\numberwithin{equation}{section}
\theoremstyle{plain}
\newtheorem{thm}{\protect\theoremname}[section]
\theoremstyle{remark}
\newtheorem{rem}[thm]{\protect\remarkname}
\theoremstyle{plain}
\newtheorem{cor}[thm]{\protect\corollaryname}
\theoremstyle{plain}
\newtheorem{lem}[thm]{\protect\lemmaname}
\theoremstyle{plain}
\newtheorem{prop}[thm]{\protect\propositionname}
\providecommand{\corollaryname}{Corollary}
\providecommand{\lemmaname}{Lemma}
\providecommand{\propositionname}{Proposition}
\providecommand{\remarkname}{Remark}
\providecommand{\theoremname}{Theorem}
\begin{document}
\global\long\def\bbC{\mathbb{C}}%
\global\long\def\bbN{\mathbb{N}}%
\global\long\def\bbQ{\mathbb{Q}}%
\global\long\def\bbR{\mathbb{R}}%
\global\long\def\bbZ{\mathbb{Z}}%

\global\long\def\bfD{{\bf D}}%

\global\long\def\bfv{{\bf v}}%

\global\long\def\calA{\mathcal{A}}%
\global\long\def\calC{\mathcal{C}}%
\global\long\def\calE{\mathcal{E}}%
\global\long\def\calF{\mathcal{F}}%
\global\long\def\calH{\mathcal{H}}%
\global\long\def\calI{\mathcal{I}}%
\global\long\def\calK{\mathcal{K}}%
\global\long\def\calL{\mathcal{L}}%
\global\long\def\calM{\mathcal{M}}%
\global\long\def\calN{\mathcal{N}}%
\global\long\def\calU{\mathcal{U}}%
\global\long\def\calO{\mathcal{O}}%
\global\long\def\calP{\mathcal{P}}%
\global\long\def\calR{\mathcal{R}}%
\global\long\def\calS{\mathcal{S}}%
\global\long\def\calT{\mathcal{T}}%
\global\long\def\calU{\mathcal{U}}%
\global\long\def\calZ{\mathcal{Z}}%

\global\long\def\frkc{\mathfrak{c}}%
\global\long\def\frkp{\mathfrak{p}}%

\global\long\def\eps{\epsilon}%
\global\long\def\gmm{\gamma}%
\global\long\def\Gmm{\Gamma}%
\global\long\def\tht{\theta}%
\global\long\def\lmb{\lambda}%
\global\long\def\Lmb{\Lambda}%

\global\long\def\rd{\partial}%
\global\long\def\aleq{\lesssim}%
\global\long\def\ageq{\gtrsim}%

\global\long\def\peq{\mathrel{\phantom{=}}}%
\global\long\def\To{\longrightarrow}%
\global\long\def\weakto{\rightharpoonup}%
\global\long\def\embed{\hookrightarrow}%
\global\long\def\Re{\mathrm{Re}}%
\global\long\def\Im{\mathrm{Im}}%
\global\long\def\chf{\mathbf{1}}%
\global\long\def\td#1{\widetilde{#1}}%
\global\long\def\br#1{\overline{#1}}%
\global\long\def\ul#1{\underline{#1}}%
\global\long\def\wh#1{\widehat{#1}}%
\global\long\def\tint#1#2{{\textstyle \int_{#1}^{#2}}}%
\global\long\def\tsum#1#2{{\textstyle \sum_{#1}^{#2}}}%

\global\long\def\CR{\mathbf{D}_{+}}%
\global\long\def\Mod{\mathbf{Mod}}%
\global\long\def\bbet{\bm{\beta}}%
\global\long\def\out{\phantom{}^{\mathrm{(out)}}}%
\global\long\def\inn{\phantom{}^{\mathrm{(in)}}}%
\global\long\def\loc{\mathrm{loc}}%
\global\long\def\pert{\mathrm{pert}}%
\global\long\def\sg{\mathrm{sg}}%
\global\long\def\rg{\mathrm{rg}}%

\title[Blow-up rigidity for CSS]{Rigidity of smooth finite-time blow-up for equivariant self-dual
Chern--Simons--Schrödinger equation}
\author{Kihyun Kim}
\email{khyun@ihes.fr}
\address{IHES, 35 route de Chartres, Bures-sur-Yvette 91440, France}
\keywords{Chern--Simons--Schrödinger equation, self-duality, blow-up, rigidity,
soliton resolution}
\subjclass[2020]{35B40, 35Q55, 37K40}
\begin{abstract}
We consider the long time dynamics for the self-dual Chern--Simons--Schrödinger
equation (CSS) within equivariant symmetry. (CSS) is a self-dual $L^{2}$-critical
equation having pseudoconformal invariance and solitons. In this paper,
we show that any $m$-equivariant, $m\geq1$, $H^{3}$ finite-time
blow-up solution to (CSS) is a pseudoconformal blow-up solution. More
precisely, such a solution decomposes into the sum of one modulated
soliton that contracts at the pseudoconformal rate $\lmb(t)\sim T-t$,
and a radiation. Applying the pseudoconformal transform in reverse,
we also obtain a refined soliton resolution theorem for $m$-equivariant,
$m\geq1$, sufficiently regular and localized solutions: any such
solutions blow up in the pseudoconformal regime, scatter (to $0$),
or scatter to a modulated soliton with some fixed scale and phase.
To our knowledge, this is the first result on the full classification
of the dynamics of arbitrary smooth and spatially decaying solutions,
in the class of nonlinear Schrödinger equations which are not known
to be completely integrable.

Our analysis not only builds upon the previous works, especially the
soliton resolution theorem by the author, Kwon, and Oh, but also refines
all steps of the arguments typically employed in the forward construction
of blow-up dynamics. The key feature of the proof is that we can identify
the singular and regular parts of any $H^{3}$ finite-time blow-up
solutions, such that the evolution of the singular part is governed
by a universal modulation dynamics while the regular part is kept
$H^{3}$-bounded even up to the blow-up time. As a byproduct, we also
observe that the asymptotic profile has a universal singular structure.
\end{abstract}

\maketitle
\tableofcontents{}

\section{Introduction}

\subsection{Self-dual Chern--Simons--Schrödinger equation}

We consider the self-dual Chern--Simons--Schrödinger equation within
equivariant symmetry. This equation reads 
\begin{equation}
i(\rd_{t}+iA_{t}[u])u+\rd_{rr}u+\frac{1}{r}\rd_{r}u-\Big(\frac{m+A_{\theta}[u]}{r}\Big)^{2}u+|u|^{2}u=0,\tag{CSS}\label{eq:CSS-m-equiv}
\end{equation}
where $m\in\bbZ$ (called \emph{equivariance index}), $u:[0,T)\times(0,\infty)\to\bbC$,
and the connection components $A_{t}[u]$ and $A_{\tht}[u]$ are given
by the formulas 
\begin{equation}
A_{t}[u]=-\int_{r}^{\infty}(m+A_{\tht}[u])|u|^{2}\frac{dr'}{r'},\qquad A_{\tht}[u]=-\frac{1}{2}\int_{0}^{r}|u|^{2}r'dr'.\label{eq:def-A}
\end{equation}

The (full non-equivariant) covariant Chern--Simons--Schrödinger
equation, 
\begin{equation}
\left\{ \begin{aligned}\bfD_{t}\phi & =i(\bfD_{1}\bfD_{1}+\bfD_{2}\bfD_{2})\phi+ig|\phi|^{2}\phi,\\
F_{t1} & =-\Im(\br{\phi}\bfD_{2}\phi),\\
F_{t2} & =\Im(\br{\phi}\bfD_{1}\phi),\\
F_{12} & =-\tfrac{1}{2}|\phi|^{2},
\end{aligned}
\right.\label{eq:CSS-cov}
\end{equation}
with the coupling constant $g>0$, the scalar field $\phi:[0,T)\times\bbR^{2}\to\bbC$,
the covariant derivatives $\bfD_{\alpha}=\rd_{\alpha}+iA_{\alpha}$,
and the curvature components $F_{\alpha\beta}=\rd_{\alpha}A_{\beta}-\rd_{\beta}A_{\alpha}$,
$\alpha,\beta\in\{t,1,2\}$, was introduced by the physicists Jackiw
and Pi \cite{JackiwPi1990PRL} as a gauged 2D nonlinear Schrödinger
equation that admits \emph{explicit static self-dual solitons} within
the particular choice of the coupling constant $g=1$. This shows
a stark contrast with the 2D cubic nonlinear Schrödinger equation
\begin{equation}
i\partial_{t}\psi+\Delta\psi+|\psi|^{2}\psi=0,\qquad\psi:[0,T)\times\bbR^{2}\to\bbC,\tag{NLS}\label{eq:NLS}
\end{equation}
where the equation does not have static (time-independent) finite
energy solutions and no explicit soliton formulas are known. The model
\eqref{eq:CSS-m-equiv} under consideration is derived from the covariant
system \eqref{eq:CSS-cov} after choosing $g=1$, fixing the Coulomb
gauge condition, and imposing \emph{equivariant symmetry} on the scalar
field $\phi$: 
\begin{equation}
\phi(t,x)=u(t,r)e^{im\theta},\qquad m\in\bbZ,\label{eq:def-equiv-symm}
\end{equation}
where $(r,\theta)$ denotes the polar coordinates on $\bbR^{2}$.
We refer to \cite{JackiwPi1990PRL,JackiwPi1990PRD,JackiwPi1991PRD,JackiwPi1992Progr.Theoret.,Dunne1995Springer}
for more physical contents of the Chern--Simons--Schrödinger equation
\eqref{eq:CSS-cov} and to the introduction of \cite{LiuSmith2016,KimKwon2023MAMS}
for the derivation of \eqref{eq:CSS-m-equiv}.

Recent studies on the long-term dynamics of \eqref{eq:CSS-m-equiv}
make this model more interesting. To be further explained in this
introduction, two notable features are \emph{rotational instability}
and \emph{soliton resolution}. Rotational instability is an instability
mechanism of some finite-time blow-up solutions and soliton resolution
asserts a strong rigidity of asymptotic behavior of solutions. The
present paper continues our study on soliton resolution and our main
result gives a complete description of the asymptotic behavior of
$m$-equivariant, $m\geq1$, smooth (and spatially decaying) solutions
to \eqref{eq:CSS-m-equiv}.

We continue our discussion on the model. \eqref{eq:CSS-m-equiv} enjoys
various symmetries and conservation laws analogous to \eqref{eq:NLS}.
Among the most basic symmetries are the time translation $(u(t,r)\to u(t+t_{0},r)$,
$t_{0}\in\bbR$) and the phase rotation symmetries ($u(t,r)\to e^{i\gmm_{0}}u(t,r)$,
$\gmm_{0}\in\bbR/2\pi\bbZ$). Associated to these are the conservation
of \emph{energy} and \emph{mass}\footnote{The physical interpretation of the quantity $M[u]$ is the \emph{total
charge}, but in this paper we shall call it mass following the widespread
convention for NLS.}:
\begin{align}
E[u] & \coloneqq\int\frac{1}{2}|\partial_{r}u|^{2}+\frac{1}{2}\Big(\frac{m+A_{\theta}[u]}{r}\Big)^{2}|u|^{2}-\frac{1}{4}|u|^{2},\label{eq:energy-Coulomb-form}\\
M[u] & \coloneqq\int|u|^{2},\label{eq:charge}
\end{align}
where we denoted $\int f(r)=2\pi\int f(r)rdr$. \eqref{eq:CSS-m-equiv}
moreover admits a \emph{Hamiltonian} structure
\begin{equation}
\rd_{t}u=-i\nabla E[u],\label{eq:CSS-equiv-ham}
\end{equation}
where $\nabla$ (acting on a functional) is the Fréchet derivative
with respect to the real inner product $(u,v)_{r}=\int\Re(\br uv)$.
Of particular importance in this work are the \emph{$L^{2}$-scaling
symmetry} and the \emph{pseudoconformal symmetry};\emph{ }if $u(t,r)$
is a solution to \eqref{eq:CSS-m-equiv}, then the functions $u_{\lmb}$
and $\calC u$ also solve \eqref{eq:CSS-m-equiv}: 
\begin{align}
u_{\lmb}(t,r) & \coloneqq\frac{1}{\lmb}u\Big(\frac{t}{\lmb^{2}},\frac{r}{\lmb}\Big),\qquad\qquad\forall\lmb>0,\label{eq:scaling}\\{}
[\calC u](t,r) & \coloneqq\frac{1}{|t|}u\Big(-\frac{1}{t},\frac{r}{|t|}\Big)e^{\frac{ir^{2}}{4t}},\qquad\forall t\neq0.\label{eq:def-pseudoconf}
\end{align}
Associated to \eqref{eq:scaling} and \eqref{eq:def-pseudoconf} are
the \emph{virial identities}:\emph{ }
\begin{align}
\rd_{t}\int r^{2}|u|^{2} & =4\int\Im(\br u\cdot r\rd_{r}u),\label{eq:virial-1}\\
\rd_{t}\int\Im(\br u\cdot r\rd_{r}u) & =4E[u].\label{eq:virial-2}
\end{align}
We note that the covariant system \eqref{eq:CSS-cov} also shares
very similar symmetries and conservation laws with \eqref{eq:NLS}.

As alluded to above, a notable feature of \eqref{eq:CSS-m-equiv}
distinguished from \eqref{eq:NLS} is the \emph{self-duality}. Let
us introduce the (covariant) \emph{Cauchy--Riemann operator} 
\begin{equation}
\bfD_{u}f\coloneqq\rd_{r}f-\frac{m+A_{\theta}[u]}{r}f,\label{eq:CR-radial}
\end{equation}
which indeed corresponds to the radial part of the operator $\bfD_{+}\coloneqq{\bf D}_{1}+i\bfD_{2}$.
Now the energy functional \eqref{eq:energy-Coulomb-form} can be written
into the self-dual form
\begin{equation}
E[u]=\int\frac{1}{2}|\bfD_{u}u|^{2}.\label{eq:energy-self-dual-form}
\end{equation}
We call the nonlinear operator $u\mapsto\bfD_{u}u$ the \emph{Bogomol'nyi
operator}. Due to \eqref{eq:CSS-equiv-ham} and \eqref{eq:energy-self-dual-form},
any static solutions to \eqref{eq:CSS-m-equiv} are given by solutions
(Jackiw--Pi vortices) to the \emph{Bogomol'nyi equation}
\begin{equation}
\bfD_{Q}Q=0.\label{eq:Bogomol'nyi-eq}
\end{equation}
Since \eqref{eq:Bogomol'nyi-eq} is a first-order (nonlocal) equation,
we say that the equation is \emph{self-dual}. In fact, there is a
covariant version of the Bogomol'nyi equation and Jackiw--Pi \cite{JackiwPi1990PRD}
found a transform of the covariant Bogomol'nyi equation into the Liouville
equation, which is completely integrable. Solving \eqref{eq:Bogomol'nyi-eq},
we obtain a unique (up to symmetries) \emph{explicit} $m$-equivariant
static solution for any $m\geq0$: 
\begin{equation}
Q(r)=\sqrt{8}(m+1)\frac{r^{m}}{1+r^{2m+2}},\qquad m\geq0.\label{eq:Q-formula}
\end{equation}
There is no finite energy solution to \eqref{eq:Bogomol'nyi-eq} when
$m<0$.\footnote{This fact can be proved either by ODE analysis or by showing the coercivity
of energy \cite{KimKwonOh2022arXiv1}.} Applying the pseudoconformal transform \eqref{eq:def-pseudoconf}
to $Q$, we also obtain an explicit finite-time blow-up solution ($m\geq0$)
\[
S(t,r)\coloneqq\frac{1}{|t|}Q\Big(\frac{r}{|t|}\Big)e^{-i\frac{r^{2}}{4|t|}},\qquad t<0,
\]
which is another fundamental object in the long-term dynamics for
\eqref{eq:CSS-m-equiv}. Due to the spatial decay $Q(r)\sim r^{-(m+2)}$,
$S(t)$ has finite energy (i.e., $S(t)$ belongs to $H^{1}$) if and
only if $m\geq1$.

Note that we have suppressed the $m$-dependences in various notation
such as $A_{t}[u]$, $E[u]$, $\bfD_{u}$, $Q$, and $S(t)$. We will
keep suppressing these $m$-dependences throughout this article for
the simplicity of notation.

\subsection{Long-term dynamics for \eqref{eq:CSS-m-equiv}}

Let us begin with some previous works on the Cauchy problem of the
covariant Chern--Simons--Schrödinger equation \eqref{eq:CSS-cov}.
The local well-posedness has been studied by many authors \cite{BergeDeBouardSaut1995Nonlinearity,Huh2013Abstr.Appl.Anal,LiuSmithTataru2014IMRN,Lim2018JDE}.
Lim \cite{Lim2018JDE} proved the $H^{1}$ well-posedness under Coulomb
gauge and Liu--Smith--Tataru \cite{LiuSmithTataru2014IMRN} proved
the small data $H^{0+}$ well-posedness under the heat gauge. These
results still miss the critical $L^{2}$-space. There are also results
on long-term dynamics such as the finite-time blow-up for negative
energy solutions (necessarily $g>1$) with a formal derivation of
the log-log rate \cite{BergeDeBouardSaut1995Nonlinearity,BergeDeBouardSaut1995PRL}
and small data scattering \cite{OhPusateri2015}.

We turn to the long-term dynamics for \eqref{eq:CSS-m-equiv}. First,
it is easy to see that \eqref{eq:CSS-m-equiv} is well-posed in the
critical space $L^{2}$ because there is no derivative nonlinearity
within equivariance and one can use a standard contraction argument
with the $L_{t,x}^{4}$-Strichartz estimates \cite[Section 2]{LiuSmith2016}.
In particular, any small $L^{2}$-solutions exist globally and scatter
both forwards and backwards in time.

For large solutions, the ground state $Q$ provides a natural threshold
for the non-scattering dynamics. Indeed, Liu--Smith \cite{LiuSmith2016}
proved the \emph{subthreshold theorem}: for $m\geq0$, any $m$-equivariant
$L^{2}$-solutions $u$ with $M[u]<M[Q]$ exist globally and scatter.
Following their proof, the same should be true for any negative-equivariant
(i.e., $m<0$) solutions without any restriction on mass; see also
\cite{KimKwonOh2022arXiv1}. Recently, Li--Liu \cite{LiLiu2022AIHP}
considered the threshold dynamics ($M[u]=M[Q]$, necessarily $m\geq0$)
of $H_{m}^{1}$-solutions and proved that $Q$ and $S(t)$ are the
only non-scattering solutions up to symmetries.\footnote{When $m=0$, $S(t)$ does not have finite energy due to the slow spatial
decay of $Q$.}

Recently, author, Kwon, and Oh studied finite-time blow-up dynamics
of \eqref{eq:CSS-m-equiv} with an eye towards the dynamics above
threshold. We remark that the finite-time blow-up dynamics is closely
related to the dynamics of non-scattering global solutions through
the pseudoconformal transform.

In \cite{KimKwon2023MAMS,KimKwon2023AnnPDE,KimKwonOh2020arXiv,KimKwonOh2022arXiv2},
the authors studied the dynamics of finite-time blow-up solutions
to \eqref{eq:CSS-m-equiv} of the form 
\begin{equation}
u(t,r)-\frac{e^{i\gmm(t)}}{\lmb(t)}Q\Big(\frac{r}{\lmb(t)}\Big)\to z^{\ast}(r)\quad\text{ in }L^{2}\quad\text{as }t\to T.\label{eq:intro-decomp}
\end{equation}
The authors in \cite{KimKwon2023MAMS,KimKwon2023AnnPDE} considered
\eqref{eq:CSS-m-equiv} when $m\geq1$ and studied \emph{pseudoconformal
blow-up solutions}, namely, the solutions of the form \eqref{eq:intro-decomp}
with $\lmb(t)/(T-t)\to\ell\in(0,\infty)$ and $\gmm(t)\to\gmm^{\ast}\in\bbR/2\pi\bbZ$.
They in particular constructed pseudoconformal blow-up solutions and
uncovered their instability mechanism (\emph{rotational instability});
see Remark~\ref{rem:rotational-instability} for more discussions.
On the other hand, when $m=0$, $S(t)\notin H^{1}$ so even the existence
of finite energy blow-up solutions is not trivial. In this case, the
authors in \cite{KimKwonOh2020arXiv,KimKwonOh2022arXiv2} constructed
finite energy finite-time blow-up solutions with the rates $\lmb(t)\sim(T-t)/|\log(T-t)|^{2}$
or $\lmb(t)\sim(T-t)^{p}/|\log(T-t)|$ for any $p>1$, where the former
regime can arise from a \emph{smooth} compactly supported initial
data.

With the above blow-up solutions at hand, it is natural to ask the
following.
\begin{itemize}
\item[Q1.] Is there any blow-up scenario other than one-bubble blow-up?
\item[Q2.] Can we classify all possible dynamics of scale and phase?
\end{itemize}
The first question was answered for finite energy solutions by the
author, Kwon, and Oh \cite{KimKwonOh2022arXiv1} (see also Theorem~\ref{thm:asymptotic-description}
below). The result roughly says that any $H_{m}^{1}$ finite-time
blow-up solutions admit the decomposition \eqref{eq:intro-decomp}
with certain upper bounds on $\lmb(t)$. By pseudoconformal symmetry,
one further obtains \emph{soliton resolution} for well-localized regular
data: every such solution to \eqref{eq:CSS-m-equiv} decomposes into
at most one soliton and a radiation. A remarkable fact is that \eqref{eq:CSS-m-equiv}
does not admit multi-soliton configurations and this turns out to
be a consequence of two distinguished features of \eqref{eq:CSS-m-equiv}:
\emph{self-duality} and \emph{non-locality}. We refer to Remarks~\ref{rem:ExteriorDefocusing}
and \ref{rem:soliton resolution} for more explanations.

The present paper concerns the second question, namely, the classification
of the dynamics of scale and phase.

\subsection{Main results}

The main result of this paper is that any $m$-equivariant, $m\geq1$,
$H^{3}$ finite-time blow-up solutions to \eqref{eq:CSS-m-equiv}
must be pseudoconformal blow-up solutions and hence this gives a complete
classification of the dynamics of scale and phase. Via the pseudoconformal
transform, we also give the full description of the dynamics of any
$m$-equivariant, $m\geq1$, $H^{3,3}$-solutions.

Let us briefly introduce some important notation. We denote the modulated
soliton by 
\[
Q_{\lmb,\gmm}(r)\coloneqq\frac{e^{i\gmm}}{\lmb}Q\Big(\frac{r}{\lmb}\Big),\qquad\lmb\in(0,\infty),\ \gmm\in\bbR/2\pi\bbZ.
\]
For $k\in\bbN$, we denote by $H_{m}^{k}$ (resp., $H_{m}^{k,k}$),
the space of $m$-equivariant $H^{k}(\bbR^{2})$ (resp., $H^{k,k}(\bbR^{2})$)
functions. Here, $f\in H^{k,k}(\bbR^{2})$ means that $\langle x\rangle^{\ell}f\in H^{k-\ell}(\bbR^{2})$
for all $0\leq\ell\leq k$. See Section~\ref{subsec:Notation} for
more pieces of notation.

We begin by recalling the soliton resolution theorem of \cite{KimKwonOh2022arXiv1}
for $H_{m}^{1}$ finite-time blow-up solutions.
\begin{thm}[Soliton resolution for $H_{m}^{1}$ finite-time blow-up solutions
\cite{KimKwonOh2022arXiv1}]
\label{thm:asymptotic-description}\ 
\begin{itemize}
\item When $m<0$, there are no $H_{m}^{1}$ finite-time blow-up solutions.
\item When $m\geq0$, if $u$ is a $H_{m}^{1}$-solution to \eqref{eq:CSS-m-equiv}
that blows up forwards in time at $T<+\infty$, then $u(t)$ admits
the decomposition 
\begin{equation}
u(t,\cdot)-Q_{\lmb(t),\gmm(t)}\to z^{\ast}\text{ in }L^{2}\text{ as }t\to T,\label{eq:thm1-decomp}
\end{equation}
for some $\lmb(t)\in(0,\infty)$, $\gmm(t)\in\bbR/2\pi\bbZ$, and
$z^{\ast}\in L^{2}$ with the following properties:
\begin{itemize}
\item (Further regularity of the asymptotic profile) We have 
\begin{equation}
\partial_{r}z^{\ast},\ \tfrac{1}{r}z^{\ast}\in L^{2}.\label{eq:thm1-asym-reg}
\end{equation}
\item (Bound on the blow-up speed) As $t\to T$, we have 
\begin{equation}
\lmb(t)\aleq_{M[u]}\sqrt{E[u]}(T-t).\label{eq:thm1-lmb-upper-bound}
\end{equation}
When $m=0$, we further have an improved bound 
\begin{equation}
\lmb(t)\aleq_{M[u]}\frac{\sqrt{E[u]}(T-t)}{|\log(T-t)|^{\frac{1}{2}}}.\label{eq:thm1-lmb-upper-bound-radial}
\end{equation}
\end{itemize}
\end{itemize}
\end{thm}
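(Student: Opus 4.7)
My plan is to combine the self-dual identity $E[u]=\tfrac12\|\bfD_u u\|_{L^2}^2$ with a modulation analysis built around $Q$. For $m<0$ no finite-energy solution of $\bfD_u u=0$ exists, so I would expect the self-dual functional to be coercive: I would derive an inequality of the form $\|\rd_r u\|_{L^2}^2+\|u/r\|_{L^2}^2\aleq E[u]+c(M[u])$, either by a sharp Gagliardo--Nirenberg-type argument tailored to the Bogomol'nyi operator or, alternatively, by a profile-decomposition contradiction excluding concentrating bubbles. This uniform $H^1$-bound combined with mass and energy conservation would rule out finite-time blow-up.

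\textbf{Bubble extraction for $m\geq 0$.} Assume $u$ blows up at $T<\infty$ in $H^{1}_m$. Since $E$ is conserved, $\|\bfD_u u\|_{L^2}$ stays bounded, whereas $\|\rd_r u\|_{L^2}\to\infty$; therefore the magnetic contribution $(m+A_\tht[u])/r\cdot u$ must become large, which by the structure of $A_\tht[u]$ forces mass to concentrate at some scale $\lmb(t)\to 0$. I would extract the concentrating profile by an equivariant profile decomposition (Lions-type concentration-compactness adapted to the gauged $L^2$-critical setting). Self-duality applied to the limit profile (which carries all the concentrating energy) identifies it as a Jackiw--Pi vortex $Q$, up to scaling and phase. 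I then fix modulation parameters $(\lmb(t),\gmm(t))$ by two orthogonality conditions transverse to the scaling and phase directions on the remainder $\eps(t):=u(t)-Q_{\lmb(t),\gmm(t)}$.

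\textbf{Convergence and regularity of $z^{\ast}$.} A coercivity estimate for the linearized self-dual operator at $Q$, modulo the two modulation directions, should give $\|\eps(t)\|_{X}^{2}\aleq E[u]+(\text{modulation errors})$ in a suitable magnetic $\dot H^{1}$-type norm $X$, uniformly in $t$. Combined with mass conservation and a concentration-compactness monotonicity argument preventing loss of $L^{2}$-mass at infinity, this should upgrade the natural weak limit $\eps(t)\weakto z^{\ast}$ to a strong $L^{2}$ limit, establishing \eqref{eq:thm1-decomp}. The additional regularity \eqref{eq:thm1-asym-reg} is then obtained by passing the bound $\|\bfD_u u\|_{L^{2}}^{2}=2E[u]$ to the limit on any annulus $\{r\geq r_{0}>0\}$, where the concentrated soliton drops out and $\bfD_u u$ converges to $\bfD_{z^{\ast}}z^{\ast}$; the nonlocal coefficient $A_\tht[u]$ converges to a bounded limit by mass conservation, so it is harmless.

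\textbf{Blow-up rate and main obstacle.} The bound \eqref{eq:thm1-lmb-upper-bound} should come from the modulation equation: differentiating the orthogonality conditions in time produces an identity of the schematic form $\lmb'(t)=\langle\calF(Q,\eps),g\rangle$ with a fixed test function $g$, and the self-dual bound $\|\bfD_u u\|_{L^{2}}\aleq\sqrt{E[u]}$ together with coercivity gives $|\lmb'(t)|\aleq\sqrt{E[u]}$; integrating from $t$ to $T$ with the boundary condition $\lmb(T)=0$ yields the stated rate. For $m=0$, the improvement \eqref{eq:thm1-lmb-upper-bound-radial} reflects the borderline decay $Q\notin L^{2}(r^{2}dr)$ and would require a refined (localized) virial-type computation picking up the $|\log(T-t)|^{1/2}$ gain, most likely by plugging the localized virial $\int\chi(r/\lmb)r^{2}|u|^{2}$ with a logarithmic cutoff into \eqref{eq:virial-1}--\eqref{eq:virial-2}. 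The main obstacle I anticipate is exactly this sharp interplay: one must simultaneously exploit self-duality (to bound $\bfD_u u$) and spatial decay (to bound the nonlocal tail of $A_\tht$ after truncation), and then couple these estimates to the modulation dynamics tightly enough to recover the correct power of $T-t$ without logarithmic losses in the case $m\geq 1$, and with the precise logarithmic gain in the case $m=0$.
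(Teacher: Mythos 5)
This theorem is cited from \cite{KimKwonOh2022arXiv1} rather than proved in the present paper, so the comparison has to be made against the strategy that the present paper recalls in Remark~\ref{rem:on-quant-small}, Remark~\ref{rem:ExteriorDefocusing}, and Lemma~\ref{lem:NonlinCoerEnergy}. Your overall arc (extract a bubble, fix modulation parameters, control the remainder via self-duality, integrate the modulation ODE to get the rate) points in the right direction, but there are two concrete gaps that would make your proposal collapse if carried out literally.

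\textbf{First gap: linear versus nonlinear coercivity.} You write that ``a coercivity estimate for the linearized self-dual operator at $Q$, modulo the two modulation directions, should give $\|\eps(t)\|_{X}^{2}\aleq E[u]+\text{(modulation errors)}$.'' Linear coercivity for $L_{Q}$ controls $\eps$ only on a compact region $y\aleq 1$. In the far region $y\gg1$, the pure $\eps$-term $\tfrac{1}{y}A_{\tht}[\eps]\eps$ appearing in $\bfD_{Q+\eps}(Q+\eps)$ is \emph{not} perturbative: the scaling-critical estimate $\|\tfrac{1}{y}A_{\tht}[\eps]\eps\|_{L^{2}}\aleq\|\eps\|_{L^{2}}^{2}\|\eps\|_{\dot H_{m}^{1}}$ is optimal and $\|\eps\|_{L^{2}}$ can be large (indeed $M[u]$ is arbitrary). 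Thus one cannot absorb this term by smallness, and your estimate $\|\eps\|_{X}^{2}\aleq E[u]$ does not follow from linearization. What actually closes the argument, as recorded in Lemma~\ref{lem:NonlinCoerEnergy} and Remark~\ref{rem:on-quant-small}, is a genuinely nonlinear Hardy-type inequality $\|L_{Q}\eps-\tfrac{1}{y}A_{\tht}[\eps]\eps\|_{L^{2}}\ageq_{M}\|\eps\|_{\dot H_{m}^{1}}$, which uses the \emph{non-locality} of the problem in an essential way: for $y\gg 1$ the operator $\bfD_{Q+\eps}$ is effectively $\rd_{y}-\tfrac{1}{y}(m+A_{\tht}[Q]+A_{\tht}[\eps])$ with $m+A_{\tht}[Q]\approx-(m+2)<0$ and $A_{\tht}[\eps]\leq 0$ bounded, which makes the exterior region defocusing. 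Without this structural input, your coercivity step has no proof.

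\textbf{Second gap: uniqueness of the bubble.} Your profile decomposition produces a concentrating profile, but you then assert that ``the limit profile (which carries all the concentrating energy) identifies it as a Jackiw--Pi vortex $Q$,'' silently ruling out multi-bubble configurations. This is precisely the nontrivial content that the theorem asserts and that Remark~\ref{rem:ExteriorDefocusing} singles out: for generic $L^{2}$-critical models one cannot rule out several bubbles at different scales, and here the rigidity hinges on the same exterior-defocusing mechanism from self-duality and non-locality. A concentration-compactness argument alone will not see this; you need the sign/non-locality structure to conclude that there is at most one soliton.

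\textbf{Remaining points.} For $m<0$ your idea of a uniform bound $\|\rd_{r}u\|_{L^{2}}^{2}+\|u/r\|_{L^{2}}^{2}\aleq E[u]+c(M[u])$ is in the right spirit, but again the inequality would have to be derived by the nonlinear/nonlocal coercivity (or by ODE analysis on the Bogomol'nyi equation), not by a Gagliardo--Nirenberg argument, since there is no smallness to exploit. For the rate \eqref{eq:thm1-lmb-upper-bound}, your schematic $|\lmb'(t)|\aleq\sqrt{E[u]}$ is exactly what one wants, but the inequality is a direct consequence of $\|\eps\|_{\dot H_{m}^{1}}\aleq_{M[u]}\lmb\sqrt{E[u]}$ (i.e., the nonlinear coercivity) plugged into the modulation equations; with only linear coercivity, you would get $\|\eps\|_{\dot H_{m}^{1}}\aleq\sqrt{E[u]}$ without the crucial factor of $\lmb$, and the rate degenerates. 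Finally, your heuristic for the $m=0$ logarithmic gain (a log-localized virial) is plausible, but it must be run in tandem with the fact that $S(t)\notin H^{1}$ when $m=0$; the present paper does not expose that argument so I cannot compare it more finely.
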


\begin{rem}[Exterior defocusing nature]
\label{rem:ExteriorDefocusing}A remarkable fact in the decomposition
\eqref{eq:thm1-decomp} is that only one soliton appears. This is
a consequence of the \emph{self-duality} and \emph{non-locality},
which render \eqref{eq:CSS-m-equiv} \emph{defocusing} (in the form
of strict positivity of energy) after extracting out the soliton profile.
Hence no multi-soliton configurations appear. See the nonlinear coercivity
of energy in \cite{KimKwonOh2022arXiv1} or Lemma~\ref{lem:NonlinCoerEnergy}
of this paper.
\end{rem}

\begin{rem}[Soliton resolution]
\label{rem:soliton resolution}Via pseudoconformal transform, we
also obtain soliton resolution for $H_{m}^{1,1}$-solutions to \eqref{eq:CSS-m-equiv},
which says that at most one soliton can appear in the resolution.
Soliton resolution, in a more general context, says that (under a
genericity assumption) every solution decomposes into the sum of decoupled
solitons and a radiation. This has been known for a wide range of
completely integrable equations. However, it is mostly open for non-integrable
equations with exception of some nonlinear wave equations within symmetry,
such as energy-critical radial nonlinear wave equation and equivariant
wave maps \cite{DuyckaertsKenigMerle2013CambJMath,DuyckaertsKenigMerle2023Acta,DuyckaertsKenigMartelMerle2022CMP,CollotDuyckaertsKenigMerle2022arXiv,JendrejLawrie2021arXiv,JendrejLawrie2023AnnPDE}.
In these equations, multi-solitons are known to exist \cite{Jendrej2019AJM}.
Thus the previously mentioned exterior defocusing nature is special
to \eqref{eq:CSS-m-equiv}.
\end{rem}

From now on, we consider the $m\geq1$ case. Pseudoconformal blow-up
solutions constructed in \cite{KimKwon2023MAMS,KimKwon2023AnnPDE}
saturate the upper bound \eqref{eq:thm1-lmb-upper-bound} for $\lmb(t)$,
but the question whether an exotic blow-up speed other than the pseudoconformal
one exists or not was left open. Our main result claims that such
an exotic regime does not exist and the blow-up must be pseudoconformal
if one assumes $H^{3}$-data.
\begin{thm}[Rigidity of $H_{m}^{3}$ finite-time blow-up when $m\geq1$]
\label{thm:blow-up-rigidity}Let $m\geq1$. Let $u$ be a $H_{m}^{3}$-solution
to \eqref{eq:CSS-m-equiv} which blows up forwards in time at $T<+\infty$.
Then, the following holds.
\begin{itemize}
\item (Rigidity of blow-up rates) In the decomposition \eqref{eq:thm1-decomp},
we can choose 
\begin{equation}
\lmb(t)=\ell\cdot(T-t)\qquad\text{and}\qquad\gmm(t)=\gmm^{\ast}\label{eq:rigidity-blow-up}
\end{equation}
for some $\ell=\ell(u)\in(0,\infty)$ and $\gmm^{\ast}=\gmm^{\ast}(u)\in\bbR/2\pi\bbZ$.
\item (Structure of the asymptotic profile) We can decompose 
\begin{equation}
z^{\ast}=z_{\sg}^{\ast}+z_{\rg}^{\ast}\label{eq:z-ast-decom}
\end{equation}
for some $z_{\sg}^{\ast},z_{\rg}^{\ast}\in L^{2}$ such that the singular
part $z_{\sg}^{\ast}$ is given by 
\begin{equation}
z_{\sg}^{\ast}(r)=\begin{cases}
-e^{i\gmm^{\ast}}\ell^{2}\frac{\sqrt{2}}{8}\chi(r)\cdot r & \text{if }m=1,\\
e^{i\gmm^{\ast}}\ell^{3}\frac{\sqrt{2}}{64}i\chi(r)\cdot r^{2} & \text{if }m=2,\\
0 & \text{if }m\geq3,
\end{cases}\label{eq:z-ast-sing-pt}
\end{equation}
where $\chi(r)$ is a smooth cutoff function near $r=0$ (see Section~\ref{subsec:Notation}),
and the regular part $z_{\rg}^{\ast}$ satisfies 
\begin{equation}
|z_{\rg}^{\ast}|_{-3}\coloneqq\sup_{0\leq k\leq3}|r^{-(3-k)}\rd_{r}^{k}z_{\rg}^{\ast}|\in L^{2}.\label{eq:z-ast-reg-pt}
\end{equation}
In particular, when $m\in\{1,2\}$, $z^{\ast}(r)$ satisfies the near-origin
asymptotics 
\begin{equation}
z^{\ast}(r)=qr^{m}+o_{r\to0}(r^{2})\label{eq:z-ast-near-orig-asym}
\end{equation}
for some nonzero constant $q=q(u)\neq0$.
\end{itemize}
\end{thm}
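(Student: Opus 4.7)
The starting point is Theorem~\ref{thm:asymptotic-description}, which already provides the $L^{2}$-decomposition \eqref{eq:thm1-decomp} together with the upper bound $\lmb(t) \aleq T-t$. The plan is to perform a refined modulation analysis in the self-similar frame, introducing an additional parameter $b(t)$ that tracks the pseudoconformal rate. Concretely, I would write
\[
u(t,r) = \frac{e^{i\gmm(t)}}{\lmb(t)}(Q + \eps)\!\left(t, \frac{r}{\lmb(t)}\right) e^{-ib(t) r^{2}/(4\lmb(t)^{2})},
\]
with orthogonality conditions imposed on $\eps$ with respect to the null space and a generalized null space of the linearized operator $L_{Q}$. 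Thanks to the self-dual structure, $L_{Q} = \td{\bfD}^{\ast}\td{\bfD}$ factorizes, giving both an explicit inversion procedure and the strong coercivity on the orthogonal complement that underlies the exterior defocusing of Remark~\ref{rem:ExteriorDefocusing}.

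To leverage the $H^{3}$ hypothesis, I would build higher-order energy functionals $\calF_{k}[\eps]$ for $k \leq 3$, based on iterated applications of $\td{\bfD}$ and $\td{\bfD}^{\ast}$; coercivity follows from self-duality (extending Lemma~\ref{lem:NonlinCoerEnergy}), and boundedness as $t \to T$ is extracted from the $H^{3}$-norm of $u$ through careful Leibniz computations in the rescaled frame. Projecting \eqref{eq:CSS-m-equiv} against the generalized null modes then yields modulation ODEs of the form
\[
\tfrac{\lmb_{s}}{\lmb} + b = O(\|\eps\|_{\ast}), \qquad b_{s} + b^{2} = O(\|\eps\|_{\ast}), \qquad \gmm_{s} = O(\|\eps\|_{\ast}),
\]
with $s = \int_{0}^{t} d\tau/\lmb(\tau)^{2}$. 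Combined with exterior defocusing and the refined virial/Lyapunov identity derived from \eqref{eq:virial-1}--\eqref{eq:virial-2} at the $H^{3}$ level, these equations force $b(t) \to \ell \in (0,\infty)$ and $\eps \to 0$ in a norm strong enough to integrate the ODEs to $\lmb(t) = \ell(T-t)$ and $\gmm(t) \to \gmm^{\ast}$, giving \eqref{eq:rigidity-blow-up}.

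The explicit structure of $z^{\ast}$ in \eqref{eq:z-ast-sing-pt} then emerges by Taylor-expanding the pseudoconformal phase. Decomposing
\[
u - Q_{\lmb,\gmm} = Q_{\lmb,\gmm}(r)\bigl(e^{-ibr^{2}/(4\lmb^{2})} - 1\bigr) + O(\eps),
\]
and using the far-field asymptotics $Q_{\lmb,\gmm}(r) \sim \sqrt{8}(m+1) e^{i\gmm}\lmb^{m+1}/r^{m+2}$, a term-by-term analysis of $\sum_{k \geq 1} (-ibr^{2}/(4\lmb^{2}))^{k}/k!$ shows that in the limit $\lmb \sim \ell(T-t) \to 0$ exactly one term survives with $O(1)$ size, namely $k = m+1$, and it produces the explicit $r^{m}$-profile of \eqref{eq:z-ast-sing-pt}. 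For $m \geq 3$ this $r^{m}$-profile is already smooth (since $r^{m} e^{im\tht}$ is locally a harmonic polynomial) and hence satisfies the $|\cdot|_{-3}$-bound, so it is absorbed into $z_{\rg}^{\ast}$ and $z_{\sg}^{\ast} = 0$; for $m \in \{1,2\}$ the profile fails the weighted bound at the origin and must be split off, which also yields the near-origin asymptotics \eqref{eq:z-ast-near-orig-asym}.

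The main obstacle is the second step: proving the sharp modulation equations and ruling out the exotic regime $\lmb(t)/(T-t) \to 0$, which would be the (CSS) analogue of log-log blow-up. This requires an $H^{3}$-adapted Lyapunov functional refining the virial identities \eqref{eq:virial-1}--\eqref{eq:virial-2} at higher order, whose closure depends crucially on the exterior defocusing property: without strict coercivity of $\eps$ on the orthogonal complement of the soliton modes, one cannot prevent slow radiation mechanisms that produce log-corrections in generic $L^{2}$-critical focusing problems. Once this step is closed, the regularity $|z_{\rg}^{\ast}|_{-3} \in L^{2}$ in \eqref{eq:z-ast-reg-pt} follows by propagating the $H^{3}$ bound on $u$ through the decomposition and subtracting off the explicit singular contribution.
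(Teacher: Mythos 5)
Your high-level plan — modulation with a pseudoconformal parameter, coercivity from self-duality, higher-order energy functionals on conjugated variables, and propagating $H^{3}$ regularity of the remainder — is the right framework and matches the spirit of the paper. Your heuristic for the singular profile (the $k=m+1$ term of the pseudoconformal phase expansion) even reproduces the correct coefficients in \eqref{eq:z-ast-sing-pt}. However, there are several concrete gaps that prevent the argument from closing.

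First, you work with a three-parameter modulation $(\lmb,\gmm,b)$, but for \eqref{eq:CSS-m-equiv} the formal generalized kernel of $i\calL_{Q}$ is four-dimensional, $\mathrm{span}_{\bbR}\{\Lmb Q,\,iQ,\,i\tfrac{y^{2}}{4}Q,\,\rho\}$. A fourth parameter $\eta$ (dual to $\rho$) is indispensable: it is precisely the rotational-instability direction, and without it the decomposition cannot be uniquely fixed by orthogonality nor can the modulation system be closed. The formal ODEs are $b_{s}+b^{2}+\eta^{2}\approx 0$ and $\eta_{s}\approx 0$ (with further cubic corrections when $m=1$), and the quantity whose limit gives $\ell$ is $\beta/\lmb$ with $\beta=\sqrt{b^{2}+\eta^{2}}$, not $b/\lmb$ alone.

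Second, the pure pseudoconformal phase ansatz $e^{-ibr^{2}/(4\lmb^{2})}Q$ is not accurate enough when $m\in\{1,2\}$. The profile equation error $\Psi$ for this ansatz fails the bounds of Remark~\ref{rem:Bounds-for-Psi} by a polynomial factor, because of the slow spatial decay of $Q$ and of the phase factor; one must construct modified profiles to cubic order (and, for the remainder equation, quartic order), and one must truncate at the radius $y\sim\beta^{-1}$ (equivalently $r\sim 1$), exploiting the special cancellation $[(b^{2}+\eta^{2})\rd_{b}-by\rd_{y}]\chi_{B_{1}}=0$. Your proposal neither modifies the profile nor localizes it, and your ``term-by-term analysis'' of the Taylor series of the phase is not actually convergent in the relevant region $y\lesssim\beta^{-1}$.

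Third, ``refining the virial identities \eqref{eq:virial-1}--\eqref{eq:virial-2} at the $H^{3}$ level'' does not work as written: the unweighted dilation $\Lmb$ produces terms like $(\eps_{2},-i\Lmb\eps_{2})_{r}$ that are not finite for $\eps_{2}\in H^{1}$, and the usual monotonicity constructions in the blow-up literature rely on a sign assumption $b>0$ which you cannot impose here. The paper instead builds a Morawetz-type multiplier $\Lmb_{\psi}$ with $0\leq\psi'\leq 1$ and repulsivity on a weighted space (Lemma~\ref{lem:Morawetz}), and this requires $m\geq 1$. This is a genuinely new ingredient compared to a plain virial argument.

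Fourth, and most importantly, you identify ruling out $\lmb(t)/(T-t)\to 0$ as ``the main obstacle'' but do not close it. The mechanism the paper uses is not a higher-order virial but rather: (i) the energy-Morawetz functional gives uniform $H^{3}$-boundedness $\|\eps\|_{\dot{\calH}_{m}^{3}}\aleq\lmb^{3}$ on finite time intervals, without any sign assumption on $b$; (ii) this spacetime/pointwise control feeds into refined modulation estimates \eqref{eq:b-hat-mod1}--\eqref{eq:b-hat-mod2}; (iii) the Grönwall inequality for $\beta/\lmb+\lmb$ then forbids this quantity from reaching zero at time $T$, forcing $\beta/\lmb\to\ell>0$. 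Your statement that $\eps\to 0$ is also too strong: $\|\eps\|_{\dot{H}_{m}^{1}}\to 0$ by nonlinear coercivity, but $\|\eps\|_{L^{2}}$ stays bounded rather than small, and what one truly needs and proves is $H^{3}$-boundedness (not smallness) of $\eps^{\sharp}$. As it stands, the heart of the theorem remains unproved in your proposal.
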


The above theorem can also be viewed as a refined soliton resolution
theorem for $H_{m}^{3}$ finite-time blow-up solutions. As a standard
application of pseudoconformal symmetry, we obtain a refined soliton
resolution theorem for arbitrary $H_{m}^{3,3}$-solutions.
\begin{cor}[Soliton resolution for equivariant $H_{m}^{3,3}$-data when $m\geq1$]
\label{cor:H33-soliton-resolution}Let $m\geq1$. Let $u$ be a $H_{m}^{3,3}$-solution
to \eqref{eq:CSS-m-equiv}.
\begin{itemize}
\item (Finite-time blow-up) If $u$ blows up forwards in time at $T<+\infty$,
then $u$ satisfies the properties as in Theorem~\ref{thm:blow-up-rigidity}.
\item (Global solutions) If $u$ exists globally forwards in time, then
either $u$ scatters forwards in time in $L^{2}$, or $u$ scatters
to $Q_{\lmb^{\ast},\gmm^{\ast}}$ forwards in time in $L^{2}$ for
some $\lmb^{\ast}\in(0,\infty)$ and $\gmm^{\ast}\in\bbR/2\pi\bbZ$.
\end{itemize}
\end{cor}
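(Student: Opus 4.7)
The plan is to deduce Corollary~\ref{cor:H33-soliton-resolution} from Theorem~\ref{thm:blow-up-rigidity} via the pseudoconformal transform $\calC$ defined in \eqref{eq:def-pseudoconf}. The key input is that $\calC$ is an involution on $m$-equivariant solutions of \eqref{eq:CSS-m-equiv} which exchanges spatial decay with temporal regularity: in particular $u \in H^{3,3}_m$ if and only if $\calC u \in H^3_m$, and a direct computation from \eqref{eq:def-pseudoconf} yields
\begin{equation*}
[\calC Q_{\ell|\cdot|,\gmm^\ast}](s,r) = Q_{\ell,\gmm^\ast}(r)\cdot e^{ir^2/(4s)},
\end{equation*}
so that $\calC$ sends the pseudoconformal blow-up profile back to the stationary soliton up to a harmless phase that vanishes in $L^2_{\loc}$ as $|s|\to\infty$. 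The finite-time blow-up bullet is then immediate from $H^{3,3}_m \hookrightarrow H^3_m$ and Theorem~\ref{thm:blow-up-rigidity}.

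For the global-in-time bullet, suppose $u \in C([0,\infty); H^{3,3}_m)$ and set $v \coloneqq \calC u \in C((-\infty,0); H^3_m)$; this is an $H^3_m$-solution of \eqref{eq:CSS-m-equiv}. By $H^3_m$-local well-posedness and the blow-up alternative, exactly one of the following holds: (i) $v$ extends to a solution in $C((-\infty,0]; H^3_m)$; or (ii) $\|v(t)\|_{H^3_m}\to\infty$ as $t\to 0^-$. In case (ii), Theorem~\ref{thm:blow-up-rigidity} applies to $v$ (which blows up forward at $T_v = 0$) and yields
\begin{equation*}
v(t) - Q_{\ell(-t),\gmm^\ast} \to z^\ast \quad\text{in } L^2 \text{ as } t\to 0^-
\end{equation*}
for some $\ell\in(0,\infty)$, $\gmm^\ast\in\bbR/2\pi\bbZ$, and $z^\ast\in L^2$. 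Applying $\calC$ termwise, the soliton piece becomes $Q_{\ell,\gmm^\ast}$ by the displayed identity, and the $\calC$-image of the stationary $L^2$-profile $z^\ast$ at large $s$ is identified with a free Schrödinger evolution via the standard asymptotic $e^{it\Delta}f\sim\tfrac{c}{t}e^{i|x|^2/(4t)}\hat f(x/(2t))$, yielding $u(s) - Q_{\ell,\gmm^\ast} - e^{is\Delta}\tilde z^\ast \to 0$ in $L^2$ as $s\to\infty$ for some $\tilde z^\ast\in L^2$; this is scattering of $u$ to the modulated soliton $Q_{\ell,\gmm^\ast}$ with $\lmb^\ast = \ell$. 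In case (i), the same pullback computation without the soliton piece gives $u(s) - e^{is\Delta}\tilde z^\ast \to 0$ in $L^2$, i.e., $u$ scatters to $0$.

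The main obstacle is not Theorem~\ref{thm:blow-up-rigidity} itself---which is the deep input---but the careful bookkeeping needed to implement the pseudoconformal correspondence in the weighted/unweighted Sobolev pair $(H^{3,3}_m, H^3_m)$: one must verify that $\calC$ intertwines $H^{3,3}_m$-continuity of $u$ with $H^3_m$-continuity of $v$ as \eqref{eq:CSS-m-equiv}-solutions (in particular tracking the nonlocal Coulomb connection $A_\theta[\cdot]$ through $\calC$), and that an $L^2$-limit of $v$ at $t=0^-$ really does encode the asymptotic free-Schrödinger profile of $u$ at $s\to\infty$. These are routine but not empty consequences of \eqref{eq:def-pseudoconf} being a symmetry of \eqref{eq:CSS-m-equiv}.
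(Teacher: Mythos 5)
Your proposal is correct and follows the same route as the paper's Remark~\ref{rem:cor-H33-soliton-res}, namely reducing via the pseudoconformal transform $\calC$ to Theorem~\ref{thm:blow-up-rigidity}; you helpfully spell out the continuation/blow-up dichotomy for $v=\calC u$ at $t=0^{-}$ and the pullback computations that the paper defers to \cite{KimKwonOh2020arXiv,KimKwonOh2022arXiv1}. One small imprecision in your wording: the phase $e^{ir^{2}/(4s)}$ does not itself vanish; rather $e^{ir^{2}/(4s)}\to 1$ pointwise, so $Q_{\ell,\gmm^{\ast}}e^{ir^{2}/(4s)}\to Q_{\ell,\gmm^{\ast}}$ in $L^{2}$ by dominated convergence, but the argument as a whole is sound.
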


In view of time reversal symmetry, the analogous statements for Theorem~\ref{thm:blow-up-rigidity}
and Corollary~\ref{cor:H33-soliton-resolution} hold for backward-in-time
evolutions.
\begin{rem}[Rigidity of long-term dynamics]
Theorem~\ref{thm:blow-up-rigidity} completely classifies the possible
dynamics of the scale and phase for $H_{m}^{3}$ finite-time blow-up
solutions. In view of pseudoconformal symmetry, even the long-term
dynamics of arbitrary $H_{m}^{3,3}$ solutions is rigid as stated
in Corollary~\ref{cor:H33-soliton-resolution}. Moreover, all the
scenarios in Corollary~\ref{cor:H33-soliton-resolution} indeed occur,
thanks to the blow-up construction in \cite{KimKwon2023AnnPDE}.

Our result might be compared to the first part of the series of works
\cite{MartelMerleRaphael2014Acta,MartelMerleRaphael2015JEMS,MartelMerleRaphael2015AnnSci}
by Martel, Merle, and Raphaël, who considered the near-soliton dynamics
for the critical gKdV equation
\[
\rd_{t}u+\rd_{x}(\rd_{xx}u+u^{5})=0,\qquad u:[0,T)\times\bbR\to\bbR.\tag{gKdV}
\]
The result can be roughly stated as follows: (i) if a $H^{1}$-solution
$u$ with strong spatial decay on the right starts from an initial
data $H^{1}$-close to the ground state $Q$, then $u$ blows up in
finite time with $\lmb(t)\sim T-t$, $u$ exits the soliton tube,\footnote{If the minimal mass blow-up solution to critical gKdV scatters backwards
in time, then $u$ scatters to zero \cite{MartelMerleRaphael2015JEMS}.} or $u$ scatters to a modulated $Q$ with some fixed scale and velocity.
(ii) Moreover, the former two regimes are stable.

We remark that there is no near-soliton assumption in Corollary~\ref{cor:H33-soliton-resolution}.
For \eqref{eq:CSS-m-equiv} with $m\geq1$, the equation however allows
complex-valued functions and has additional phase rotation invariance.
In contrast to the gKdV case, the (pseudoconformal) blow-up regime
is believed to be unstable \cite{KimKwon2023MAMS}. Thus the `scattering
to 0' regime is the only stable regime for \eqref{eq:CSS-m-equiv}.
See however Remark~\ref{rem:rotational-instability} for rotational
instability of pseudoconformal blow-up solutions, which we believe
is an interesting nonlinear dynamics that generically occur.
\end{rem}

\begin{rem}[Stabilization of scale and phase for global $H_{m}^{3,3}$-solutions]
Although the soliton resolution theorem for global non-scattering
$H_{m}^{1,1}$ solutions \cite{KimKwonOh2022arXiv1} only says that
$u(t)-Q_{\lmb(t),\gmm(t)}-e^{it\Delta}u^{\ast}\to0$ with $\lmb(t)\aleq1$,
an interesting fact from Corollary~\ref{cor:H33-soliton-resolution}
is that $\lmb(t)$ and $\gmm(t)$ \emph{stabilize} for $H_{m}^{3,3}$
solutions when $m\geq1$.

We believe that the same result might be false without spatial decay
assumptions. A similar statement in \cite{MartelMerleRaphael2014Acta}
for the critical gKdV assumes a fast spatial decay on the right of
solutions, and it is even shown in \cite{MartelMerleRaphael2015AnnSci}
that a slow spatial decay on the right might lead to both finite-time
and infinite-time blow-up. Moreover, for $2$-equivariant harmonic
map heat flow, which is closely related to $1$-equivariant \eqref{eq:CSS-m-equiv}
(see the discussion in Section~\ref{subsec:Linear-conjugation-identities}),
exotic behaviors such as infinite-time blow-up, oscillation, or relaxation
can appear for solutions having slow spatial decay \cite{GustafsonNakanishiTsai2010CMP}.
It would be interesting to construct exotic solutions having smooth
but slowly decaying data.
\end{rem}

\begin{rem}[Comparison with the mass-critical NLS]
Let us begin with some general comparisons between \eqref{eq:NLS}
and \eqref{eq:CSS-m-equiv}. As alluded to above, \eqref{eq:CSS-m-equiv}
and \eqref{eq:NLS} share almost the same conservation laws and symmetries.
Moreover, the (sub-)threshold theorem for \eqref{eq:NLS} holds the
same \cite{Killip-Tao-Visan2009JEMS,Dodson2015AdvMath,KillipLiVisanZhang2009,LiZhang2012_d_geq_2}
as in the \eqref{eq:CSS-m-equiv} case \cite{LiuSmith2016,LiLiu2022AIHP}.

However, the dynamics above threshold rather shows a drastic difference
between the \eqref{eq:NLS} and \eqref{eq:CSS-m-equiv} case. We note
that standing waves for \eqref{eq:NLS} have a nonzero speed of phase
rotation and have the form $e^{it}R(x)$ up to symmetry. The profile
$R$ solves $\Delta R-R+R^{3}=0$ or equivalently $\nabla E_{\mathrm{NLS}}[R]+\nabla M[R]=0$
with the NLS energy $E_{\mathrm{NLS}}[\psi]=\frac{1}{2}\int|\nabla\psi|^{2}-\frac{1}{4}\int|\psi|^{4}$.
Notice that $\nabla E[Q]=0$ for \eqref{eq:CSS-m-equiv}. The difference
between these soliton equations ultimately leads to differences in
underlying linearized operators and modulation dynamics.

As studied in a series of works \cite{MerleRaphael2005AnnMath,MerleRaphael2003GAFA,MerleRaphael2004InventMath,Raphael2005MathAnn,MerleRaphael2006JAMS,MerleRaphael2005CMP}
by Merle and Raphaël, \eqref{eq:NLS} has a \emph{stable }almost self-similar\emph{
}blow-up regime $\lmb(t)\sim\sqrt{2\pi(T-t)/\log|\log(T-t)|}$, called
the log-log blow-up. This regime essentially arises from negative
energy solutions and hence does not appear in the \eqref{eq:CSS-m-equiv}
case \cite{KimKwonOh2022arXiv1} (see \eqref{eq:thm1-lmb-upper-bound}).
Nevertheless, the log-log regime is formally expected for the focusing
non-self-dual CSS ($g>1$), which allow negative energy solutions.

The pseudoconformal blow-up solutions constructed in \cite{KimKwon2023MAMS,KimKwon2023AnnPDE}
are the \eqref{eq:CSS-m-equiv}-analogues of Bourgain--Wang solutions
\cite{BourgainWang1997} to \eqref{eq:NLS}. However, their instability
mechanisms, studied in \cite{MerleRaphaelSzeftel2013AJM} for \eqref{eq:NLS}
and in \cite{KimKwon2023MAMS} for \eqref{eq:CSS-m-equiv}, turn out
to be completely different. While Bourgain--Wang solutions to \eqref{eq:NLS}
arise as the border of log-log blow-up solutions and global scattering
solutions, generic perturbations of pseudoconformal blow-up solutions
to \eqref{eq:CSS-m-equiv} become global scattering solutions and
rather exhibit rotational instability; see Remark~\ref{rem:rotational-instability}
below.

To directly compare our main theorem with the \eqref{eq:NLS} case,
let us briefly recall a result in \cite{Raphael2005MathAnn}: if a
$H^{1}$-solution $\psi(t)$ to \eqref{eq:NLS} with slightly supercritical
mass (i.e., $M[\psi]-M[R]\ll1$) blows up in finite time, then either
$\psi(t)$ blows up in the log-log regime or $\lmb(t)\aleq\sqrt{E[u]}(T-t)$.
The latter regime might be compared with \eqref{eq:thm1-lmb-upper-bound}
and our Theorem~\ref{thm:blow-up-rigidity} improves \eqref{eq:thm1-lmb-upper-bound}
to the rigidity \eqref{eq:rigidity-blow-up} in the \eqref{eq:CSS-m-equiv}
case when $m\geq1$. The question whether this upper bound for \eqref{eq:NLS}
can be replaced by the rigidity \eqref{eq:rigidity-blow-up} for $\lmb(t)$
or not is a long-standing open problem.\footnote{We do not expect the convergence of the phase $\gmm(t)$ for \eqref{eq:NLS}
because Bourgain--Wang solutions do not have this property.}
\end{rem}

\begin{rem}[Structure of the asymptotic profile]
\label{rem:structure-of-asymp-prof}Theorem~\ref{thm:blow-up-rigidity}
provides a more precise understanding of the structure of $z^{\ast}$
for $H_{m}^{3}$ finite-time blow-up solutions compared to the $H^{1}$-regularity
of $z^{\ast}$ for $H_{m}^{1}$ blow-up solutions of Theorem~\ref{thm:asymptotic-description}.
We note that the choice of $\chi$ itself is not important; only the
$r\to0$ behavior of $z^{\ast}$ has meaning.

When $m\in\{1,2\}$, $z^{\ast}(r)$ always has a nontrivial singular
part $z_{\sg}^{\ast}$. Note that this is not a contradiction to the
existence of the minimal mass blow-up solution $S(t)$ (which has
zero asymptotic profile) because $S(t)\notin H_{m}^{3}$ when $m\in\{1,2\}$.
We may also write $z^{\ast}\notin H_{-m-2}^{3}$ when $m\in\{1,2\}$,\footnote{Due to the non-locality of \eqref{eq:CSS-m-equiv}, the dynamics outside
of the soliton region is similar to the dynamics of $-(m+2)$-equivariant
\eqref{eq:CSS-m-equiv}, where $-(m+2)=m+A_{\tht}[Q](+\infty)$. See
\cite{KimKwon2023MAMS,KimKwonOh2022arXiv1}.} thus the asymptotic profile does not necessarily have the same regularity
as the solution $u(t)$. Such an observation is already well-known
in various contexts and the regularity of the asymptotic profile is
closely related to the blow-up speed; we refer to the discussions
in \cite{MerleRaphael2005CMP,RaphaelRodnianski2012Publ.Math.,MerleRaphaelRodnianski2013InventMath,MartelMerleRaphael2014Acta,JendrejLawrieRodriguez2022ASENS}.

An interesting fact in Theorem~\ref{thm:blow-up-rigidity} is that
the singular part of $z^{\ast}$ has a universal structure \eqref{eq:z-ast-near-orig-asym}.
See also \cite{MerleRaphael2005CMP,MartelMerleRaphael2014Acta}. In
particular, an asymptotic profile of the form $z^{\ast}(r)\approx qr^{\nu}$
with $\nu\in(1,2)$ cannot be achieved from $H_{m}^{3}$ finite-time
blow-up solutions if $m\geq1$. It would be very interesting to fully
understand the singular structure of the asymptotic profile arising
from smooth initial data (say $H_{m}^{\infty}$).

Our decomposition of $z^{\ast}$ is tied to the decomposition $u=P^{\sharp}+\eps^{\sharp}$
of the full solution $u(t)$ to be used in our modulation analysis.
See Remark~\ref{rem:structure-of-full-sol}.
\end{rem}

\begin{rem}[Structure of the full solution]
\label{rem:structure-of-full-sol}The key observation of this paper
is that we can realize the decomposition $u=P^{\sharp}+\eps^{\sharp}$
(see \eqref{eq:str-dec} below) such that the evolution of the singular,
blow-up part $P^{\sharp}$ is governed by a universal modulation dynamics
while \emph{the regular part $\eps^{\sharp}$ remains $H^{3}$-bounded}
(see also \eqref{eq:str-bdd}) up to the blow-up time. In the limit
$t\to T$, the regular part $\eps^{\sharp}$ converges to $z_{\rg}^{\ast}$
and the blow-up part $P^{\sharp}$ detects $z_{\sg}^{\ast}$, leading
to the structure of $z^{\ast}$.

The $H^{3}$-boundedness of $\eps^{\sharp}$ is not limited to finite-time
blow-up solutions. For any $H_{m}^{3}$-solutions $u$ lying in a
neighborhood of modulated solitons (i.e., $Q_{\lmb,\gmm}$), we show
in Corollary~\ref{cor:integ-monot} that the $H^{3}$-norm of $\eps^{\sharp}$
cannot blow up in finite time. In this sense, we essentially prove
that the regular parts $\eps^{\sharp}$ of nearby solutions are \emph{uniformly}
$H^{3}$-bounded on any fixed finite time interval.

To prove the $H^{3}$-boundedness, we not only refine the modified
profile construction but also introduce a new energy-Morawetz functional
to control $\eps^{\sharp}$ forwards in time. Several novelties regarding
the proof are explained in the strategy below (Section~\ref{subsec:Strat-Proof}).
\end{rem}

\begin{rem}[Other equivariance]
When $m<0$, there is no $H^{1}$ finite-time blow-up as in Theorem~\ref{thm:asymptotic-description},
so the $m=0$ case is the interesting one to consider.

When $m=0$, there are no finite energy pseudoconformal blow-up solutions
\cite{KimKwonOh2022arXiv1} (or see \eqref{eq:thm1-lmb-upper-bound-radial}).
Recall that $S(t)$ has infinite energy when $m=0$ due to the slow
spatial decay of $Q$. Instead, the authors in \cite{KimKwonOh2020arXiv}
construct smooth (finite energy) finite-time blow-up solutions with
the blow-up rate $\lmb(t)\sim(T-t)/|\log(T-t)|^{2}$.

We do not even expect the uniqueness of blow-up rates for solutions
with smooth data. In view of the blow-up constructions for example
in \cite{RaphaelSchweyer2014AnalPDE,HadzicRaphael2019JEMS,CollotGhoulMasmoudiNguyen2022CPAM}
for some critical parabolic equations and in \cite{MerleRaphaelRodnianski2015CambJMath,Collot2018MemAMS}
for some energy-supercritical dispersive equations, we expect that
(countably) infinitely many blow-up rates of the form $\{(T-t)^{c_{\ell}}/|\log(T-t)|^{d_{\ell}}\}_{\ell\in\bbN}$
can arise from smooth initial data. Identifying such admissible blow-up
rates and proving rigidity in this context are of great interest.
Note that rigidity of this type is obtained for some energy-supercritical
semi-linear heat equations within radial symmetry \cite{Mizoguchi2007MathAnn}
using the intersection comparison argument, which is not available
for dispersive equations.

If one further allows rougher blow-up solutions, then the blow-up
dynamics is no more rigid. In fact, there is a continuum of possible
blow-up rates for $H_{0}^{1}$-solutions \cite{KimKwonOh2022arXiv2};
for any $p>1$ there is a $H_{0}^{1}$ finite-time blow-up solution
such that $\lmb(t)\sim|t|^{p}/|\log|t||$. Let us note that (see Section~\ref{subsec:Linear-conjugation-identities})
the dynamics of $m$-equivariant \eqref{eq:CSS-m-equiv} is closely
related to the dynamics of $(m+1)$-equivariant wave maps and Schrödinger
maps and such a continuum of blow-up rates have already been constructed
in these equations within $1$-equivariance \cite{KriegerSchlagTataru2008Invent,Perelman2014CMP,JendrejLawrieRodriguez2022ASENS}.

We finally note that the stabilization of scale for global-in-time
solutions is \emph{always} false for solutions with $H_{0}^{1,1}$
initial data. Indeed, it is proved in \cite{KimKwonOh2022arXiv1}
that $\lmb(t)\aleq(\log t)^{-1/2}$. This does not contradict the
existence of the static solution $Q$ because $Q$ does not belong
to $H_{0}^{1,1}$ when $m=0$.
\end{rem}

\begin{rem}[Rotational instability]
\label{rem:rotational-instability}By Theorem~\ref{thm:blow-up-rigidity},
(when $m\geq1$) pseudoconformal blow-up is the only blow-up regime
allowed for smooth initial data but it is in fact non-generic and
unstable. Thus the dynamics of solutions near these blow-up solutions
is generic and of interest.

In \cite{KimKwon2023MAMS}, the authors constructed a curve of solutions
$\{u^{(\eta)}\}_{\eta\in[0,\eta^{\ast}]}$ such that the scale and
phase parameters approximately evolve as \eqref{eq:formal-sol}. In
other words, while $u^{(0)}$ is a pseudoconformal blow-up solution,
$u^{(\eta)}$ with $\eta\neq0$ concentrates only up to the spatial
scale $|\eta|$, then exhibits a quick phase rotation by the fixed
angle $\mathrm{sgn}(\eta)(m+1)\pi$ within a time interval of length
$\sim|\eta|$, and then spreads out like a backward-in-time evolution
of pseudoconformal blow-up solutions (\emph{rotational instability}).
On the other hand, a codimension one set of initial data leading to
pseudoconformal blow-up is constructed in \cite{KimKwon2023AnnPDE}.
These two observations naturally suggest the following \emph{rotational
instability conjecture}: there is a codimension one manifold $\calM$
of smooth initial data $u_{0}$ such that (i) if $u_{0}\in\calM$,
then $u(t)$ blows up in finite time (and hence is a pseudoconformal
blow-up) and (ii) if $u_{0}\notin\calM$ but is close to $\calM$,
then $u(t)$ does not blow up and rather exhibits rotational instability.

Rotational instability seems to be not limited to \eqref{eq:CSS-m-equiv}
but rather \emph{universal}. The work \cite{BergWilliams2013} provides
some formal computations and numerical simulations for a quick spatial
rotation by the angle $\pi$ for the $1$-equivariant Landau--Lifschitz--Gilbert
equation. See also \cite{MerleRaphaelRodnianski2013InventMath}. The
same is expected for the critical wave maps. We finally note that
the $H^{3}$-boundedness of the regular part of solution as in Remark~\ref{rem:structure-of-full-sol}
not only holds for finite-time blow-up solutions but also for nearby
solutions. We believe that this observation plays a crucial role for
the resolution of the rotational instability conjecture.
\end{rem}

\subsection{\label{subsec:Strat-Proof}Strategy of the proof}

We use the notation collected in Section~\ref{subsec:Notation}.
As Corollary~\ref{cor:H33-soliton-resolution} is a direct consequence
of pseudoconformal symmetry and Theorem~\ref{thm:blow-up-rigidity},
we only sketch the proof of Theorem~\ref{thm:blow-up-rigidity}.

\medskip{}

Let $u$ be a $H_{m}^{3}$-solution to \eqref{eq:CSS-m-equiv} that
blows up forwards in time at $T<+\infty$. Recall from \cite{KimKwonOh2022arXiv1}
(or Remark~\ref{rem:blow-up-vicinity} of this paper) that after
renormalization $u(t)$ eventually enters the $\dot{H}_{m}^{1}$-vicinity
of the soliton: 
\[
\inf_{\gmm\in\bbR/2\pi\bbZ}\big\|\wh{\lmb}(t)e^{-i\gmm}u(t,\wh{\lmb}(t)\cdot)-Q\big\|_{\dot{H}_{m}^{1}}\to0\quad\text{ as }\quad t\to T,
\]
where $\wh{\lmb}(t)=\|Q\|_{\dot{H}_{m}^{1}}/\|u(t)\|_{\dot{H}_{m}^{1}}$.
We note that there is no $L^{2}$-closeness because we allow $u$
to have arbitrarily large mass.

We study the near-soliton dynamics of \eqref{eq:CSS-m-equiv} using
modulation analysis. As in the previous work \cite{KimKwon2023AnnPDE},
we will write $u(t)$ in the form 
\begin{equation}
u(t,r)=\frac{e^{i\gmm(t)}}{\lmb(t)}[P(\cdot;b(t),\eta(t))+\eps(t,\cdot)]\Big(\frac{r}{\lmb(t)}\Big)\eqqcolon[P+\eps]^{\sharp}(t,r)\label{eq:str-dec}
\end{equation}
for some additional modulation parameters $b,\eta$ and some modified
profile $P=P(y;b,\eta)$ with $P(\cdot;0,0)=Q$. Note that the finite-time
blow-up assumption only says that $\lmb(t)\to0$ as $t\to T$.

Our key ingredient for the proof of the refined descriptions in Theorem~\ref{thm:blow-up-rigidity}
will be the proof of \emph{$H^{3}$-boundedness of $\eps^{\sharp}(t)$
up to the blow-up time}. To be more precise, we will in fact prove
the boundedness of $\eps^{\sharp}(t)$ in a stronger topology, which
in particular guarantees the boundedness of (see \eqref{eq:def-k-derivs}
for the notation $|\cdot|_{-3}$)
\begin{equation}
\big\|\chf_{r\geq\lmb(t)}|\eps^{\sharp}(t)|_{-3}\big\|_{L^{2}}.\label{eq:str-bdd}
\end{equation}
Taking the limit $t\to T$ gives the regularity \eqref{eq:z-ast-reg-pt}
of $z_{\rg}^{\ast}$. Note that the modified profile $P$ should also
be chosen carefully because we a posteriori know from Theorem~\ref{thm:blow-up-rigidity}
that $P^{\sharp}$ should detect the remaining part of $z^{\ast}$,
namely, $z_{\sg}^{\ast}$ in the limit $t\to T$. We also note that
the $H^{3}$-boundedness of $\eps^{\sharp}$ implies the rigidity
of blow-up rates; see Step~5 of this strategy.

As can be seen in the strategy below, our approach shares a similar
spirit with the forward construction of blow-up dynamics as in \cite{RodnianskiSterbenz2010Ann.Math.,RaphaelRodnianski2012Publ.Math.,MerleRaphaelRodnianski2013InventMath,MartelMerleRaphael2014Acta,MerleRaphaelRodnianski2015CambJMath}
and for \eqref{eq:CSS-m-equiv}, in \cite{KimKwon2023AnnPDE,KimKwonOh2020arXiv}
($m\geq1$ and $m=0$, respectively). However, we need further improvements
in all parts of the proof compared to the blow-up construction in
\cite{KimKwon2023AnnPDE}. The main reason is that, since we are dealing
with arbitrary finite-time blow-up solutions, we \emph{cannot assume
dynamical controls} such as 
\begin{equation}
b>0\qquad\text{and}\qquad|\eta|\ll b\label{eq:old-dyn-control-1}
\end{equation}
(meaning that $\lmb$ is shrinking and the instability mode is turned
off, respectively), which can be safely assumed in the blow-up construction
of \cite{KimKwon2023AnnPDE}. We also note that the $H^{3}$-boundedness
of $\eps^{\sharp}(t)$ is not even necessary there. In this sense,
our analysis here focuses more on the general near-soliton dynamics
as in \cite{MartelMerleRaphael2014Acta}. See also \cite{MerleRaphael2005AnnMath,Raphael2005MathAnn,RaphaelSzeftel2011JAMS}
for the mass-critical NLS.

\medskip{}
\emph{1. Conjugated variables and setup for the modulation analysis.}
As in \cite{KimKwonOh2020arXiv}, we view \eqref{eq:CSS-m-equiv}
as a system of equations for nonlinearly conjugated variables $u$,
$u_{1}\coloneqq\bfD_{u}u$, and $u_{2}\coloneqq A_{u}\bfD_{u}u$,
where $A_{u}=\bfD_{u}-\frac{1}{r}$. Being $u_{1}$ and $u_{2}$ naturally
defined (see \eqref{eq:cov-conj-identification} below), the evolution
equations \eqref{eq:u1-eqn} and \eqref{eq:u2-eqn} for $u_{1}$ and
$u_{2}$ still have simple forms.

For the modulation analysis, we will decompose $u,u_{1},u_{2}$ of
the form 
\begin{equation}
\left\{ \begin{aligned}u(t,r) & =\frac{e^{i\gmm(t)}}{\lmb(t)}[P(\cdot;b(t),\eta(t))+\eps(t,\cdot)]\Big(\frac{r}{\lmb(t)}\Big),\\
u_{1}(t,r) & =\frac{e^{i\gmm(t)}}{\lmb^{2}(t)}[P_{1}(\cdot;b(t),\eta(t))+\eps_{1}(t,\cdot)]\Big(\frac{r}{\lmb(t)}\Big),\\
u_{2}(t,r) & =\frac{e^{i\gmm(t)}}{\lmb^{3}(t)}[P_{2}(\cdot;b(t),\eta(t))+\eps_{2}(t,\cdot)]\Big(\frac{r}{\lmb(t)}\Big),
\end{aligned}
\right.\label{eq:str-decom}
\end{equation}
for some modified profiles $P,P_{1},P_{2}$ to be constructed. Four
orthogonality conditions are imposed either on $\eps$ or $\eps_{1}$
to fix the decomposition. To analyze the flow, we introduce the renormalized
coordinates $(s,y)$ as well as the renormalized variable $w$: 
\[
\frac{ds}{dt}=\frac{1}{\lmb^{2}(t)},\quad y=\frac{r}{\lmb(t)},\quad w(s,y)=\lmb e^{-i\gmm}u(t,\lmb y)|_{t=t(s)}.
\]
The associated conjugated variables are $w_{1}\coloneqq\bfD_{w}w$
and $w_{2}\coloneqq A_{w}\bfD_{w}w$. The system of evolution equations
for $w,w_{1},w_{2}$ can be found in \eqref{eq:w-eqn}--\eqref{eq:w2-eqn}.

One notable advantage of using the system of equations for the nonlinearly
conjugated variables $w,w_{1},w_{2}$ is that we can separate the
roles of each evolution equation. For example, the $w$-equation will
only be used to detect the modulation equations for $\lmb$ and $\gmm$,
which allows simpler (or, rougher) ansatz for $P$ compared to $P_{1}$
or $P_{2}$. The $w_{1}$-equation will be used to detect the modulation
equations for $b$ and $\eta$. In fact, these equations become more
apparent in the $w_{1}$-equation rather than the $w$-equation itself,
allowing the authors in \cite{KimKwonOh2020arXiv} to capture the
crucial logarithmic corrections in the $b_{s}$- and $\eta_{s}$-equations.
Finally, the $w_{2}$-equation will be used in the final energy estimates
for the remainder $\eps$. The use of the $w_{2}$-equation not only
simplifies the structure of the nonlinearity but it is also in the
$w_{2}$-equation that the linearized Hamiltonian $\td H_{Q}$ has
a \emph{repulsive }property (see Section~\ref{subsec:Linear-conjugation-identities}).
This enables the authors in \cite{KimKwonOh2020arXiv} to perform
the modified energy method for the variable $\eps_{2}$. We refer
to \cite{KimKwonOh2020arXiv} for more discussions on this approach.

\medskip{}
\emph{2. Construction of the modified profiles.} In \cite{KimKwon2023MAMS,KimKwon2023AnnPDE},
when $m\geq1$, the authors introduced a modified profile, which is
constructed in a nonlinear way and suggests the formal modulation
equations 
\begin{equation}
\frac{\lmb_{s}}{\lmb}+b=0,\quad\gmm_{s}\approx(m+1)\eta,\quad b_{s}+b^{2}+\eta^{2}=0,\quad\eta_{s}=0.\label{eq:str-old-mod-eqn}
\end{equation}
The $\eta=0$ case corresponds to pseudoconformal blow-up and its
instability mechanism can be dictated by varying $\eta$; see Section~\ref{subsec:Old-profile-ansatz}
for more details.

However, when $m\in\{1,2\}$, that profile turns out to be insufficient
to address the $H^{3}$-boundedness of $\eps^{\sharp}$ and we need
a new profile ansatz. We perform the tail computations as in \cite{RaphaelRodnianski2012Publ.Math.,MerleRaphaelRodnianski2013InventMath,MerleRaphaelRodnianski2015CambJMath}.
It turns out that when $m=1$, there are nontrivial \emph{cubic corrections}
to the $b_{s}$- and $\eta_{s}$-modulation equations, say
\[
b_{s}+b^{2}+\eta^{2}+p_{3}^{(b)}=0\qquad\text{and}\qquad\eta_{s}+p_{3}^{(\eta)}=0,
\]
and the new profiles deviate from the old ones at the cubic order.
When $m=2$, the same happens at the quartic order.

Another interesting observation is that \emph{the cutoff radius must
be 
\begin{equation}
y\sim B_{1}\coloneqq(b^{2}+\eta^{2})^{-\frac{1}{2}},\label{eq:str-cutoff-radi}
\end{equation}
which corresponds to $r\sim1$} in the original space variable $r$.
Moreover, in the original spacetime variables, our modified profile
\emph{$P^{\sharp}(t)$ converges to the singular part $z_{\sg}^{\ast}$
as $t\to T$.} This is a sharp contrast to the use of almost self-similar
scales typically employed in many of the prior blow-up constructions
such as \cite{RaphaelRodnianski2012Publ.Math.,MerleRaphaelRodnianski2013InventMath,MerleRaphaelRodnianski2015CambJMath,KimKwon2023AnnPDE}.

\medskip{}
\emph{3. Key a priori bounds from nonlinear coercivity of energy.}
Despite the lack of dynamical controls \eqref{eq:old-dyn-control-1},
we are able to obtain the following \emph{size controls}:
\begin{equation}
\|\eps\|_{L^{2}}\aleq_{M[u]}1\qquad\text{and}\qquad|b|+|\eta|+\|\eps\|_{\dot{H}_{m}^{1}}\aleq_{M[u]}\lmb\sqrt{E[u]}\eqqcolon\mu.\label{eq:size-control-1}
\end{equation}
The former estimate is clear from mass conservation, but the latter,
which we call the \emph{nonlinear coercivity of energy}, is not obvious.
In a simpler context where $b$ and $\eta$ are absent, this estimate
was proved in \cite{KimKwonOh2022arXiv1} as a remarkable consequence
of the \emph{self-duality} and \emph{non-locality} of our problem,
which are two distinguished features of \eqref{eq:CSS-m-equiv}. We
refer to \cite{KimKwonOh2022arXiv1} or Remark~\ref{rem:on-quant-small}
below for more explanations. Recalling its proof in \cite{KimKwonOh2022arXiv1},
it is not difficult to extend the nonlinear coercivity in the presence
of $b$- and $\eta$-modulations. See Lemma~\ref{lem:NonlinCoerEnergy}
below.

\medskip{}
\emph{4. Energy-Morawetz functional and uniform $H^{3}$-bound.} We
propagate the $H^{3}$-boundedness of $\eps^{\sharp}$, which is the
heart of the proof of Theorem~\ref{thm:blow-up-rigidity}. The basic
strategy will be the \emph{energy method with repulsivity}, the idea
being that the non-perturbative but localized error terms appearing
in the energy estimates can be handled by the monotonicity from repulsivity.
This strategy has successfully addressed many of the forward constructions
of blow-up dynamics, for example in \cite{RodnianskiSterbenz2010Ann.Math.,RaphaelRodnianski2012Publ.Math.,MerleRaphaelRodnianski2013InventMath,MerleRaphaelRodnianski2015CambJMath}
and for \eqref{eq:CSS-m-equiv} in \cite{KimKwon2023AnnPDE,KimKwonOh2020arXiv}.

Again, the difficulty is that we cannot assume dynamical controls
such as \eqref{eq:old-dyn-control-1}. However, we have the \emph{size
control} \eqref{eq:size-control-1} and our main assertion will be
that this is sufficient to propagate the $H^{3}$-boundedness of $\eps^{\sharp}$.
We should mention here that in their work \cite{MartelMerleRaphael2014Acta}
on the critical gKdV, the authors were able to design monotonicity
formulas that are sufficient to conclude the rigidity of the near-soliton
dynamics even without the size control \eqref{eq:size-control-1}.
Instead, they used the local virial estimate and properties specific
(e.g., Kato-type smoothing) to gKdV, where the latter seems difficult
to be implemented in the Schrödinger setting.

We propose a new energy functional $\calF$ that propagates the control
$\|\eps^{\sharp}\|_{\dot{H}^{3}\cap\dot{H}^{2}}\aleq1$ on any finite
time intervals only under the size control \eqref{eq:size-control-1}.
As mentioned before, we look at the $\eps_{2}$-variable, whose associated
linearized Hamiltonian $\td H_{Q}$ has a \emph{repulsive} character.
The functional $\calF$ takes the form 
\begin{equation}
\calF=\frac{(\eps_{2},\td H_{Q}\eps_{2})_{r}}{\mu^{6}}-A\frac{(i\eps_{2},\psi'\rd_{y}\eps_{2})_{r}}{\mu^{5}}+A^{2}\frac{\|\eps_{2}\|_{L^{2}}^{2}}{\mu^{4}}\label{eq:str-def-H3en}
\end{equation}
for some large constant $A>1$ and some well-chosen function $\psi'=\psi'(y)$
of Morawetz type (see Lemma~\ref{lem:Morawetz}). Note that $\calF$
is coercive in the sense that $\calF\aleq1$ implies $\|\eps_{2}\|_{\dot{H}^{1}}\aleq\mu^{3}$
and $\|\eps_{2}\|_{L^{2}}\aleq\mu^{2}$, which in turn give $\|\eps\|_{\dot{H}^{3}}\aleq\mu^{3}$
and $\|\eps\|_{\dot{H}^{2}}\aleq\mu^{2}$ using coercivity estimates.
By scaling, we have the $H^{3}$-boundedness $\|\eps^{\sharp}\|_{\dot{H}^{3}\cap\dot{H}^{2}}\aleq1$.
Thus our goal is to propagate the control $\calF\aleq1$.

Propagation of the control $\calF\aleq1$ in this work crucially utilizes
the assumption $m\geq1$ as well as the size control \eqref{eq:size-control-1}.
Indeed, the \emph{Morawetz type repulsivity} (Lemma~\ref{lem:Morawetz}),
i.e., the positivity of the commutator $[\td H_{Q},\Lmb_{\psi}]$
for some choice of $0\leq\psi'\leq1$, where $\Lmb_{\psi}=\psi'\rd_{y}+\frac{\Delta\psi}{2}$,
heavily relies on $m\geq1$. Moreover, $|b|+|\eta|\aleq\mu$ of \eqref{eq:size-control-1}
is used to dominate the non-perturbative (but localized) error terms
arising in $\rd_{s}\{(\eps_{2},\td H_{Q}\eps_{2})_{r}/\mu^{6}\}$
by the Morawetz monotonicity from $\rd_{s}\{(i\eps_{2},\psi'\rd_{y}\eps_{2})_{r}/\mu^{5}\}$.
Furthermore, $\|\eps\|_{\dot{H}_{m}^{1}}\aleq\mu$ of \eqref{eq:size-control-1}
is used to make pure $\eps$-terms perturbative; the contribution
of such pure $\eps$-terms arises in the form $\rd_{s}\calF\aleq\mu^{2}(1+\calF)$,
which gives $\rd_{t}\calF\aleq1+\calF$, which in turn yields $\calF(t)\aleq1+\calF(t_{0})$
on any finite time interval by Grönwall's inequality. More technical
aspects of this energy estimate will be explained in Section~\ref{subsec:Energy-Morawetz}.

As a result of the boundedness of $\calF$, we roughly have 
\begin{equation}
\|\eps\|_{\dot{H}^{k}}\aleq\lmb^{k},\qquad0\leq k\leq3.\label{eq:str-unif-H3}
\end{equation}

\medskip{}
\emph{5. Rigidity of the blow-up rates.} In order to prove rigidity
of the blow-up rate, an important role is played by the quantity 
\begin{equation}
\frac{\beta^{2}}{\lmb^{2}}=\frac{b^{2}+\eta^{2}}{\lmb^{2}}.\label{eq:str-formal-cons}
\end{equation}
This corresponds to the energy of the blow-up part $P^{\sharp}$ and
is conserved under the formal modulation ODEs \eqref{eq:str-old-mod-eqn}.
Note that by \eqref{eq:size-control-1} we only know $\frac{\beta}{\lmb}\aleq1$
at this stage. Our goal is to show the reverse inequality. In fact,
we show that 
\begin{equation}
\frac{\beta}{\lmb}\to\ell\neq0\qquad\text{as }t\to T.\label{eq:str-nonzero-beta}
\end{equation}
We note that $\ell\neq0$ is important because if $\ell=0$ one can
consider an exotic blow-up regime $\lmb(t)\sim(T-t)^{\nu}$ and $b(t)\sim(T-t)^{2\nu-1}$
with $\nu>1$. We also note that energy conservation of the full solution
$u(t)$ is useless because the $\eps^{\sharp}$-part is nontrivial.\footnote{In \cite{RaphaelSzeftel2011JAMS}, where the authors consider minimal
mass blow-up solutions, \eqref{eq:str-nonzero-beta} easily follows
from energy conservation and the improved smallness $\|\eps^{\sharp}\|_{H^{1}}\to0$.
Let us note however that this smallness estimate is nontrivial. Nevertheless,
$\|\eps^{\sharp}\|_{H^{1}}\to0$ does not hold in our context.} Instead, we look at the evolution of the quantity $\frac{\beta}{\lmb}+\lmb$
and obtain \eqref{eq:rmk-diff-ineq}, i.e., 
\[
\Big(\frac{\beta}{\lmb}+\lmb\Big)_{t}\aleq\Big(1+\frac{\|\eps\|_{\loc}^{2}}{\lmb^{4}}\Big)\Big(\frac{\beta}{\lmb}+\lmb\Big)
\]
for some spatially localized norm $\|\cdot\|_{\loc}$. By choosing
$\|\cdot\|_{\loc}=\|\cdot\|_{\dot{H}^{3}}$ and applying the uniform
$H^{3}$-bound \eqref{eq:str-unif-H3}, we obtain $(\frac{\beta}{\lmb}+\lmb)_{t}\aleq\frac{\beta}{\lmb}+\lmb$.
By Grönwall's inequality $\frac{\beta}{\lmb}+\lmb$ cannot converge
to zero in finite time. See also Remark~\ref{rem:H3/2-rigidity}.
Since $\lmb\to0$ on the other hand, we obtain \eqref{eq:str-nonzero-beta}.
Once we have $\frac{\beta}{\lmb}\to\ell\neq0$ at hand, it is not
difficult to conclude $\lmb(t)\approx\ell\cdot(T-t)$ and $\gmm(t)\to\gmm^{\ast}$.
Let us finally note that there is a small technical issue when $m$
is small and we need to introduce corrections $\wh b,\wh{\eta}$ of
$b,\eta$ to run the previous argument.

\medskip{}
\emph{6. Structure of the asymptotic profile.} As the existence of
the asymptotic profile $z^{\ast}$ is a consequence of Theorem~\ref{thm:asymptotic-description},
we focus on the structure of $z^{\ast}$. When $m=1$, the quadratic
term of $P^{\sharp}$ resembles $[\chi_{B_{1}}b^{2}y]^{\sharp}=\chi_{\lmb/\beta}(r)(\frac{b}{\lmb})^{2}r$,
which gives a nontrivial singular part $z_{\sg}^{\ast}$ in the limit
$t\to T$ by the choice \eqref{eq:str-cutoff-radi} of the cutoff
and \eqref{eq:str-nonzero-beta}. Similarly, when $m=2$ the cubic
term of $P^{\sharp}$ converges to $z_{\sg}^{\ast}$. The bound \eqref{eq:z-ast-reg-pt}
follows from taking the $t\to T$ limit of the $H^{3}$-boundedness
\eqref{eq:str-bdd}.

\medskip{}

\noindent \mbox{\textbf{Organization of the paper.} }In Section~\ref{sec:Preliminaries},
we collect notation and preliminaries for our analysis. In Section~\ref{sec:Modified-profiles},
we construct the modified profiles $P,P_{1},P_{2}$. In Section~\ref{sec:Modulation-analysis},
we perform modulation analysis of the near soliton dynamics and prove
the $H^{3}$-boundedness of $\eps^{\sharp}$, which is the heart of
the paper. Finally in Section~\ref{sec:Proof-of-the-main-thm}, we
prove Theorem~\ref{thm:blow-up-rigidity}. For the proof of Corollary~\ref{cor:H33-soliton-resolution},
see Remark~\ref{rem:cor-H33-soliton-res}.

\medskip{}

\noindent \mbox{\textbf{Acknowledgements.} }The author is supported
by  Huawei Young Talents Programme at IHES.

\section{\label{sec:Preliminaries}Preliminaries}

\subsection{\label{subsec:Notation}Notation}

For $A\in\bbC$ and $B\geq0$, we use the standard asymptotic notation
$A\aleq B$ or $A=O(B)$ if there is a constant $C>0$ such that $|A|\leq CB$.
$A\sim B$ means that $A\aleq B$ and $B\aleq A$. The dependencies
of $C$ are specified by subscripts, e.g., $A\aleq_{E}B\Leftrightarrow A=O_{E}(B)\Leftrightarrow|A|\leq C(E)B$.
In this paper, any dependencies on the equivariance index $m$ will
be omitted.

We also use the small $o$-notation such as $o_{\alpha^{\ast}\to0}(1)$
or $o_{t\to T}(1)$, which denotes a quantity that can be made arbitrarily
small by taking $\alpha^{\ast}\to0$ or $t\to T$, respectively. Similarly,
we use the notation $\delta_{M}(\alpha^{\ast})$ to denote a quantity
that goes to $0$ as $\alpha^{\ast}\to0$ but may depend on $M$ (thus
may not be uniform in $M$).

We also use the notation $\langle x\rangle$, $\log_{+}x$, $\log_{-}x$
defined by 
\[
\langle x\rangle\coloneqq(|x|^{2}+1)^{\frac{1}{2}},\quad\log_{+}x\coloneqq\max\{\log x,0\},\quad\log_{-}x\coloneqq\max\{-\log x,0\}.
\]
We let $\chi=\chi(x)$ be a smooth spherically symmetric cutoff function
such that $\chi(x)=1$ for $|x|\leq1$ and $\chi(x)=0$ for $|x|\geq2$.
For $A>0$, we define its rescaled version by $\chi_{A}(x)\coloneqq\chi(x/A)$.
We also define $\chi_{\ageq A}\coloneqq1-\chi_{A}$.

We mainly work with equivariant functions on $\bbR^{2}$, say $\phi:\bbR^{2}\to\bbC$,
or equivalently their radial part $u:\bbR_{+}\to\bbC$ with $\phi(x)=u(r)e^{im\theta}$,
where $\bbR_{+}\coloneqq(0,\infty)$ and $x_{1}+ix_{2}=re^{i\theta}$.
We denote by $\rd_{+}=\rd_{1}+i\rd_{2}$ the Cauchy--Riemann operator,
which acts on $m$-equivariant function via the formula 
\begin{equation}
\rd_{+}[u(r)e^{im\theta}]=[(\rd_{r}-\tfrac{m}{r})u](r)e^{i(m+1)\theta}.\label{eq:def-euc-CR}
\end{equation}
We denote by $\Delta^{(m)}=\rd_{rr}+\tfrac{1}{r}\rd_{r}-\tfrac{m^{2}}{r^{2}}$
the Laplacian acting on $m$-equivariant functions.

The integral symbol $\int$ means 
\[
\int=\int_{\bbR^{2}}\,dx=2\pi\int\,rdr.
\]
For complex-valued functions $f$ and $g$, we define their \emph{real}
inner product by 
\[
(f,g)_{r}\coloneqq\int\Re(\br fg).
\]
For a real-valued functional $F$ and a function $u$, we denote by
$\nabla F[u]$ the \emph{functional derivative} of $F$ at $u$ under
this real inner product.

For $s\in\bbR$, denote by $\Lmb_{s}$ the generator of $\dot{H}^{s}$-scaling:
\begin{align*}
\Lmb_{s} & \coloneqq r\rd_{r}+1-s,\\
\Lmb & \coloneqq\Lmb_{0}=r\rd_{r}+1.
\end{align*}
Note that $\Lmb$ is formally anti-symmetric. We also use a truncated
version of $\Lmb$; for a spherically symmetric function $\psi=\psi(x)$
on $\bbR^{2}$, we define the operator
\begin{equation}
\Lmb_{\psi}\coloneqq\psi'\rd_{r}+\frac{\Delta\psi}{2},\label{eq:def-Lmb-psi}
\end{equation}
where $\psi'=\rd_{r}\psi$ and $\Delta\psi=\rd_{rr}\psi+\frac{1}{r}\rd_{r}\psi$.
Note that $\Lmb_{\psi}$ is formally anti-symmetric.

For $k\in\bbN$, we define 
\begin{equation}
\begin{aligned}|f|_{k} & \coloneqq\max\{|f|,|r\rd_{r}f|,\dots,|(r\rd_{r})^{k}f|\},\\
|f|_{-k} & \coloneqq\max\{|\rd_{r}^{k}f|,|\tfrac{1}{r}\rd_{r}^{k-1}f|,\dots,|\tfrac{1}{r^{k}}f|\}.
\end{aligned}
\label{eq:def-k-derivs}
\end{equation}
We note that $|f|_{k}\sim r^{k}|f|_{-k}$. The following Leibniz rules
hold: 
\[
|fg|_{k}\aleq|f|_{k}|g|_{k},\qquad|fg|_{-k}\aleq|f|_{k}|g|_{-k}.
\]

Given a scaling parameter $\lmb\in\bbR_{+}$, phase rotation parameter
$\gmm\in\bbR/2\pi\bbZ$, and a function $f$, we write 
\[
f_{\lmb,\gmm}(r)\coloneqq\frac{e^{i\gmm}}{\lmb}f\Big(\frac{r}{\lmb}\Big).
\]
When $\lmb$ and $\gmm$ are clear from the context, we will also
denote the above by $f^{\sharp}$ as in \cite{KimKwon2023MAMS}, i.e.,
\[
f^{\sharp}\coloneqq f_{\lmb,\gmm}.
\]
Similarly, we define the $\flat$-operation as 
\[
g^{\flat}\coloneqq g_{1/\lmb,-\gmm}.
\]
When a time-dependent scaling parameter $\lmb(t)$ is given, we define
rescaled spacetime variables $s,y$ by the relations 
\begin{equation}
\frac{ds}{dt}=\frac{1}{\lmb^{2}(t)}\qquad\text{and}\qquad y=\frac{r}{\lmb(t)}.\label{eq:def-renormalized-var}
\end{equation}
The raising operation $\sharp$ converts a function $f=f(y)$ to a
function of $r$: $f^{\sharp}=f^{\sharp}(r)$. The lowering operator
$\flat$ does the reverse. In the modulation analysis in this paper,
the dynamical parameters such as $\lmb,\gmm,b,\eta$ are functions
of either the variable $t$ or $s$ under $\frac{ds}{dt}=\frac{1}{\lmb^{2}}$.

We collect the definitions of various linear operators. Let $A_{\tht}[\psi_{1},\psi_{2}]$
be the polarization of $A_{\tht}[\psi]$: 
\[
A_{\theta}[\psi_{1},\psi_{2}]=-\tfrac{1}{2}\tint 0y\Re(\br{\psi_{1}}\psi_{2})y'dy'.
\]
We will often use the first order operators and their formal $L^{2}$-adjoints:
\begin{align*}
\bfD_{w} & =\rd_{y}-\tfrac{1}{y}(m+A_{\tht}[w]), & \bfD_{w}^{\ast} & =-\rd_{y}-\tfrac{1}{y}(m+1+A_{\tht}[w]),\\
L_{w} & =\bfD_{w}-\tfrac{2}{y}A_{\tht}[w,\cdot], & L_{w}^{\ast} & =\bfD_{w}^{\ast}+w\tint y{\infty}\Re(\br w\cdot)dy',\\
A_{w} & =\bfD_{w}-\tfrac{1}{y}, & A_{w}^{\ast} & =\bfD_{w}^{\ast}-\tfrac{1}{y}.
\end{align*}
The second order operators of particular importance are 
\begin{align*}
\calL_{w} & =\nabla^{2}E[w]\text{, i.e., the Hessian of }E,\\
H_{w} & =-\rd_{yy}-\tfrac{1}{y}\rd_{y}+\tfrac{1}{y^{2}}(m+1+A_{\tht}[w])^{2}-\tfrac{1}{2}|w|^{2}=A_{w}^{\ast}A_{w},\\
\td H_{w} & =-\rd_{yy}-\tfrac{1}{y}\rd_{y}+\tfrac{1}{y^{2}}(m+2+A_{\tht}[w])^{2}+\tfrac{1}{2}|w|^{2}=A_{w}A_{w}^{\ast}.
\end{align*}
See Section~\ref{subsec:Linearization-of-CSS} for more explanations
on these linear operators.

The relevant function spaces such as $\dot{H}_{m}^{k}$, $\dot{\calH}_{m}^{k}$,
and $\dot{V}_{m}^{k}$, and their norms will be defined in Section~\ref{subsec:Adapted-function-spaces}.

\subsection{\label{subsec:Conjugation-identities}Conjugation identities}

In this subsection, we record the evolution equations for $u$, $u_{1}=\bfD_{u}u$,
and $u_{2}=A_{u}\bfD_{u}u$, where we recall $\bfD_{u}=\rd_{r}-\tfrac{1}{r}(m+A_{\tht}[u])$
and $A_{u}=\bfD_{u}-\frac{1}{r}$. As mentioned in the strategy (Section~\ref{subsec:Strat-Proof}),
we will view \eqref{eq:CSS-m-equiv} as a system of equations for
$u$, $u_{1}$, $u_{2}$.

First, we note that $u$, $u_{1}$, and $u_{2}$ are realized as $m$-,
$(m+1)$-, and $(m+2)$-equivariant functions. This is because the
covariant Cauchy--Riemann operator $\bfD_{+}=\bfD_{1}+i\bfD_{2}$
shifts the equivariance index by $1$ (c.f. \eqref{eq:def-euc-CR})
and $u,u_{1},u_{2}$ are identified as 
\begin{equation}
\left\{ \begin{aligned}\phi(x) & =u(r)e^{im\theta},\\{}
[\bfD_{+}\phi](x) & =[\bfD_{u}u](r)e^{i(m+1)\theta}=u_{1}(r)e^{i(m+1)\theta},\\{}
[\bfD_{+}\bfD_{+}\phi](x) & =[A_{u}\bfD_{u}u](r)e^{i(m+2)\theta}=u_{2}(r)e^{i(m+2)\theta}.
\end{aligned}
\right.\label{eq:cov-conj-identification}
\end{equation}
Now, we record the evolution equations for $u$, $u_{1}$, and $u_{2}$.
The following can be derived using the identification \eqref{eq:cov-conj-identification}
and the curvature relations of \eqref{eq:CSS-cov}. 
\begin{gather}
\rd_{t}u+i\bfD_{u}^{\ast}u_{1}+\Big(\int_{r}^{\infty}\Re(\br uu_{1})dr'\Big)iu=0,\label{eq:u-eqn}\\
\rd_{t}u_{1}+iA_{u}^{\ast}u_{2}+\Big(\int_{r}^{\infty}\Re(\br uu_{1})dr'\Big)iu_{1}=0,\label{eq:u1-eqn}\\
\rd_{t}u_{2}+i\td H_{u}u_{2}+\Big(\int_{r}^{\infty}\Re(\br uu_{1})dr'\Big)iu_{2}-i\br uu_{1}^{2}=0.\label{eq:u2-eqn}
\end{gather}
See \cite[Section 2]{KimKwonOh2020arXiv} for more details.

When ($C^{1}$) parameter curves $\lmb(t)$ and $\gmm(t)$ are given,
using the renormalized coordinates $(s,y)$ defined in \eqref{eq:def-renormalized-var},
we consider the renormalized variables $w,w_{1},w_{2}$ of $u,u_{1},u_{2}$:
\begin{align*}
w(s,y) & \coloneqq\lmb e^{-i\gmm}u(t,\lmb r)|_{t=t(s)},\\
w_{1}(s,y) & \coloneqq\lmb^{2}e^{-i\gmm}u_{1}(t,\lmb r)|_{t=t(s)}=\bfD_{w}w,\\
w_{2}(s,y) & \coloneqq\lmb^{3}e^{-i\gmm}u_{2}(t,\lmb r)|_{t=t(s)}=A_{w}\bfD_{w}w.
\end{align*}
These variables solve the following evolution equations:
\begin{align}
\big(\rd_{s}-\frac{\lmb_{s}}{\lmb}\Lmb+\gmm_{s}i\big)w & +i\bfD_{w}^{\ast}w_{1}+\Big(\int_{y}^{\infty}\Re(\br ww_{1})dy'\Big)iw=0,\label{eq:w-eqn}\\
\big(\rd_{s}-\frac{\lmb_{s}}{\lmb}\Lmb_{-1}+\gmm_{s}i\big)w_{1} & +iA_{w}^{\ast}w_{2}+\Big(\int_{y}^{\infty}\Re(\br ww_{1})dy'\Big)iw_{1}=0,\label{eq:w1-eqn}\\
\big(\rd_{s}-\frac{\lmb_{s}}{\lmb}\Lmb_{-2}+\gmm_{s}i\big)w_{2} & +i\td H_{w}w_{2}+\Big(\int_{y}^{\infty}\Re(\br ww_{1})dy'\Big)iw_{2}-i\br ww_{1}^{2}=0.\label{eq:w2-eqn}
\end{align}

\subsection{\label{subsec:Linearization-of-CSS}Linearization}

We quickly record the linearization of \eqref{eq:CSS-m-equiv} around
$Q$. For more detailed exposition, see the corresponding sections
of \cite{KimKwon2023MAMS,KimKwon2023AnnPDE,KimKwonOh2020arXiv}. We
note that the discussion in this subsection formally assumes the decomposition
\[
u(t,r)=Q(r)+\eps(t,r)
\]
for small $\eps$ and hence we will identify $t=s$ and $r=y$. One
may consider the situation where $\lmb(t)\equiv1$ and $\gmm(t)\equiv0$.

\subsubsection{Linearization of the Bogomol'nyi operator}

We first linearize the Bogomol'nyi operator $w\mapsto\bfD_{w}w$.
We can write 
\begin{equation}
\bfD_{w+\eps}(w+\eps)=\bfD_{w}w+L_{w}\eps+N_{w}(\eps),\label{eq:LinearizationBogomolnyi}
\end{equation}
where 
\begin{align*}
L_{w}\eps & \coloneqq\bfD_{w}\eps-\tfrac{2}{y}A_{\tht}[w,\eps]w,\\
N_{w}(\eps) & \coloneqq-\tfrac{2}{y}A_{\tht}[w,\eps]\eps-\tfrac{1}{y}A_{\tht}[\eps]w,
\end{align*}
and $A_{\theta}[\psi_{1},\psi_{2}]$ is defined through polarization
\[
A_{\tht}[\psi_{1},\psi_{2}]\coloneqq-\tfrac{1}{2}\tint 0y\Re(\br{\psi_{1}}\psi_{2})y'dy'.
\]
The $L^{2}$-adjoint $L_{w}^{\ast}$ of $L_{w}$ takes the form 
\[
L_{w}^{\ast}v=\bfD_{w}^{\ast}v+w{\textstyle \int_{y}^{\infty}}\Re(\br wv)\,dy'.
\]
We remark that the operator $L_{w}$ and its adjoint $L_{w}^{\ast}$
are \emph{only} $\bbR$-linear. From \eqref{eq:Bogomol'nyi-eq} and
\eqref{eq:energy-self-dual-form}, we have the following expansion
for the energy: 
\begin{equation}
E[Q+\eps]=\tfrac{1}{2}\|L_{Q}\eps\|_{L^{2}}^{2}+\text{(h.o.t.)}.\label{eq:linearized-energy-expn}
\end{equation}

\subsubsection{Linearization of \eqref{eq:CSS-m-equiv} and generalized kernel relations}

Next, we linearize \eqref{eq:CSS-m-equiv}. Recall the Hamiltonian
form \eqref{eq:CSS-equiv-ham}: $\rd_{t}u+i\nabla E[u]=0$. We decompose
\begin{equation}
\nabla E[w+\eps]=\nabla E[w]+\mathcal{L}_{w}\eps+R_{w}(\eps),\label{eq:grad-energy-lin-nonlin}
\end{equation}
where $\calL_{w}\eps$ collects the linear terms in $\eps$ and $R_{w}(\eps)$
collects the remainders. Note that $\calL_{w}$ is the Hessian of
$E$, i.e., $\nabla^{2}E[w]=\calL_{w}$ and hence $\calL_{w}$ is
formally symmetric with respect to the real inner product. If one
recalls \eqref{eq:energy-self-dual-form}, we have $\nabla E[u]=L_{u}^{\ast}\bfD_{u}u=L_{u}^{\ast}u_{1}$
and hence \eqref{eq:CSS-m-equiv} can also be written as (compare
with \eqref{eq:u-eqn})
\[
i\rd_{t}u+L_{u}^{\ast}u_{1}=0.
\]
Note that \eqref{eq:grad-energy-lin-nonlin} and \eqref{eq:LinearizationBogomolnyi}
yield the following full expression of $\calL_{w}$ 
\[
\calL_{w}\eps=L_{w}^{\ast}L_{w}\eps+(\tfrac{1}{y}\tint 0y\Re(\br w\eps)y'dy')w_{1}+w\tint y{\infty}\Re(\br{\eps}w_{1})dy'+\eps\tint y{\infty}\Re(\br ww_{1})dy',
\]
where $w_{1}=\bfD_{w}w$, but this full expression is not important
in our analysis. We remark that the operator $\calL_{w}$ is only
$\bbR$-linear. From $\bfD_{Q}Q=0$, we observe the \emph{self-dual
factorization} of $i\calL_{Q}$:\footnote{This identity was first observed by Lawrie, Oh, and Shahshahani in
their unpublished note.} 
\begin{equation}
i\calL_{Q}=iL_{Q}^{\ast}L_{Q}.\label{eq:calLQ}
\end{equation}
Thus, the linearization of \eqref{eq:CSS-m-equiv} at $Q$ is 
\begin{equation}
\rd_{t}\eps+i\calL_{Q}\eps\approx0,\quad\text{or}\quad\rd_{t}\eps+iL_{Q}^{\ast}L_{Q}\eps\approx0.\label{eq:lin-CSS-Q}
\end{equation}

We briefly recall the formal generalized kernel relations of the linearized
operator $i\calL_{Q}$. We have 
\[
N_{g}(i\calL_{Q})=\mathrm{span}_{\bbR}\{\Lambda Q,iQ,\tfrac{i}{4}y^{2}Q,\rho\}
\]
with the relations (see \cite[Proposition 3.4]{KimKwon2023MAMS})
\begin{equation}
\left\{ \begin{aligned}i\calL_{Q}(i\tfrac{y^{2}}{4}Q) & =\Lambda Q, & i\calL_{Q}\rho & =iQ,\\
i\calL_{Q}(\Lambda Q) & =0, & i\calL_{Q}(iQ) & =0,
\end{aligned}
\right.\label{eq:gen-kernel-rel}
\end{equation}
where the existence of $\rho$ is given in \cite[Lemma 3.6]{KimKwon2023MAMS}.
In fact, we have: 
\begin{equation}
\left\{ \begin{aligned}L_{Q}(i\tfrac{y^{2}}{4}Q) & =i\tfrac{y}{2}Q, & L_{Q}\rho & =\tfrac{1}{2(m+1)}yQ,\\
L_{Q}^{\ast}(i\tfrac{y}{2}Q) & =-i\Lambda Q, & L_{Q}^{\ast}(\tfrac{1}{2(m+1)}yQ) & =Q,\\
L_{Q}(\Lambda Q) & =0, & L_{Q}(iQ) & =0.
\end{aligned}
\right.\label{eq:gen-kernel-rel-LQ}
\end{equation}
One can uniquely determine $\rho$ by requiring $\rho\aleq y^{2}Q$
and $\rho$ can be explicitly constructed by the formula 
\begin{equation}
\rho\coloneqq\out L_{Q}^{-1}(\tfrac{1}{2(m+1)}yQ),\label{eq:def-rho}
\end{equation}
where $\out L_{Q}^{-1}$ denotes a formal right inverse of $L_{Q}$
whose explicit formula is given in Lemma~\ref{lem:LQ-inv}.

\subsubsection{\label{subsec:Linear-conjugation-identities}Linear conjugation identities
and Morawetz repulsivity}

Assuming $u=Q+\eps$, \eqref{eq:LinearizationBogomolnyi} says that
the first conjugated variable $u_{1}=\bfD_{u}u$ satisfies 
\[
u_{1}=\bfD_{u}u\approx L_{Q}\eps\eqqcolon\wh{\eps}_{1}.
\]
The $u_{1}$-equation \eqref{eq:u1-eqn} can be linearized as 
\begin{equation}
\rd_{t}\wh{\eps}_{1}+iA_{Q}^{\ast}A_{Q}\wh{\eps}_{1}\approx0.\label{eq:lin-e1-eqn}
\end{equation}
In this regard, the operator $H_{w}$ naturally arises: 
\begin{align}
H_{w} & \coloneqq A_{w}^{\ast}A_{w}=-\rd_{yy}-\tfrac{1}{y}\rd_{y}+\tfrac{1}{y^{2}}V_{w},\label{eq:H-factor}\\
V_{w} & \coloneqq(m+1+A_{\tht}[w])^{2}-\tfrac{1}{2}y^{2}|w|^{2}.\label{eq:V-def}
\end{align}
On the other hand, if we simply take $L_{Q}$ to \eqref{eq:lin-CSS-Q},
we also have 
\[
\rd_{t}\wh{\eps}_{1}+iL_{Q}iL_{Q}^{\ast}\wh{\eps}_{1}\approx0.
\]
Comparing this with \eqref{eq:lin-e1-eqn} gives the \emph{linear
conjugation identity} 
\[
L_{Q}iL_{Q}^{\ast}=iA_{Q}^{\ast}A_{Q}=iH_{Q}.
\]

The linear conjugation identity was first observed in \cite{KimKwon2023AnnPDE}.
While the linearized operator $\calL_{w}$ is only $\bbR$-linear
and is nonlocal, $H_{w}$ is now \emph{$\bbC$-linear} and \emph{local}.
Moreover, the kernels $\Lmb Q$ and $iQ$ of $i\calL_{Q}$ have been
removed by taking $L_{Q}$ to \eqref{eq:lin-CSS-Q} and the generalized
kernels $i\tfrac{y^{2}}{4}Q$ and $(m+1)\rho$ have become the \emph{kernels}
$i\tfrac{y}{2}Q$ and $\frac{y}{2}Q$ of $iH_{Q}$, thanks to \eqref{eq:gen-kernel-rel-LQ}
and $A_{Q}(yQ)=0$. More interestingly, $H_{Q}=A_{Q}^{\ast}A_{Q}$
is exactly the same operator arising in the linearization of several
critical geometric equations around harmonic maps within $k=(m+1)$-equivariance,
such as in \cite{RodnianskiSterbenz2010Ann.Math.,RaphaelRodnianski2012Publ.Math.,RaphaelSchweyer2013CPAM,MerleRaphaelRodnianski2013InventMath}.
Therefore, at the linearized level, the dynamics of $m$-equivariant
\eqref{eq:CSS-m-equiv} is closely related to the dynamics of $(m+1)$-equivariant
Schrödinger maps. These earlier works motivated the authors in \cite{KimKwon2023AnnPDE}
to consider further linearly conjugated variable $\wh{\eps}_{2}=A_{Q}\wh{\eps}_{1}=A_{Q}L_{Q}\eps$
below.

The further conjugated variable $u_{2}$ can be linearized as 
\[
u_{2}=A_{u}\bfD_{u}u\approx A_{Q}\wh{\eps}_{1}\eqqcolon\wh{\eps}_{2}
\]
and the linearization of the $u_{2}$-equation \eqref{eq:u2-eqn}
becomes 
\begin{equation}
\rd_{t}\wh{\eps}_{2}+iA_{Q}A_{Q}^{\ast}\wh{\eps}_{2}\approx0.\label{eq:lin-e2-eqn}
\end{equation}
In this regard, we define the linear operator $\td H_{w}$ by 
\begin{align}
\td H_{w} & \coloneqq A_{w}A_{w}^{\ast}=-\rd_{yy}-\tfrac{1}{y}\rd_{y}+\tfrac{1}{y^{2}}\td V_{w},\label{eq:H-td-factor}\\
\td V_{w} & \coloneqq(m+2+A_{\tht}[w])^{2}+\tfrac{1}{2}y^{2}|w|^{2}.\label{eq:V-td-def}
\end{align}
Note that the kernels $iyQ$ and $yQ$ of $iH_{Q}$ have been removed
by taking $A_{Q}$ to \eqref{eq:lin-e1-eqn} and the operator $\td H_{Q}$
has no kernels.

More importantly, as observed in \cite{RodnianskiSterbenz2010Ann.Math.}
for the wave maps (and in \cite{KimKwon2023AnnPDE} in the present
context), $\td H_{Q}$ has a \emph{repulsive} character: 
\begin{equation}
\td V_{Q}\geq m^{2}\qquad\text{and}\qquad-y\rd_{y}\td V_{Q}=y^{2}Q^{2}\geq0.\label{eq:H-td-Q-repul}
\end{equation}
One instance of the usefulness of this repulsivity is the following
spacetime control of $\wh{\eps}_{2}$ obtained through a virial-type
computation:
\begin{align*}
\tfrac{1}{2}\rd_{t}(\wh{\eps}_{2}, & -i\Lmb\wh{\eps}_{2})_{r}=(\wh{\eps}_{2},\tfrac{1}{2}[\td H_{Q},\Lmb]\wh{\eps}_{2})_{r}\\
 & =(\wh{\eps}_{2},\td H_{Q}\wh{\eps}_{2})_{r}+(\wh{\eps}_{2},\tfrac{-\rd_{y}\td V}{2y}\wh{\eps}_{2})_{r}\geq(\wh{\eps}_{2},\td H_{Q}\wh{\eps}_{2})_{r}=\|A_{Q}^{\ast}\wh{\eps}_{2}\|_{L^{2}}^{2}.
\end{align*}
As mentioned in the introduction, the repulsivity \eqref{eq:H-td-Q-repul}
has been used in many of the blow-up constructions. Note that the
virial term $(\wh{\eps}_{2},-i\Lmb\wh{\eps}_{2})_{r}$ above is not
bounded in the usual Sobolev spaces (e.g., for $\wh{\eps}_{2}\in H^{1}$).
In this work, we will use a \emph{Morawetz-type }monotonicity as follows.
\begin{lem}[Morawetz-type repulsivity]
\label{lem:Morawetz}Let $m\geq1$. There exists a smooth function
$\psi:(0,\infty)\to\bbR$ with the following properties:
\begin{itemize}
\item $\psi$ admits a smooth radially symmetric extension on $\bbR^{2}$.
\item (Bounds on $\psi'$) Denote $\psi'\coloneqq\rd_{y}\psi$. We have
$0\leq\psi'\leq1$ and 
\begin{equation}
y\rd_{y}\psi'\aleq\langle y\rangle^{-1}\label{eq:ydy-psi'}
\end{equation}
\item (Morawetz-type repulsivity) Recall the operator $\Lmb_{\psi}=\psi'\rd_{y}+\frac{\Delta\psi}{2}$.
There exists $\frkc>0$ such that for any smooth $(m+2)$-equivariant
function $f$, we have 
\begin{equation}
(\td H_{Q}f,\Lmb_{\psi}f)_{r}\geq\frkc\int\Big(\frac{1}{\langle y\rangle^{2}}|\rd_{y}f|^{2}+\frac{1}{\langle y\rangle}\Big|\frac{f}{y}\Big|^{2}\Big)\eqqcolon\frkc\|f\|_{\dot{V}_{m+2}^{3/2}}^{2}.\label{eq:Morawetz-repul}
\end{equation}
\end{itemize}
\end{lem}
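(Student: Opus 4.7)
My plan is to exploit the anti-symmetry of $\Lmb_{\psi}$ and the symmetry of $\td H_{Q}$ to write
\[
(\td H_{Q}f,\Lmb_{\psi}f)_{r}=\tfrac{1}{2}(f,[\td H_{Q},\Lmb_{\psi}]f)_{r},
\]
reducing matters to a positive-commutator (Morawetz-type) identity. I decompose $\td H_{Q}=-\rd_{yy}-y^{-1}\rd_{y}+\td V_{Q}/y^{2}$. The commutator with the radial Laplacian is evaluated by two integrations by parts and contributes $2\pi\int\psi''|\rd_{y}f|^{2}y\,dy-\tfrac{\pi}{2}\int\Delta^{2}\psi\cdot|f|^{2}y\,dy$; the commutator with the potential yields $-\pi\int\psi'\rd_{y}(\td V_{Q}/y^{2})\cdot|f|^{2}y\,dy$, which via the repulsivity identity $-y\rd_{y}\td V_{Q}=y^{2}Q^{2}\geq0$ from \eqref{eq:H-td-Q-repul} splits as $\pi\int\psi'Q^{2}|f|^{2}\,dy+2\pi\int\frac{\psi'\td V_{Q}}{y^{2}}|f|^{2}\,dy$. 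Both of these last contributions are pointwise nonnegative and provide the main source of coercivity.

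I would take the explicit multiplier $\psi'(y)=\frac{2}{\pi}\arctan y$, i.e.\ $\psi(y)=\frac{2}{\pi}\bigl[y\arctan y-\tfrac{1}{2}\log(1+y^{2})\bigr]$. This expands as $\psi(y)=y^{2}/\pi+O(y^{4})$ near the origin, so $\psi$ extends to a smooth spherically symmetric function on $\bbR^{2}$, and the properties $0\leq\psi'<1$ and $y\psi''(y)=\frac{2y}{\pi(1+y^{2})}\aleq\langle y\rangle^{-1}$ are immediate. Crucially, $\psi''(y)=\frac{2}{\pi\langle y\rangle^{2}}$, so the kinetic term directly controls $\int\frac{|\rd_{y}f|^{2}}{\langle y\rangle^{2}}\cdot 2\pi y\,dy$; moreover the elementary inequality $\arctan y\ageq y/\langle y\rangle$ on $[0,\infty)$ combined with $\td V_{Q}\geq m^{2}\geq1$ (the place where the hypothesis $m\geq1$ enters decisively) gives $\frac{\psi'\td V_{Q}}{y^{3}}\ageq\frac{1}{\langle y\rangle y^{2}}$ as a weight in the $(\cdot,\cdot)_{r}$-pairing. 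These two contributions already reproduce $\|f\|_{\dot{V}_{m+2}^{3/2}}^{2}$ up to an absolute constant (the $\psi'Q^{2}$-term being a harmless bonus).

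The principal obstacle is then the negative correction $-\tfrac{\pi}{2}\int\Delta^{2}\psi\cdot|f|^{2}y\,dy$, which must be absorbed into the positive contributions. A direct computation from the explicit formulas yields $|\Delta^{2}\psi(y)|\aleq\langle y\rangle^{-3}$, so that pointwise $|\Delta^{2}\psi(y)|\cdot y\aleq\frac{y^{2}}{\langle y\rangle^{2}}\cdot\frac{1}{\langle y\rangle y}$; this ratio is $O(y^{2})$ near the origin, where the $(m+2)$-equivariance forces the mild vanishing $|f(y)|\aleq y^{m+2}$ making the correction strictly subordinate, and it is uniformly bounded at infinity, where the correction is comparable to but strictly smaller than the positive angular term by a margin supplied by $m^{2}\geq1$. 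Should the implicit constants prove too tight in a given range, one regains flexibility by softening the multiplier, for instance by replacing $\arctan y$ with $\int_{0}^{y}\frac{\chi(y'/R)}{\langle y'\rangle}\,dy'$ for a sufficiently large parameter $R$ (or by interpolating with a smooth cutoff), which preserves (i)--(iii) while shrinking $|\Delta^{2}\psi|$ uniformly; after this calibration the commutator estimate yields the desired bound \eqref{eq:Morawetz-repul} for some $\frkc>0$.
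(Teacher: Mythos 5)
Your proposal follows the same route as the paper: antisymmetrize, reduce to the positive commutator $\tfrac12[\td H_Q,\Lmb_\psi]$, integrate by parts, and invoke the repulsivity facts $\td V_Q\geq m^2\geq1$ and $-y\rd_y\td V_Q=y^2Q^2\geq0$ to organize the resulting weight on $|f/y|^2$. The difference is the explicit multiplier: you take $\psi'=\tfrac{2}{\pi}\arctan y$, whereas the paper uses $\psi'=\tfrac{y}{\langle y\rangle}-\delta\tfrac{y}{\langle y\rangle^{2}}$. Both are valid choices. Your multiplier has the advantage that $\psi''=\tfrac{2}{\pi}\langle y\rangle^{-2}$ gives the kinetic weight $\langle y\rangle^{-2}$ \emph{directly}, without needing a corrector (the paper's leading term $\wh\psi'=y/\langle y\rangle$ only gives $\wh\psi''=\langle y\rangle^{-3}$, and the entire purpose of the $\delta$-corrector is to boost this to $\langle y\rangle^{-2}$). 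On the other hand, the paper's two-term ansatz makes the verification of $\frac{\psi'}{y}-\frac{y^2\Delta^2\psi}{4}\ageq\langle y\rangle^{-1}$ particularly transparent: for $\wh\psi$ this quantity is an explicit rational function with manifestly positive numerator, and the $\delta$-corrector only perturbs it by $O(\langle y\rangle^{-2})$. With $\arctan y$ the analogous expression mixes rational and $\arctan$ pieces — a computation gives $\tfrac{3\arctan y}{2\pi y}+\tfrac{8y^2-y^4+1}{2\pi\langle y\rangle^6}$, where the rational part changes sign around $y\approx 2.85$ and is eventually dominated by the $\arctan$ term — so the positivity holds but is less immediate, which is exactly where you hedged ("should the implicit constants prove too tight\ldots"). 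A complete proof along your lines would need to pin this down rather than defer it.

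One small inaccuracy in your write-up: the near-origin absorption does not rely on the $(m+2)$-equivariance vanishing $|f|\aleq y^{m+2}$. Near $y=0$ the coefficient $\tfrac{\psi'}{y}\td V_Q$ is already bounded below by a positive constant (since $\psi'/y\to 2/\pi$ and $\td V_Q\geq m^2$), while $y^{2}\Delta^{2}\psi\to 0$, so the negative correction vanishes pointwise there — no vanishing of $f$ is needed. The equivariance only enters the lemma through its use downstream (the $L^2_{\loc}$ well-definedness of $|f/y|^2$), not in the pointwise comparison of weights.
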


\begin{rem}[On $m\geq1$]
\label{rem:Morawetz}In the proof, we heavily rely on the assumption
$m\geq1$ to obtain the \emph{nonnegativity} of $(\td H_{Q}f,\Lmb_{\psi}f)_{r}$.
When $m=0$, $\td H_{Q}$ is rather similar to the 2D Laplacian $-\Delta_{0}=-\rd_{yy}-\frac{1}{y}\rd_{y}$
near the spatial infinity, and hence there are some subtleties to
obtain a Morawetz estimate due to the zero resonance of $\Delta_{0}$.
Note that the blow-up construction in \cite{KimKwonOh2020arXiv} for
the $m=0$ case (and similarly in \cite{RaphaelRodnianski2012Publ.Math.,MerleRaphaelRodnianski2013InventMath}
for critical wave maps and Schrödinger maps) rather constructs precise
correction terms for the modified energy under the shrinking regime
$\rd_{t}\lmb(t)<0$, which we do not assume in the present context.
\end{rem}

\begin{rem}
The bound \eqref{eq:ydy-psi'} will be used in the energy estimates
of our blow-up analysis (Proposition~\ref{prop:EnergyMorawetz}).
As is well-known, if one only considers a Morawetz estimate for the
linear equation \eqref{eq:lin-e2-eqn} without requiring the bound
\eqref{eq:ydy-psi'}, then the weight $\langle y\rangle^{-2}$ of
\eqref{eq:Morawetz-repul} can be improved to $\langle y\rangle^{-1+}$
by considering for example $\psi(y)=\langle y\rangle-\delta\langle y\rangle^{1-\delta}$
with small $\delta>0$.
\end{rem}

\begin{proof}
We consider $\psi$ of the form 
\begin{equation}
\psi'=\frac{y}{\langle y\rangle}-\delta\frac{y}{\langle y\rangle^{2}}\eqqcolon\wh{\psi}'+\delta\td{\psi}'\label{eq:repul1}
\end{equation}
for some small $0<\delta<1$ to be chosen below. We may explicitly
set $\psi(y)=\langle y\rangle-\frac{\delta}{2}\log(1+y^{2})$ and
hence $\psi$ admits a smooth radially symmetric extension on $\bbR^{2}$.
Moreover, it is clear from \eqref{eq:repul1} that $\psi'$ satisfies
$0\leq\psi'\leq1$ and $y\rd_{y}\psi'\aleq\langle y\rangle^{-1}$.

We turn to the proof of monotonicity \eqref{eq:Morawetz-repul}. Integrating
by parts, one has 
\[
(\td H_{Q}f,\Lmb_{\psi}f)_{r}=\int\psi''|\rd_{y}f|^{2}+\int\Big\{\frac{\psi'}{y}\Big(\td V_{Q}-\frac{y\rd_{y}\td V_{Q}}{2}\Big)-\frac{y^{2}\Delta^{2}\psi}{4}\Big\}\Big|\frac{f}{y}\Big|^{2}.
\]
Applying $\psi'\geq0$ and \eqref{eq:H-td-Q-repul}, we have 
\begin{equation}
(\td H_{Q}f,\Lmb_{\psi}f)_{r}\geq\int\psi''|\rd_{y}f|^{2}+\int\Big\{\frac{\psi'}{y}-\frac{y^{2}\Delta^{2}\psi}{4}\Big\}\Big|\frac{f}{y}\Big|^{2}.\label{eq:repul2}
\end{equation}
On one hand, we have 
\[
\wh{\psi}''=\frac{1}{\langle y\rangle^{3}}\quad\text{and}\quad\frac{\wh{\psi}'}{y}-\frac{y^{2}\Delta^{2}\wh{\psi}}{4}=\frac{3y^{6}+4y^{4}+20y^{2}+4}{4\langle y\rangle^{7}}\ageq\frac{1}{\langle y\rangle}.
\]
On the other hand, we have 
\[
\Big|\td{\psi}''-\frac{1}{\langle y\rangle^{2}}\Big|\aleq\frac{1}{\langle y\rangle^{4}}\quad\text{and}\quad\Big|\frac{\td{\psi}'}{y}-\frac{y^{2}\Delta^{2}\td{\psi}}{4}\Big|\aleq\frac{1}{\langle y\rangle^{2}}.
\]
Therefore, there exists some $0<\delta<1$ such that $\psi'=\wh{\psi}'+\delta\td{\psi}'$
satisfies 
\begin{equation}
\psi''\ageq\frac{1}{\langle y\rangle^{2}}\quad\text{and}\quad\frac{\psi'}{y}-\frac{y^{2}\Delta^{2}\psi}{4}\ageq\frac{1}{\langle y\rangle}.\label{eq:repul3}
\end{equation}
Substituting \eqref{eq:repul3} into \eqref{eq:repul2} gives \eqref{eq:Morawetz-repul}.
\end{proof}

\subsection{\label{subsec:Adapted-function-spaces}Adapted function spaces}

In this subsection, we quickly recall the definition of equivariant
Sobolev spaces and introduce adapted function spaces. For more explanations,
see \cite{KimKwon2023MAMS,KimKwon2023AnnPDE}.

Let $m\in\bbZ$. For $s\geq0$, we denote by $H_{m}^{s}$ and $\dot{H}_{m}^{s}$
the restriction of the usual Sobolev spaces $H^{s}(\bbR^{2})$ and
$\dot{H}^{s}(\bbR^{2})$ on $m$-equivariant functions. For high equivariance
indices, we have the generalized Hardy's inequality \cite[Lemma A.7]{KimKwon2023MAMS}:
whenever $0\leq k\leq|m|$, we have 
\begin{equation}
\||f|_{-k}\|_{L^{2}}\sim\|f\|_{\dot{H}_{m}^{k}}.\label{eq:GenHardySection2}
\end{equation}
Specializing this to $k=1$, we have the \emph{Hardy-Sobolev inequality}
\cite[Lemma A.6]{KimKwon2023MAMS}: whenever $|m|\geq1$, we have
\begin{equation}
\|r^{-1}f\|_{L^{2}}+\|f\|_{L^{\infty}}\aleq\|f\|_{\dot{H}_{m}^{1}}.\label{eq:HardySobolevSection2}
\end{equation}
Note in general that $H_{0}^{1}\hookrightarrow L^{\infty}$ is \emph{false}.
For $k\in\bbN$, the space $H_{m}^{k,k}$ is defined by the set of
$m$-equivariant $H^{k,k}(\bbR^{2})$ functions, where $f\in H^{k,k}(\bbR^{2})$
means that $\langle x\rangle^{\ell}f\in H^{k-\ell}(\bbR^{2})$ for
all $0\leq\ell\leq k$.

Let $m\geq1$. We turn to the definitions of adapted function spaces.
As mentioned before, we will need a priori bounds (such as \eqref{eq:size-control-1})
from mass/energy conservation and perform an energy estimate for $\eps_{2}$
to prove the $H^{3}$-boundedness of $\eps^{\sharp}$. For this purpose,
we need to transfer the controls on $\eps_{2}\approx\wh{\eps}_{2}=A_{Q}L_{Q}\eps$
and $\eps_{1}\approx\wh{\eps}_{1}=L_{Q}\eps$ to those for $\eps$.
The controls on $\eps$ and hence the norms of the adapted function
spaces will be captured by various (sub-)coercivity estimates for
the linear operators $A_{Q}$ and $L_{Q}$. Note that, as $A_{Q}$
and $L_{Q}$ have nontrivial kernels $\mathrm{span}_{\bbR}\{yQ,iyQ\}$
and $\mathrm{span}_{\bbR}\{\Lmb Q,iQ\}$, respectively, the coercivity
estimates (Lemma~\ref{lem:LinearCoercivity} below) hold after excluding
these kernel elements.

The following norms will be used in this paper. Recall from \eqref{eq:cov-conj-identification}
that $\eps,\eps_{1},\eps_{2}$ are naturally identified as $m$-,
$(m+1)$-, $(m+2)$-equivariant functions. For $\eps_{2}$, we will
use the norms 
\[
\|\eps_{2}\|_{L^{2}},\quad\|\eps_{2}\|_{\dot{H}_{m+2}^{1}},\quad\text{and}\quad\|\eps_{2}\|_{\dot{V}_{m+2}^{3/2}},
\]
where we recall the definition of the $\dot{V}_{m+2}^{3/2}$-norm
(see \eqref{eq:Morawetz-repul}): 
\begin{equation}
\|\eps_{2}\|_{\dot{V}_{m+2}^{3/2}}^{2}\coloneqq\Big\|\frac{\rd_{y}\eps_{2}}{\langle y\rangle}\Big\|_{L^{2}}^{2}+\Big\|\frac{\eps_{2}}{\langle y\rangle^{1/2}y}\Big\|_{L^{2}}^{2}\label{eq:def-V3/2}
\end{equation}
The first two norms are adapted to the coercivity of the energy functional
$\calF$ introduced in \eqref{eq:str-def-H3en}. The last norm captures
the spatial part of the Morawetz estimate in Lemma~\ref{lem:Morawetz}.

For $\eps_{1}$, we will use the norms 
\[
\|\eps_{1}\|_{L^{2}},\quad\|\eps_{1}\|_{\dot{H}_{m+1}^{1}},\quad\|\eps_{1}\|_{\dot{H}_{m+1}^{2}},\quad\text{and}\quad\|\eps_{1}\|_{\dot{V}_{m+1}^{5/2}},
\]
where the last $\dot{V}_{m+1}^{5/2}$-norm is defined by 
\begin{equation}
\|\eps_{1}\|_{\dot{V}_{m+1}^{5/2}}\coloneqq\Big\|\frac{\rd_{yy}\eps_{1}}{\langle y\rangle}\Big\|_{L^{2}}^{2}+\Big\|\frac{|\eps_{1}|_{-1}}{\langle y\rangle^{1/2}y}\Big\|_{L^{2}}^{2}\label{eq:def-V5/2}
\end{equation}
The first norm, i.e., $\|\eps_{1}\|_{L^{2}}$, is adapted to energy
conservation (see \eqref{eq:linearized-energy-expn}). The remaining
high Sobolev norms are adapted to the the $L^{2}$, $\dot{H}_{m+2}^{1}$,
and $\dot{V}_{m+2}^{3/2}$ controls on $\eps_{2}\approx A_{Q}\eps_{1}$;
see the second item of Lemma~\ref{lem:LinearCoercivity} below.

Finally, for $\eps$, we will use the norms 
\[
\|\eps\|_{L^{2}},\quad\|\eps\|_{\dot{H}_{m}^{1}},\quad\|\eps\|_{\dot{\calH}_{m}^{2}},\quad\text{and}\quad\|\eps\|_{\dot{\calH}_{m}^{3}},
\]
where the $\dot{\calH}_{m}^{2}$- and $\dot{\calH}_{m}^{3}$-norms
are defined by 
\begin{equation}
\|\eps\|_{\dot{\calH}_{m}^{2}}\coloneqq\begin{cases}
\||\eps|_{-2}\|_{L^{2}}\sim\|\eps\|_{\dot{H}_{m}^{2}} & \text{if }m\geq2,\\
{\displaystyle \|\rd_{+}\eps\|_{\dot{H}_{2}^{1}}+\|\rd_{yy}\eps\|_{L^{2}}+\Big\|\frac{|\eps|_{-1}}{\langle\log_{-}y\rangle y}\Big\|_{L^{2}}} & \text{if }m=1,
\end{cases}\label{eq:def-calH2}
\end{equation}
and 
\begin{equation}
\|\eps\|_{\dot{\calH}_{m}^{3}}\coloneqq\begin{cases}
\||\eps|_{-3}\|_{L^{2}}\sim\|\eps\|_{\dot{H}_{m}^{3}} & \text{if }m\geq3,\\
{\displaystyle \|\rd_{+}\eps\|_{\dot{H}_{3}^{2}}+\|\rd_{yyy}\eps\|_{L^{2}}+\Big\|\frac{|\eps|_{-2}}{\langle\log_{-}y\rangle y}\Big\|_{L^{2}}} & \text{if }m=2,\\
{\displaystyle \|\rd_{+}\eps\|_{\dot{H}_{2}^{2}}+\||\rd_{yy}\eps|_{-1}\|_{L^{2}}+\Big\|\frac{|\eps|_{-1}}{\langle\log_{-}y\rangle\langle y\rangle y}\Big\|_{L^{2}}} & \text{if }m=1.
\end{cases}\label{eq:def-calH3}
\end{equation}
Of course, the $L^{2}$-norm of $\eps$ is adapted to  mass conservation.
The remaining higher order norms are adapted to the the $L^{2}$,
$\dot{H}_{m+1}^{1}$, and $\dot{H}_{m+1}^{2}$ controls on $\eps_{1}\approx L_{Q}\eps$;
see the last item of Lemma~\ref{lem:LinearCoercivity} below. When
$m$ is small, the above $\dot{\calH}_{m}^{k}$-norms can be viewed
as slight modifications of the usual $\dot{H}_{m}^{k}$-norms. However,
these $\dot{\calH}_{m}^{k}$-norms control the scaling critical Hardy
norms in the region $y\geq1$: 
\[
\|\eps\|_{\dot{\calH}_{m}^{2}}\ageq\|\chf_{y\geq1}|\eps|_{-2}\|_{L^{2}}\qquad\text{and}\qquad\|\eps\|_{\dot{\calH}_{m}^{3}}\ageq\|\chf_{y\geq1}|\eps|_{-3}\|_{L^{2}}.
\]
In this sense, the $\dot{\calH}_{m}^{k}$-norms are stronger than
the $\dot{H}_{m}^{k}$-norms when $m$ is small.

The following lemma collects the coercivity estimates of $L_{Q}$
and $A_{Q}$.
\begin{lem}[Linear coercivity estimates]
\label{lem:LinearCoercivity}Let $m\geq1$.
\begin{enumerate}
\item (Unconditional coercivity of $\td H_{Q}$) We have 
\begin{equation}
(\eps_{2},\td H_{Q}\eps_{2})_{r}\geq\|\rd_{y}\eps_{2}\|_{L^{2}}^{2}+\|\tfrac{1}{y}\eps_{2}\|_{L^{2}}^{2}.\label{eq:H-td-coer}
\end{equation}
\item (Coercivity of $A_{Q}$) Suppose $\td{\calZ}_{3},\td{\calZ}_{4}\in C_{c,m+1}^{\infty}$\footnote{That is, $\td{\calZ}_{3}$ and $\td{\calZ}_{4}$ have smooth compactly
supported $(m+1)$-equivariant extensions on $\bbR^{2}$.} are such that the $2\times2$ matrix $\{(a_{kj})\}_{k,j\in\{3,4\}}$
defined by $a_{k3}=(\td{\calZ}_{k},iyQ)_{r}$ and $a_{k4}=(\td{\calZ}_{k},yQ)_{r}$
has nonzero determinant. Then, we have coercivity estimates 
\begin{align}
\|\eps_{1}\|_{\dot{H}_{m+1}^{1}} & \aleq_{\td{\calZ}_{3},\td{\calZ}_{4}}\|A_{Q}\eps_{1}\|_{L^{2}}\aleq\|\eps_{1}\|_{\dot{H}_{m+1}^{1}}, & \forall\eps_{1}\in\dot{H}_{m+1}^{1}\cap\{\td{\calZ}_{3},\td{\calZ}_{4}\}^{\perp},\label{eq:AQ-coer-L2-to-H1}\\
\|\eps_{1}\|_{\dot{H}_{m+1}^{2}} & \aleq_{\td{\calZ}_{3},\td{\calZ}_{4}}\|A_{Q}\eps_{1}\|_{\dot{H}_{m+2}^{1}}\aleq\|\eps_{1}\|_{\dot{H}_{m+1}^{2}}, & \forall\eps_{1}\in\dot{H}_{m+1}^{2}\cap\{\td{\calZ}_{3},\td{\calZ}_{4}\}^{\perp},\label{eq:AQ-coer-H1-to-H2}\\
\|\eps_{1}\|_{\dot{V}_{m+1}^{5/2}} & \aleq_{\td{\calZ}_{3},\td{\calZ}_{4}}\|A_{Q}\eps_{1}\|_{\dot{V}_{m+2}^{3/2}}\aleq\|\eps_{1}\|_{\dot{V}_{m+1}^{5/2}}, & \forall\eps_{1}\in\dot{V}_{m+1}^{5/2}\cap\{\td{\calZ}_{3},\td{\calZ}_{4}\}^{\perp},\label{eq:AQ-coer-V}
\end{align}
where $\perp$ is taken with respect to the real inner product $(\cdot,\cdot)_{r}$.
\item (Coercivity of $L_{Q}$) Suppose $\calZ_{1},\calZ_{2}\in C_{c,m}^{\infty}$
are such that the $2\times2$ matrix $\{(a_{kj})\}_{k,j\in\{1,2\}}$
defined by $a_{k1}=(\calZ_{k},\Lambda Q)_{r}$ and $a_{k2}=(\calZ_{k},iQ)_{r}$
has nonzero determinant. Then, we have coercivity estimates 
\begin{align}
\|\eps\|_{\dot{H}_{m}^{1}} & \aleq_{\calZ_{1},\calZ_{2}}\|L_{Q}\eps\|_{L^{2}}\aleq\|\eps\|_{\dot{H}_{m}^{1}}, & \forall\eps\in\dot{H}_{m}^{1}\cap\{\calZ_{1},\calZ_{2}\}^{\perp},\label{eq:LQ-coer-L2-to-H1}\\
\|\eps\|_{\dot{\calH}_{m}^{2}} & \aleq_{\calZ_{1},\calZ_{2}}\|L_{Q}\eps\|_{\dot{H}_{m+1}^{1}}\aleq\|\eps\|_{\dot{\calH}_{m}^{2}}, & \forall\eps\in\dot{\calH}_{m}^{2}\cap\{\calZ_{1},\calZ_{2}\}^{\perp},\label{eq:LQ-coer-H1-to-H2}\\
\|\eps\|_{\dot{\calH}_{m}^{3}} & \aleq_{\calZ_{1},\calZ_{2}}\|L_{Q}\eps\|_{\dot{H}_{m+1}^{2}}\aleq\|\eps\|_{\dot{\calH}_{m}^{3}}, & \forall\eps\in\dot{\calH}_{m}^{3}\cap\{\calZ_{1},\calZ_{2}\}^{\perp},\label{eq:LQ-coer-H2-to-H3}
\end{align}
where $\perp$ is taken with respect to the real inner product $(\cdot,\cdot)_{r}$.
\end{enumerate}
\end{lem}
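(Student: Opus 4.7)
All three parts rest on the same three-step pattern: an upper bound by direct computation, a sub-coercivity estimate that does not yet use orthogonality, and an upgrade to full coercivity via contradiction-plus-compactness that uses the orthogonality conditions precisely to rule out the (finite-dimensional, known) null space in the limit.

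Part (1) is essentially a direct IBP. Integrating by parts against the $2\pi r\,dr$-measure and using that $\rd_{yy}+\tfrac{1}{y}\rd_y$ is the radial Laplacian one gets
\[
(\eps_{2},\td H_{Q}\eps_{2})_{r}=\|\rd_{y}\eps_{2}\|_{L^{2}}^{2}+\int\frac{\td V_{Q}}{y^{2}}|\eps_{2}|^{2}.
\]
The repulsivity bound $\td V_{Q}\geq m^{2}$ from \eqref{eq:H-td-Q-repul}, combined with $m\geq1$, then gives the claim. No orthogonality is needed here because $\td H_{Q}=A_{Q}A_{Q}^{\ast}$ is injective on $(m+2)$-equivariant functions.

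Part (2): the upper bounds are easy. Expanding $A_{Q}\eps_{1}=\rd_{y}\eps_{1}-\tfrac{m+1+A_{\tht}[Q]}{y}\eps_{1}$ and using that $A_{\tht}[Q]$ is bounded (together with the Hardy--Sobolev inequality \eqref{eq:HardySobolevSection2} and its iterated versions \eqref{eq:GenHardySection2}) gives $\|A_{Q}\eps_{1}\|_{L^{2}}\aleq\|\eps_{1}\|_{\dot H_{m+1}^{1}}$ and the analogous upper bounds for the $\dot H^{1}_{m+2}$- and $\dot V^{3/2}_{m+2}$-side. For the lower bound in \eqref{eq:AQ-coer-L2-to-H1}, I would first prove the unconditional sub-coercivity
\[
\|\eps_{1}\|_{\dot H_{m+1}^{1}}^{2}\aleq\|A_{Q}\eps_{1}\|_{L^{2}}^{2}+\int\chf_{y\leq R}|\eps_{1}|^{2}
\]
for some large $R$, using $\|A_{Q}\eps_{1}\|_{L^{2}}^{2}=\|\rd_{y}\eps_{1}\|_{L^{2}}^{2}+\int\tfrac{V_{Q}}{y^{2}}|\eps_{1}|^{2}$ together with the facts that $V_{Q}\to(m+1)^{2}$ as $y\to0,\infty$ and that its (compactly supported) negative part can be absorbed into the local $L^{2}$ term. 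Then I would upgrade to full coercivity by contradiction: a sequence $\eps_{1}^{(n)}$ with $\|\eps_{1}^{(n)}\|_{\dot H_{m+1}^{1}}=1$, orthogonal to $\td\calZ_{3},\td\calZ_{4}$, and with $\|A_{Q}\eps_{1}^{(n)}\|_{L^{2}}\to 0$, has, by Rellich, an $L^{2}_{\loc}$-strong limit $\eps_{1}^{\infty}$ satisfying $A_{Q}\eps_{1}^{\infty}=0$, hence $\eps_{1}^{\infty}\in\mathrm{span}_{\bbR}\{yQ,iyQ\}$. The transferred orthogonality and the non-degeneracy of the matrix $(a_{kj})_{k,j\in\{3,4\}}$ force $\eps_{1}^{\infty}=0$, contradicting the sub-coercivity lower bound from $\|\eps_1^{(n)}\|_{\dot H^1_{m+1}}=1$.

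Part (3) is entirely parallel. The upper bounds come from the formula $L_{Q}\eps=\bfD_{Q}\eps-\tfrac{2}{y}A_{\tht}[Q,\eps]Q$ (the nonlocal piece being a rank-one smoothing operator controlled by $\|\eps\|_{L^{2}}$), plus Hardy. Sub-coercivity follows from $L_{Q}^{\ast}L_{Q}=\calL_{Q}$ and the self-dual identity \eqref{eq:calLQ}, together with the linearized-energy expansion \eqref{eq:linearized-energy-expn} and coercivity of $\calL_Q$ on the orthogonal complement of $\mathrm{span}_{\bbR}\{\Lmb Q,iQ\}$ (which is standard once one knows the explicit kernel \eqref{eq:gen-kernel-rel}). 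The compactness step concludes as in part (2), with the kernel now being $\mathrm{span}_{\bbR}\{\Lmb Q,iQ\}$ and the non-degeneracy hypothesis on $(a_{kj})_{k,j\in\{1,2\}}$ ruling out the limit. For the higher-regularity statements \eqref{eq:AQ-coer-H1-to-H2}, \eqref{eq:AQ-coer-V}, \eqref{eq:LQ-coer-H1-to-H2}, \eqref{eq:LQ-coer-H2-to-H3}, I would apply the $L^{2}$ coercivity already obtained to $\rd_{y}(A_{Q}\eps_{1})$, $\rd_{y}(L_{Q}\eps)$, etc., and absorb the commutators $[A_{Q},\rd_{y}]$ and $[L_{Q},\rd_{y}]$ which are lower-order and localized.

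\paragraph{Main obstacle.} The subtle point is the higher-regularity coercivity at the low equivariance indices $m=1,2$, precisely the cases where the author introduces the bespoke norms $\dot\calH_{m}^{2}$, $\dot\calH_{m}^{3}$ with $\langle\log_{-}y\rangle$-losses near the origin. The difficulty is that the chain of Hardy inequalities \eqref{eq:GenHardySection2} degenerates once one exceeds the threshold $k=|m|$, so a naive commutator argument produces scaling-critical Hardy terms that cannot be absorbed. The norms $\dot\calH_{m}^{k}$ and $\dot V^{k}$ are engineered exactly so that the sub-coercivity survives with a logarithmic loss, and the factorisation $H_{Q}=A_{Q}^{\ast}A_{Q}$, $\calL_{Q}=L_{Q}^{\ast}L_{Q}$ keeps the loss on the same side of each inequality. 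Getting both directions of \eqref{eq:AQ-coer-V} and \eqref{eq:LQ-coer-H2-to-H3} with the precise $\langle\log_{-}y\rangle$-weight will be the main bookkeeping challenge; the rest is a routine compactness argument.
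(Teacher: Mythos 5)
Your general architecture --- subcoercivity estimates followed by a compactness argument using the kernel characterization and the non-degeneracy of the testing matrix --- is exactly the paper's: the proof goes through the subcoercivity Lemmas~\ref{lem:subcoer-LQ} and \ref{lem:subcoer-AQ} combined with what the paper calls ``a soft functional analytic argument.'' Part (1) and your $L^{2}\to\dot{H}^{1}$ subcoercivity for $A_Q$ via $\|A_Q\eps_1\|_{L^2}^2=\|\rd_y\eps_1\|_{L^2}^2+\int V_Q\,y^{-2}|\eps_1|^2$ are fine (the paper does a region decomposition there, but your route is a valid alternative at this level). There are, however, two genuine problems with the rest.

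First, the subcoercivity you propose for $L_Q$ is circular. You cite ``coercivity of $\calL_Q$ on the orthogonal complement of $\mathrm{span}_{\bbR}\{\Lmb Q,iQ\}$'' as a known input, but by the self-dual identity \eqref{eq:calLQ} one has $(\eps,\calL_Q\eps)_r=\|L_Q\eps\|_{L^2}^2$; hence ``coercivity of $\calL_Q$'' is precisely the Hardy-type inequality $\|L_Q\eps\|_{L^2}\gtrsim\|\eps\|_{\dot{H}_{m}^{1}}$ that you are trying to prove. Self-duality gives nonnegativity for free, but since the potential in $\calL_Q$ decays, the essential spectrum reaches zero and there is no spectral gap to invoke; the lower bound must be established by hand, which is exactly the content of Lemma~\ref{lem:subcoer-LQ}.

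Second, the commutator approach to the higher Sobolev estimates does not close. You assert that $[\rd_y,A_Q]$ and $[\rd_y,L_Q]$ are ``lower-order and localized.'' Near $y=0$ one computes $[\rd_y,\bfD_Q]\eps=\big(\tfrac12 Q^2+\tfrac{m+A_\theta[Q]}{y^2}\big)\eps=\tfrac{m}{y^2}\eps+O(y^{2m}\eps)$, a \emph{critical} Hardy-weight term of exactly the same strength as the quantity being estimated, not a lower-order one. For $A_Q$ acting on $(m+1)$-equivariant data this term can still be absorbed because \eqref{eq:GenHardySection2} holds for $k\leq m+1$, hence for $k=2$ whenever $m\geq 1$. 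But for $L_Q$ acting on $m$-equivariant data the generalized Hardy inequality \eqref{eq:GenHardySection2} fails once $k>m$, which is $m=1$ for \eqref{eq:LQ-coer-H1-to-H2} and $m\in\{1,2\}$ for \eqref{eq:LQ-coer-H2-to-H3}. That is exactly why the statement is phrased in the modified norms $\dot{\calH}_{m}^{2}$, $\dot{\calH}_{m}^{3}$ with the $\langle\log_{-}y\rangle$-loss near the origin: surviving this critical commutator is the \emph{entire content} of Lemma~\ref{lem:subcoer-LQ}, not bookkeeping. The paper's mechanism replaces $\rd_y$ near the origin with the Cauchy--Riemann derivative $\rd_+=\rd_y-m/y$, for which the commutator with $L_Q$ is genuinely lower order, and then exploits the operator identities $\rd_{yy}=(\rd_y+y^{-1})\rd_+$ (for $m=1$) and $\rd_{yyy}=(\rd_y+y^{-1})^2\rd_+$ (for $m=2$) together with logarithmic Hardy to recover the precise $\dot{\calH}_{m}^{k}$-weights; near infinity the leading operator is $\rd_y+(m+2)/y$, where ordinary Hardy applies, and the intermediate region is handled by interpolation. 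Your plan needs a mechanism of this type; the naive $\rd_y$-commutator produces terms it cannot absorb.
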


See Appendix~\ref{sec:Linear-coercivity-estimates}. Some of these
estimates are already proved in \cite{KimKwon2023AnnPDE}.
\begin{rem}[Choice of $\calZ_{1},\calZ_{2},\td{\calZ}_{3},\td{\calZ}_{4}$]
\label{rem:Ortho-choice}In this work, we fix the orthogonality profiles
$\calZ_{1},\calZ_{2}\in C_{c,m}^{\infty}$ and $\td{\calZ}_{3},\td{\calZ}_{4}\in C_{c,m+1}^{\infty}$
satisfying the following properties. We require for $\calZ_{1},\calZ_{2}\in C_{c,m}^{\infty}$
that $\calZ_{1}$ is real, $\calZ_{2}$ is purely imaginary, and $\calZ_{1},\calZ_{2}$
satisfy the transversality condition 
\begin{equation}
(\Lambda Q,\calZ_{1})_{r},\ (iQ,\calZ_{2})_{r}\neq0\label{eq:Z1Z2-transv}
\end{equation}
and a gauge condition\emph{ }
\begin{equation}
(-i\tfrac{y^{2}}{4}Q,\calZ_{k})_{r}=(\rho,\calZ_{k})_{r}=0,\qquad\forall k\in\{1,2\}.\label{eq:Z1Z2-gauge-condition}
\end{equation}
Next, we fix $\td{\calZ}_{3},\td{\calZ}_{4}\in C_{c,m+1}^{\infty}$
such that 
\begin{equation}
\td{\calZ}_{3}\coloneqq-i\tfrac{1}{2}yQ\chi_{R_{\circ}}\qquad\text{and}\qquad\td{\calZ}_{4}\coloneqq-\tfrac{1}{2}yQ\chi_{R_{\circ}},\label{eq:def-td-Z3Z4}
\end{equation}
where $R_{\circ}>1$ is some radius such that 
\begin{equation}
\|\chf_{[R_{\circ},\infty)}yQ\|_{L^{2}}\leq\tfrac{1}{2}\|yQ\|_{L^{2}}\label{eq:def-R-circ}
\end{equation}
and the supports of $\calZ_{1}$ and $\calZ_{2}$ are contained in
$y\leq2R_{\circ}$. It is clear that Lemma~\ref{lem:LinearCoercivity}
is applicable with this choice of orthogonality profiles.
\end{rem}

\subsection{Weighted $L^{\infty}$ and integral estimates}

In this subsection, we record weighted $L^{\infty}$ and integral
estimates, which will be very useful in our analysis.
\begin{lem}[Weighted $L^{\infty}$ estimates]
\label{lem:weighted-Linfty-est}For $m$-equivariant functions $\eps$,
we have 
\begin{equation}
\|\eps\|_{L^{\infty}}\aleq\|\eps\|_{\dot{H}_{m}^{1}},\quad\|\langle y\rangle^{-1}\eps\|_{L^{\infty}}\aleq\|\eps\|_{\dot{\calH}_{m}^{2}},\quad\|\langle y\rangle^{-2}\eps\|_{L^{\infty}}\aleq\|\eps\|_{\dot{\calH}_{m}^{3}}.\label{eq:eps-Linfty}
\end{equation}
For $(m+1)$-equivariant functions $\eps_{1}$, we have 
\begin{equation}
\begin{aligned} & \|\eps_{1}\|_{L^{\infty}}\aleq\|\eps_{1}\|_{\dot{H}_{m+1}^{1}},\qquad\||\eps_{1}|_{-1}\|_{L^{\infty}}\aleq\|\eps_{1}\|_{\dot{H}_{m+1}^{2}},\\
 & \|\langle y\rangle^{-3/4}\rd_{y}\eps_{1}\|_{L^{\infty}}+\|\langle y\rangle^{-3/2}\tfrac{1}{y}\eps_{1}\|_{L^{\infty}}\aleq\|\eps_{1}\|_{\dot{V}_{m+1}^{5/2}}.
\end{aligned}
\label{eq:eps1-Linfty}
\end{equation}
For $(m+2)$-equivariant functions $\eps_{2}$, we have 
\begin{equation}
\|\eps_{2}\|_{L^{\infty}}\aleq\|\eps_{2}\|_{\dot{H}_{m+2}^{1}},\qquad\|\langle y\rangle^{-3/4}\eps_{2}\|_{L^{\infty}}\aleq\|\eps_{2}\|_{\dot{V}_{m+1}^{3/2}}.\label{eq:eps2-Linfty}
\end{equation}
\end{lem}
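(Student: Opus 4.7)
The plain $L^\infty$ bounds in all three displays are immediate from the Hardy--Sobolev inequality \eqref{eq:HardySobolevSection2}: since $m \geq 1$, each of the equivariance indices $m$, $m+1$, $m+2$ is $\geq 1$, so $\dot H^1_k \embed L^\infty$ applies at every level. This dispatches $\|\eps\|_{L^\infty}\aleq\|\eps\|_{\dot H^1_m}$, $\|\eps_1\|_{L^\infty}\aleq\|\eps_1\|_{\dot H^1_{m+1}}$, and $\|\eps_2\|_{L^\infty}\aleq\|\eps_2\|_{\dot H^1_{m+2}}$.

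For the higher-regularity bounds, such as $\||\eps_1|_{-1}\|_{L^\infty}\aleq\|\eps_1\|_{\dot H^2_{m+1}}$ and the two weighted estimates for $\eps$ in \eqref{eq:eps-Linfty}, I plan to split $|\eps_1|_{-1}=\max\{|\rd_y\eps_1|,|\eps_1/y|\}$ (and similarly for $|\eps|_{-k}$) and treat each component separately. Each piece admits an equivariance interpretation: $\rd_+\eps_1=\rd_y\eps_1-\frac{m+1}{y}\eps_1$ is $(m+2)$-equivariant, and $\eps_1/y$ has the same near-origin vanishing as an $m$-equivariant function. The generalized Hardy inequality \eqref{eq:GenHardySection2} transfers the $\dot H^2_{m+1}$-control of $\eps_1$ to $\dot H^1$-type control of each component, after which Hardy--Sobolev yields the $L^\infty$ bound. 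The two weighted bounds for $\eps$ by $\|\eps\|_{\dot\calH^2_m}$ and $\|\eps\|_{\dot\calH^3_m}$ are handled in the same spirit after splitting into $y\aleq1$ and $y\ageq1$: in the inner region the weights $\langle y\rangle^{-1}$ and $\langle y\rangle^{-2}$ are $\sim 1$ and the equivariant vanishing of $\eps$ at the origin makes Hardy--Sobolev applicable, while in the outer region I invoke the inequalities $\|\chf_{y\geq 1}|\eps|_{-k}\|_{L^2}\aleq\|\eps\|_{\dot\calH^k_m}$ noted just below the definition of the $\dot\calH^k_m$-norms. The logarithmic weights in \eqref{eq:def-calH2}--\eqref{eq:def-calH3} for small $m$ are precisely engineered to cover the borderline of Hardy's inequality at low equivariance.

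The most delicate bounds are the half-derivative weighted estimates $\|\langle y\rangle^{-3/4}\eps_2\|_{L^\infty}\aleq\|\eps_2\|_{\dot V^{3/2}_{m+2}}$ and the $\dot V^{5/2}_{m+1}$-bound for $\eps_1$. For the $\eps_2$ estimate I would apply a weighted fundamental theorem of calculus,
\[
\langle y\rangle^{-3/2}|\eps_2(y)|^2 \;=\; -\int_y^\infty \rd_s\bigl(\langle s\rangle^{-3/2}|\eps_2(s)|^2\bigr)\,ds,
\]
expand the derivative into a term of order $\langle s\rangle^{-5/2}s|\eps_2|^2$ and a cross term $\langle s\rangle^{-3/2}|\eps_2||\rd_s\eps_2|$, and estimate each by Cauchy--Schwarz after distributing the weights so that one factor belongs to $\|\rd_y\eps_2/\langle y\rangle\|_{L^2}$ and the other to $\|\eps_2/(\langle y\rangle^{1/2}y)\|_{L^2}$, the two building blocks of \eqref{eq:def-V3/2}. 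The $\dot V^{5/2}$-bound for $\eps_1$ reduces to this case by applying the same weighted FTOC separately to $\rd_y\eps_1$ (using $\rd_{yy}\eps_1/\langle y\rangle$ and $|\rd_y\eps_1|/(\langle y\rangle^{1/2}y)\leq|\eps_1|_{-1}/(\langle y\rangle^{1/2}y)$) and to $\eps_1/y$.

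The main technical point is the weight bookkeeping in the last paragraph: the exponent on $\langle y\rangle$ in the weighted FTOC must be chosen so that the pointwise target, the surviving $|\eps|^2$ term after differentiation, and the Cauchy--Schwarz split of the cross term all fit precisely into the two ingredients of the $\dot V^{3/2}$-norm. Beyond this, the lemma is a compilation of standard weighted Sobolev embedding techniques adapted to the equivariant setting.
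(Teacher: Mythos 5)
Your plan matches the paper's: the author derives all of these from the fundamental theorem of calculus, integrated backwards from spatial infinity and combined with Cauchy--Schwarz, and only writes out the $\dot V^{5/2}_{m+1}$-bound for $\eps_1$, which is done by applying the FTC separately to $\rd_y\eps_1$ and to $\eps_1/y$ exactly as you propose; the Hardy--Sobolev route to the plain $L^\infty$ bounds is the same thing in disguise.

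However, the exponent bookkeeping you single out as the main technical point contains a concrete slip. You claim that differentiating $\langle s\rangle^{-3/2}|\eps_2|^2$ produces a term ``of order $\langle s\rangle^{-5/2}s|\eps_2|^2$.'' In fact $\rd_s\langle s\rangle^{-3/2}=-\tfrac{3}{2}s\langle s\rangle^{-7/2}$, which is bounded by $\langle s\rangle^{-5/2}$ but \emph{not} by $s\langle s\rangle^{-5/2}$; your expression carries an extraneous factor of $s$. This is not cosmetic: $\int_y^\infty s\langle s\rangle^{-5/2}|\eps_2|^2\,ds$ is \emph{not} controlled by $\|\eps_2/(\langle y\rangle^{1/2}y)\|_{L^2}^2$, since the required pointwise comparison $s^2\aleq\langle s\rangle^{3/2}$ fails as $s\to\infty$, whereas the correct term $s\langle s\rangle^{-7/2}|\eps_2|^2\leq|\eps_2|^2/(\langle s\rangle s)$ integrates to the right thing. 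Note also that this surviving no-derivative term is handled by a direct pointwise comparison to the second building block of $\dot V^{3/2}_{m+2}$, not by a Cauchy--Schwarz split; only the cross term with $\rd_s\eps_2$ needs that. The paper avoids the weight-derivative bookkeeping altogether by establishing $\rd_y\bigl(\text{(weight)}\,|\eps_1|^2\bigr)\aleq\text{(weight)}\cdot|\eps_1|_{-1}|\eps_1|$ in one step, absorbing the weight loss into the $|\eps_1|_{-1}$ factor via $|\rd_y(\text{weight})|\aleq\text{(weight)}/y$ and $|\eps_1|/y\leq|\eps_1|_{-1}$; adopting that packaging makes the argument cleaner and less error-prone.
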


\begin{rem}
The controls in the region $y\aleq1$ are not sharp. It is also possible
to control higher derivatives of $\eps$. See for example \cite[Appendix A]{KimKwon2023AnnPDE}.
We will not need such sharp controls in this paper.
\end{rem}

\begin{proof}
All the estimates easily follow from the FTC argument. We only prove
the last inequality of \eqref{eq:eps1-Linfty}, which is the most
complicated, and omit the remaining proof. Integrating the inequalities
\begin{align*}
\rd_{y}(\langle y\rangle^{-3}y^{-2}|\eps_{1}|^{2}) & \aleq\langle y\rangle^{-3}y^{-2}\cdot|\eps_{1}|_{-1}|\eps_{1}|\qquad\text{and}\\
\rd_{y}(\langle y\rangle^{-3/2}|\rd_{y}\eps_{1}|^{2}) & \aleq\langle y\rangle^{-3/2}\cdot|\rd_{y}\eps_{1}|_{-1}|\rd_{y}\eps_{1}|
\end{align*}
backwards from the spatial infinity and using Cauchy--Schwarz, we
get 
\begin{align*}
 & \langle y\rangle^{-3}y^{-2}|\eps_{1}|^{2}\aleq\|\langle y\rangle^{-3/2}y^{-1}\cdot|\eps_{1}|_{-1}\|_{L^{2}(ydy)}\|\langle y\rangle^{-3/2}y^{-1}\cdot\tfrac{1}{y}\eps_{1}\|_{L^{2}(ydy)}\aleq\|\eps_{1}\|_{\dot{V}_{m+1}^{5/2}}^{2},\\
 & \langle y\rangle^{-3/2}|\rd_{y}\eps_{1}|^{2}\aleq\|\langle y\rangle^{-1}\cdot|\rd_{y}\eps_{1}|_{-1}\|_{L^{2}(ydy)}\|\langle y\rangle^{-1/2}\cdot\tfrac{1}{y}\rd_{y}\eps_{1}\|_{L^{2}(ydy)}\aleq\|\eps_{1}\|_{\dot{V}_{m+1}^{5/2}}^{2},
\end{align*}
which complete the proof of the last inequality of \eqref{eq:eps1-Linfty}.
\end{proof}
\begin{lem}[Mapping properties for integral operators \cite{KimKwonOh2022arXiv1}]
Let $1\leq p,q\leq\infty$ and $s\in[0,2]$ be such that $(p,q,s)=(1,\infty,0)$
or $\frac{1}{q}+1=\frac{1}{p}+\frac{s}{2}$ with $p>1$. Then, we
have 
\begin{equation}
\Big\|\frac{1}{y^{s}}\int_{0}^{y}f(y')y'dy'\Big\|_{L^{q}}\aleq_{p}\|f\|_{L^{p}}.\label{eq:integral-operator-bdd1}
\end{equation}
Denoting by $p'$ and $q'$ the Hölder conjugates of $p$ and $q$,
respectively, we have the dual estimate 
\begin{equation}
\Big\|\int_{y}^{\infty}g(y')(y')^{1-s}dy'\Big\|_{L^{p'}}\aleq_{p}\|f\|_{L^{q'}}.\label{eq:integral-operator-bdd2}
\end{equation}
\end{lem}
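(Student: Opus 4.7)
The claim is a scaling-critical weighted inequality on the half-line with the $2$D Lebesgue measure $y\,dy$. My plan is to dispose of the trivial endpoint $(p,q,s)=(1,\infty,0)$ by a direct bound and reduce the remaining case $p>1$ to a standard convolution estimate on $\bbR$ via a logarithmic change of variables, with the dual estimate then following by real-inner-product duality.

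For $(p,q,s)=(1,\infty,0)$ the claim reads $\sup_y |\int_0^y f(y')y'\,dy'| \leq \|f\|_{L^1(y\,dy)}$, which is immediate from absolute values. For $p>1$ I would substitute $y=e^t$ and set $F(t)\coloneqq e^{2t/p}f(e^t)$ so that $\|F\|_{L^p(dt)}=\|f\|_{L^p(y\,dy)}$; analogously, writing $Tf(y)\coloneqq y^{-s}\int_0^y f(y')y'\,dy'$ and $G(t)\coloneqq e^{2t/q}(Tf)(e^t)$ one has $\|G\|_{L^q(dt)}=\|Tf\|_{L^q(y\,dy)}$. A direct computation expresses $G(t)$ as an integral of $F(\tau)$ over $\tau\leq t$ with exponential weight $e^{(2/q-s)t+2\tau/p'}$, and the scaling hypothesis $\frac{1}{q}+1=\frac{1}{p}+\frac{s}{2}$ is precisely the condition under which this weight becomes a function of $t-\tau$ alone. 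One then obtains the convolution identity $G=K*F$ with nonnegative kernel $K(\sigma)=e^{-2\sigma/p'}\chf_{\sigma\geq 0}$. Since $p>1$ forces $p'<\infty$ we have $K\in L^r(dt)$ for every $r\in[1,\infty]$, and the constraint $s\leq 2$ translates to $q\geq p$; hence Young's convolution inequality with $1/r=1+1/q-1/p\in[0,1]$ delivers $\|G\|_{L^q}\leq\|K\|_{L^r}\|F\|_{L^p}$, which is exactly the first bound.

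For the dual estimate I would observe via Fubini that the operator $T^{\ast}g(y')\coloneqq\int_{y'}^\infty g(y)y^{1-s}\,dy$ is the formal adjoint of $T$ with respect to the real inner product on $L^2(y\,dy)$, so that the boundedness $T^{\ast}\colon L^{q'}(y\,dy)\to L^{p'}(y\,dy)$ follows from the first part by duality with the same operator norm. Alternatively one can rerun the same logarithmic substitution applied to $T^{\ast}$ directly, producing a convolution kernel supported on $\sigma\leq 0$ with the roles of $p,q$ swapped; Young's inequality then applies in the same way.

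I anticipate no genuine obstacle in this proof — the only point that requires care is the bookkeeping to verify that the scaling relation $\frac{1}{q}+1=\frac{1}{p}+\frac{s}{2}$ is exactly what makes the kernel translation-invariant after the logarithmic substitution. Once that is observed, the entire lemma reduces to Young's inequality plus duality, with the endpoint $p=1$ handled separately by inspection.
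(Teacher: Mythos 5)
The paper does not include a proof of this lemma; it is cited from \cite{KimKwonOh2022arXiv1}, so there is no in-text argument to compare against. Your proposal is nonetheless correct, and the logarithmic substitution plus Young's inequality is a clean, standard route for such scale-critical Hardy-type estimates. The calculations check out: with $y=e^t$ and $F(t)=e^{2t/p}f(e^t)$, $G(t)=e^{2t/q}(Tf)(e^t)$, one indeed finds $G=K*F$ with $K(\sigma)=e^{-2\sigma/p'}\chf_{\sigma\geq 0}$, and the condition $\frac{1}{q}+1=\frac{1}{p}+\frac{s}{2}$ is exactly what cancels the residual $\tau$-dependence; $p>1$ gives $p'<\infty$ so $K\in L^r$ for every $r$, while $s\leq 2\Leftrightarrow q\geq p$ guarantees the Young exponent $\frac{1}{r}=1+\frac{1}{q}-\frac{1}{p}$ lies in $[0,1]$. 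The duality step is also correct: Fubini shows $g\mapsto\int_y^\infty g(y')(y')^{1-s}\,dy'$ is the $L^2(y\,dy)$-adjoint of $T$, and the endpoint $(p,q)=(1,\infty)$, where $K$ fails to be integrable, is handled separately by the trivial $L^1\to L^\infty$ bound as you note. One small remark worth including for completeness: the hypothesis $q\in[1,\infty]$ implicitly also constrains $\frac{1}{p}+\frac{s}{2}\geq 1$ (so that $\frac{1}{q}\geq 0$); your argument tacitly uses this, which is fine since it is built into the statement.
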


\begin{rem}
We will often use the above estimates when $p,q\in\{1,2,\infty\}$.
\end{rem}

\begin{rem}
If a nonnegative weight $w=w(y)$ is decreasing, then one can have
a weighted version of \eqref{eq:integral-operator-bdd1}. For example,
we can have 
\[
\Big\|\frac{w(y)}{y^{s}}\int_{0}^{y}f(y')\,y'dy'\Big\|_{L^{q}}\leq\Big\|\frac{1}{y^{s}}\int_{0}^{y}w(y')|f(y')|\,y'dy'\Big\|_{L^{q}}\aleq_{p}\|wf\|_{L^{p}}.
\]
Similar argument works for \eqref{eq:integral-operator-bdd2} if $w$
is increasing. We will implicitly use this simple argument in many
of the estimates.
\end{rem}

\subsection{Proximity to soliton for small energy profiles}

We recall a variational lemma from \cite{KimKwonOh2022arXiv1}, which
provides the starting point of our modulation analysis.
\begin{lem}[{Proximity to $Q$ for small energy profiles \cite[Lemma 4.2]{KimKwonOh2022arXiv1}}]
\label{lem:orbital-stability-small-energy}For any $M>0$ and $\alpha^{\ast}>0$,
there exists $\delta>0$ such that\footnote{Compared to \cite[Lemma 4.2]{KimKwonOh2022arXiv1}, we switched the
roles of $\alpha^{\ast}$ and $\delta$. This is because $\alpha^{\ast}$
in this paper will be used as a small parameter measuring the $\dot{H}_{m}^{1}$-distance
between the renormalized solution and the soliton $Q$.} the following holds. Let $u\in H_{m}^{1}$ be a nonzero profile satisfying
$\|u\|_{L^{2}}\leq M$ and the small energy condition $\sqrt{E[u]}\leq\delta\|u\|_{\dot{H}_{m}^{1}}$.
Then, there exists $\wh{\gmm}\in\bbR/2\pi\bbZ$ such that 
\begin{equation}
\|e^{-i\wh{\gmm}}\wh{\lmb}u(\wh{\lmb}\cdot)-Q\|_{\dot{H}_{m}^{1}}<\alpha^{\ast},\label{eq:orbital-small-en}
\end{equation}
where $\wh{\lmb}=\|Q\|_{\dot{H}_{m}^{1}}/\|u\|_{\dot{H}_{m}^{1}}$.
\end{lem}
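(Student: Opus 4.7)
Argue by contradiction. Suppose the conclusion fails: there exist $M, \alpha^{\ast} > 0$ and a sequence $(u_{n}) \subset H_{m}^{1} \setminus \{0\}$ with $\|u_{n}\|_{L^{2}} \leq M$ and $\sqrt{E[u_{n}]} / \|u_{n}\|_{\dot{H}_{m}^{1}} \to 0$, yet $\inf_{\wh{\gmm}} \|e^{-i\wh{\gmm}} \wh{\lmb}_{n} u_{n}(\wh{\lmb}_{n} \cdot) - Q\|_{\dot{H}_{m}^{1}} \geq \alpha^{\ast}$. Setting $v_{n}(y) \coloneqq \wh{\lmb}_{n} u_{n}(\wh{\lmb}_{n} y)$ and using that this $L^{2}$-critical rescaling preserves $\|\cdot\|_{L^{2}}$ and scales both $E$ and $\|\cdot\|_{\dot{H}_{m}^{1}}^{2}$ by the same factor $\wh{\lmb}_{n}^{2}$, I obtain a sequence bounded in $H_{m}^{1}$ satisfying $\|v_{n}\|_{\dot{H}_{m}^{1}} = \|Q\|_{\dot{H}_{m}^{1}}$, $\|v_{n}\|_{L^{2}} \leq M$, and $E[v_{n}] \to 0$.

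Next I extract a weak limit $v_{n} \weakto v_{\infty}$ in $H_{m}^{1}$ along a subsequence and identify it via the Bogomol'nyi equation. The Hardy--Sobolev inequality (\ref{eq:HardySobolevSection2}) gives a uniform $L^{\infty}$-bound on $v_{n}$, and Rellich compactness for equivariant $H^{1}$-functions in two dimensions upgrades the weak convergence to strong convergence in $L^{p}_{\loc}$ for every $p < \infty$, hence to pointwise convergence a.e. Since $A_{\tht}[v_{n}]$ is uniformly bounded and, by dominated convergence, converges pointwise to $A_{\tht}[v_{\infty}]$, one passes to the limit in $\bfD_{v_{n}} v_{n}$; combined with $\|\bfD_{v_{n}} v_{n}\|_{L^{2}}^{2} = 2 E[v_{n}] \to 0$, this gives the distributional identity $\bfD_{v_{\infty}} v_{\infty} = 0$. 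By the classification of $m$-equivariant finite-energy solutions to (\ref{eq:Bogomol'nyi-eq}) together with the explicit formula (\ref{eq:Q-formula}), either $v_{\infty} = 0$ or $v_{\infty}(y) = e^{i\gmm_{0}} \lmb_{0}^{-1} Q(y/\lmb_{0})$ for some $\lmb_{0} > 0$ and $\gmm_{0} \in \bbR/2\pi\bbZ$.

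The main obstacle, and where the real work lies, is to simultaneously exclude the vanishing case $v_{\infty} = 0$ and to pin down $\lmb_{0} = 1$ in the nontrivial case, i.e., to promote the weak convergence to strong convergence at the correct scale. For this I invoke an $L^{2}$-critical profile decomposition adapted to the equivariant setting (no translation modes arise),
\[
v_{n} = \sum_{j=1}^{J} e^{i\gmm_{j}^{n}} (\lmb_{j}^{n})^{-1} \phi_{j}(\cdot/\lmb_{j}^{n}) + r_{n}^{J},
\]
with the scales $\{\lmb_{j}^{n}\}_{j}$ asymptotically pairwise orthogonal and the usual $L^{2}$/$\dot{H}_{m}^{1}$-asymptotic decoupling of the remainder. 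Passing to the limit in each rescaled and derotated subsequence and using $E[v_{n}] \to 0$ forces each profile to satisfy $\bfD_{\phi_{j}} \phi_{j} = 0$, so $\phi_{j} \in \{0\} \cup \{e^{i\alpha} Q : \alpha \in \bbR\}$. The $L^{2}$-mass bound $\sum_{j} \|\phi_{j}\|_{L^{2}}^{2} \leq M^{2}$ then caps the number of nonzero profiles, while the $\dot{H}_{m}^{1}$-identity $\|Q\|_{\dot{H}_{m}^{1}}^{2} = \sum_{j} (\lmb_{j}^{n})^{-2} \|Q\|_{\dot{H}_{m}^{1}}^{2} + \|r_{n}^{J}\|_{\dot{H}_{m}^{1}}^{2} + o_{n}(1)$ leaves room for exactly one bubble with $\lmb_{1}^{n} \to 1$ and a remainder that vanishes in $\dot{H}_{m}^{1}$. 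Consequently $v_{n} \to e^{i\gmm_{\infty}} Q$ strongly in $\dot{H}_{m}^{1}$ along a subsequence, and taking $\wh{\gmm} = \gmm_{\infty}$ contradicts the assumed lower bound $\alpha^{\ast}$.
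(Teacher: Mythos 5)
Your overall strategy—renormalize to fix the $\dot{H}_m^1$-norm, argue by contradiction, extract a weak limit, identify it via the Bogomol'nyi equation, and use a bubble decomposition to exclude loss of compactness—is the right one, and the scaling computations in your first paragraph are correct. However, the core of the argument has two genuine gaps, both of which concern the nonlocality of $A_\theta$, which is precisely the structural feature the paper emphasizes.

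First, the assertion that ``each profile satisfies $\bfD_{\phi_j}\phi_j = 0$'' is not true in general. Suppose the scales are ordered $\lambda_1^n \ll \lambda_2^n \ll \cdots$. If you rescale $v_n$ at a scale $\lambda_j^n$ that is \emph{not} the finest, the rescaled function carries, concentrated at the origin, the $L^2$-mass of all finer bubbles $k < j$. Because $A_\theta[w](y) = -\frac{1}{2}\int_0^y |w|^2 y'\,dy'$ feels this concentrated mass, the pointwise limit of $A_\theta$ acquires an additive constant $-c_j \le 0$, and the distributional limit equation reads $\partial_y\phi_j - \frac{(m - c_j) + A_\theta[\phi_j]}{y}\phi_j = 0$—a Bogomol'nyi equation with a \emph{shifted} equivariance index $m - c_j$, not the original one. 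This shift is not a technical annoyance to be waved away: it is exactly what excludes multi-bubble configurations, since a nonzero finest bubble $Q$ has mass $M[Q] = 8\pi(m+1)$, so the next bubble sees an effective index $m - (2m+2) = -(m+2) < 0$, and the Bogomol'nyi equation has no finite-energy solutions with negative index. By instead asserting $\phi_j \in \{0\}\cup\{e^{i\alpha}Q\}$ for every $j$, you bypass the one observation that actually does the work (and the paper explicitly flags it: ``this turns out to be a consequence of two distinguished features of (CSS): \emph{self-duality} and \emph{non-locality}'').

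Second, the concluding step—that the $\dot{H}_m^1$-decoupling ``leaves room for exactly one bubble with $\lambda_1^n \to 1$ and a remainder that vanishes in $\dot{H}_m^1$''—does not follow from what you wrote. The Pythagorean identity $\|Q\|_{\dot{H}_m^1}^2 = \sum_j(\lambda_j^n)^{-2}\|Q\|_{\dot{H}_m^1}^2 + \|r_n^J\|_{\dot{H}_m^1}^2 + o_n(1)$ is perfectly consistent with a single bubble at scale $\lambda_1^n \to \lambda_1 > 1$ and a remainder with $\|r_n^J\|_{\dot{H}_m^1}^2 \to (1 - \lambda_1^{-2})\|Q\|_{\dot{H}_m^1}^2 > 0$; a standard profile decomposition only gives smallness of the remainder in a weaker (e.g.\ $L^\infty$- or Strichartz-type) norm, not in $\dot{H}_m^1$. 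To rule this out one needs a coercivity argument showing that a nonvanishing $\dot{H}_m^1$-remainder forces $E[v_n]$ to stay bounded away from zero; this is essentially a nonlinear Hardy inequality of the type (\ref{eq:nonlin-coer-to-use}) applied \emph{after} the bubble is extracted, and it requires care because the nonlocal bilinear term $\frac{2}{y}A_\theta[Q,\eps]\eps$ is not small when only $\|\eps\|_{L^2}$ is controlled. Neither this step nor the preceding one is present in your argument.
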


\begin{rem}
Notice that we allow profiles having arbitrarily large mass and hence
the use of the $L^{2}$-norm is prohibited in \eqref{eq:orbital-small-en}.
The above lemma is a consequence of the self-duality \eqref{eq:energy-self-dual-form}.
More precisely, it is a consequence of the nonnegativity of energy
and the uniqueness of solutions to the Bogomol'nyi equation \eqref{eq:Bogomol'nyi-eq}
up to scaling and phase rotation invariances. For the mass-critical
NLS, the analogous statement only holds under additional restriction
on mass such as $\|\psi\|_{L^{2}}<\|R\|_{L^{2}}+\delta$ for the ground
state $R$ and in that case the $L^{2}$-distance can also be made
small.
\end{rem}

\begin{rem}
\label{rem:blow-up-vicinity}Any $H_{m}^{1}$ finite-time blow-up
solutions to \eqref{eq:CSS-m-equiv} eventually enters the vicinity
of modulated solitons. Indeed, if $u(t)$ is a $H_{m}^{1}$-solution
that blows up at $T<+\infty$, then $\|u(t)\|_{\dot{H}_{m}^{1}}\to\infty$
as $t\to T$ by the standard Cauchy theory. Therefore, for any $\delta>0$
the small energy condition $\sqrt{E[u]}\leq\delta\|u\|_{\dot{H}_{m}^{1}}$
eventually holds by energy conservation. Note that $\|u(t)\|_{L^{2}}$
remains bounded by mass conservation.
\end{rem}

\section{\label{sec:Modified-profiles}Modified profiles}

For the modulation analysis in the next section, we will decompose
the full solution $u(t)$ of the form 
\[
u(t,r)=\frac{e^{i\gmm(t)}}{\lmb(t)}[P(\cdot;b(t),\eta(t))+\eps(t,\cdot)]\Big(\frac{r}{\lmb(t)}\Big).
\]
(In fact, we decompose $u,u_{1},u_{2}$ as in \eqref{eq:str-decom}).
In this section, we aim to construct the profiles $P(\cdot;b,\eta)$,
which we call the \emph{modified profiles}, and derive formal modulation
equations for $b$ and $\eta$. Note that the four parameters $\lmb,\gmm,b,\eta$
correspond to the basis $\{\Lmb Q,iQ,-i\frac{y^{2}}{4}Q,\rho\}$ of
the formal generalized kernel of $i\calL_{Q}$. $P$ will be chosen
such that $P\approx Q$, $\rd_{b}P\approx-i\frac{y^{2}}{4}Q$, and
$\rd_{\eta}P\approx-(m+1)\rho$.

Basically, we will perform the tail computations in the spirit of
\cite{RaphaelRodnianski2012Publ.Math.,MerleRaphaelRodnianski2013InventMath,MerleRaphaelRodnianski2015CambJMath}
(and \cite{KimKwonOh2020arXiv} for \eqref{eq:CSS-m-equiv} when $m=0$).
We assume the \emph{adiabatic ansatz} 
\[
\frac{\lmb_{s}}{\lmb}+b=0\qquad\text{and}\qquad\gmm_{s}\approx(m+1)\eta
\]
and assume the expansions of $b_{s}$, $\eta_{s}$, and $P$, which
roughly take the form 
\begin{align*}
b_{s} & =\tsum{k+\ell\geq2}{}c_{k,\ell}^{(b)}b^{k}\eta^{\ell},\\
\eta_{s} & =\tsum{k+\ell\geq2}{}c_{k,\ell}^{(\eta)}b^{k}\eta^{\ell},\\
P & =Q-ib\tfrac{y^{2}}{4}Q-(m+1)\eta\rho+\tsum{k+\ell\geq2}{}b^{k}\eta^{\ell}T_{k,\ell}
\end{align*}
with some scalars $c_{k,\ell}^{(b)}$, $c_{k,\ell}^{(\eta)}$, and
profiles $T_{k,\ell}=T_{k,\ell}(y)$. By the tail computations, we
seek for the choices of $c_{k,\ell}^{(b)}$ and $c_{k,\ell}^{(\eta)}$
such that $T_{k,\ell}$ grows the slowest.

We mention some observations in the profile construction of this work.
\begin{itemize}
\item In practice, we need to truncate the profiles $P$. In this work,
we will truncate the profiles at the scale $y=B_{1}=(b^{2}+\eta^{2})^{-1/2}$,
which is much further than the self-similar scale $y=b^{-1/2}$, and
\emph{this scale turns out to be the only scale that our analysis
works for $m\in\{1,2\}$}. This can be  explained as follows. As mentioned
in Section~\ref{subsec:Strat-Proof}, our basic strategy for the
rigidity theorem is to prove that the remainder part $\eps^{\sharp}$
remains regular up to the blow-up time, or more specifically $\eps^{\sharp}$
remains bounded in the $H^{3}$-topology. Thus the blow-up part $P^{\sharp}$
should contain the singular part of the solution $u$, which turns
out to be nontrivial when $m\in\{1,2\}$ by Theorem~\ref{thm:blow-up-rigidity}.
To recognize this from the blow-up part $P^{\sharp}$, the cutoff
scale must be $r\sim1$, which corresponds to $y\sim B_{1}$ in the
modulation dynamics.
\item When $m=1$, there are nontrivial \emph{cubic corrections} to the
modulation equations of $b$ and $\eta$. The profile ansatz from
\cite{KimKwon2023AnnPDE} as well as the modulation equations $b_{s}+b^{2}+\eta^{2}=0$
and $\eta_{s}=0$ are no longer correct; the tail computations suggest
$b_{s}+b^{2}+\eta^{2}+p_{3}^{(b)}=0$ and $\eta_{s}+p_{3}^{(\eta)}=0$
instead, for some nonzero cubic polynomials $p_{3}^{(b)}$ and $p_{3}^{(\eta)}$.
We believe in general that there are $(m+2)$-order corrections, which
are tied to the spatial decay rate of $Q$ as in \cite{RaphaelRodnianski2012Publ.Math.},
when $m\geq1$.
\end{itemize}
Additionally, we will adopt the framework of \cite{KimKwonOh2020arXiv}.
Not only do we look at the variable $w$, we also look at the variables
$w_{1}$ and $w_{2}$; thus in this section we will construct the
profiles $P,P_{1},P_{2}$ adapted to the variables $w,w_{1},w_{2}$.
This strategy well separates the roles of each evolution equation.
An important advantage in the profile construction is that the derivation
of $b_{s}$- and $\eta_{s}$-modulation equations become much more
direct at the level of the $w_{1}$-equation. Another advantage is
that $P$- and $P_{1}$-ansatz do not need to be as precise as the
$P_{2}$-ansatz; it suffices to construct $P$, $P_{1}$, and $P_{2}$,
up to quadratic, cubic, and quartic orders, respectively. At the same
time, $P_{1}$ and $P_{2}$ enjoy degeneracies; the $P_{1}$- and
$P_{2}$-expansions start from the linear and quadratic terms, respectively.

\subsection{\label{subsec:Old-profile-ansatz}Old profile ansatz}

In this subsection, we discuss the modified profiles as well as the
modulation dynamics observed in \cite{KimKwon2023MAMS,KimKwon2023AnnPDE},
from which pseudoconformal blow-up and rotational instability manifest.

In \cite{KimKwon2023MAMS,KimKwon2023AnnPDE}, the authors studied
the dynamics near pseudoconformal blow-up solutions. They used the
profiles of the form
\begin{equation}
P_{\mathrm{old}}(y;b,\eta)=\chi_{B}(y)e^{-ib\frac{y^{2}}{4}}Q^{(\eta)}(y),\label{eq:old-P-1}
\end{equation}
where $B=\infty$ (i.e., no cutoff) and $\eta\geq0$ in \cite{KimKwon2023MAMS}
and $0<b\ll1$, $|\eta|\ll b$, and $B=b^{-1/2}$ in \cite{KimKwon2023AnnPDE}.
The profile $Q^{(\eta)}(y)$ is constructed in a way that it solves
the \emph{modified Bogomol'nyi equation} \cite{KimKwon2023MAMS}:
\[
\bfD_{Q^{(\eta)}}Q^{(\eta)}=-\eta\tfrac{y}{2}Q^{(\eta)}
\]
on $y\in(0,\infty)$ if $\eta\geq0$ and $0<y\ll|\eta|^{-1/2}$ if
$\eta<0$. This nonlinear construction of the profile effectively
avoided the difficulties from nonlocal nonlinearities and yielded
the formal modulation equations 
\begin{equation}
\frac{\lmb_{s}}{\lmb}+b=0,\quad\gmm_{s}\approx(m+1)\eta,\quad b_{s}+b^{2}+\eta^{2}=0,\quad\eta_{s}=0.\label{eq:old-mod-eqn}
\end{equation}
Typical solutions to \eqref{eq:old-mod-eqn}, parametrized by $\eta_{0}\in\bbR$,
are given by 
\begin{equation}
\begin{gathered}b(t)=-t,\quad\eta(t)=\eta_{0},\\
\lmb(t)=(t^{2}+\eta_{0}^{2})^{1/2},\quad\gmm(t)=\begin{cases}
0 & \text{if }\eta_{0}=0,\\
\mathrm{sgn}(\eta_{0})(m+1)\tan^{-1}(\frac{t}{|\eta_{0}|}) & \text{if }\eta_{0}\neq0.
\end{cases}
\end{gathered}
\label{eq:formal-sol}
\end{equation}
When $\eta_{0}=0$, the solution represents the pseudoconformal blow-up
and exhibits no phase rotation. When $\eta_{0}\neq0$, no matter how
much small $|\eta_{0}|$ is, the solution does not blow up and rather
exhibits a quick phase rotation (on the time interval of length $\sim|\eta_{0}|$)
by the fixed amount of angle $(m+1)\pi$ (\emph{rotational instability}).

Keeping the dynamics \eqref{eq:formal-sol} in mind, by varying $\eta_{0}\geq0$
and using the backward construction, the authors in \cite{KimKwon2023MAMS}
constructed one-parameter families of solutions exhibiting rotational
instability. Using the forward construction and the shooting method
for $\eta$, the authors in \cite{KimKwon2023AnnPDE} constructed
a codimension one set of smooth initial data leading to pseudoconformal
blow-up.

In the present context, we deal with arbitrary $H_{m}^{3}$ finite-time
blow-up solutions. Any hypotheses such as $b>0$ (contracting scale)
or $|\eta|\ll b$ (trapped regime) cannot be assumed; we need to allow
any (small) values of $b$ and $\eta$. A naive remedy would be to
slightly modify the definition \eqref{eq:old-P-1} of $P_{\mathrm{old}}$
and consider instead 
\begin{equation}
\td P_{\mathrm{old}}(y;b,\eta)=\chi_{\delta B_{0}}e^{-ib\frac{y^{2}}{4}}Q^{(\eta)}(y),\label{eq:old-P-2}
\end{equation}
where $0<\delta\ll1$ and $B_{0}=(b^{2}+\eta^{2})^{-1/4}\eqqcolon\beta^{-1/2}$.
When $m\geq3$, this profile indeed works in our framework. However,
it \emph{no longer works} when $m\in\{1,2\}$ because of the dangerous
profile equation error $\Psi$ (see \eqref{eq:P1-eqn} below for more
precise meaning of $\Psi$) of size $\|\Psi\|_{\dot{\calH}_{m}^{3}}\ageq\beta^{\frac{m}{2}+3}\ageq\beta^{4}$,
which is insufficient in view of Remark~\ref{rem:Bounds-for-Psi}
below. Moreover, $\Psi$ cannot be improved by adjusting the cutoff
radius because of the pseudoconformal phase $e^{-ib\frac{y^{2}}{4}}$
and the fact that $Q^{(\eta)}$ might not be defined for large $y$,
e.g., $y\gg\delta B_{0}$.

A similar difficulty was faced by the authors in \cite{KimKwonOh2020arXiv}
for their blow-up construction in the $m=0$ case. Motivated from
the connection between the linearized (CSS) and the linearized wave
maps or Schrödinger maps (see the discussion in Section~\ref{subsec:Linear-conjugation-identities})
and the fact that the zero resonance of $H_{Q}$ might lead to nontrivial
logarithmic corrections to the modulation equations (as observed for
example in \cite{RaphaelRodnianski2012Publ.Math.,MerleRaphaelRodnianski2013InventMath}
for energy-critical geometric equations), the authors were able to
carry out the tail computations at the level of the \emph{$w_{1}$-equation}
and observe a \emph{logarithmic correction} to the modulation equation
$b_{s}+b^{2}+\eta^{2}=0$. Note that several simplifications were
possible because the trapped regime $|\eta|\ll b$ was assumed in
\cite{KimKwonOh2020arXiv}.

In the present context, we further push this idea (i) in the $m\geq1$
case and (ii) without the assumption $|\eta|\ll b$. It turns out
that, when $m=1$, there are nontrivial \emph{cubic corrections} to
the modulation equations $b_{s}+b^{2}+\eta^{2}=0$ and $\eta_{s}=0$
(see e.g. \eqref{eq:formal-mod-cub-corr}) and the cubic terms of
$P_{1}$ and $P_{2}$ have polynomially improved spatial decays. Similarly,
\emph{quartic} terms have improved decays when $m=2$. As $b_{s}$
has the nontrivial quadratic term $b^{2}+\eta^{2}$, our profile construction
will involve very explicit computations. Moreover, the linearized
operator $H_{Q}$ for the $w_{1}$-equation has a zero eigenfunction
$yQ$, so the solvability condition must be taken into account when
we invert $H_{Q}$.

\subsection{Definition of unlocalized profiles}

In this subsection, we define the unlocalized profiles $\wh P$, $\wh P_{1}$,
$\wh P_{2}$ for $w$, $w_{1}$, $w_{2}$ and also derive the formal
modulation equations for $b$ and $\eta$. We denote 
\begin{equation}
\beta\coloneqq\sqrt{b^{2}+\eta^{2}}\quad\text{and}\quad\bbet\coloneqq ib+\eta.\label{eq:def-beta/bbet}
\end{equation}
The unlocalized profiles will take the form
\begin{equation}
\left\{ \begin{aligned}\wh P & =Q+T_{1}^{(0)}+T_{2}^{(0)}+T_{3}^{(0)},\\
\wh P_{1} & =T_{1}^{(1)}+T_{2}^{(1)}+T_{3}^{(1)},\\
\wh P_{2} & =T_{2}^{(2)}+T_{3}^{(2)}+T_{4}^{(2)},
\end{aligned}
\right.\label{eq:def-unloc-prof}
\end{equation}
where each $T_{j}^{(k)}$ can be further decomposed as 
\begin{equation}
T_{j}^{(k)}(y;b,\eta)=\sum_{j_{1}+j_{2}=j}b^{j_{1}}\eta^{j_{2}}T_{j_{1},j_{2}}^{(k)}(y).\label{eq:Tk_j-decomp}
\end{equation}
The quartic term $T_{4}^{(2)}$ will be chosen in the next subsection.
Moreover, we will set $T_{3}^{(0)}=0$ if $m=1$ or $m\geq3$, so
the $\wh P$-expansion is essentially quadratic. When $m=1$, the
formal modulation equations for $b$ and $\eta$ will be changed into
\begin{equation}
b_{s}+b^{2}+\eta^{2}+p_{3}^{(b)}=0\qquad\text{and}\qquad\eta_{s}+p_{3}^{(\eta)}=0,\label{eq:formal-mod-cub-corr}
\end{equation}
where $p_{3}^{(b)}$ and $p_{3}^{(\eta)}$ are real cubic polynomials
satisfying \eqref{eq:def-p_3}.

Up to quadratic orders, the profiles $T_{j}^{(k)}$ are straightforward
because the deviation from the old profile ansatz will appear (only
when $m=1$) from the cubic order. Thus the linear and quadratic profiles
can be easily determined from the old profile ansatz \eqref{eq:old-P-1},
which can be viewed as $\wh P_{1}=-\bbet\tfrac{y}{2}\wh P$ and similarly
$\wh P_{2}=-\bbet\tfrac{y}{2}\wh P_{1}$. Motivated from the generalized
kernel relations \eqref{eq:gen-kernel-rel} and \eqref{eq:gen-kernel-rel-LQ},
the linear terms are defined by
\begin{align}
T_{1}^{(1)} & \coloneqq-\bbet\tfrac{y}{2}Q,\label{eq:def-T1_1}\\
T_{1}^{(0)} & \coloneqq-ib\tfrac{y^{2}}{4}Q-(m+1)\eta\rho.\label{eq:def-T0_1}
\end{align}
The quadratic terms are given by 
\begin{align}
T_{2}^{(2)} & \coloneqq\bbet^{2}\tfrac{y^{2}}{4}Q,\label{eq:def-T2_2}\\
T_{2}^{(1)} & \coloneqq-\bbet\tfrac{y}{2}T_{1}^{(0)},\label{eq:def-T1_2}\\
T_{2}^{(0)} & \coloneqq\out L_{Q}^{-1}[T_{2}^{(1)}+\tfrac{2}{y}A_{\tht}[Q,T_{1}^{(0)}]T_{1}^{(0)}+\tfrac{1}{y}A_{\tht}[T_{1}^{(0)}]Q],\label{eq:def-T0_2}
\end{align}
where we recall the polarization $A_{\tht}[\psi_{1},\psi_{2}]=-\frac{1}{2}\int_{0}^{y}\Re(\br{\psi_{1}}\psi_{2})y'dy'$
and the operator $\out L_{Q}^{-1}$ from \eqref{eq:def-LQ-inv}. A
seemingly complicated definition of $T_{2}^{(0)}$ is motivated from
the compatibility $\bfD_{P}P\approx P_{1}$.

We turn to the cubic terms of the profile expansions. We first state
the definitions of these cubic profiles. First, we define $T_{3}^{(2)}$
by 
\begin{equation}
T_{3}^{(2)}\coloneqq\begin{cases}
\out(A_{Q}^{\ast})^{-1}[\bbet^{2}A_{Q}^{\ast}(\tfrac{y^{2}}{4}T_{1}^{(0)})-p_{3}^{(b)}\tfrac{y}{2}Q+p_{3}^{(\eta)}i\tfrac{y}{2}Q] & \text{if }m=1,\\
\bbet^{2}\tfrac{y^{2}}{4}T_{1}^{(0)} & \text{if }m\geq2,
\end{cases}\label{eq:def-T2_3}
\end{equation}
where $p_{3}^{(b)}$ and $p_{3}^{(\eta)}$ are cubic homogeneous real
polynomials of $b$ and $\eta$ such that 
\begin{equation}
p_{3}^{(b)}-ip_{3}^{(\eta)}=\begin{cases}
8\pi\bbet^{3}/\|yQ\|_{L^{2}}^{2} & \text{if }m=1,\\
0 & \text{if }m\geq2,
\end{cases}\label{eq:def-p_3}
\end{equation}
and $\out(A_{Q}^{\ast})^{-1}$ is the inverse of $A_{Q}^{\ast}$,
whose explicit formula is given in \eqref{eq:def-AQ-ast-inv}. Next,
we define $T_{3}^{(1)}$ using the compatibility $A_{P}P_{1}\approx P_{2}$
(see Step~3 of the proof of Proposition~\ref{prop:Modified-profiles}
for the derivation): 
\begin{align}
T_{3}^{(1)} & \coloneqq\out A_{Q}^{-1}[T_{3}^{(2)}-\bbet(A_{\tht}[Q,T_{2}^{(0)}]Q+A_{\tht}[Q,T_{1}^{(0)}]T_{1}^{(0)}+\tfrac{1}{2}A_{\tht}[T_{1}^{(0)}]Q)],\label{eq:def-T1_3}
\end{align}
where $\out A_{Q}^{-1}$ denotes a formal right inverse of $A_{Q}$
whose explicit formula is given in \eqref{eq:def-AQ-inv}. Finally,
we define $T_{3}^{(0)}$ by 
\begin{equation}
T_{3}^{(0)}\coloneqq\begin{cases}
0 & \text{if }m=1\text{ or }m\geq3,\\
-\bbet^{3}\tfrac{y^{6}}{384}Q & \text{if }m=2.
\end{cases}\label{eq:def-T0_3}
\end{equation}
In particular, the profile $T_{3}^{(0)}$ is nonzero only if $m=2$.
It is introduced to capture the singular part $z_{\sg}^{\ast}$ of
$z^{\ast}$ when $m=2$, not to improve the profile equation error
$\Psi$ for $P$.

The definition of $T_{3}^{(2)}$ can be derived from the tail computations.
Note that when $m\geq2$, the old profile ansatz \eqref{eq:old-P-1}
is still valid at the cubic order so we can easily expect that $T_{3}^{(2)}=\bbet^{2}\tfrac{y^{2}}{4}T_{1}^{(0)}$.
However, when $m=1$, nontrivial cubic order modifications are necessary
and the derivation of $T_{3}^{(2)}$ needs more involved computations.
In the spirit of the tail computations, we substitute the formal modulation
equations \eqref{eq:formal-mod-cub-corr} with $p_{3}^{(b)}$ and
$p_{3}^{(\eta)}$ unknown and $(w,w_{1},w_{2})=(P,P_{1},P_{2})$ into
\eqref{eq:w1-eqn} to have\footnote{More precisely, one needs to substitute $\gmm_{s}=-(m+1)\eta-\int_{0}^{\infty}\Re(\br PP_{1})dy$
instead of $\eta_{s}=(m+1)\eta$. See the definition \eqref{eq:def-mod-vec}
of $\Mod$ and Remark~\ref{rem:profile-phase-corr} below.} 
\[
\begin{aligned}0 & \approx(b^{2}+\eta^{2}+p_{3}^{(b)})i\rd_{b}P_{1}+p_{3}^{(\eta)}i\rd_{\eta}P_{1}-ib\Lmb_{-1}P_{1}\\
 & \peq-(m+1)\eta P_{1}-(\tint 0y\Re(\br PP_{1})dy')P_{1}+A_{P}^{\ast}P_{2}.
\end{aligned}
\]
After a series of explicit computations (see the proof of \eqref{eq:psi1-bd-10}
in Step~6 of the proof of Proposition~\ref{prop:Modified-profiles}),
we can extract out the cubic terms of the above display as 
\begin{equation}
0\approx p_{3}^{(b)}\tfrac{y}{2}Q-p_{3}^{(\eta)}i\tfrac{y}{2}Q+A_{Q}^{\ast}(T_{3}^{(2)}-\bbet^{2}\tfrac{y^{2}}{4}T_{1}^{(0)}).\label{eq:prof-cubic}
\end{equation}
We then require the following \emph{solvability condition} for $A_{Q}^{\ast}$
in order to have $T_{3}^{(2)}$ growing the slowest:
\begin{equation}
p_{3}^{(b)}\tfrac{y}{2}Q-p_{3}^{(\eta)}i\tfrac{y}{2}Q-A_{Q}^{\ast}(\bbet^{2}\tfrac{y^{2}}{4}T_{1}^{(0)})\perp yQ.\label{eq:solv-cond-T2_3}
\end{equation}
When $m\geq2$, fast spatial decay of $yQ$ and $A_{Q}(yQ)=0$ guarantee
$A_{Q}^{\ast}(\bbet^{2}\tfrac{y^{2}}{4}T_{1}^{(0)})\perp yQ$ and
thus we set $p_{3}^{(b)}=p_{3}^{(\eta)}=0$ and $T_{3}^{(2)}=\bbet^{2}\frac{y^{2}}{4}T_{1}^{(0)}$
as expected. When $m=1$, the spatial decay $yQ\sim y^{-2}$ is \emph{critical},
and the following inner product becomes \emph{nonzero}: 
\[
\int_{0}^{\infty}A_{Q}^{\ast}(\bbet^{2}\tfrac{y^{2}}{4}T_{1}^{(0)})\cdot yQ\ ydy=-\bbet^{2}\tfrac{y^{2}}{4}T_{1}^{(0)}\cdot yQ\cdot y\Big|_{y=0}^{\infty}=\bbet^{3}\lim_{y\to+\infty}\tfrac{1}{16}y^{6}Q^{2}=2\bbet^{3},
\]
where we used \eqref{eq:asymp-T-lin} in the middle equality. In order
to ensure \eqref{eq:solv-cond-T2_3}, this computation suggests us
to define $p_{3}^{(b)}$ and $p_{3}^{(\eta)}$ such that 
\[
p_{3}^{(b)}-ip_{3}^{(\eta)}=\frac{2\bbet^{3}}{\int_{0}^{\infty}\frac{y}{2}Q\cdot yQ\ ydy}=\frac{8\pi\bbet^{3}}{\|yQ\|_{L^{2}}^{2}},
\]
which is \eqref{eq:def-p_3}. With this choice of $p_{3}^{(b)}$ and
$p_{3}^{(\eta)}$, we construct $T_{3}^{(2)}$ according to \eqref{eq:prof-cubic},
giving the definition \eqref{eq:def-T2_3}.

We end this subsection with some useful pointwise estimates for the
profiles $T_{j}^{(k)}$.
\begin{lem}[Pointwise estimates for $T_{j}^{(k)}$]
\label{lem:TaylorExpands}Recall the definitions of $T_{j}^{(k)}$s
above.
\begin{itemize}
\item (Smooth profiles) All $T_{j}^{(k)}$s have smooth $(m+k)$-equivariant
extensions on $\bbR^{2}$.
\item (Large $y$-asymptotics) Let $y\geq2$ and $k\in\bbN$. We have 
\begin{align}
|T_{1}^{(0)}+\bbet\tfrac{y^{2}}{4}Q|_{k} & \aleq_{k}\beta Q\log y,\label{eq:asymp-T-lin}\\
|T_{2}^{(0)}-\bbet^{2}\tfrac{y^{4}}{32}Q|_{k}+y|T_{2}^{(1)}-\bbet^{2}\tfrac{y^{3}}{8}Q|_{k} & \aleq_{k}\beta^{2}y^{2}Q\log y.\label{eq:asymp-T-quad}
\end{align}
For the cubic profiles, we have 
\begin{align}
|T_{3}^{(1)}|_{k}+y|T_{3}^{(2)}|_{k} & \aleq_{k}\beta^{3}y^{3}Q\log y\quad\text{if }m=1,\label{eq:asym-T-cub-m1}\\
|T_{3}^{(1)}+\bbet^{3}\tfrac{y^{5}}{64}Q|_{k}+y|T_{3}^{(2)}+\bbet^{3}\tfrac{y^{4}}{16}Q|_{k} & \aleq_{k}\beta^{3}y^{3}Q\log y\quad\text{if }m\geq2.\label{eq:asym-T-cub-mgeq2}
\end{align}
In particular, we have 
\begin{equation}
|T_{3}^{(1)}|_{k}+y|T_{3}^{(2)}|_{k}\aleq_{k}\beta^{3}y.\label{eq:asymp-T-cub-rough}
\end{equation}
 The above asymptotics also hold after taking $\rd_{b}$ or $\rd_{\eta}$
to the LHS and removing one $\beta$ from the RHS.
\end{itemize}
\end{lem}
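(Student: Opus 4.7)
The plan is to unwind each definition from the previous subsection, compare the defined profile against the leading polynomial guess appearing in the claimed asymptotic, and reduce the remaining difference to expressions built from $Q$, $\rho$, the bilinear functionals $A_\tht[\cdot,\cdot]$, and the explicit right-inverses $\out L_Q^{-1}$, $\out A_Q^{-1}$, $\out(A_Q^\ast)^{-1}$. Smoothness is a direct check: $Q$ and $\rho$ admit smooth $m$-equivariant extensions on $\bbR^2$ by \eqref{eq:Q-formula} and \eqref{eq:def-rho} (recalled from \cite{KimKwon2019arXiv}); all remaining $T_j^{(k)}$ are formed from these by polynomial multiplication in $y$, by $A_\tht[\cdot,\cdot]$ (which is smooth once the arguments are smooth and equivariant), and by applying the integral inverses to smooth equivariant data. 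That each inversion preserves equivariance and produces no origin singularity follows from the explicit formulas referenced as \eqref{eq:def-rho}.

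For the linear asymptotic \eqref{eq:asymp-T-lin}, the identity $\bbet = ib+\eta$ yields the cancellation
\[
T_1^{(0)} + \bbet\tfrac{y^2}{4}Q = \eta\bigl[\tfrac{y^2}{4}Q - (m+1)\rho\bigr],
\]
so the bound reduces to $|\tfrac{y^2}{4}Q - (m+1)\rho|_k \aleq Q\log y$, which is exactly the $\rho$-asymptotic from \cite{KimKwon2019arXiv} (reflecting that solving $L_Q\rho = \tfrac{1}{2(m+1)}yQ$ against the naive guess $\tfrac{1}{m+1}\cdot\tfrac{y^2}{4}Q$ leaves only a logarithmic error). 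The quadratic bound on $T_2^{(1)}$ is then immediate from the factorization
\[
T_2^{(1)} - \bbet^2\tfrac{y^3}{8}Q = -\bbet\tfrac{y}{2}\bigl(T_1^{(0)} + \bbet\tfrac{y^2}{4}Q\bigr).
\]
For $T_2^{(0)}$ I would substitute the source from \eqref{eq:def-T0_2} into the integral formula for $\out L_Q^{-1}$, split into the leading $\bbet^2\tfrac{y^3}{8}Q$ piece (whose inversion gives $\bbet^2\tfrac{y^4}{32}Q$ up to logarithmic corrections, consistent with the old ansatz $e^{-iby^2/4}Q$) plus the remainder, and use that $A_\tht[Q,T_1^{(0)}]$ stabilizes to a finite constant at infinity since $QT_1^{(0)}$ has integrable decay once $m\geq 1$, so the nonlinear terms contribute only subleading corrections.

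The cubic bounds split into two cases, and the $m=1$ case is the main obstacle. When $m\geq 2$ we simply have $T_3^{(2)} = \bbet^2\tfrac{y^2}{4}T_1^{(0)}$, hence
\[
T_3^{(2)} + \bbet^3\tfrac{y^4}{16}Q = \bbet^2\tfrac{y^2}{4}\bigl(T_1^{(0)} + \bbet\tfrac{y^2}{4}Q\bigr),
\]
and \eqref{eq:asym-T-cub-mgeq2} for $T_3^{(2)}$ follows from the linear bound; $T_3^{(1)}$ is then handled by feeding the known asymptotics of $T_3^{(2)}, T_2^{(0)}, T_1^{(0)}$ into the $\out A_Q^{-1}$ formula \eqref{eq:def-T1_3}. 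When $m=1$, however, $T_3^{(2)}$ is defined through $\out(A_Q^\ast)^{-1}$ applied to a source whose leading piece has borderline decay against the kernel $yQ$ of $A_Q^\ast$, and $p_3^{(b)}, p_3^{(\eta)}$ were chosen in \eqref{eq:def-p_3} precisely so that the full source is $L^2$-orthogonal to $yQ$. I would verify this orthogonality by the boundary-term computation sketched right after \eqref{eq:solv-cond-T2_3}, which gave $\int A_Q^\ast(\bbet^2\tfrac{y^2}{4}T_1^{(0)})\cdot yQ\, y\,dy = 2\bbet^3$; once orthogonality is secured, running $\out(A_Q^\ast)^{-1}$ on the source produces the weaker $\beta^3 y$ growth \eqref{eq:asym-T-cub-m1} (up to a log), as the polynomial cancellation available in the $m\geq 2$ case no longer takes place in the critical-decay regime. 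The bound for $T_3^{(1)}$ again follows by inserting this into \eqref{eq:def-T1_3}.

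Finally, the concluding sentence on $\rd_b$ and $\rd_\eta$ is automatic from \eqref{eq:Tk_j-decomp}: each $T_j^{(k)}$ is polynomial of degree exactly $j$ in $(b,\eta)$ with $y$-dependent coefficients, so differentiation in $b$ or $\eta$ lowers the polynomial degree by one and removes one factor of $\beta$, while the $y$-asymptotics of the coefficients are unchanged. The real difficulty is concentrated in the $m=1$ cubic step, where one must verify that the choice \eqref{eq:def-p_3} exactly kills the obstruction to proper decay and that after cancellation the output has only the growth recorded in \eqref{eq:asym-T-cub-m1}; all the other estimates cascade by elementary manipulations once this is in place.
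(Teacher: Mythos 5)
Your treatment of the smoothness claim, the linear and quadratic profiles, and the $m\geq 2$ cubic case follows the paper's own route, and your cancellation $T_1^{(0)}+\bbet\tfrac{y^2}{4}Q=\eta\bigl[\tfrac{y^2}{4}Q-(m+1)\rho\bigr]$ is exactly what the paper uses (the paper then expands $\rho$ via the explicit $\out L_Q^{-1}$ formula and the $J_1,J_2$ asymptotics rather than citing the bound, but this is a cosmetic difference). The gap is in the $m=1$ cubic case, and it is genuine. You claim that applying $\out(A_Q^\ast)^{-1}$ produces a ``$\beta^3 y$ growth (up to a log)'' because ``the polynomial cancellation available in the $m\geq 2$ case no longer takes place.'' Both the bound and the mechanism are wrong. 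The actual estimate \eqref{eq:asym-T-cub-m1} reads $y|T_3^{(2)}|_k\aleq\beta^3 y^3 Q\log y$, which for $m=1$ (where $y^3Q\sim 1$) is $\beta^3\log y$ — a full power of $y$ stronger than what you record; the weaker bound $\beta^3 y$ is the rough consequence \eqref{eq:asymp-T-cub-rough}, and it is \emph{not} sufficient downstream (plugging it into the $\Psi_1$-estimate in Step 6 of Proposition~\ref{prop:Modified-profiles} would yield an error of size $\beta^3$ instead of the required $\beta^4|\log\beta|^3$).

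More importantly, the mechanism is the opposite of what you describe: for $m=1$ there \emph{is} a crucial algebraic cancellation, not an absence of one. The leading asymptotic of the pre-source is $\bbet^2\tfrac{y^2}{4}T_1^{(0)}\approx-\bbet^3\tfrac{y^4}{16}Q\sim\beta^3 y$ (linear growth when $Q\sim y^{-3}$), and $A_Q^\ast\approx-\rd_y+\tfrac{1}{y}$ at large $y$ \emph{annihilates} linear growth: $(-\rd_y+\tfrac{1}{y})(\beta^3 y)=0$. This is what gives $|A_Q^\ast(\bbet^2\tfrac{y^2}{4}T_1^{(0)})|_k\aleq\beta^3 yQ\log y$, i.e.\ the source actually decays like $\beta^3 y^{-2}\log y$ rather than staying of size $\beta^3$. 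Without observing this cancellation, the source would not decay, the inner product $\int A_Q^\ast(\cdots)\cdot yQ\,y\,dy$ you invoke would not even be absolutely convergent, and the integral in \eqref{eq:AQ-ast-inv-orthog} defining $T_3^{(2)}$ would diverge — so the construction breaks down at the very step you assert ``cascades by elementary manipulations.'' Your proposal needs to identify this cancellation explicitly before anything downstream can go through.
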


\begin{rem}
Typically the growth tail of $T_{j+1}^{(k)}$ is $\beta y^{2}$ times
that of $T_{j}^{(k)}$. However, when $m=1$, the growth tail of $T_{3}^{(k)}$
is only $\beta\log y$ times that of $T_{2}^{(k)}$. This polynomially
improved decay is a consequence of the tail computations and is crucial
to have the profile error estimates \eqref{eq:Psi2-H2-bound}--\eqref{eq:Psi2-H3-bound}.
Similarly, we will see in Proposition~\ref{prop:Modified-profiles}
that the same happens for the $T_{4}^{(2)}$ profile when $m=2$.
\end{rem}

\begin{proof}
The fact that all $T_{j}^{(k)}$ have smooth $(m+k)$-equivariant
extensions easily follows from the smoothness assertions in Appendix~\ref{sec:FormalRightInverses}.
We focus on the proof of large $y$-asymptotics from now on.

\emph{\uline{Proof of \mbox{\eqref{eq:asymp-T-lin}}.}} By \eqref{eq:def-T0_1}
and \eqref{eq:def-rho}, we have 
\[
T_{1}^{(0)}+\bbet\tfrac{y^{2}}{4}Q=\eta\{-\out L_{Q}^{-1}(\tfrac{y}{2}Q)+\tfrac{y^{2}}{4}Q\}.
\]
Using the formula \eqref{eq:def-LQ-inv} of $\out L_{Q}^{-1}$, we
have 
\[
\out L_{Q}^{-1}(\tfrac{y}{2}Q)=QJ_{1}\tint 0y[\rd_{y}J_{2}]\tfrac{(y')^{2}}{2}dy'-QJ_{2}\tint 0y[\rd_{y}J_{1}]\tfrac{(y')^{2}}{2}dy',
\]
where $J_{1}$ and $J_{2}$ are defined in \eqref{eq:def-J1J2}. We
apply the asymptotics \eqref{eq:asymp-J1J2} of $J_{1}$ and $J_{2}$
to the above. Note that the first term gives the the leading order
term $\frac{y^{2}}{4}Q$ with an error of size $O(Q)$, while the
second term is of size $O(Q\log y)$. This yields \eqref{eq:asymp-T-lin}.

\emph{\uline{Proof of \mbox{\eqref{eq:asymp-T-quad}}.}} Recall
from \eqref{eq:def-T1_2} that $T_{2}^{(1)}=-\bbet\frac{y}{2}T_{1}^{(0)}$
so \eqref{eq:asymp-T-quad} for $T_{2}^{(1)}$ follows from \eqref{eq:asymp-T-lin}.
For $T_{2}^{(0)}$ (recall the definition \eqref{eq:def-T0_2}), we
note the estimates for $y\geq2$:
\begin{align*}
\chf_{y\geq2}|T_{2}^{(1)}-\bbet^{2}\tfrac{y^{3}}{8}Q|_{k} & \aleq_{k}\chf_{y\geq2}\beta^{2}yQ\log y,\\
\chf_{y\geq2}|\tfrac{2}{y}A_{\tht}[Q,T_{1}^{(0)}]T_{1}^{(0)}+\tfrac{1}{y}A_{\tht}[T_{1}^{(0)}]Q|_{k} & \aleq_{k}\chf_{y\geq2}\beta^{2}yQ.
\end{align*}
Combining these estimates with \eqref{eq:def-T0_2} and \eqref{eq:def-LQ-inv}
gives \eqref{eq:asymp-T-quad} for $T_{2}^{(0)}$\emph{.}

\emph{\uline{Proof of \mbox{\eqref{eq:asym-T-cub-m1}}.}} We first
prove \eqref{eq:asym-T-cub-m1} for $T_{3}^{(2)}$. Recall the formula
\eqref{eq:def-T2_3}: 
\[
T_{3}^{(2)}=\out(A_{Q}^{\ast})^{-1}[\bbet^{2}A_{Q}^{\ast}(\tfrac{y^{2}}{4}T_{1}^{(0)})-p_{3}^{(b)}\tfrac{y}{2}Q+p_{3}^{(\eta)}i\tfrac{y}{2}Q].
\]
As $m=1$, we have $A_{Q}^{\ast}\approx-\rd_{y}+\frac{1}{y}$ and
$y^{2}T_{1}^{(0)}\sim\beta y^{4}Q\sim\beta y$, which result in the
cancellation $(-\rd_{y}+\frac{1}{y})\beta y=0$. Incorporating the
error terms using \eqref{eq:asymp-T-lin}, this cancellation is captured
by the following estimate 
\[
\chf_{y\geq2}|A_{Q}^{\ast}(\tfrac{y^{2}}{4}T_{1}^{(0)})|_{k}\aleq_{k}\chf_{y\geq2}\beta yQ\log y.
\]
Thus we have 
\begin{equation}
\chf_{y\geq2}|\bbet^{2}A_{Q}^{\ast}(\tfrac{y^{2}}{4}T_{1}^{(0)})-p_{3}^{(b)}\tfrac{y}{2}Q+p_{3}^{(\eta)}i\tfrac{y}{2}Q|_{k}\aleq_{k}\chf_{y\geq2}\beta^{3}yQ\log y.\label{eq:asymp-T-tmp1}
\end{equation}
We now recall that $p_{3}^{(b)}$ and $p_{3}^{(\eta)}$ are chosen
to satisfy \eqref{eq:solv-cond-T2_3}. Thus we can apply the formula
\eqref{eq:AQ-ast-inv-orthog}: 
\[
T_{3}^{(2)}=\tfrac{1}{y^{2}Q}\tint y{\infty}y'Q[\bbet^{2}A_{Q}^{\ast}(\tfrac{(y')^{2}}{4}T_{1}^{(0)})-p_{3}^{(b)}\tfrac{y'}{2}Q+p_{3}^{(\eta)}i\tfrac{y'}{2}Q]\,y'dy'.
\]
Substituting \eqref{eq:asymp-T-tmp1} into the above display gives
\eqref{eq:asym-T-cub-m1} for $T_{3}^{(2)}$.

Next, we prove \eqref{eq:asym-T-cub-m1} for $T_{3}^{(1)}$. Combining
the pointwise estimates 
\begin{align*}
|T_{3}^{(2)}|_{k} & \aleq_{k}\beta^{3}y^{2}Q\log y,\\
|\bbet(A_{\tht}[Q,T_{1}^{(0)}]T_{1}^{(0)}+\tfrac{1}{2}A_{\tht}[T_{1}^{(0)}]Q)|_{k} & \aleq_{k}\beta^{3}y^{2}Q
\end{align*}
with \eqref{eq:def-T1_3} and \eqref{eq:def-AQ-inv} gives \eqref{eq:asym-T-cub-m1}
for $T_{3}^{(1)}$.

\emph{\uline{Proof of \mbox{\eqref{eq:asym-T-cub-mgeq2}}.}} As
$m\geq2$, we have $p_{3}^{(b)}=p_{3}^{(\eta)}=0$ and $T_{3}^{(2)}=\bbet^{2}\frac{y^{2}}{4}T_{1}^{(0)}$.
Thus \eqref{eq:asym-T-cub-mgeq2} for $T_{3}^{(2)}$ easily follows
from \eqref{eq:asymp-T-lin}. Combining the pointwise estimates 
\begin{align*}
|T_{3}^{(2)}+\bbet^{3}\tfrac{y^{4}}{16}Q|_{k} & \aleq_{k}\beta^{3}y^{2}Q\log y,\\
|\bbet(A_{\tht}[Q,T_{1}^{(0)}]T_{1}^{(0)}+\tfrac{1}{2}A_{\tht}[T_{1}^{(0)}]Q)|_{k} & \aleq_{k}\beta^{3}y^{2}Q
\end{align*}
with \eqref{eq:def-T1_3} and \eqref{eq:def-AQ-inv} gives \eqref{eq:asym-T-cub-mgeq2}
for $T_{3}^{(1)}$.

\emph{\uline{Completion of the proof.}} The estimate \eqref{eq:asymp-T-cub-rough}
follows from \eqref{eq:asym-T-cub-m1} and \eqref{eq:asym-T-cub-mgeq2}.
As all $T_{j}^{(k)}$s are of the form \eqref{eq:Tk_j-decomp}, all
the estimates used in this proof holds after taking $\rd_{b}$ or
$\rd_{\eta}$ in the LHS and removing one $\beta$ from the RHS. This
completes the proof.
\end{proof}

\subsection{Localization and main proposition}

Having defined the profiles up to cubic terms, we can state the precise
form of the profiles $P$, $P_{1}$, and $P_{2}$ including the cutoffs.
Let 
\begin{equation}
B_{0}\coloneqq\beta^{-\frac{1}{2}}\qquad\text{and}\qquad B_{1}\coloneqq\beta^{-1}.\label{eq:def-cutoff-rad}
\end{equation}
We will use the modified profiles defined by (c.f. \eqref{eq:def-unloc-prof})
\begin{align}
P & =Q+\chi_{B_{1}}\{T_{1}^{(0)}+T_{2}^{(0)}+T_{3}^{(0)}\},\label{eq:def-P0}\\
P_{1} & =\chi_{B_{1}}\{T_{1}^{(1)}+T_{2}^{(1)}+T_{3}^{(1)}\},\label{eq:def-P1}\\
P_{2} & =\chi_{B_{1}}\{T_{2}^{(2)}+T_{3}^{(2)}\}+\chi_{B_{0}}T_{4}^{(2)},\label{eq:def-P2}
\end{align}
where the quartic term $T_{4}^{(2)}$ of $P_{2}$ will be defined
in the proof of Proposition~\ref{prop:Modified-profiles} below.
As mentioned at the beginning of Section~\ref{sec:Modified-profiles},
the cutoff radius $B_{1}$ is important in our analysis. See also
Remark~\ref{rem:cutoff-radius} below for more detailed explanations.

We now state the main proposition. Define the vectors 
\begin{equation}
\Mod\coloneqq\begin{pmatrix}\frac{\lmb_{s}}{\lmb}+b\\
\gmm_{s}+(m+1)\eta+\int_{0}^{\infty}\Re(\br PP_{1})dy\\
b_{s}+b^{2}+\eta^{2}+p_{3}^{(b)}\\
\eta_{s}+p_{3}^{(\eta)}
\end{pmatrix},\quad\bfv_{k}\coloneqq\begin{pmatrix}\Lmb P_{k}\\
-iP_{k}\\
-\rd_{b}P_{k}\\
-\rd_{\eta}P_{k}
\end{pmatrix},\label{eq:def-mod-vec}
\end{equation}
where we wrote $P_{0}=P$ and $\bfv_{0}=\bfv$.
\begin{prop}[Modified profiles]
\label{prop:Modified-profiles}Let $m\geq1$ and $\beta\in(-\frac{1}{10},\frac{1}{10})$.
There exists a smooth profile 
\[
T_{4}^{(2)}(y;b,\eta)=\sum_{i+j=4}b^{i}\eta^{j}T_{i,j}^{(2)}(y)
\]
that admits a smooth $(m+2)$-equivariant extension on $\bbR^{2}$
and satisfies the pointwise estimates 
\begin{equation}
|T_{4}^{(2)}|_{k}\aleq_{k}\begin{cases}
\beta^{4}y(\log y)^{2} & \text{if }m=1,\\
\beta^{4}\log y & \text{if }m=2,\\
\beta^{4}y^{4-m} & \text{if }m\geq3
\end{cases}\label{eq:T2_4-bound}
\end{equation}
for any $k\in\bbN$ in the region $y\geq2$, such that the following
bounds hold.
\begin{itemize}
\item (Estimates for modulation vectors) For $\bfv=\bfv_{0}$, we have for
any $R>0$
\begin{align}
\chf_{(0,R]}\{|(\bfv_{0})_{1}-\Lmb Q|_{k}+|(\bfv_{0})_{2}+iQ|_{k}\} & \aleq_{R}\beta,\label{eq:modvec0-1}\\
\chf_{(0,R]}\{|(\bfv_{0})_{3}-i\tfrac{y^{2}}{4}Q|_{k}+|(\bfv_{0})_{4}-(m+1)\rho|_{k}\} & \aleq_{R}\beta.\label{eq:modvec0-2}
\end{align}
For $\bfv_{1}$, we have for any $k\in\bbN$
\begin{gather}
|(\bfv_{1})_{1}|_{k}+|(\bfv_{1})_{2}|_{k}\aleq_{k}\chf_{(0,2]}\beta y^{2}+\chf_{[2,2B_{1}]}(\beta y^{-2}+\beta^{2}),\label{eq:modvec1-1}\\
|(\bfv_{1})_{3}-\chi_{B_{1}}i\tfrac{y}{2}Q|_{k}+|(\bfv_{1})_{4}-\chi_{B_{1}}\tfrac{y}{2}Q|_{k}\aleq_{k}\chf_{(0,2]}\beta y^{4}+\chf_{[2,2B_{1}]}\beta.\label{eq:modvec1-2}
\end{gather}
For $\bfv_{2}$, we have for any $k\in\bbN$ 
\begin{align}
|(\bfv_{2})_{1}|_{k}+|(\bfv_{2})_{2}|_{k} & \aleq_{k}\chf_{(0,2]}\beta^{2}y^{3}+\chf_{[2,2B_{1}]}\beta^{2}y^{-1},\label{eq:modvec2-1}\\
|(\bfv_{2})_{3}|_{k}+|(\bfv_{2})_{4}|_{k} & \aleq_{k}\chf_{(0,2]}\beta y^{3}+\chf_{[2,2B_{1}]}\beta y^{-1}.\label{eq:modvec2-2}
\end{align}
\item (Compatibility) For $\bfD_{P}P$ and $P_{1}$, we have 
\begin{align}
\|\bfD_{P}P-P_{1}\|_{L^{2}} & \aleq\beta,\label{eq:compat1-H1}\\
\|\bfD_{P}P-P_{1}\|_{\dot{H}_{m+1}^{1}} & \aleq\beta^{2},\label{eq:compat1-H2}\\
\|\bfD_{P}P-P_{1}\|_{\dot{H}_{m+1}^{2}} & \aleq\beta^{3}.\label{eq:compat1-H3}
\end{align}
For $A_{P}P_{1}$ and $P_{2}$, we have 
\begin{align}
\|A_{P}P_{1}-P_{2}\|_{L^{2}} & \aleq\beta^{2},\label{eq:compat2-H2}\\
\|A_{P}P_{1}-P_{2}\|_{\dot{H}_{m+2}^{1}} & \aleq\beta^{3},\label{eq:compat2-H3}\\
\|A_{P}P_{1}-P_{2}\|_{\dot{V}_{m+2}^{3/2}} & \aleq\beta^{7/2}.\label{eq:compat2-virial}
\end{align}
\item (Equation for $P$) We have 
\begin{equation}
\big(\rd_{s}-\frac{\lmb_{s}}{\lmb}\Lmb+\gmm_{s}i\big)P+i\bfD_{P}^{\ast}P_{1}+i\Big(\int_{y}^{\infty}\Re(\br PP_{1})dy'\Big)P=-\Mod\cdot\bfv+i\Psi,\label{eq:P-eqn}
\end{equation}
where $\Psi$ satisfies for any $R>0$ 
\begin{equation}
\chf_{(0,R]}|\Psi|\aleq_{R}\beta^{3}.\label{eq:Psi0-loc-bound}
\end{equation}
\item (Equation for $P_{1}$) We have 
\begin{equation}
\big(\rd_{s}-\frac{\lmb_{s}}{\lmb}\Lmb_{-1}+\gmm_{s}i\big)P_{1}+iA_{P}^{\ast}P_{2}+i\Big(\int_{y}^{\infty}\Re(\br PP_{1})dy'\Big)P_{1}=-\Mod\cdot\bfv_{1}+i\Psi_{1},\label{eq:P1-eqn}
\end{equation}
where $\Psi_{1}$ satisfies
\begin{equation}
\|\tfrac{1}{y^{2}}\Psi_{1}\|_{L^{1}}\aleq\beta^{4}|\log\beta|^{3}.\label{eq:Psi1-bound}
\end{equation}
\item (Equation for $P_{2}$) We have 
\begin{equation}
\begin{aligned}\big(\rd_{s}-\frac{\lmb_{s}}{\lmb}\Lmb_{-2}+\gmm_{s}i\big)P_{2}+i\td H_{P}P_{2}\qquad\qquad\\
+i\Big(\int_{y}^{\infty}\Re(\br PP_{1})dy'\Big)P_{2}-i\br PP_{1}^{2} & =-\Mod\cdot\bfv_{2}+i\Psi_{2}
\end{aligned}
\label{eq:P2-eqn}
\end{equation}
with the bounds 
\begin{align}
\|\Psi_{2}\|_{L^{2}} & \aleq\beta^{4}|\log\beta|^{2},\label{eq:Psi2-H2-bound}\\
\|\Psi_{2}\|_{\dot{H}_{m+2}^{1}} & \aleq\beta^{\frac{9}{2}}|\log\beta|^{2}.\label{eq:Psi2-H3-bound}
\end{align}
\end{itemize}
\end{prop}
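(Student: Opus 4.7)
The plan is to establish the five clusters of assertions in sequence, with the quartic profile $T_4^{(2)}$ and the accompanying $\Psi_2$-bounds being the technical heart.

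I would first dispatch the modulation-vector estimates \eqref{eq:modvec0-1}--\eqref{eq:modvec2-2}. Each $T_j^{(k)}$ is polynomial in $(b,\eta)$ of total degree $j$ with smooth $y$-coefficients whose large-$y$ tails are quantified by Lemma~\ref{lem:TaylorExpands}. Applying $\Lmb$, $i\cdot$, $-\rd_b$, $-\rd_\eta$ to the explicit formulas \eqref{eq:def-P0}--\eqref{eq:def-P2} reduces each bound to (i) comparing the inner part of $P_k$ with its formal linear model, (ii) invoking Lemma~\ref{lem:TaylorExpands} in the region $y\gtrsim 1$, and (iii) noting that the commutators of $\Lmb$, $\rd_b$, $\rd_\eta$ with $\chi_{B_1}$ are supported in the annulus $y\sim B_1$ and are $O(1)$ there, which is exactly the localization captured by the factor $\chf_{[2,2B_1]}$ on the right-hand side.

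Next, the compatibilities \eqref{eq:compat1-H1}--\eqref{eq:compat2-virial} are built into the profile construction. Using \eqref{eq:LinearizationBogomolnyi} together with $\bfD_Q Q=0$ and the definition \eqref{eq:def-T0_2} of $T_2^{(0)}=\out L_Q^{-1}(\cdots)$, one checks that $\bfD_P P-P_1$ vanishes to quadratic order in $(b,\eta)$ inside $y\ll B_1$ by design, leaving only cubic-and-higher tail contributions and $\chi_{B_1}$-commutator errors supported at $y\sim B_1$; these are estimated in $L^2$, $\dot{H}_{m+1}^{1}$, $\dot{H}_{m+1}^{2}$ by Lemma~\ref{lem:TaylorExpands} combined with the mapping properties \eqref{eq:integral-operator-bdd1}--\eqref{eq:integral-operator-bdd2}. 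The same pattern applies to $A_P P_1-P_2$ through \eqref{eq:def-T1_3}, the extra $\dot{V}_{m+2}^{3/2}$-weight in \eqref{eq:compat2-virial} gaining a half-power of $\beta$ from the sharp support of the cutoff commutator.

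The equations \eqref{eq:P-eqn} and \eqref{eq:P1-eqn} are then obtained by substituting $(P,P_1)$ into \eqref{eq:w-eqn}--\eqref{eq:w1-eqn} under the adiabatic ansatz encoded by $\Mod$, collecting $\Mod\cdot\bfv$ and $\Mod\cdot\bfv_1$ on the right, and calling the remainder $i\Psi$, $i\Psi_1$. For \eqref{eq:Psi0-loc-bound} the restriction to $y\leq R$ silences the cutoff and leaves only cubic-or-higher $(b,\eta)$-terms plus $\rd_s$ hitting the compatibility defect, all $O_R(\beta^3)$. For \eqref{eq:Psi1-bound} the cubic inner-region part of $\Psi_1$ reduces precisely to the expression in \eqref{eq:prof-cubic}, which vanishes by our choice \eqref{eq:def-p_3} of $p_3^{(b)}-ip_3^{(\eta)}$ (vacuous for $m\geq 2$, nontrivial for $m=1$ by the solvability condition \eqref{eq:solv-cond-T2_3}) together with the definition \eqref{eq:def-T2_3} of $T_3^{(2)}$; the surviving quartic tails and $\chi_{B_1}$-errors contribute $\beta^4|\log\beta|^3$ in the $y^{-2}L^1$-norm via Lemma~\ref{lem:TaylorExpands} and the logarithmic growth from integrating over $2\leq y\leq 2B_1$.

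The hard part is the $P_2$-equation. Substituting the ansatz into \eqref{eq:w2-eqn} and collecting quartic-in-$(b,\eta)$ terms yields an equation of the form $\td H_Q T_4^{(2)}=F_4$, where $F_4$ is a polynomial-coefficient combination of $Q$, $\rho$, $T_j^{(k)}$ coming from $\td H_P P_2$, the nonlocal potential $(\int_y^\infty\Re(\br P P_1))P_2$, the nonlinear term $\br P P_1^2$, and the $\rd_s P_2$-contributions generated by the cubic polynomials $p_3^{(b)}$, $p_3^{(\eta)}$ and by $-b\Lmb_{-2}P_2-(m+1)\eta\,iP_2$. Since $\td H_Q=A_Q A_Q^{\ast}$ has no $L^2$-kernel, I would define $T_4^{(2)}\coloneqq\out(A_Q^{\ast})^{-1}\out A_Q^{-1}F_4$ using the formal right inverses already employed in \eqref{eq:def-T1_3}--\eqref{eq:def-T2_3}; the large-$y$ tail of $F_4$ has leading coefficient that cancels algebraically when $m\in\{1,2\}$, producing the improved rates in \eqref{eq:T2_4-bound}, while for $m\geq 3$ one recovers the natural $\beta^4 y^{4-m}$. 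The cutoff is taken at $B_0=\beta^{-1/2}$ rather than $B_1$ because this is the largest scale at which the commutator $[\chi_{B_0},\td H_Q]T_4^{(2)}$ and the chopped tail $(1-\chi_{B_0})F_4$ remain below the $L^2$-threshold $\beta^4|\log\beta|^2$ and the $\dot{H}_{m+2}^{1}$-threshold $\beta^{9/2}|\log\beta|^2$. With $T_4^{(2)}$ in hand, $\Psi_2$ consists of quintic-and-higher tails (perturbative by Lemma~\ref{lem:TaylorExpands}), quartic terms supported in $y\gtrsim B_0$, and $\Mod$-type contributions from $\rd_b T_4^{(2)}$, $\rd_\eta T_4^{(2)}$, each of which I would then bound term by term in $L^2$ and $\dot{H}_{m+2}^{1}$.
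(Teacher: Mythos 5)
The broad architecture of your argument tracks the paper's proof, but there are two genuine gaps in the handling of the profile-equation errors that would make your estimates for $\Psi_1$ and $\Psi_2$ fail as stated.

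First, you treat the cutoff commutators arising from $\rd_s$ and $\Lmb_{-k}$ hitting $\chi_{B_1}$ as generic $O(1)$ annular errors supported at $y\sim B_1$, and you assert that these contribute only at quartic order. In fact, without further structure these commutators are \emph{cubic}: when you substitute the formal modulation law $b_s\approx -(b^2+\eta^2)$ into $\rd_s\chi_{B_1}$, the resulting term is $(b^2+\eta^2)\rd_b\chi(\beta y)=b\beta y\chi'(\beta y)$, which is $O(|b|)$ on the annulus $y\sim B_1$, and after multiplying by the tail of $\wh P_1$ and taking the $y^{-2}L^1$-norm you land at $\beta^3|\log\beta|$, not $\beta^4|\log\beta|^3$. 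The proof is only saved by the \emph{exact cancellation} $[(b^2+\eta^2)\rd_b - by\rd_y]\chi_{B_1}\equiv 0$, which is the precise reason the cutoff radius must be $B_1=\beta^{-1}$ and not, say, the self-similar scale $b^{-1/2}$; see \eqref{eq:cutoff-cancellation} and Remark~\ref{rem:cutoff-radius}. Your proof needs to isolate and invoke this identity (as in \eqref{eq:psi1-bd-07} and \eqref{eq:psi2-bd-13}), rather than bound the commutators crudely.

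Second, your choice $T_4^{(2)}=\out(A_Q^{\ast})^{-1}\out A_Q^{-1}F_4=\out\td H_Q^{-1}F_4$ does not satisfy the decay bound \eqref{eq:T2_4-bound} when $m\geq 2$. The two homogeneous solutions of $\td H_Q$ behave like $\td h_1\sim y^{m}$ and $\td h_2\sim y^{-m}$ at infinity, and with a source $F_4\sim y^{-m}\log y$ (or $y^{2-m}$) the outgoing inverse produces a contribution $\td h_1\int_0^y\td h_2 F_4\,y'dy'\sim y^{m}$, which grows. The paper instead uses $-\inn\td H_Q^{-1}F_4^{(2)}$ for $m\geq 2$ (and reserves $\out\td H_Q^{-1}$ for $m=1$, where $y^{m}=y$ is compatible with the bound $\beta^4 y(\log y)^2$); the $\inn$ inverse replaces the growing-mode contribution with $\td h_1\int_y^\infty\td h_2 F_4\,y'dy'$, which decays. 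Your claim that the decay comes from an algebraic cancellation in the tail of $F_4$ is not the mechanism: the decay comes from the choice of inverse. You should also be aware that for $m=2$ the compatibility estimate \eqref{eq:compat1-H3} fails logarithmically unless one inserts the nonzero cubic profile $T_3^{(0)}=-\bbet^3\tfrac{y^6}{384}Q$ into $P$ (see the completion of Step~2 in the paper's proof), another structural point your outline does not address.
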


\begin{rem}[Cutoff radius $B_{1}$]
\label{rem:cutoff-radius}As alluded to above, $B_{1}=\beta^{-1}$
is the only cutoff radius such that our analysis works when $m\in\{1,2\}$.
According to our strategy, $P^{\sharp}=P_{\lmb,\gmm}$ must contain
the singular part $z_{\sg}^{\ast}$ of the asymptotic profile $z^{\ast}$
and hence the cutoff at the original scale should be $r\sim1$. In
the formal modulation dynamics \eqref{eq:old-mod-eqn}, the quantity
\[
\frac{b^{2}+\eta^{2}}{\lmb^{2}}
\]
is \emph{conserved}, and hence $r\sim1$ corresponds to $y=\frac{r}{\lmb}\sim(b^{2}+\eta^{2})^{-1/2}=B_{1}$.

When we estimate the profile errors $\Psi_{k}$, we crucially use
the \emph{special cancellation} 
\begin{equation}
[(b^{2}+\eta^{2})\rd_{b}-by\rd_{y}]\chi_{B_{1}}=0,\label{eq:cutoff-cancellation}
\end{equation}
which corresponds to the obvious identity $\rd_{t}\chi_{R}=0$ for
fixed $R$ in the original $(t,r)$-coordinates. If we use other cutoff
radii, say $B=\beta^{-p}$ with $p\neq1$, then there are nontrivial
cutoff errors 
\begin{align*}
\|[(b^{2}+\eta^{2})\rd_{b}-by\rd_{y}]\chi_{B}T_{2}^{(2)}\|_{L^{2}} & \ageq\|\chf_{y\sim B}b\beta^{2}\tfrac{1}{y}\|_{L^{2}}\ageq|b|^{3}\text{ if }m=1,\\
\|[(b^{2}+\eta^{2})\rd_{b}-by\rd_{y}]\chi_{B}T_{3}^{(2)}\|_{\dot{H}_{m+2}^{1}} & \ageq\|\chf_{y\sim B}b\beta^{3}\|_{\dot{H}_{m+2}^{1}}\ageq|b|^{4}\text{ if }m=2,
\end{align*}
violating the necessary bounds \eqref{eq:necessary-bound} below.

When $m\geq3$, the cancellation \eqref{eq:cutoff-cancellation} is
not necessary and even the old profile ansatz \eqref{eq:old-P-2}
(where the cutoff is placed at the self-similar scale) works. This
is consistent with the statement of Theorem~\ref{thm:blow-up-rigidity}
that there is no $H^{3}$-singular part of the asymptotic profile.
\end{rem}

\begin{rem}[Bounds for $\Psi_{k}$]
\label{rem:Bounds-for-Psi}As we will see in Section~\ref{subsec:Energy-Morawetz},
it is necessary to obtain the bounds 
\begin{equation}
\chf_{y\aleq1}|\Psi|\aleq\beta^{2+},\quad\chf_{y\aleq1}|\Psi_{1}|\aleq\beta^{3+},\quad\|\Psi_{2}\|_{L^{2}}\aleq\beta^{3+},\quad\|\Psi_{2}\|_{\dot{H}_{m+2}^{1}}\aleq\beta^{4+}\label{eq:necessary-bound}
\end{equation}
for our main energy estimates (Proposition~\ref{prop:EnergyMorawetz}).
The first two bounds contribute to the $\lmb,\gmm$-modulation estimates
and $b,\eta$-modulation estimates, respectively. In the energy estimates,
we will use the $P_{2}$-equation and the degeneracy estimates \eqref{eq:modvec2-1}--\eqref{eq:modvec2-2}
for $\bfv_{2}$ allow the $\Psi$ and $\Psi_{1}$ bounds to be rougher
than the $\Psi_{2}$ bound by the factors $\beta^{2}$ and $\beta$,
respectively. This also explains why $P$ and $P_{1}$ ansatzes do
not need to be as precise as the $P_{2}$ ansatz.

By \eqref{eq:necessary-bound}, it is necessary to have at least quadratic,
cubic, and quartic expansions of $P$, $P_{1}$, and $P_{2}$, respectively.
This in principle guarantees 
\begin{equation}
\Psi=O(\beta^{3}),\ \Psi_{1}=O(\beta^{4}),\ \Psi_{2}=O(\beta^{5})\text{ in a compact region }y\aleq1.\label{eq:rem-Psi-cpt-bd}
\end{equation}
If $m$ is sufficiently large, \eqref{eq:rem-Psi-cpt-bd} remains
valid even in various Sobolev norms due to the fast spatial decay
of $Q$. Thus all the $\beta$-losses in \eqref{eq:Psi1-bound}, \eqref{eq:Psi2-H2-bound},
and \eqref{eq:Psi2-H3-bound} compared to \eqref{eq:rem-Psi-cpt-bd}
come from the lack of spatial decay of $Q$.

Finally, let us note that we did not attempt to make \eqref{eq:Psi1-bound},
\eqref{eq:Psi2-H2-bound}, and \eqref{eq:Psi2-H3-bound} sharp because
we only need the bound such as \eqref{eq:necessary-bound}. Some of
these bounds can be improved logarithmically or even polynomially
in $\beta$.
\end{rem}

\begin{rem}[Phase correction]
\label{rem:profile-phase-corr}The integral term $\int_{0}^{\infty}\Re(\br PP_{1})dy$
in the definition of $\Mod_{2}$ is employed to replace the $\int_{y}^{\infty}$-integrals
in the equations \eqref{eq:w-eqn}--\eqref{eq:w2-eqn} by the $-\int_{0}^{y}$-integrals
so that the resulting equations become more suited for the tail computations.
This integral term can be understood as a \emph{phase correction}
from the nonlocal nonlinearity. Note that it is equal to $-2(m+1)\eta$
at the linear order due to 
\begin{equation}
\tint 0{\infty}\Re(\br PP_{1})dy=-\eta\tfrac{1}{2}\tint 0{\infty}Q^{2}ydy+O(\beta^{2})=-2(m+1)\eta+O(\beta^{2}),\label{eq:phase-corr-1}
\end{equation}
so we have $\Mod_{2}\approx\gmm_{s}-(m+1)\eta$ as in \eqref{eq:old-mod-eqn}.
\end{rem}

\begin{rem}
In contrast to \cite{KimKwon2023AnnPDE}, we cannot simply bound $P$
by $Q$ because the higher order terms in the expansion of $P$ become
dominant in the region $B_{0}\ll y\aleq B_{1}$. Thus we need to take
care about the higher order terms in many places of the proof. In
several cases, the contribution of the highest order terms indeed
\emph{saturates} the error estimates.
\end{rem}

\begin{proof}
\textbf{Step 0. }Remarks and notation for the proof.

In many parts of the proof, we will only focus on the estimates in
the far region $y\geq2$. The contribution of error terms from the
region $y\leq2$ does not disrupt the validity of the proposition.
Note that we do not need to worry about singularity issues near the
origin $y=0$ because all the profiles $T_{j}^{(k)}$ have smooth
equivariant extensions on $\bbR^{2}$.

To simplify the presentation in various pointwise estimates, let us
introduce more notation. We denote by $f=\calO(g)$ to mean that 
\begin{equation}
f=\calO(g)\qquad\Longleftrightarrow\qquad\chf_{y\geq2}|f|_{k}\aleq_{k}g\qquad\forall k\in\bbN.\label{eq:def-calO}
\end{equation}
For a nonnegative integer $j\geq0$, a real number $\ell$, and a
function $f=f(y;b,\eta)$, we use the following notation: 
\begin{align}
f=\calI_{\ell}^{j}\quad & \Leftrightarrow\quad f=\sum_{j_{1}+j_{2}=j}b^{j_{1}}\eta^{j_{2}}f_{j_{1},j_{2}}(y)\text{ for some }f_{j_{1},j_{2}}=\calO(y^{\ell}),\label{eq:def-calI}\\
f=\calO(\calI_{\ell}^{j})\quad & \Leftrightarrow\quad f=\calO(\beta^{j}y^{\ell}).\label{eq:def-calOcalI}
\end{align}
We can also add logarithmic divergences to the above notation; for
a real number $\ell'$, we use the notation $f=\calI_{\ell,\ell'}^{j}$
if $f$ can be expressed as in \eqref{eq:def-calI} for some\footnote{We use the $\calO$-notation in the region $y\geq2$ so $(\log y)^{\ell'}$
is well-defined.} $f_{j_{1},j_{2}}=\calO(y^{\ell}(\log y)^{\ell'})$. We can similarly
define $f=\calO(\calI_{\ell,\ell'}^{j})$. We will also use this notation
combined with various operations such as addition, subtraction, and
integration. A useful fact is that 
\[
\int_{0}^{y}\calI_{\ell}^{j}\frac{dy'}{y'}=\begin{cases}
\calI_{\ell}^{j} & \text{if }\ell>0,\\
\calI_{0,1}^{j} & \text{if }\ell=0,\\
\calI_{0}^{j} & \text{if }\ell<0,
\end{cases}
\]
and similarly for $\int_{0}^{y}\calO(\calI_{\ell}^{j})\frac{dy'}{y'}$
by taking $\calO$. We will also frequently use the estimate $\calI_{\ell+1}^{j+1}\leq\calI_{\ell}^{j}$
in the region $y\leq2B_{1}$.

\textbf{Step 1.} Estimates for the modulation vectors.

The proof of \eqref{eq:modvec0-1}--\eqref{eq:modvec0-2} is clear
from the expansion \eqref{eq:def-P0} as we allow $R$-dependences.
The proof of \eqref{eq:modvec1-1}--\eqref{eq:modvec1-2} follows
from $T_{1}^{(1)}=-\bbet\tfrac{y}{2}Q$, \eqref{eq:asymp-T-quad},
and \eqref{eq:asymp-T-cub-rough}. Finally, the proof of \eqref{eq:modvec2-1}--\eqref{eq:modvec2-2}
follows from $T_{2}^{(2)}=\bbet^{2}\frac{y^{2}}{4}Q$, \eqref{eq:asymp-T-cub-rough},
and \eqref{eq:T2_4-bound} with $B_{0}=\beta^{-\frac{1}{2}}$.\footnote{Note that $T_{4}^{(2)}$ is not constructed yet, but it will be constructed
in Step~8 below satisfying $T_{4}^{(2)}=\calI_{1,2}^{4}$.}

\textbf{Step 2.} Proof of the compatibility estimates \eqref{eq:compat1-H1}--\eqref{eq:compat1-H3}.

We denote by $\calR_{c}^{(0)}$ any function satisfying the bounds
(c.f. RHS\eqref{eq:compat1-H1}--RHS\eqref{eq:compat1-H3})
\begin{equation}
\|\calR_{c}^{(0)}\|_{L^{2}}\aleq\beta,\quad\|\calR_{c}^{(0)}\|_{\dot{H}_{m+1}^{1}}\aleq\beta^{2},\quad\|\calR_{c}^{(0)}\|_{\dot{H}_{m+1}^{2}}\aleq\beta^{3}.\label{eq:def-Rc0}
\end{equation}
We denote $f=\calR_{c}^{(0)}$ if $f$ satisfies the estimates \eqref{eq:def-Rc0}.

\emph{\uline{First claim.}} We claim that we can approximate $\bfD_{P}P$
by $\chi_{B_{1}}\bfD_{\wh P}\wh P$:
\begin{equation}
\bfD_{P}P=\chi_{B_{1}}\bfD_{\wh P}\wh P+\calR_{c}^{(0)},\label{eq:compat1-tmp1}
\end{equation}
where $\wh P$ is the unlocalized profile \eqref{eq:def-unloc-prof}.
By the definition of $P$ and $\bfD_{Q}Q=0$, we have (recall $\chi_{\ageq B_{1}}=1-\chi_{B_{1}}$)
\begin{align*}
\bfD_{P}P & =\chi_{B_{1}}\bfD_{P}\wh P+\chi_{\ageq B_{1}}\bfD_{P}Q+(\rd_{y}\chi_{B_{1}})(\wh P-Q)\\
 & =\chi_{B_{1}}\bfD_{\wh P}\wh P+\chi_{B_{1}}(\bfD_{P}-\bfD_{\wh P})\wh P\\
 & \qquad+\chi_{\ageq B_{1}}\{(\bfD_{P}-\bfD_{Q})Q\}+(\rd_{y}\chi_{B_{1}})\{\wh P-Q\}.
\end{align*}
Thus it suffices to show that the last three terms of the above display
belong to $\calR_{c}^{(0)}$. Note that $\bfD_{P}-\bfD_{\wh P}$ and
$\bfD_{P}-\bfD_{Q}$ can be identified as the functions $\frac{1}{y}(A_{\tht}[\wh P]-A_{\tht}[P])$
and $\frac{1}{y}(A_{\tht}[Q]-A_{\tht}[P])$ and we have the estimates
\begin{alignat*}{1}
\chi_{B_{1}}(\bfD_{P}-\bfD_{\wh P}),\ \rd_{y}\chi_{B_{1}} & =\calO(\chf_{[B_{1},2B_{1}]}y^{-1}),\\
\chi_{\ageq B_{1}}(\bfD_{P}-\bfD_{Q}) & =\calO(\chf_{[B_{1},\infty)}y^{-1}).
\end{alignat*}
These estimates give 
\begin{align*}
\chi_{B_{1}}(\bfD_{P}-\bfD_{\wh P})\wh P+(\rd_{y}\chi_{B_{1}})\{\wh P-Q\} & =\calO(\chf_{[B_{1},2B_{1}]}\beta y^{-1})=\calR_{c}^{(0)},\\
\chi_{\ageq B_{1}}\{(\bfD_{P}-\bfD_{Q})Q\} & =\calO(\chf_{[B_{1},\infty)}y^{-4})=\calR_{c}^{(0)},
\end{align*}
as desired. This completes the proof of the claim \eqref{eq:compat1-tmp1}.

\emph{\uline{Second claim.}} We claim that 
\begin{equation}
\chi_{B_{1}}\bfD_{\wh P}\wh P=\chi_{B_{1}}\{\bfD_{\wh P-T_{3}^{(0)}}(\wh P-T_{3}^{(0)})+\bfD_{Q}T_{3}^{(0)}\}+\calR_{c}^{(0)}.\label{eq:compat1-tmp2}
\end{equation}
To show this, it suffices to show that 
\[
\chi_{B_{1}}\{(\bfD_{\wh P}-\bfD_{Q})T_{3}^{(0)}+(\bfD_{\wh P}-\bfD_{\wh P-T_{3}^{(0)}})(\wh P-T_{3}^{(0)})\}=\calR_{c}^{(0)}.
\]
As $T_{3}^{(0)}=0$ for $m\neq2$, we may assume $m=2$. Now the above
error estimate follows from 
\begin{align*}
\chi_{B_{1}}(\bfD_{\wh P}-\bfD_{Q})T_{3}^{(0)} & =\calO(\chf_{[2,2B_{1}]}\tfrac{1}{y}\tint 0y(\calI_{-6}^{1}+\calI_{4}^{6})y'dy'\cdot\calI_{2}^{3})\\
 & =\calO(\chf_{[2,2B_{1}]}\{\calI_{1}^{4}+\calI_{7}^{9}\})=\calR_{c}^{(0)}
\end{align*}
and 
\begin{align*}
\chi_{B_{1}}(\bfD_{\wh P}-\bfD_{\wh P-T_{3}^{(0)}})(\wh P-T_{3}^{(0)}) & =\calO(\chf_{[2,2B_{1}]}\tfrac{1}{y}\tint 0y(\calI_{-2}^{3}+\calI_{4}^{6})y'dy'\cdot(\calI_{-4}^{0}+\calI_{0}^{2}))\\
 & =\calO(\chf_{[2,2B_{1}]}\{\calI_{-5,1}^{3}+\calI_{-1,1}^{5}+\calI_{1}^{6}+\calI_{5}^{8}\})=\calR_{c}^{(0)}.
\end{align*}
Note that the contribution of the highest order term $\frac{1}{y}A_{\tht}[T_{3}^{(0)}]T_{3}^{(0)}$
saturates the bound \eqref{eq:def-Rc0}.

\emph{\uline{Last claim.}} We claim that 
\begin{equation}
\chi_{B_{1}}\bfD_{\wh P-T_{3}^{(0)}}(\wh P-T_{3}^{(0)})=P_{1}-\chi_{B_{1}}T_{3}^{(1)}+\calR_{c}^{(0)}.\label{eq:compat1-tmp3}
\end{equation}
To see this, we expand $\bfD_{\wh P-T_{3}^{(0)}}(\wh P-T_{3}^{(0)})$
using \eqref{eq:LinearizationBogomolnyi}: 
\begin{equation}
\begin{aligned}\chi_{B_{1}} & \bfD_{\wh P-T_{3}^{(0)}}(\wh P-T_{3}^{(0)})\\
 & =\chi_{B_{1}}\{L_{Q}(T_{1}^{(0)}+T_{2}^{(0)})+N_{Q}(T_{1}^{(0)}+T_{2}^{(0)})\}\\
 & =\chi_{B_{1}}\{L_{Q}T_{1}^{(0)}+(L_{Q}T_{2}^{(0)}-\tfrac{2}{y}A_{\tht}[Q,T_{1}^{(0)}]T_{1}^{(0)}-\tfrac{1}{y}A_{\tht}[T_{1}^{(0)}]Q)\}\\
 & \peq+\calO(\chf_{[2,2B_{1}]}\{\calI_{0}^{3}+\calI_{0,1}^{4}+\calI_{2}^{5}+\calI_{4}^{6}\}).
\end{aligned}
\label{eq:compat1-tmp}
\end{equation}
Applying \eqref{eq:gen-kernel-rel-LQ} for the linear term and \eqref{eq:def-T0_2}
for the quadratic term, the previous display continues as 
\begin{align*}
 & =\chi_{B_{1}}\{T_{1}^{(1)}+T_{2}^{(1)}\}+\calO(\chf_{[2,2B_{1}]}\{\calI_{0}^{3}+\calI_{0,1}^{4}+\calI_{2}^{5}+\calI_{4}^{6}\})\\
 & =P_{1}-\chi_{B_{1}}T_{3}^{(1)}+\calR_{c}^{(0)},
\end{align*}
completing the proof of the claim \eqref{eq:compat1-tmp3}. As in
the previous paragraph, we note that the highest order term $\frac{1}{y}A_{\tht}[T_{2}^{(0)}]T_{2}^{(0)}$
\emph{saturates} the bound \eqref{eq:def-Rc0} when $m=1$.

\emph{\uline{Completion of the proof of \mbox{\eqref{eq:compat1-H1}}--\mbox{\eqref{eq:compat1-H3}}.}}
Combining the claims \eqref{eq:compat1-tmp1}, \eqref{eq:compat1-tmp2},
and \eqref{eq:compat1-tmp3}, we have proved that 
\[
\bfD_{P}P-P_{1}=\chi_{B_{1}}\{\bfD_{Q}T_{3}^{(0)}-T_{3}^{(1)}\}+\calR_{c}^{(0)}.
\]
When $m=1$ or $m\geq3$, we have $T_{3}^{(0)}=0$ and $T_{3}^{(1)}=\calI_{0,1}^{3}$
(see \eqref{eq:asym-T-cub-m1}--\eqref{eq:asym-T-cub-mgeq2}) so
the RHS belongs to $\calR_{c}^{(0)}$. When $m=2$, we have $T_{3}^{(1)}=\calI_{1}^{3}$
so $\chi_{B_{1}}T_{3}^{(1)}$ \emph{does not} satisfy the last bound
of \eqref{eq:def-Rc0} logarithmically. Thus we introduced a nontrivial
$T_{3}^{(0)}=-\bbet^{3}\tfrac{y^{6}}{384}Q$ in this case so that
\[
\chi_{B_{1}}\{\bfD_{Q}T_{3}^{(0)}-T_{3}^{(1)}\}=\chi_{B_{1}}\{-\bbet^{3}\tfrac{y^{5}}{64}Q-T_{3}^{(1)}\}=\calO(\chf_{[2,2B_{1}]}\beta^{3}y^{-1}\log y)=\calR_{c}^{(0)},
\]
where we used \eqref{eq:asym-T-cub-mgeq2}. This completes the proof
of \eqref{eq:compat1-H1}--\eqref{eq:compat1-H3}.

\textbf{Step 3.} Proof of the compatibility estimates \eqref{eq:compat2-H2}--\eqref{eq:compat2-virial}.

We proceed similarly as in the previous step. Denote by $\calR_{c}^{(1)}$
any function satisfying the bounds (c.f. \eqref{eq:compat2-H2}--\eqref{eq:compat2-virial})
\begin{equation}
\|\calR_{c}^{(1)}\|_{L^{2}}\aleq\beta^{2},\quad\|\calR_{c}^{(1)}\|_{\dot{H}_{m+2}^{1}}\aleq\beta^{3},\quad\|\calR_{c}^{(1)}\|_{\dot{V}_{m+2}^{3/2}}\aleq\beta^{7/2}.\label{eq:def-Rc1}
\end{equation}

\emph{\uline{First claim.}} We claim that 
\begin{equation}
A_{P}P_{1}=\chi_{B_{1}}A_{\wh P}\wh P_{1}+\calR_{c}^{(1)}.\label{eq:compat2-cl1}
\end{equation}
To see this, we note that 
\[
A_{P}P_{1}=\chi_{B_{1}}A_{\wh P}\wh P_{1}+\chi_{B_{1}}(A_{P}-A_{\wh P})\wh P_{1}+(\rd_{y}\chi_{B_{1}})\wh P_{1}.
\]
Thus the claim follows from (using $\chf_{[B_{1},\infty)}|P_{1}|_{k}\aleq_{k}\chf_{[B_{1},2B_{1}]}\beta^{2}$)
\begin{align*}
\chi_{B_{1}}(A_{P}-A_{\wh P})\wh P_{1}+(\rd_{y}\chi_{B_{1}})\wh P_{1} & =\calO(\chf_{[B_{1},2B_{1}]}\beta^{2}y^{-1})=\calR_{c}^{(1)}.
\end{align*}

\emph{\uline{Second claim.}} We claim that 
\begin{equation}
\chi_{B_{1}}A_{\wh P}\wh P_{1}=\chi_{B_{1}}A_{\wh P-T_{3}^{(0)}}\wh P_{1}+\calR_{c}^{(1)}.\label{eq:compat2-cl2}
\end{equation}
To see this, as $T_{3}^{(0)}\neq0$ only if $m=2$, we may assume
$m=2$. Thus the claim follows from (using $\wh P_{1}=\calO(\calI_{-3}^{1}+\calI_{1}^{3})$)
\begin{align*}
\chi_{B_{1}} & (A_{\wh P}-A_{\wh P-T_{3}^{(0)}})\wh P_{1}\\
 & =\calO(\chf_{[2,2B_{1}]}\tfrac{1}{y}\tint 0y(\calI_{-2}^{3}+\calI_{4}^{6})y'dy'\cdot(\calI_{-3}^{1}+\calI_{1}^{3}))\\
 & =\calO(\chf_{[2,2B_{1}]}\{\calI_{-4,1}^{4}+\calI_{0,1}^{6}+\calI_{2}^{7}+\calI_{6}^{9}\})=\calR_{c}^{(1)}.
\end{align*}
Note that the highest order term $-\frac{1}{y}A_{\tht}[T_{3}^{(0)}]T_{3}^{(1)}$
saturates the bound \eqref{eq:def-Rc1} when $m=2$.

\emph{\uline{Third claim.}} We claim that 
\begin{equation}
\begin{aligned}\chi_{B_{1}}A_{\wh P-T_{3}^{(0)}}(T_{1}^{(1)}+T_{2}^{(1)}) & =\chi_{B_{1}}\big\{\bbet^{2}\tfrac{y^{2}}{4}Q+\bbet\big(A_{\tht}[Q,T_{2}^{(0)}]Q\\
 & \qquad+A_{\tht}[Q,T_{1}^{(0)}]T_{1}^{(0)}+\tfrac{1}{2}A_{\tht}[T_{1}^{(0)}]Q\big)\big\}+\calR_{c}^{(1)}.
\end{aligned}
\label{eq:compat2-cl3}
\end{equation}
Indeed, the claim follows from (c.f. \eqref{eq:compat1-tmp})
\begin{align*}
\chi_{B_{1}} & A_{\wh P-T_{3}^{(0)}}(T_{1}^{(1)}+T_{2}^{(1)})\\
 & =\chi_{B_{1}}\{-\bbet\tfrac{y}{2}\bfD_{\wh P-T_{3}^{(0)}}(Q+T_{1}^{(0)})\}\\
 & =\chi_{B_{1}}\big\{-\bbet\tfrac{y}{2}\big(L_{Q}T_{1}^{(0)}+(-\tfrac{2}{y}A_{\tht}[Q,T_{2}^{(0)}]Q\\
 & \qquad\quad-\tfrac{2}{y}A_{\tht}[Q,T_{1}^{(0)}]T_{1}^{(0)}-\tfrac{1}{y}A_{\tht}[T_{1}^{(0)}]Q)\big)\big\}+\calO(\chf_{[2,2B_{1}]}\{\calI_{-1,1}^{4}+\calI_{1}^{5}+\calI_{3}^{6}\})
\end{align*}
and applying $L_{Q}T_{1}^{(0)}=-\bbet\tfrac{y}{2}Q$. Note that the
term $-\tfrac{1}{y}A_{\tht}[T_{2}^{(0)}]T_{2}^{(1)}$ saturates the
bound \eqref{eq:def-Rc1} when $m=1$.

\emph{\uline{Last claim.}} We claim that 
\begin{equation}
\chi_{B_{1}}A_{\wh P-T_{3}^{(0)}}T_{3}^{(1)}=\chi_{B_{1}}A_{Q}T_{3}^{(1)}+\calR_{c}^{(1)}.\label{eq:compat2-cl4}
\end{equation}
This easily follows from (using $T_{3}^{(1)}=\calI_{1}^{3}$) 
\[
\chi_{B_{1}}A_{\wh P-T_{3}^{(0)}}T_{3}^{(1)}=\chi_{B_{1}}A_{Q}T_{3}^{(1)}+\calO(\chf_{[2,2B_{1}]}\{\calI_{0}^{4}+\calI_{0,1}^{5}+\calI_{2}^{6}+\calI_{4}^{7}\}).
\]

\emph{\uline{Completion of the proof of \mbox{\eqref{eq:compat2-H2}}--\mbox{\eqref{eq:compat2-virial}}.}}
By the claims \eqref{eq:compat2-cl1}, \eqref{eq:compat2-cl2}, \eqref{eq:compat2-cl3},
and \eqref{eq:compat2-cl4}, we have proved that 
\begin{align*}
A_{P}P_{1} & =\chi_{B_{1}}\{\bbet^{2}\tfrac{y^{2}}{4}Q\}+\chi_{B_{1}}\big\{ A_{Q}T_{3}^{(1)}\\
 & \peq+\bbet(A_{\tht}[Q,T_{2}^{(0)}]Q+A_{\tht}[Q,T_{1}^{(0)}]T_{1}^{(0)}+\tfrac{1}{2}A_{\tht}[T_{1}^{(0)}]Q)\big\}+\calR_{c}^{(1)}.
\end{align*}
By the definition of $T_{2}^{(2)},T_{3}^{(1)},P_{2}$, the above display
continues as 
\[
=\chi_{B_{1}}\{T_{2}^{(2)}+T_{3}^{(2)}\}+\calR_{c}^{(1)}=P_{2}-\chi_{B_{0}}T_{4}^{(2)}+\calR_{c}^{(2)}.
\]
Since $T_{4}^{(2)}=\calI_{1,2}^{4}$ and $B_{0}=\beta^{-1/2}$ (much
smaller than $B_{1}$), we have $\chi_{B_{0}}T_{4}^{(2)}=\calR_{c}^{(2)}$
and hence the proof of \eqref{eq:compat2-H2}--\eqref{eq:compat2-virial}
is completed.

\textbf{Step 4.} Proof of \eqref{eq:Psi0-loc-bound}.

We denote by $\calR_{0}$ any function satisfying the bound (c.f.
RHS\eqref{eq:Psi0-loc-bound})
\[
\chf_{(0,R]}|\calR_{0}|\aleq_{R}\beta^{3}.
\]
We need to estimate 
\begin{align*}
\Psi & =(b^{2}+\eta^{2}+p_{3}^{(b)})i\rd_{b}P+p_{3}^{(\eta)}i\rd_{\eta}P-ib\Lmb P\\
 & \peq-(m+1)\eta P-(\tint 0y\Re(\br PP_{1})dy')P+\bfD_{P}^{\ast}P_{1}.
\end{align*}
Since the bounds are allowed to depend on $R$, we only need to show
that $\Psi$ consists of cubic or higher order terms in $b$ and $\eta$.
In particular, we do not need to care about any growing tails in $y$
and cutoffs. The computations in this step are similar to those in
\cite{KimKwon2023AnnPDE}, where the modified profile \eqref{eq:old-P-1}
was used.

We begin the proof of \eqref{eq:Psi0-loc-bound}. Since $P_{1}=-\bbet\frac{y}{2}P+\calR_{0}$,
we have 
\begin{align*}
(\tint 0y\Re(\br PP_{1})dy')P & =\eta A_{\tht}[P]P+\calR_{0},\\
\bfD_{P}^{\ast}P_{1} & =-\bbet\tfrac{y}{2}(\bfD_{P}^{\ast}-\tfrac{1}{y})P+\calR_{0}.
\end{align*}
We also note that $p_{3}^{(b)}i\rd_{b}P+p_{3}^{(\eta)}i\rd_{\eta}P=\calR_{0}$.
Thus we have 
\[
\Psi=(b^{2}+\eta^{2})i\rd_{b}P-ib\Lmb P-\eta((m+1)+A_{\tht}[P])P-\bbet\tfrac{y}{2}(\bfD_{P}^{\ast}-\tfrac{1}{y})P+\calR_{0}.
\]
Applying to the above the operator identities 
\begin{align*}
\Lmb+\tfrac{y}{2}(\bfD_{P}^{\ast}-\tfrac{1}{y}) & =\tfrac{y}{2}\bfD_{P},\\
(m+1)+A_{\tht}[P]+\tfrac{y}{2}(\bfD_{P}^{\ast}-\tfrac{1}{y}) & =-\tfrac{y}{2}\bfD_{P},
\end{align*}
we have 
\[
\Psi=(b^{2}+\eta^{2})i\rd_{b}P-(ib-\eta)\tfrac{y}{2}\bfD_{P}P+\calR_{0}.
\]
Applying $(b^{2}+\eta^{2})i\rd_{b}P=(b^{2}+\eta^{2})\tfrac{y^{2}}{4}P+\calR_{0}$
and $\bfD_{P}P=P_{1}+\calR_{0}=-\bbet\tfrac{y}{2}P+\calR_{0}$ (see
\eqref{eq:compat1-H3}) to the above display completes the proof of
\eqref{eq:Psi0-loc-bound}.

\textbf{Step 5.} Some preliminary claims for the proof of \eqref{eq:Psi1-bound}
and \eqref{eq:Psi2-H2-bound}--\eqref{eq:Psi2-H3-bound}.

In contrast to the proof of \eqref{eq:Psi0-loc-bound}, we cannot
simply replace $P_{1}$ by $-\bbet\frac{y}{2}P$ when $m=1$ because
the cubic terms of $P_{1}$ and $-\bbet\frac{y}{2}P$, namely, $T_{3}^{(1)}$
and $-\bbet\tfrac{y}{2}T_{2}^{(0)}$, are not the same and $-\bbet\frac{y}{2}T_{2}^{(0)}$
rather has a dangerous growing tail. However, we would still like
to partially replace $P_{1}$ by $-\bbet\frac{y}{2}P$ in order to
handle the algebraic computations efficiently. For this purpose, it
is convenient to introduce the notation
\begin{equation}
\td P\coloneqq Q+T_{1}^{(0)}\quad\text{and}\quad\td P_{1}\coloneqq T_{1}^{(1)}+T_{2}^{(1)}\quad\text{so that}\quad\td P_{1}=-\bbet\tfrac{y}{2}\td P.\label{eq:def-tdP-tdP1}
\end{equation}
With this additional notation, we claim the following.

\emph{\uline{(i) \mbox{$A_{\tht}$}-like terms.}} We claim 
\begin{equation}
\begin{aligned}\chi_{B_{1}}A_{\tht}[\td P] & =\chi_{B_{1}}A_{\tht}[Q]+\chi_{B_{1}}\calI_{0}^{1}+\calO(\chf_{[2,2B_{1}]}\calI_{0,1}^{2})\\
 & =\chi_{B_{1}}A_{\tht}[Q]+\calO(\chf_{[2,2B_{1}]}\calI_{0}^{1})
\end{aligned}
\label{eq:ModProfPrem1}
\end{equation}
and 
\begin{equation}
\begin{aligned}\chi_{B_{1}} & \tint 0y\Re(\br PP_{1})dy'\\
 & =\chi_{B_{1}}\{\eta A_{\tht}[\td P]\}+\calO(\chf_{[2,2B_{1}]}\{\calI_{0,1}^{3}+\calI_{2}^{4}\})\\
 & =\chi_{B_{1}}\{\eta A_{\tht}[Q]\}+\chi_{B_{1}}\calI_{0}^{2}+\calO(\chf_{[2,2B_{1}]}\{\calI_{0,1}^{3}+\calI_{2}^{4}\})\\
 & =\chi_{B_{1}}\{-2(m+1)\eta\}+\chi_{B_{1}}\calI_{-2}^{1}+\calO(\chf_{[2,2B_{1}]}\calI_{0}^{2}).
\end{aligned}
\label{eq:ModProfPrem2}
\end{equation}
The estimate \eqref{eq:ModProfPrem1} is obvious from \eqref{eq:asymp-T-lin}.
The first equality of \eqref{eq:ModProfPrem2} follows from (using
$P_{1}=\calO(\calI_{-2}^{1}+\calI_{0}^{2})$ and $P_{1}+\bbet\tfrac{y}{2}\td P=\calO(\calI_{1}^{3})$
in the region $y\leq2B_{1}$) 
\begin{align*}
\chi_{B_{1}} & \tint 0y\Re(\br PP_{1}+\eta\tfrac{y'}{2}\td P^{2})dy'\\
 & =\chi_{B_{1}}\tint 0y\Re[(\br{P-\td P})P_{1}+\br{\td P}(P_{1}+\bbet\tfrac{y'}{2}\td P)]dy'\\
 & =\calO(\chf_{[2,2B_{1}]}\tint 0y\{\calI_{1}^{2}(\calI_{-2}^{1}+\calI_{0}^{2})+(\calI_{-3}^{0}+\calI_{-1}^{1})\calI_{1}^{3}\}dy')\\
 & =\calO(\chf_{[2,2B_{1}]}\{\calI_{0,1}^{3}+\calI_{2}^{4}\}).
\end{align*}
This estimate combined with \eqref{eq:ModProfPrem1} gives the second
equality of \eqref{eq:ModProfPrem2}. Further applying $A_{\tht}[Q]=-2(m+1)+\calI_{-4}^{0}$
gives the last equality of \eqref{eq:ModProfPrem2}.

\emph{\uline{(ii) Difference estimates for \mbox{$A_{P}^{\ast}$}}}
(similar bounds also hold for $A_{P}$). We claim
\begin{equation}
\begin{aligned}\chi_{B_{1}}A_{P}^{\ast} & =\chi_{B_{1}}A_{\td P}^{\ast}+\chi_{B_{1}}\calI_{-1,1}^{2}+\calO(\chf_{[2,2B_{1}]}\{\calI_{1}^{3}+\calI_{3}^{4}\})\\
 & =\chi_{B_{1}}A_{Q}^{\ast}+\chi_{B_{1}}\calI_{-1}^{1}+\calO(\chf_{[2,2B_{1}]}\{\calI_{-1,1}^{2}+\calI_{3}^{4}\}).
\end{aligned}
\label{eq:ModProfPrem3}
\end{equation}
This follows from 
\begin{align*}
\chi_{B_{1}}\{A_{P}^{\ast}-A_{\td P}^{\ast}\} & =\chi_{B_{1}}\{-\tfrac{1}{y}(A_{\tht}[P]-A_{\tht}[\td P])\}\\
 & =\chi_{B_{1}}\{\tfrac{1}{2y}\tint 0y\calI_{-2}^{2}y'dy'\}+\calO(\chf_{[2,2B_{1}]}\tfrac{1}{y}\tint 0y(\calI_{0}^{3}+\calI_{2}^{4})y'dy')\\
 & =\chi_{B_{1}}\calI_{-1,1}^{2}+\calO(\chf_{[2,2B_{1}]}\{\calI_{1}^{3}+\calI_{3}^{4}\})
\end{align*}
and \eqref{eq:ModProfPrem1}.

\emph{\uline{(iii) Difference estimates for \mbox{$\td H_{P}$}}}
(and the similar bounds hold for $H_{P}$). We claim
\begin{equation}
\begin{aligned}\chi_{B_{1}}\td H_{P} & =\chi_{B_{1}}\td H_{\td P}+\chi_{B_{1}}\calI_{-2,1}^{2}+\calO(\chf_{[2,2B_{1}]}\{\calI_{0}^{3}+\calI_{2}^{4}\})\\
 & =\chi_{B_{1}}\td H_{Q}+\chi_{B_{1}}\calI_{-2}^{1}+\calO(\chf_{[2,2B_{1}]}\{\calI_{-2,1}^{2}+\calI_{2}^{4}\}).
\end{aligned}
\label{eq:ModProfPrem4}
\end{equation}
The first equality follows from the expression
\[
\td H_{P}-\td H_{\td P}=\tfrac{1}{y}(2m+4+A_{\tht}[P]+A_{\tht}[\td P])\cdot\tfrac{1}{y}(A_{\tht}[P]-A_{\tht}[\td P])+\tfrac{1}{2}(|P|^{2}-|\td P|^{2}),
\]
the estimate $2m+4+A_{\tht}[P]+A_{\tht}[\td P]=\calI_{0}^{0}+\calO(\chf_{[2,2B_{1}]}\beta)$
in the region $y\leq2B_{1}$, and the first part of \eqref{eq:ModProfPrem3}.
The second equality can be proved similarly, using the second part
of \eqref{eq:ModProfPrem3} instead.

\textbf{Step 6.} Key claim for the proof of \eqref{eq:Psi1-bound}.

We denote by $\calR_{1}$ any function satisfying the bound (c.f.
RHS\eqref{eq:Psi1-bound})
\begin{equation}
\|\tfrac{1}{y^{2}}\calR_{1}\|_{L^{1}}\aleq\beta^{4}|\log\beta|^{3}.\label{eq:psi1-bd-00}
\end{equation}
The goal of this step is to prove the following: 
\begin{equation}
\begin{aligned}(b^{2}+\eta^{2}+p_{3}^{(b)})i\rd_{b}P_{1}+p_{3}^{(\eta)}i\rd_{\eta}P_{1}-ib\Lmb_{-1}P_{1}-(m+1)\eta P_{1}\qquad\\
-(\tint 0y\Re(\br PP_{1})dy')P_{1}+A_{P}^{\ast}(\chi_{B_{1}}\{T_{2}^{(2)}+T_{3}^{(2)}\}) & =\calR_{1}.
\end{aligned}
\label{eq:psi1-bd-01}
\end{equation}
Notice that LHS\eqref{eq:psi1-bd-01} is equal to $\Psi_{1}-A_{P}^{\ast}P_{2}$,
but the expression itself does not involve $T_{4}^{(2)}$ that has
not been chosen yet.

To show \eqref{eq:psi1-bd-01}, we write 
\begin{equation}
\begin{aligned} & \text{LHS\eqref{eq:psi1-bd-01}}\\
 & =A_{P}^{\ast}(\chi_{B_{1}}\{T_{2}^{(2)}+T_{3}^{(2)}\})\\
 & \peq+\chi_{B_{1}}\{[(b^{2}+\eta^{2})i\rd_{b}-ib\Lmb_{-1}-\eta(m+1)-(\tint 0y\Re(\br PP_{1})dy')]\td P_{1}\}\\
 & \peq+\chi_{B_{1}}\{[(b^{2}+\eta^{2})i\rd_{b}-ib\Lmb_{-1}-\eta(m+1)-(\tint 0y\Re(\br PP_{1})dy')]T_{3}^{(1)}\}\\
 & \peq+\{[(b^{2}+\eta^{2})i\rd_{b}-iby\rd_{y}]\chi_{B_{1}}\}\wh P_{1}\\
 & \peq+[p_{3}^{(b)}i\rd_{b}+p_{3}^{(\eta)}i\rd_{\eta}]P_{1}.
\end{aligned}
\label{eq:psi1-bd-03}
\end{equation}
In what follows, we show that each line of RHS\eqref{eq:psi1-bd-03}
belongs to $\calR_{1}$.

\emph{\uline{First line of RHS\mbox{\eqref{eq:psi1-bd-03}}.}}
We claim that 
\begin{equation}
\begin{aligned} & A_{P}^{\ast}(\chi_{B_{1}}\{T_{2}^{(2)}+T_{3}^{(2)}\})\\
 & =\chi_{B_{1}}\{-\bbet A_{\td P}^{\ast}(\tfrac{y}{2}\td P_{1})\}+\chi_{B_{1}}\{A_{Q}^{\ast}(T_{3}^{(2)}-\bbet^{2}\tfrac{y^{2}}{4}T_{1}^{(0)})\}+\calR_{1}.
\end{aligned}
\label{eq:psi1-bd-04}
\end{equation}
To see this, we start from writing 
\begin{align*}
 & A_{P}^{\ast}(\chi_{B_{1}}\{T_{2}^{(2)}+T_{3}^{(2)}\})\\
 & =\chi_{B_{1}}\{A_{\td P}^{\ast}T_{2}^{(2)}+A_{Q}^{\ast}T_{3}^{(2)}\}+\chi_{B_{1}}\{(A_{P}^{\ast}-A_{\td P}^{\ast})T_{2}^{(2)}+(A_{P}^{\ast}-A_{Q}^{\ast})T_{3}^{(2)})\}\\
 & \peq-(\rd_{y}\chi_{B_{1}})(T_{2}^{(2)}+T_{3}^{(2)}).
\end{align*}
We then substitute $T_{2}^{(2)}=\bbet^{2}\frac{y^{2}}{4}Q=-\bbet\tfrac{y}{2}\td P_{1}-\bbet^{2}\tfrac{y^{2}}{4}T_{1}^{(0)}$
into the above to obtain 
\begin{align*}
A_{P}^{\ast}P_{2} & =\chi_{B_{1}}\{-\bbet A_{\td P}^{\ast}(\tfrac{y}{2}\td P_{1})+A_{Q}^{\ast}(T_{3}^{(2)}-\bbet^{2}\tfrac{y^{2}}{4}T_{1}^{(0)})\}\\
 & \peq+\chi_{B_{1}}\{(A_{P}^{\ast}-A_{\td P}^{\ast})T_{2}^{(2)}+(A_{P}^{\ast}-A_{Q}^{\ast})T_{3}^{(2)}-(A_{\td P}^{\ast}-A_{Q}^{\ast})(\bbet^{2}\tfrac{y^{2}}{4}T_{1}^{(0)})\}\\
 & \peq-(\rd_{y}\chi_{B_{1}})(T_{2}^{(2)}+T_{3}^{(2)}).
\end{align*}
Hence it remains to show that all the terms in the last two lines
belong to $\calR_{1}$. Using \eqref{eq:ModProfPrem3}, we have 
\begin{align*}
\chi_{B_{1}}\{ & (A_{P}^{\ast}-A_{\td P}^{\ast})T_{2}^{(2)}\}-(\rd_{y}\chi_{B_{1}})T_{2}^{(2)}\\
 & =\calO((\chf_{[2,2B_{1}]}\{\calI_{-1,1}^{2}+\calI_{3}^{4}\}+\chf_{[B_{1},2B_{1}]}\tfrac{1}{y})\cdot\calI_{-1}^{2})\\
 & =\calO(\chf_{[2,2B_{1}]}\{\calI_{-2,1}^{4}+\calI_{2}^{6}\}+\chf_{[B_{1},2B_{1}]}\calI_{-2}^{2})=\calR_{1}.
\end{align*}
Using \eqref{eq:ModProfPrem3} and \eqref{eq:asymp-T-cub-rough},
we have 
\begin{align*}
\chi_{B_{1}}\{ & (A_{P}^{\ast}-A_{Q}^{\ast})T_{3}^{(2)})\}-(\rd_{y}\chi_{B_{1}})T_{3}^{(2)}\\
 & =\calO((\chf_{[2,2B_{1}]}\{\calI_{-1}^{1}+\calI_{3}^{4}\}+\chf_{[B_{1},2B_{1}]}\tfrac{1}{y})\cdot\calI_{0}^{3})\\
 & =\calO(\chf_{[2,2B_{1}]}\{\calI_{-1}^{4}+\calI_{3}^{7}\}+\chf_{[B_{1},2B_{1}]}\calI_{-1}^{3})=\calR_{1}.
\end{align*}
Using \eqref{eq:ModProfPrem1} and \eqref{eq:asymp-T-lin}, we have
\[
\chi_{B_{1}}\{(A_{\td P}^{\ast}-A_{Q}^{\ast})(\bbet^{2}\tfrac{y^{2}}{4}T_{1}^{(0)})\}=\calO(\chf_{[2,2B_{1}]}\calI_{-1}^{1}\cdot\calI_{1}^{3})=\calO(\chf_{[2,2B_{1}]}\calI_{0}^{4})=\calR_{1}.
\]
This completes the proof of the claim \eqref{eq:psi1-bd-04}.

\emph{\uline{Second line of RHS\mbox{\eqref{eq:psi1-bd-03}}.}}
We claim that
\begin{equation}
\chi_{B_{1}}(\tint 0y\Re(\br PP_{1})dy')\td P_{1}=\chi_{B_{1}}\{\eta A_{\tht}[\td P]\td P_{1}\}+\calR_{1}.\label{eq:psi1-bd-05}
\end{equation}
Indeed, this follows from \eqref{eq:ModProfPrem2}: 
\begin{align*}
\chi_{B_{1}}( & \tint 0y\Re(\br PP_{1})dy')\td P_{1}\\
 & =\chi_{B_{1}}\{\eta A_{\tht}[\td P]\td P_{1}\}+\calO(\chf_{[2,2B_{1}]}(\calI_{0,1}^{3}+\calI_{2}^{4})(\calI_{-2}^{1}+\calI_{0}^{2}))\\
 & =\chi_{B_{1}}\{\eta A_{\tht}[\td P]\td P_{1}\}+\calO(\chf_{[2,2B_{1}]}\{\calI_{-2,1}^{4}+\calI_{0,1}^{5}+\calI_{2}^{6}\})\\
 & =\chi_{B_{1}}\{\eta A_{\tht}[\td P]\td P_{1}\}+\calR_{1}.
\end{align*}

\emph{\uline{Third line of RHS\mbox{\eqref{eq:psi1-bd-03}}.}}
We claim that 
\begin{equation}
\chi_{B_{1}}\{[(b^{2}+\eta^{2})i\rd_{b}-ib\Lmb_{-1}-\eta(m+1)-(\tint 0y\Re(\br PP_{1})dy')]T_{3}^{(1)}\}=\calR_{1}.\label{eq:psi1-bd-06}
\end{equation}
We separate into two cases: (i) $m=1$ or $m\geq3$, and (ii) $m=2$.
In the first case, i.e., when $m=1$ or $m\geq3$, the square bracket
is of size $\calO(\beta)$ and we can use the spatial decay $T_{3}^{(1)}=\calI_{0,1}^{3}$
(see \eqref{eq:asym-T-cub-m1}--\eqref{eq:asym-T-cub-mgeq2}) to
have 
\[
\chi_{B_{1}}\{[(b^{2}+\eta^{2})i\rd_{b}-ib\Lmb_{-1}-\eta(m+1)-(\tint 0y\Re(\br PP_{1})dy')]T_{3}^{(1)}\}=\calO(\chf_{[2,2B_{1}]}\calI_{0,1}^{4}).
\]
In the latter case, i.e., when $m=2$, the bound $T_{3}^{(1)}=\calO(\calI_{1}^{3})$
is insufficient. However, we can exploit some spatial cancellation
using the spatial asymptotics \eqref{eq:asym-T-cub-mgeq2}. We handle
the integral term in the square bracket using \eqref{eq:ModProfPrem2}
\begin{align*}
\chi_{B_{1}}\{-\tint 0y\Re(\br PP_{1})dy'\} & =\chi_{B_{1}}\{\eta\tfrac{1}{2}\tint 0yQ^{2}y'dy'\}+\calO(\chf_{[2,2B_{1}]}\calI_{0}^{2})\\
 & =\chi_{B_{1}}\{\eta(2m+2)\}+\chi_{B_{1}}\calI_{-2}^{1}+\calO(\chf_{[2,2B_{1}]}\calI_{0}^{2}),
\end{align*}
from which we obtain 
\begin{align*}
\chi_{B_{1}}\{ & [(b^{2}+\eta^{2})i\rd_{b}-ib\Lmb_{-1}-\eta(m+1)-(\tint 0y\Re(\br PP_{1})dy')]T_{3}^{(1)}\}\\
 & =\chi_{B_{1}}\{[(b^{2}+\eta^{2})i\rd_{b}-ib\Lmb_{-1}+\eta(m+1)]T_{3}^{(1)}\}+\calO(\chf_{[2,2B_{1}]}\{\calI_{-1}^{4}+\calI_{1}^{5}\}).
\end{align*}
We apply \eqref{eq:asym-T-cub-mgeq2} to the above. Observe that \emph{the
contribution of the tail $\bbet^{3}y^{-(m-3)}=\bbet^{3}y$ of $T_{3}^{(1)}$
cancels when $m=2$}:
\begin{align*}
 & [(b^{2}+\eta^{2})i\rd_{b}-ib\Lmb_{-1}+\eta(m+1)]\bbet^{3}y\\
 & \qquad=[(b^{2}+\eta^{2})(-3\bbet^{2})-3ib\bbet^{3}+3\eta\bbet^{3})]y=0.
\end{align*}
Therefore, we have the \emph{improved spatial decay} 
\[
\chi_{B_{1}}\{[(b^{2}+\eta^{2})i\rd_{b}-ib\Lmb_{-1}+\eta(m+1)]T_{3}^{(1)}\}=\calO(\chf_{[2,2B_{1}]}\calI_{-1,1}^{4})=\calR_{1}.
\]
This completes the proof of the claim \eqref{eq:psi1-bd-06}.

\emph{\uline{Fourth line of RHS\mbox{\eqref{eq:psi1-bd-03}}.}}
Thanks to the \emph{special cancellation \eqref{eq:cutoff-cancellation}
of the cutoff}, we have 
\begin{equation}
\{[(b^{2}+\eta^{2})i\rd_{b}-iy\rd_{y}]\chi_{B_{1}}\}\wh P_{1}=0.\label{eq:psi1-bd-07}
\end{equation}

\emph{\uline{Last line of RHS\mbox{\eqref{eq:psi1-bd-03}}.}} We
claim that 
\begin{equation}
[p_{3}^{(b)}i\rd_{b}+p_{3}^{(\eta)}i\rd_{\eta}]P_{1}=\chi_{B_{1}}\{p_{3}^{(b)}\tfrac{y}{2}Q-p_{3}^{(\eta)}i\tfrac{y}{2}Q\}+\calR_{1}.\label{eq:psi1-bd-08}
\end{equation}
Indeed, this follows from 
\begin{align*}
[p_{3}^{(b)}i\rd_{b}+p_{3}^{(\eta)}i\rd_{\eta}]P_{1} & =\chi_{B_{1}}\{p_{3}^{(b)}\tfrac{y}{2}Q-p_{3}^{(\eta)}i\tfrac{y}{2}Q\}\\
 & \peq+\chi_{B_{1}}\{[p_{3}^{(b)}i\rd_{b}+p_{3}^{(\eta)}i\rd_{\eta}](\wh P_{1}+\bbet\tfrac{y}{2}Q)\}\\
 & \peq+\{[p_{3}^{(b)}i\rd_{b}+p_{3}^{(\eta)}i\rd_{\eta}]\chi_{B_{1}}\}\wh P_{1}\\
 & =\chi_{B_{1}}\{p_{3}^{(b)}\tfrac{y}{2}Q-p_{3}^{(\eta)}i\tfrac{y}{2}Q\}+\calO(\chf_{[2,2B_{1}]}\beta^{4}).
\end{align*}

\emph{\uline{Completion of the proof of \mbox{\eqref{eq:psi1-bd-01}}.}}
Substituting the claims \eqref{eq:psi1-bd-04}--\eqref{eq:psi1-bd-08}
into \eqref{eq:psi1-bd-03}, we arrive at 
\begin{align*}
\text{LHS\eqref{eq:psi1-bd-01}} & =\chi_{B_{1}}\{-\bbet A_{\td P}^{\ast}(\tfrac{y}{2}\td P_{1})\}+\chi_{B_{1}}\{A_{Q}^{\ast}(T_{3}^{(2)}-\bbet^{2}\tfrac{y^{2}}{4}T_{1}^{(0)})\}\\
 & \peq+\chi_{B_{1}}\{[(b^{2}+\eta^{2})i\rd_{b}-ib\Lmb_{-1}-\eta((m+1)+A_{\tht}[\td P])]\td P_{1}\}\\
 & \peq+\chi_{B_{1}}\{p_{3}^{(b)}\tfrac{y}{2}Q-p_{3}^{(\eta)}i\tfrac{y}{2}Q\}+\calR_{1}.
\end{align*}
Applying to the above the operator identities 
\begin{align*}
A_{\td P}^{\ast}(\tfrac{y}{2}\cdot)+\Lmb_{-1} & =\tfrac{y}{2}A_{\td P}+1,\\
A_{\td P}^{\ast}(\tfrac{y}{2}\cdot)+m+1+A_{\tht}[\td P] & =-(\tfrac{y}{2}A_{\td P}+1),
\end{align*}
we obtain 
\begin{equation}
\begin{aligned}\text{LHS\eqref{eq:psi1-bd-01}} & =\chi_{B_{1}}\{[(b^{2}+\eta^{2})i\rd_{b}-(ib-\eta)(\tfrac{y}{2}A_{\td P}+1)]\td P_{1}\}\\
 & \peq+\chi_{B_{1}}\{p_{3}^{(b)}\tfrac{y}{2}Q-p_{3}^{(\eta)}i\tfrac{y}{2}Q+A_{Q}^{\ast}(T_{3}^{(2)}-\bbet^{2}\tfrac{y^{2}}{4}T_{1}^{(0)})\}+\calR_{1}.
\end{aligned}
\label{eq:psi1-bd-09}
\end{equation}
A direct computation using $\td P_{1}=-\bbet\tfrac{y}{2}\td P$, $T_{1,0}^{(0)}=-i\tfrac{y^{2}}{4}Q$,
and $L_{Q}T_{1}^{(0)}+\bbet\tfrac{y}{2}Q=0$ gives 
\begin{align*}
[(b^{2} & +\eta^{2})i\rd_{b}-(ib-\eta)(\tfrac{y}{2}A_{\td P}+1)]\td P_{1}\\
 & =(b^{2}+\eta^{2})(\tfrac{y}{2}\td P-i\bbet\tfrac{y}{2}T_{1,0}^{(0)}-(\tfrac{y}{2}A_{\td P}+1)(\tfrac{y}{2}\td P))\\
 & =(b^{2}+\eta^{2})(-i\bbet\tfrac{y}{2}T_{1,0}^{(0)}-\tfrac{y^{2}}{4}\bfD_{\td P}\td P)\\
 & =-(b^{2}+\eta^{2})\tfrac{y^{2}}{4}(\bbet\tfrac{y}{2}Q+L_{Q}T_{1}^{(0)}+N_{Q}(T_{1}^{(0)}))\\
 & =-(b^{2}+\eta^{2})\tfrac{y^{2}}{4}N_{Q}(T_{1}^{(0)}),
\end{align*}
which belongs to $\calR_{1}$: 
\begin{align*}
\chi_{B_{1}}\{ & -(b^{2}+\eta^{2})\tfrac{y^{2}}{4}N_{Q}(T_{1}^{(0)})\}\\
 & =\calO(\chf_{[2,2B_{1}]}\calI_{2}^{2}\cdot(\tfrac{1}{y}\tint 0y(\calI_{-4}^{1}+\calI_{-2}^{2})y'dy'\cdot\calI_{-1}^{1}+\tfrac{1}{y}\tint 0y\calI_{-2}^{2}y'dy'\cdot\calI_{-3}^{0}))\\
 & =\calO(\chf_{[2,2B_{1}]}\calI_{0}^{4})=\calR_{1}.
\end{align*}
Therefore, \eqref{eq:psi1-bd-09} becomes 
\begin{equation}
\text{LHS\eqref{eq:psi1-bd-01}}=\chi_{B_{1}}\{p_{3}^{(b)}\tfrac{y}{2}Q-p_{3}^{(\eta)}i\tfrac{y}{2}Q+A_{Q}^{\ast}(T_{3}^{(2)}-\bbet^{2}\tfrac{y^{2}}{4}T_{1}^{(0)})\}+\calR_{1}.\label{eq:psi1-bd-10}
\end{equation}
By the definition of $p_{3}^{(b)}$, $p_{3}^{(\eta)}$, and $T_{3}^{(2)}$
(see \eqref{eq:def-T2_3} and \eqref{eq:def-p_3}), the curly bracket
of \eqref{eq:psi1-bd-10} \emph{vanishes}. This completes the proof
of \eqref{eq:psi1-bd-01}.

\textbf{Step 7.} Key claim for the construction of $T_{4}^{(2)}$
and the proof of \eqref{eq:Psi2-H2-bound}--\eqref{eq:Psi2-H3-bound}.

We denote by $\calR_{2}$ any function satisfying the bounds (c.f.
RHS\eqref{eq:Psi2-H2-bound}--RHS\eqref{eq:Psi2-H3-bound})
\begin{equation}
\|\calR_{2}\|_{L_{m+2}^{2}}\aleq\beta^{4}|\log\beta|^{2}\quad\text{and}\quad\|\calR_{2}\|_{\dot{H}_{m+2}^{1}}\aleq\beta^{9/2}|\log\beta|^{2}.\label{eq:psi2-bd-21}
\end{equation}
Denote by $\calA_{4}^{(2)}$ any function $f(y;b,\eta)$ of the form
$f=\sum_{i+j=4}b^{i}\eta^{j}f_{i,j}(y)$ such that each $f_{i,j}$
satisfies the pointwise bounds 
\begin{equation}
|f_{i,j}|_{k}\aleq_{k}\begin{cases}
y^{-m}\log y & \text{if }m\in\{1,2\},\\
y^{-m+2} & \text{if }m\geq3.
\end{cases}\label{eq:psi2-bd-22}
\end{equation}
The goal of this step is to prove the following: 
\begin{equation}
\begin{aligned}\big[(b^{2}+\eta^{2}+p_{3}^{(b)})i\rd_{b}+p_{3}^{(\eta)}i\rd_{\eta}-ib\Lmb_{-2}-(m+1)\eta\qquad\\
-(\tint 0y\Re(\br PP_{1})dy')+\td H_{P}\big](\chi_{B_{1}}\{T_{2}^{(2)}+T_{3}^{(2)}\})-\br PP_{1}^{2} & =\chi_{B_{1}}F_{4}^{(2)}+\calR_{2}
\end{aligned}
\label{eq:psi2-bd-00}
\end{equation}
for some function $F_{4}^{(2)}$ of class $\calA_{4}^{(2)}$. Notice
that LHS\eqref{eq:psi2-bd-00} does not involve $T_{4}^{(2)}$ that
has not been chosen yet.

To show \eqref{eq:psi2-bd-00}, we write 
\begin{equation}
\begin{aligned} & \text{LHS\eqref{eq:psi2-bd-00}}\\
 & =\td H_{P}(\chi_{B_{1}}\{T_{2}^{(2)}+T_{3}^{(2)}\})-\br PP_{1}^{2}\\
 & \peq+\chi_{B_{1}}\{[(b^{2}+\eta^{2})i\rd_{b}-ib\Lmb_{-2}-(m+1)\eta-(\tint 0y\Re(\br PP_{1})dy')]T_{2}^{(2)}\}\\
 & \peq+\chi_{B_{1}}\{[(b^{2}+\eta^{2})i\rd_{b}-ib\Lmb_{-2}-(m+1)\eta-(\tint 0y\Re(\br PP_{1})dy')]T_{3}^{(2)}\}\\
 & \peq+\{[(b^{2}+\eta^{2})i\rd_{b}-iby\rd_{y}]\chi_{B_{1}}\}(T_{2}^{(2)}+T_{3}^{(2)})\\
 & \peq+[p_{3}^{(b)}i\rd_{b}+p_{3}^{(\eta)}i\rd_{\eta}](\chi_{B_{1}}\{T_{2}^{(2)}+T_{3}^{(2)}\})
\end{aligned}
\label{eq:psi2-bd-01}
\end{equation}
In the following, we show that each line of RHS\eqref{eq:psi2-bd-01}
takes the form $\chi_{B_{1}}\calA_{4}^{(2)}+\calR_{2}$.

\emph{\uline{First line of RHS\mbox{\eqref{eq:psi2-bd-01}}.}}
We claim that 
\begin{equation}
\begin{aligned} & \td H_{P}(\chi_{B_{1}}\{T_{2}^{(2)}+T_{3}^{(2)}\})-\br PP_{1}^{2}\\
 & \quad=\chi_{B_{1}}\{-\bbet^{3}(A_{Q}^{\ast}-\tfrac{1}{y})(\tfrac{y^{3}}{8}Q)\}+\chi_{B_{1}}\calA_{4}^{(2)}+\calR_{2}.
\end{aligned}
\label{eq:psi2-bd-16}
\end{equation}
To see this, we first treat the cutoff errors: 
\begin{align*}
 & \td H_{P}(\chi_{B_{1}}\{T_{2}^{(2)}+T_{3}^{(2)}\})-\br PP_{1}^{2}\\
 & \quad=\chi_{B_{1}}\td H_{P}(T_{2}^{(2)}+T_{3}^{(2)})-\chi_{B_{1}}\br{\wh P}\wh P_{1}^{2}+\calO(\chf_{[B_{1},2B_{1}]}\calI_{-3}^{2}).
\end{align*}
The term $\chi_{B_{1}}\br{\wh P}\wh P_{1}^{2}$ can be written as
\begin{align*}
 & \chi_{B_{1}}\br{\wh P}\wh P_{1}^{2}\\
 & =\chi_{B_{1}}\br{\td P}\td P_{1}^{2}+\chi_{B_{1}}\calI_{-3m}^{4}+\calO(\chf_{[2,2B_{1}]}\{\calI_{-1}^{5}+\calI_{1}^{6}\})\\
 & =\chi_{B_{1}}\{|\td P|^{2}(\bbet^{2}\tfrac{y^{2}}{4}(Q+T_{1}^{(0)}))\}+\chi_{B_{1}}\calI_{-3m}^{4}+\calO(\chf_{[2,2B_{1}]}\{\calI_{-1}^{5}+\calI_{1}^{6}\})\\
 & =\chi_{B_{1}}\{|\td P|^{2}\cdot T_{2}^{(2)}+Q^{2}\cdot\bbet^{2}\tfrac{y^{2}}{4}T_{1}^{(0)}\}+\chi_{B_{1}}\calA_{4}^{(2)}+\calR_{2}.
\end{align*}
Next, we use \eqref{eq:ModProfPrem4}, $T_{2}^{(2)}=\calI_{-m}^{2}$,
$T_{3}^{(2)}=\calI_{-m+2}^{3}$ (when it contributes to the quartic
term) and $T_{3}^{(2)}=\calI_{0}^{3}$ (when it contributes to the
remainder, see \eqref{eq:asymp-T-cub-rough}) to have 
\begin{align*}
\chi_{B_{1}}\td H_{P}T_{2}^{(2)} & =\chi_{B_{1}}\td H_{\td P}T_{2}^{(2)}+\chi_{B_{1}}\calI_{-(m+2),1}^{4}+\calO(\chf_{[2,2B_{1}]}\{\calI_{-1}^{5}+\calI_{1}^{6}\}),\\
\chi_{B_{1}}\td H_{P}T_{3}^{(2)} & =\chi_{B_{1}}\td H_{Q}T_{3}^{(2)}+\chi_{B_{1}}\calI_{-m}^{4}+\calO(\chf_{[2,2B_{1}]}\{\calI_{-2,1}^{5}+\calI_{2}^{7}\}),
\end{align*}
By the previous three displays, we have proved that 
\begin{equation}
\begin{aligned} & \td H_{P}(\chi_{B_{1}}\{T_{2}^{(2)}+T_{3}^{(2)}\})-\br PP_{1}^{2}\\
 & =\chi_{B_{1}}\{(\td H_{\td P}-|\td P|^{2})T_{2}^{(2)}+(\td H_{Q}T_{3}^{(2)}-Q^{2}\cdot\bbet^{2}\tfrac{y^{2}}{4}T_{1}^{(0)})\}+\chi_{B_{1}}\calA_{4}^{(2)}+\calR_{2}.
\end{aligned}
\label{eq:psi2-bd-17}
\end{equation}
For the first term of RHS\eqref{eq:psi2-bd-17}, using the operator
identity 
\[
\td H_{\td P}-|\td P|^{2}=(A_{\td P}^{\ast}-\tfrac{1}{y})(A_{\td P}-\tfrac{1}{y})
\]
and the cancellation $(A_{Q}-\tfrac{1}{y})T_{2}^{(2)}=0$, we have
\[
\chi_{B_{1}}\{(\td H_{\td P}-|\td P|^{2})T_{2}^{(2)}\}=\chi_{B_{1}}\{(A_{\td P}^{\ast}-\tfrac{1}{y})(A_{\td P}-A_{Q})T_{2}^{(2)}\}.
\]
We decompose this into the cubic, quartic, and the remainder terms:
\begin{align*}
 & \chi_{B_{1}}\{(A_{\td P}^{\ast}-\tfrac{1}{y})(A_{\td P}-A_{Q})T_{2}^{(2)}\}\\
 & =\chi_{B_{1}}\{[A_{Q}^{\ast}-\tfrac{1}{y}](-\tfrac{2}{y}A_{\tht}[Q,T_{1}^{(0)}]T_{2}^{(2)})\}\\
 & \peq+\chi_{B_{1}}\{[A_{Q}^{\ast}-\tfrac{1}{y}](\tfrac{1}{y}\tint 0y\calI_{-2}^{2}y'dy'\cdot\calI_{-m}^{2})+\tfrac{1}{y^{2}}\tint 0y\calI_{-4}^{1}y'dy'\tint 0y\calI_{-4}^{1}y'dy'\cdot\calI_{-m}^{2}\}\\
 & \peq+\calO(\chf_{[2,2B_{1}]}(\tfrac{1}{y^{2}}\tint 0y(\calI_{-4}^{1}+\calI_{-2}^{2})y'dy'\tint 0y\calI_{-2}^{2}y'dy')\cdot\calI_{-1}^{2})\\
 & =\chi_{B_{1}}\{\bbet^{2}[A_{Q}^{\ast}-\tfrac{1}{y}](-\tfrac{y}{2}A_{\tht}[Q,T_{1}^{(0)}]Q)\}\\
 & \peq+\chi_{B_{1}}\{\calI_{-m-2,1}^{4}\}+\calO(\chf_{[2,2B_{1}]}(\calI_{-3,1}^{5}+\calI_{-3,2}^{6})).
\end{align*}
For the second term of RHS\eqref{eq:psi2-bd-17}, we use $\td H_{Q}T_{3}^{(2)}=\bbet^{2}\td H_{Q}(\tfrac{y^{2}}{4}T_{1}^{(0)})$
(see \eqref{eq:def-T2_3}) to have 
\begin{align*}
 & \chi_{B_{1}}\{\td H_{Q}T_{3}^{(2)}-\bbet^{2}\tfrac{y^{2}}{4}Q^{2}T_{1}^{(0)}\}\\
 & =\chi_{B_{1}}\{\bbet^{2}[\td H_{Q}-Q^{2}](\tfrac{y^{2}}{4}T_{1}^{(0)})\}\\
 & =\chi_{B_{1}}\{\bbet^{2}(A_{Q}^{\ast}-\tfrac{1}{y})(A_{Q}-\tfrac{1}{y})(\tfrac{y^{2}}{4}T_{1}^{(0)})\}\\
 & =\chi_{B_{1}}\{\bbet^{2}[A_{Q}^{\ast}-\tfrac{1}{y}](\tfrac{y^{2}}{4}\bfD_{Q}T_{1}^{(0)})\}.
\end{align*}
Substituting the previous three displays into \eqref{eq:psi2-bd-17}
gives 
\begin{align*}
 & \td H_{P}(\chi_{B_{1}}\{T_{2}^{(2)}+T_{3}^{(2)}\})-\br PP_{1}^{2}\\
 & =\chi_{B_{1}}\{\bbet^{2}[A_{Q}^{\ast}-\tfrac{1}{y}](\tfrac{y^{2}}{4}\bfD_{Q}T_{1}^{(0)}-\tfrac{y}{2}A_{\tht}[Q,T_{1}^{(0)}]Q)\}+\chi_{B_{1}}\calA_{4}^{(2)}+\calR_{2}.
\end{align*}
Applying the identity 
\[
\bfD_{Q}T_{1}^{(0)}-\tfrac{2}{y}A_{\tht}[Q,T_{1}^{(0)}]Q=L_{Q}T_{1}^{(0)}=-\bbet\tfrac{y}{2}Q
\]
completes the proof of the claim \eqref{eq:psi2-bd-16}.

\emph{\uline{Second line of RHS\mbox{\eqref{eq:psi2-bd-01}}.}}
We claim that 
\begin{equation}
\begin{aligned}\chi_{B_{1}}\{[(b^{2}+\eta^{2})i\rd_{b}-ib\Lmb_{-2}-(m+1)\eta-(\tint 0y\Re(\br PP_{1})dy')]T_{2}^{(2)}\}\qquad\\
=\chi_{B_{1}}\{-\bbet^{3}\rd_{y}(\tfrac{y^{3}}{4}Q)\}+\chi_{B_{1}}\calA_{4}^{(2)}+\calR_{2}.
\end{aligned}
\label{eq:psi2-bd-11}
\end{equation}
To see this, we rewrite LHS\eqref{eq:psi2-bd-11} as 
\begin{align*}
 & \chi_{B_{1}}\{[(b^{2}+\eta^{2})i\rd_{b}-ib\Lmb_{-2}-(m+1)\eta-(\tint 0y\Re(\br PP_{1})dy')]T_{2}^{(2)}\}\\
 & =\chi_{B_{1}}\{[(b^{2}+\eta^{2})i\rd_{b}-ib\Lmb_{-2}-\eta((m+1)+A_{\tht}[Q])]T_{2}^{(2)}\}\\
 & \peq-\chi_{B_{1}}\{(\tint 0y\Re(\br PP_{1})dy'-\eta A_{\tht}[Q])T_{2}^{(2)}\}.
\end{align*}
For the first term, we use $T_{2}^{(2)}=\bbet^{2}\tfrac{y^{2}}{4}Q$
and $(m+A_{\tht}[Q])Q=y\rd_{y}Q$ (by \eqref{eq:Bogomol'nyi-eq})
to have the algebraic identity 
\begin{align*}
[(b^{2} & +\eta^{2})i\rd_{b}-ib\Lmb_{-2}-\eta((m+1)+A_{\tht}[Q])]T_{2}^{(2)}\\
 & =[2(ib-\eta)-ib\Lmb_{-2}-\eta(y\rd_{y}-1)](\bbet^{2}\tfrac{y^{2}}{4}Q)\\
 & =[-\bbet(y\rd_{y}+1)](\bbet^{2}\tfrac{y^{2}}{4}Q)=-\bbet^{3}\rd_{y}(\tfrac{y^{3}}{4}Q).
\end{align*}
For the second term, we use \eqref{eq:ModProfPrem2} and $T_{2}^{(2)}=\calI_{-m}^{2}$
to have 
\begin{align*}
\chi_{B_{1}}\{ & (\tint 0y\Re(\br PP_{1})dy'-\eta A_{\tht}[Q])T_{2}^{(2)}\}\\
 & =\chi_{B_{1}}\calI_{-m}^{4}+\calO(\chf_{[2,2B_{1}]}\{\calI_{-m,1}^{5}+\calI_{-m+2}^{6}\})=\chi_{B_{1}}\calA_{4}^{(2)}+\calR_{2}.
\end{align*}
This completes the proof of \eqref{eq:psi2-bd-11}.

\emph{\uline{Third line of RHS\mbox{\eqref{eq:psi2-bd-01}}.}}
We claim that 
\begin{equation}
\begin{aligned}\chi_{B_{1}}\{[(b^{2}+\eta^{2})i\rd_{b}-ib\Lmb_{-2}-(m+1)\eta-(\tint 0y\Re(\br PP_{1})dy')]T_{3}^{(2)}\}\\
=\chi_{B_{1}}\calA_{4}^{(2)} & +\calR_{2}.
\end{aligned}
\label{eq:psi2-bd-12}
\end{equation}
Indeed, the proof of this claim is very similar to that of \eqref{eq:psi1-bd-06}.
Using $T_{3}^{(2)}=\calI_{0}^{3}$ and \eqref{eq:ModProfPrem2} and
proceeding as in the proof of \eqref{eq:psi1-bd-06}, we have 
\begin{align*}
 & \chi_{B_{1}}\{[(b^{2}+\eta^{2})i\rd_{b}-ib\Lmb_{-2}-\eta(m+1)-(\tint 0y\Re(\br PP_{1})dy')]T_{3}^{(2)}\}\\
 & =\chi_{B_{1}}\{[(b^{2}+\eta^{2})i\rd_{b}-ib\Lmb_{-2}+\eta(m+1)+\calI_{-2}^{1}+\calO(\calI_{0}^{2})]T_{3}^{(2)}\}.
\end{align*}
When $m=1$, we apply the improved decay $T_{3}^{(2)}=\calI_{-1,1}^{3}$
from \eqref{eq:asym-T-cub-m1}. When $m\geq3$, we apply the decay
$T_{3}^{(2)}=\calI_{-m+2}^{3}$. When $m=2$, as in the proof of \eqref{eq:psi1-bd-06},
there is a full cancellation of the contribution of the tail $\bbet^{3}$
from $T_{3}^{(2)}$. Therefore, we have 
\begin{align*}
 & \chi_{B_{1}}\{[(b^{2}+\eta^{2})i\rd_{b}-ib\Lmb_{-1}-\eta(m+1)-(\tint 0y\Re(\br PP_{1})dy')]T_{3}^{(2)}\}\\
 & =\begin{cases}
\chi_{B_{1}}\calI_{0}^{1}\cdot\calI_{-1,1}^{3}+\calO(\chf_{[2,2B_{1}]}\calI_{0}^{2}\cdot\calI_{-1,1}^{3}) & \text{if }m=1,\\
\chi_{B_{1}}(\calI_{0}^{1}\cdot\calI_{-2,1}^{3}+\calI_{-2}^{1}\cdot\calI_{0}^{3})+\calO(\chf_{[2,2B_{1}]}\calI_{0}^{2}\cdot\calI_{0}^{3}) & \text{if }m=2,\\
\chi_{B_{1}}\calI_{0}^{1}\cdot\calI_{-m+2}^{3}+\calO(\chf_{[2,2B_{1}]}\calI_{0}^{2}\cdot\calI_{-(m-2)}^{3}) & \text{if }m\geq3
\end{cases}\\
 & =\chi_{B_{1}}\calA_{4}^{(2)}+\calR_{2}.
\end{align*}
This completes the proof of \eqref{eq:psi2-bd-12}.

\emph{\uline{Fourth line of RHS\mbox{\eqref{eq:psi2-bd-01}}.}}
Thanks to the cancellation \eqref{eq:cutoff-cancellation}, we have
\begin{equation}
\{[(b^{2}+\eta^{2})i\rd_{b}-iby\rd_{y}]\chi_{B_{1}}\}(T_{2}^{(2)}+T_{3}^{(2)})=0.\label{eq:psi2-bd-13}
\end{equation}

\emph{\uline{Last line of RHS\mbox{\eqref{eq:psi2-bd-01}}.}} We
claim that 
\begin{equation}
[p_{3}^{(b)}i\rd_{b}+p_{3}^{(\eta)}i\rd_{\eta}](\chi_{B_{1}}\{T_{2}^{(2)}+T_{3}^{(2)}\})=\chi_{B_{1}}\calA_{4}^{(2)}+\calR_{2}.\label{eq:psi2-bd-14}
\end{equation}
Indeed, 
\begin{align*}
 & [p_{3}^{(b)}i\rd_{b}+p_{3}^{(\eta)}i\rd_{\eta}](\chi_{B_{1}}\{T_{2}^{(2)}+T_{3}^{(2)}\})\\
 & =\chi_{B_{1}}\{[p_{3}^{(b)}i\rd_{b}+p_{3}^{(\eta)}i\rd_{\eta}]T_{2}^{(2)}\}\\
 & \peq+\{[p_{3}^{(b)}i\rd_{b}+p_{3}^{(\eta)}i\rd_{\eta}]\chi_{B_{1}}\}T_{2}^{(2)}+[p_{3}^{(b)}i\rd_{b}+p_{3}^{(\eta)}i\rd_{\eta}](\chi_{B_{1}}T_{3}^{(2)})\\
 & =\chi_{B_{1}}\calI_{-m}^{4}+\calO(\chf_{[2,2B_{1}]}\calI_{0}^{5}+\chf_{[B_{1},2B_{1}]}\calI_{-m}^{4})\\
 & =\chi_{B_{1}}\calA_{4}^{(2)}+\calR_{2}.
\end{align*}

\emph{\uline{Completion of the proof of \mbox{\eqref{eq:psi2-bd-00}}.}}
Substituting the claims \eqref{eq:psi2-bd-16} and \eqref{eq:psi2-bd-11}--\eqref{eq:psi2-bd-14}
into \eqref{eq:psi2-bd-01}, we obtain 
\[
\text{LHS\eqref{eq:psi2-bd-00}}=\chi_{B_{1}}\{-\bbet^{3}\rd_{y}(\tfrac{y^{3}}{4}Q)-\bbet^{3}(A_{Q}^{\ast}-\tfrac{1}{y})(\tfrac{y^{3}}{8}Q)\}+\chi_{B_{1}}\calA_{4}^{(2)}+\calR_{2}.
\]
Since $\rd_{y}(\tfrac{y^{3}}{4}Q)+(A_{Q}^{\ast}-\tfrac{1}{y})(\tfrac{y^{3}}{8}Q)=(A_{Q}-\tfrac{2}{y})(\tfrac{y^{3}}{8}Q)=0$,
the curly bracket of the above display vanishes. This completes the
proof of \eqref{eq:psi2-bd-00}.

\textbf{Step 8.} Construction of $T_{4}^{(2)}$ and the proof of \eqref{eq:Psi1-bound}
and \eqref{eq:Psi2-H2-bound}--\eqref{eq:Psi2-H3-bound}.

\emph{\uline{Construction of \mbox{$T_{4}^{(2)}$}.}} Recall \eqref{eq:psi2-bd-00}.
From the expression of LHS\eqref{eq:psi2-bd-00}, the function $F_{4}^{(2)}$
depends only on already defined functions $T_{j}^{(k)}$, $j\leq3$.
Now we define 
\begin{equation}
T_{4}^{(2)}\coloneqq\begin{cases}
-\out\td H_{Q}^{-1}F_{4}^{(2)} & \text{if }m=1,\\
-\inn\td H_{Q}^{-1}F_{4}^{(2)} & \text{if }m\geq2,
\end{cases}\label{eq:def-T2_4}
\end{equation}
where $\out\td H_{Q}^{-1}$ and $\inn\td H_{Q}^{-1}$ are defined
in Lemma~\ref{lem:HtdQ-inv}. One can easily check the bound \eqref{eq:T2_4-bound}
for $T_{4}^{(2)}$ using Lemma~\ref{lem:HtdQ-inv}.

\emph{\uline{Proof of \mbox{\eqref{eq:Psi1-bound}}.}} It suffices
to estimate 
\[
\Psi_{1}=A_{P}^{\ast}(\chi_{B_{0}}T_{4}^{(2)})+\text{LHS\eqref{eq:psi1-bd-01}}.
\]
By the claim \eqref{eq:psi1-bd-01} and recalling $\calR_{1}$ from
\eqref{eq:psi1-bd-00}, it suffices to show $A_{P}^{\ast}(\chi_{B_{0}}T_{4}^{(2)})=\calR_{1}$.
Using \eqref{eq:T2_4-bound}, this follows from 
\[
A_{P}^{\ast}(\chi_{B_{0}}T_{4}^{(2)})=\calO(\chf_{[2,2B_{0}]}\calI_{0,2}^{4})=\calR_{1}.
\]
Note that this term saturates the bound \eqref{eq:Psi1-bound}. This
completes the proof of \eqref{eq:Psi1-bound}.

\emph{\uline{Proof of \mbox{\eqref{eq:Psi2-H2-bound}}--\mbox{\eqref{eq:Psi2-H3-bound}}.}}
It suffices to estimate 
\begin{align*}
\Psi_{2}=\big[(b^{2}+\eta^{2}+p_{3}^{(b)})i\rd_{b}+p_{3}^{(\eta)}i\rd_{\eta}-ib\Lmb_{-2}-(m+1)\eta\quad\\
-(\tint 0y\Re(\br PP_{1})dy')+\td H_{P}\big] & (\chi_{B_{0}}T_{4}^{(2)})+\text{LHS\eqref{eq:psi2-bd-00}}.
\end{align*}
By the claim \eqref{eq:psi2-bd-00} and the definition \eqref{eq:def-T2_4}
of $T_{4}^{(2)}$, we see that 
\begin{align*}
\Psi_{2} & =[(b^{2}+\eta^{2}+p_{3}^{(b)})i\rd_{b}+p_{3}^{(\eta)}i\rd_{\eta}-ib\Lmb_{-2}\\
 & \qquad\qquad-(m+1)\eta-(\tint 0y\Re(\br PP_{1})dy')](\chi_{B_{0}}T_{4}^{(2)})\\
 & \peq+\chi_{B_{0}}(\td H_{P}-\td H_{Q})T_{4}^{(2)}+[\chi_{B_{0}},\rd_{yy}+\tfrac{1}{y}\rd_{y}]T_{4}^{(2)}+(\chi_{B_{1}}-\chi_{B_{0}})F_{4}^{(2)}+\calR_{2},
\end{align*}
where we recall $\calR_{2}$ from \eqref{eq:psi2-bd-21}. It suffices
to show that all terms of the above display belong to $\calR_{2}$.
For the first and second lines, as the terms in the square bracket
are of size $\calO(\beta)$, these lines are of size $\calO(\chf_{[2,2B_{0}]}\calI_{1,2}^{5})$,
which belongs to $\calR_{2}$ due to $B_{0}=\beta^{-1/2}$. For the
last line of the above display, we use \eqref{eq:ModProfPrem4}, \eqref{eq:T2_4-bound},
and \eqref{eq:psi2-bd-22} to have 
\begin{align*}
\chi_{B_{0}}(\td H_{P}-\td H_{Q})T_{4}^{(2)} & =\calO(\chf_{[2,2B_{0}]}\{\calI_{-1,2}^{5}+\calI_{3,2}^{8}\})=\calR_{2},\\{}
[\chi_{B_{0}},\rd_{yy}+\tfrac{1}{y}\rd_{y}]T_{4}^{(2)} & =\calO(\chf_{[B_{0},2B_{0}]}\calI_{-1,2}^{4})=\calR_{2},\\
(\chi_{B_{1}}-\chi_{B_{0}})F_{4}^{(2)} & =\calR_{2}.
\end{align*}
This completes the proof of \eqref{eq:Psi2-H2-bound}--\eqref{eq:Psi2-H3-bound}.
This finishes the proof.
\end{proof}

\section{\label{sec:Modulation-analysis}Modulation analysis}

In this section, we study the dynamics of solutions near soliton using
modulation analysis. Near-soliton dynamics is relevant to our ultimate
goal (Theorem~\ref{thm:blow-up-rigidity}) because any $H^{1}$ finite-time
blow-up solutions eventually enters a small neighborhood of $Q$ after
renormalization by Remark \ref{rem:blow-up-vicinity}.

Using modulation analysis, our main goal of this section is the \emph{(local-in-time)
uniform $H^{3}$-bound of the remainder part of the solution}. More
precisely, using the modified profile $P(\cdot;b,\eta)$ constructed
in the previous section, we show that a $H_{m}^{3}$-solution $u(t)$
lying in a soliton tube (i.e., in a neighborhood of solitons $Q_{\lmb,\gmm}$)
admits a decomposition 
\[
u(t,r)=\frac{e^{i\gmm(t)}}{\lmb(t)}[P(\cdot;b(t),\eta(t))+\eps(t,\cdot)]\Big(\frac{r}{\lmb(t)}\Big)=[P+\eps]^{\sharp}
\]
with \emph{the remainder $\eps^{\sharp}(t)$ being $H_{m}^{3}$-bounded
on $I$}. It should be remarked that we obtain this $H^{3}$-boundedness
without any dynamical hypotheses on the modulation parameters such
as $b>0$ ($\lmb$ is contracting) or $|\eta|\ll b$ (rotational instability
is turned off), which are typically assumed in the blow-up construction.

\subsection{Decomposition of solutions}

In this subsection, we explain how we fix the decomposition of $u(t)$.
The decomposition will be fixed for each time $t$, by imposing four
(nonlinear) orthogonality conditions on the remainder $\eps(t)$.
The following is a fixed-time analysis and hence we omit the time
variable $t$ in this subsection.

Recall that we view \eqref{eq:CSS-m-equiv} as a system \eqref{eq:u-eqn}--\eqref{eq:u2-eqn}
of equations for $u$, $u_{1}=\bfD_{u}u$, and $u_{2}=A_{u}\bfD_{u}u$.
As in the previous section, where we constructed the modified profiles
$P$, $P_{1}$, and $P_{2}$ for the renormalized variables $w$,
$w_{1}$, and $w_{2}$, the decomposition here will take the form
\begin{align*}
w=\lmb e^{-i\gmm}u(\lmb\cdot) & =P(\cdot;b,\eta)+\eps,\\
w_{1}=\bfD_{w}w & =P_{1}(\cdot;b,\eta)+\eps_{1},\\
w_{2}=A_{w}w_{1} & =P_{2}(\cdot;b,\eta)+\eps_{2}.
\end{align*}

To fix the decomposition, as in \cite{KimKwonOh2020arXiv}, we impose
two orthogonality conditions on $\eps$ and the remaining two on $\eps_{1}$.
More precisely, we use the orthogonality conditions 
\begin{equation}
(\eps,\calZ_{1})_{r}=(\eps,\calZ_{2})_{r}=(\eps_{1},\td{\calZ}_{3})_{r}=(\eps_{1},\td{\calZ}_{4})_{r}=0,\label{eq:DecomOrtho}
\end{equation}
where $\calZ_{1},\calZ_{2}\in C_{c,m}^{\infty}$ and $\td{\calZ}_{3},\td{\calZ}_{4}\in C_{c,m+1}^{\infty}$
are chosen in Remark~\ref{rem:Ortho-choice}. The reason for using
two $\eps_{1}$-orthogonality conditions is that the $b$- and $\eta$-modulation
equations become more apparent in the $w_{1}$-equation compared to
the $w$-equation. We note that because $\eps_{1}\approx L_{Q}\eps$
at the leading order, the last two orthogonality conditions of \eqref{eq:DecomOrtho}
can be viewed as slight modifications of $\eps$-orthogonality conditions
$(\eps,L_{Q}^{\ast}\td{\calZ}_{3})_{r}=(\eps,L_{Q}^{\ast}\td{\calZ}_{4})_{r}=0$.

We now state the decomposition lemma. Denote a $\dot{H}_{m}^{1}$-soliton
tube by 
\begin{equation}
\dot{\calT}_{\alpha^{\ast}}\coloneqq\{u\in\dot{H}_{m}^{1}:\inf_{(\lmb,\gmm)\in\bbR_{+}\times(\bbR/2\pi\bbZ)}\|u_{1/\lmb,-\gmm}-Q\|_{\dot{H}_{m}^{1}}<\alpha^{\ast}\}.\label{eq:def-Hdot1-soliton-tube}
\end{equation}
As a standard application of the implicit function theorem, we obtain
the following.
\begin{lem}[Decomposition]
\label{lem:decomp}There exist $\delta_{0},\alpha_{0}>0$ such that
any $u\in\dot{\calT}_{\alpha_{0}}$ admits the unique decomposition
\begin{equation}
\left\{ \begin{aligned}u & =\frac{e^{i\gmm}}{\lmb}[P(\cdot;b,\eta)+\eps]\Big(\frac{\cdot}{\lmb}\Big),\\
u_{1}=\bfD_{u}u & =\frac{e^{i\gmm}}{\lmb^{2}}[P_{1}(\cdot;b,\eta)+\eps_{1}]\Big(\frac{\cdot}{\lmb}\Big),
\end{aligned}
\right.\label{eq:DecomDec}
\end{equation}
satisfying the orthogonality conditions 
\[
(\eps,\calZ_{1})_{r}=(\eps,\calZ_{2})_{r}=(\eps_{1},\td{\calZ}_{3})_{r}=(\eps_{1},\td{\calZ}_{4})_{r}=0.\tag{\ref{eq:DecomOrtho}}
\]
and smallness
\begin{equation}
\beta+\|\eps\|_{\dot{H}_{m}^{1}}<\delta_{0}.\label{eq:DecomSmall1}
\end{equation}
Moreover, if $u\in\dot{\calT}_{\alpha^{\ast}}$ with $\alpha^{\ast}<\alpha_{0}$,
we have a qualitative smallness 
\begin{equation}
\beta+\|\eps\|_{\dot{H}_{m}^{1}}=o_{\alpha^{\ast}\to0}(1).\label{eq:DecomSmall2}
\end{equation}
\end{lem}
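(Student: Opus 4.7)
The plan is to apply the implicit function theorem at the reference configuration $(u;\lmb,\gmm,b,\eta)=(Q;1,0,0,0)$, where the four orthogonality conditions \eqref{eq:DecomOrtho} hold trivially since $\eps=w-P(\cdot;0,0)=Q-Q=0$ and $\eps_{1}=\bfD_{w}w-P_{1}(\cdot;0,0)=\bfD_{Q}Q=0$. Given $u\in\dot{\calT}_{\alpha^{\ast}}$ with $\alpha^{\ast}<\alpha_{0}$, the soliton-tube definition \eqref{eq:def-Hdot1-soliton-tube} provides an initial guess $(\lmb_{0},\gmm_{0})$ so that $u_{1/\lmb_{0},-\gmm_{0}}$ is $\dot{H}_{m}^{1}$-close to $Q$; I then perturb $(\lmb_{0},\gmm_{0},0,0)$ to enforce the orthogonalities exactly. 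This immediately reduces matters to checking that, on a $\dot{H}_{m}^{1}$-neighborhood of $Q$, the Jacobian of the map
\[
F:(\lmb,\gmm,b,\eta)\mapsto\big((\eps,\calZ_{1})_{r},(\eps,\calZ_{2})_{r},(\eps_{1},\td{\calZ}_{3})_{r},(\eps_{1},\td{\calZ}_{4})_{r}\big)
\]
is invertible at the reference point.

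To compute the Jacobian, I use $P(\cdot;0,0)=Q$ together with the linear expansions \eqref{eq:def-T0_1}, \eqref{eq:def-T1_1} (namely $P\approx Q-ib\tfrac{y^{2}}{4}Q-(m+1)\eta\rho$ and $P_{1}\approx-\bbet\tfrac{y}{2}Q$) to obtain, at the reference point,
\[
\rd_{\lmb}\eps=\Lmb Q,\ \rd_{\gmm}\eps=-iQ,\ \rd_{b}\eps=i\tfrac{y^{2}}{4}Q,\ \rd_{\eta}\eps=(m+1)\rho,
\]
and, since $u_{1}=\bfD_{Q}Q$ vanishes identically along the whole orbit through $Q$,
\[
\rd_{\lmb}\eps_{1}=0,\ \rd_{\gmm}\eps_{1}=0,\ \rd_{b}\eps_{1}=i\tfrac{y}{2}Q,\ \rd_{\eta}\eps_{1}=\tfrac{y}{2}Q.
\]
These identities are consistent with the generalized kernel relations \eqref{eq:gen-kernel-rel}--\eqref{eq:gen-kernel-rel-LQ}. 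The Jacobian therefore has the block-triangular form in which the lower-left $2\times2$ block vanishes; the gauge condition \eqref{eq:Z1Z2-gauge-condition} kills the upper-right $2\times2$ block; the reality of $\calZ_{1}$ and imaginarity of $\calZ_{2}$ diagonalize the upper-left block into the entries $(\Lmb Q,\calZ_{1})_{r}$ and $(-iQ,\calZ_{2})_{r}$, which are nonzero by the transversality condition \eqref{eq:Z1Z2-transv}; and the explicit form \eqref{eq:def-td-Z3Z4} of $\td{\calZ}_{3},\td{\calZ}_{4}$ diagonalizes the lower-right block into entries proportional to $\int y^{2}Q^{2}\chi_{R_{\circ}}$, which is positive by \eqref{eq:def-R-circ}.

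With the Jacobian invertible, the implicit function theorem yields unique $C^{1}$-maps $(\lmb[\cdot],\gmm[\cdot],b[\cdot],\eta[\cdot])$ on a $\dot{H}_{m}^{1}$-neighborhood of $Q$ solving $F=0$; composing with the renormalization $(u,\lmb_{0},\gmm_{0})\mapsto u_{1/\lmb_{0},-\gmm_{0}}$ and invoking the soliton-tube hypothesis \eqref{eq:def-Hdot1-soliton-tube} extends the decomposition to all of $\dot{\calT}_{\alpha_{0}}$ for $\alpha_{0}$ sufficiently small, giving the smallness \eqref{eq:DecomSmall1}. The qualitative bound \eqref{eq:DecomSmall2} then follows from continuity of the parameter maps at $u=Q$. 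As this is a textbook application of the implicit function theorem, there is no serious obstacle; the only thing to verify carefully is the block structure of the Jacobian, which is precisely why the orthogonality profiles were chosen in Remark~\ref{rem:Ortho-choice} with the reality/imaginarity, gauge, and support properties used above.
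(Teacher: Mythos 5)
Your proof is correct and follows essentially the same approach as the paper: both apply the implicit function theorem and reduce matters to checking nondegeneracy of the same $4\times4$ matrix, which is shown to be block-diagonal (upper-right killed by the gauge condition \eqref{eq:Z1Z2-gauge-condition}, lower-left vanishing because $\Lmb P_1,iP_1=O(\beta)$) with nonzero diagonal entries guaranteed by \eqref{eq:Z1Z2-transv} and the choice \eqref{eq:def-td-Z3Z4}. The only cosmetic difference is that you compute the Jacobian exactly at the reference point $(Q;1,0,0,0)$, whereas the paper records the matrix \eqref{eq:matrix-transv} at a general nearby $(b,\eta)$ with an $O(\beta)$ error; these are equivalent for the IFT argument.
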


\begin{proof}[Proof sketch]
By the standard argument using the implicit function theorem, see
\cite[Lemma 5.2]{KimKwonOh2020arXiv} for a closely related proof,
it suffices to show that 
\begin{equation}
\begin{aligned}\begin{pmatrix}((\bfv)_{1},\calZ_{1})_{r} & ((\bfv)_{2},\calZ_{1})_{r} & ((\bfv)_{3},\calZ_{1})_{r} & ((\bfv)_{4},\calZ_{1})_{r}\\
((\bfv)_{1},\calZ_{2})_{r} & ((\bfv)_{2},\calZ_{2})_{r} & ((\bfv)_{3},\calZ_{2})_{r} & ((\bfv)_{4},\calZ_{2})_{r}\\
((\bfv_{1})_{1},\td{\calZ}_{3})_{r} & ((\bfv_{1})_{2},\td{\calZ}_{3})_{r} & ((\bfv_{1})_{3},\td{\calZ}_{3})_{r} & ((\bfv_{1})_{4},\td{\calZ}_{3})_{r}\\
((\bfv_{1})_{1},\td{\calZ}_{4})_{r} & ((\bfv_{1})_{2},\td{\calZ}_{4})_{r} & ((\bfv_{1})_{3},\td{\calZ}_{4})_{r} & ((\bfv_{1})_{4},\td{\calZ}_{4})_{r}
\end{pmatrix}\\
=\mathrm{diag}\big((\Lmb Q,\calZ_{1})_{r},(-iQ,\calZ_{2})_{r},(i\tfrac{y}{2}Q,\td{\calZ}_{3})_{r},(\tfrac{y}{2}Q,\td{\calZ}_{4})_{r}\big) & +O(\beta).
\end{aligned}
\label{eq:matrix-transv}
\end{equation}
Note that the entries in the RHS are nonzero due to \eqref{eq:Z1Z2-transv}
and \eqref{eq:def-td-Z3Z4}.

For the proof of \eqref{eq:matrix-transv}, by \eqref{eq:modvec0-1}--\eqref{eq:modvec1-2}
and the fact that $\calZ_{1},\calZ_{2},\td{\calZ}_{3},\td{\calZ}_{4}$
are compactly supported, we have 
\[
\text{LHS\eqref{eq:matrix-transv}}=\begin{pmatrix}(\Lmb Q,\calZ_{1})_{r} & (-iQ,\calZ_{1})_{r} & (i\tfrac{y^{2}}{4}Q,\calZ_{1})_{r} & ((m+1)\rho,\calZ_{1})_{r}\\
(\Lmb Q,\calZ_{2})_{r} & (-iQ,\calZ_{2})_{r} & (i\tfrac{y^{2}}{4}Q,\calZ_{2})_{r} & ((m+1)\rho,\calZ_{2})_{r}\\
0 & 0 & (i\tfrac{y}{2}Q,\td{\calZ}_{3})_{r} & (\tfrac{y}{2}Q,\td{\calZ}_{3})_{r}\\
0 & 0 & (i\tfrac{y}{2}Q,\td{\calZ}_{4})_{r} & (\tfrac{y}{2}Q,\td{\calZ}_{4})_{r}
\end{pmatrix}+O(\beta).
\]
Here, the upper right $2\times2$ matrix is in fact zero, thanks to
\eqref{eq:Z1Z2-gauge-condition}. Moreover, as $\calZ_{1},\td{\calZ}_{4}$
are real and $\calZ_{2},\td{\calZ}_{3}$ are imaginary, the remaining
off-diagonal entries vanish. This completes the proof of \eqref{eq:matrix-transv}.
For the remaining arguments for the proof of this lemma, we refer
to \cite[Lemma 5.2]{KimKwonOh2020arXiv}.
\end{proof}

\subsection{Nonlinear coercivity of energy}

For $H_{m}^{1}$-solutions, a remarkable fact observed in \cite{KimKwonOh2022arXiv1}
(in the context where $b$- and $\eta$-modulations are absent and
only the first two orthogonality conditions of \eqref{eq:DecomOrtho}
are assumed) is that the qualitative smallness $\|\eps\|_{\dot{H}_{m}^{1}}=o_{\alpha^{\ast}\to0}(1)$
can be improved to the \emph{quantitative smallness} 
\begin{equation}
\|\eps\|_{\dot{H}_{m}^{1}}\aleq_{M[u]}\lmb\sqrt{E[u]},\label{eq:quant-small-no-b-eta}
\end{equation}
which we call the\emph{ nonlinear coercivity of energy}. Note that
\eqref{eq:quant-small-no-b-eta} is a significant improvement over
the qualitative smallness in the sense that \eqref{eq:quant-small-no-b-eta}
gives the \emph{uniform $H^{1}$-control} $\|\eps^{\sharp}\|_{\dot{H}_{m}^{1}}\aleq_{M[u]}\sqrt{E[u]}$.
In \cite{KimKwonOh2022arXiv1}, this estimate was at the heart of
the proof of soliton resolution and in particular of the proof of
the upper bound \eqref{eq:thm1-lmb-upper-bound}. We remark that \eqref{eq:quant-small-no-b-eta}
is a non-perturbative estimate. It is rather a consequence of the
\emph{self-duality} and \emph{non-locality} of our problem, which
are two distinguished features of \eqref{eq:CSS-m-equiv}; see the
introduction of \cite{KimKwonOh2022arXiv1} or Remark~\ref{rem:on-quant-small}
below for more details.

In this subsection, we will obtain such a quantitative smallness estimate
in the presence of $b$- and $\eta$-modulations, namely, the bound
\begin{equation}
|b|+|\eta|+\|\eps\|_{\dot{H}_{m}^{1}}\aleq_{M[u]}\lmb\sqrt{E[u]}.\label{eq:quant-small}
\end{equation}
This bound will be the key \emph{a priori} bound allowing the energy
estimates in Section~\ref{subsec:Energy-Morawetz}.

Before we state the full result, let us introduce more pieces of notation.
For $u\in H_{m}^{1}\cap\dot{\calT}_{\alpha_{0}}$, hence $u$ having
finite energy and having a decomposition as in Lemma~\ref{lem:decomp},
let us define a \emph{scaling invariant} quantity 
\begin{equation}
\mu\coloneqq\lmb\sqrt{E[u]}.\label{eq:def-mu}
\end{equation}
For $M,\alpha^{\ast}>0$, we denote a $H_{m}^{1}$-soliton tube by
\begin{equation}
\calT_{\alpha^{\ast},M}\coloneqq\{u\in H_{m}^{1}:u\in\dot{\calT}_{\alpha^{\ast}}\text{ and }M[u]\leq M\}.\label{eq:def-H1-soliton-tube}
\end{equation}

\begin{lem}[Nonlinear coercivity of energy]
\label{lem:NonlinCoerEnergy}For any $M>0$, the following holds
if $\alpha^{\ast}>0$ is sufficiently small (depending on $M$). For
any $u\in\calT_{\alpha^{\ast},M}$, consider the decomposition of
$u$ as in Lemma~\ref{lem:decomp}.
\begin{itemize}
\item (Rough bound from mass conservation) We have 
\begin{equation}
\|\eps\|_{L^{2}}\aleq_{M}1.\label{eq:rough-bd-mass}
\end{equation}
\item (Small parameter $\mu$) We have\footnote{Recall the notation $\delta_{M}(\alpha^{\ast})$ from Section \ref{subsec:Notation}.}
\begin{equation}
\mu=\delta_{M}(\alpha^{\ast}).\label{eq:mu-small-par}
\end{equation}
\item (Nonlinear coercivity of energy) We have 
\begin{align}
\beta+\|\eps\|_{\dot{H}_{m}^{1}}+\|\eps_{1}\|_{L^{2}} & \sim_{M}\mu,\label{eq:H1-nonlin-coer}
\end{align}
\end{itemize}
\end{lem}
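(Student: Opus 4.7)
The plan is to reduce everything to the rescaled profile $u^\flat(y)=\lmb e^{-i\gmm}u(\lmb y)=P(\cdot;b,\eta)+\eps$: by $L^2$-criticality of energy one has $\mu^2=\lmb^2E[u]=E[u^\flat]$, and mass conservation gives $\|u^\flat\|_{L^2}\leq M$. Part~(1) is then immediate from the triangle inequality together with the easy bound $\|P\|_{L^2}\lesssim 1$, which follows from the formula \eqref{eq:def-P0} and $\beta<\tfrac{1}{10}$. The cornerstone for everything that follows is the \emph{exact} self-dual identity
\[
2\mu^2 \;=\; \|\bfD_{u^\flat}u^\flat\|_{L^2}^2 \;=\; \|w_1\|_{L^2}^2 \;=\; \|P_1+\eps_1\|_{L^2}^2,
\]
a rewriting of \eqref{eq:energy-self-dual-form} together with the definition of $\eps_1$. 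The upper direction of \eqref{eq:H1-nonlin-coer} then follows at once from the triangle inequality and $\|P_1\|_{L^2}\lesssim\beta$ (Proposition~\ref{prop:Modified-profiles}).

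The lower direction $\beta+\|\eps\|_{\dot H_m^1}+\|\eps_1\|_{L^2}\lesssim_M\mu$ proceeds in three stages. First, by the choice of orthogonality profiles in Remark~\ref{rem:Ortho-choice}, the algebraic identity $-\bbet\tfrac{y}{2}Q\chi_{R_\circ}=b\td\calZ_3+\eta\td\calZ_4$ holds, and $(\td\calZ_3,\td\calZ_4)_r=0$. Combined with the leading expansion $P_1=-\bbet\tfrac{y}{2}Q\chi_{B_1}+(\text{h.o.t.})$ from Proposition~\ref{prop:Modified-profiles}, the fact that $\chi_{B_1}=1$ on $\mathrm{supp}\,\chi_{R_\circ}$ (for $\beta$ small), and the orthogonality $(\eps_1,\td\calZ_j)_r=0$ for $j=3,4$, the Bessel projection onto $\mathrm{span}_{\mathbb R}\{\td\calZ_3,\td\calZ_4\}$ yields
\[
2\mu^2 \;\geq\; \frac{|(P_1,\td\calZ_3)_r|^2}{\|\td\calZ_3\|_{L^2}^2}+\frac{|(P_1,\td\calZ_4)_r|^2}{\|\td\calZ_4\|_{L^2}^2} \;\gtrsim\; b^2+\eta^2=\beta^2,
\]
so $\beta\lesssim\mu$. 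The triangle inequality then delivers $\|\eps_1\|_{L^2}\leq\sqrt{2}\,\mu+\|P_1\|_{L^2}\lesssim\mu+\beta\lesssim_M\mu$.

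The third and most delicate stage is $\|\eps\|_{\dot H_m^1}\lesssim_M\mu$. Expanding via \eqref{eq:LinearizationBogomolnyi}, $\eps_1=(\bfD_P P-P_1)+L_P\eps+N_P(\eps)$; the first term is $O(\beta)$ in $L^2$ by \eqref{eq:compat1-H1}, and $\|(L_P-L_Q)\eps\|_{L^2}\lesssim_M\beta\|\eps\|_{\dot H_m^1}$ (up to harmless logarithms) since $\|P-Q\|_{\dot H_m^1}\lesssim\beta$. The genuinely nonlinear bound is
\[
\|N_P(\eps)\|_{L^2}\;\lesssim_M\;\|\eps\|_{\dot H_m^1},
\]
which I would prove by a Cauchy--Schwarz analysis of the nonlocal integral operator $\tfrac{1}{y}\int_0^y(\cdot)\,y'dy'$, combined with Hardy's inequality (available because $m\geq 1$) and the rough bound \eqref{eq:rough-bd-mass}: for both contributions $\tfrac{2}{y}A_\theta[P,\eps]\eps$ and $\tfrac{1}{y}A_\theta[\eps]P$, one arranges matters so that one factor of $\eps$ always appears weighted by $1/y$ and is hence small. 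This is the main obstacle, and is precisely where the self-dual, non-local structure of \eqref{eq:CSS-m-equiv}---in the spirit of \cite{KimKwonOh2022arXiv1}---plays its indispensable role: the estimate must be \emph{non-perturbative} in $\|\eps\|_{L^2}$, which is merely bounded, not small. Inserting these bounds into the $L_Q$-coercivity \eqref{eq:LQ-coer-L2-to-H1} under $\eps\perp\{\calZ_1,\calZ_2\}$ gives
\[
\|\eps\|_{\dot H_m^1}\;\lesssim_M\;\|L_Q\eps\|_{L^2}\;\lesssim_M\;\|\eps_1\|_{L^2}+\beta+(\beta+\delta_M(\alpha^\ast))\|\eps\|_{\dot H_m^1},
\]
and absorbing the last term (legal for $\alpha^\ast$ small depending on $M$, by Lemma~\ref{lem:decomp}) closes the estimate. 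Finally, the smallness $\mu=\delta_M(\alpha^\ast)$ in part~(2) is a byproduct: the same expansion gives $\|\eps_1\|_{L^2}\lesssim_M\beta+\|\eps\|_{\dot H_m^1}$, so the upper direction yields $\mu\lesssim_M\beta+\|\eps\|_{\dot H_m^1}=\delta_0(\alpha^\ast)$ via Lemma~\ref{lem:decomp}.
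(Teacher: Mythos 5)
Your Bessel-projection argument for $\beta\lesssim\mu$ is a clean alternative to the paper's direct quadratic expansion on the restricted region $(0,B_0]$; both proceed by isolating the $P_1\approx -\bbet\tfrac{y}{2}Q$ contribution to $\|w_1\|_{L^2}^2$ using the $\eps_1$-orthogonality conditions, and your version makes the mechanism more transparent. This part of the proposal is fine.

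The genuine gap lies in the third stage, where you try to close $\|\eps\|_{\dot H_m^1}\lesssim_M\mu$ by inserting the \emph{linear} coercivity \eqref{eq:LQ-coer-L2-to-H1} for $L_Q\eps$ and absorbing $N_P(\eps)$. The full nonlinear remainder of the Bogomol'nyi operator is
\[
N_P(\eps)=-\tfrac{2}{y}A_{\tht}[P,\eps]\eps-\tfrac{1}{y}A_{\tht}[\eps]P-\tfrac{1}{y}A_{\tht}[\eps]\eps,
\]
and while the first two (one factor of $P$, two of $\eps$) do admit bounds of size $\delta_M(\alpha^\ast)\|\eps\|_{\dot H_m^1}+O_M(\beta)$ via Cauchy--Schwarz for the integral operator, the pure-$\eps$ cubic term does not: the sharp scaling-invariant estimate is $\|\tfrac{1}{y}A_{\tht}[\eps]\eps\|_{L^2}\lesssim\|\eps\|_{L^2}^2\|\eps\|_{\dot H_m^1}$, and $\|\eps\|_{L^2}$ is merely $O_M(1)$, not small. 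Thus your displayed inequality, which records the error coefficient as $\delta_M(\alpha^\ast)$, is not what your claimed bound $\|N_P(\eps)\|_{L^2}\lesssim_M\|\eps\|_{\dot H_m^1}$ delivers; with a genuine $O_M(1)$ coefficient, the absorption step fails. You cite the non-perturbative spirit of \cite{KimKwonOh2022arXiv1} as the way around this, but a vague appeal is not a mechanism: the actual mechanism is to move the offending cubic term to the \emph{left-hand side}, i.e.\ to work with the combination $L_Q\eps-\tfrac{1}{y}A_{\tht}[\eps]\eps$, for which the genuinely nonlinear Hardy inequality \eqref{eq:nonlin-coer-to-use} of \cite{KimKwonOh2022arXiv1} gives coercivity $\|L_Q\eps-\tfrac{1}{y}A_{\tht}[\eps]\eps\|_{L^2}\gtrsim_M\|\eps\|_{\dot H_m^1}$ uniformly in $\|\eps\|_{L^2}\leq M$. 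With that regrouping the remaining error $(L_P-L_Q)\eps+N_P(\eps)+\tfrac{1}{y}A_{\tht}[\eps]\eps$ involves only terms carrying at least one factor of $P$ or $P-Q$, which \emph{are} perturbative, and the Gr\"onwall-free absorption closes. Substituting the linear coercivity \eqref{eq:LQ-coer-L2-to-H1} for this nonlinear input is precisely where your argument would not go through.
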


\begin{rem}
In view of the uniqueness of zero energy solutions, we have $\mu=0$
if and only if $u=Q_{\lmb,\gmm}$. Thus it is natural to expect that
$\mu$ is a small parameter.
\end{rem}

\begin{rem}[Self-duality, non-locality, and nonlinear coercivity of energy]
\label{rem:on-quant-small}Following the discussion in the introduction
of \cite{KimKwonOh2022arXiv1}, we explain why \eqref{eq:quant-small-no-b-eta}
is a non-perturbative estimate and how the self-duality and non-locality
give \eqref{eq:quant-small-no-b-eta}.

By the \emph{self-duality} \eqref{eq:energy-self-dual-form}, \eqref{eq:quant-small-no-b-eta}
is implied by 
\begin{equation}
\|\bfD_{Q+\eps}(Q+\eps)\|\ageq_{M}\|\eps\|_{\dot{H}_{m}^{1}}.\label{eq:non-lin-coer-1}
\end{equation}
Note that we have $\|\eps\|_{\dot{H}_{m}^{1}}\ll1$ from the qualitative
smallness but $\|\eps\|_{L^{2}}$ might be large. In the compact region
$y\aleq1$, the $\dot{H}_{m}^{1}$-smallness is enough to guarantee
\[
\bfD_{Q+\eps}(Q+\eps)\approx L_{Q}\eps\qquad\text{for }y\aleq1
\]
in view of \eqref{eq:LinearizationBogomolnyi}. Applying the linear
coercivity \eqref{eq:LQ-coer-L2-to-H1} gives \eqref{eq:non-lin-coer-1}
in the region $y\aleq1$. However, in the far region $y\gg1$, the
pure $\eps$-term $\frac{1}{y}A_{\tht}[\eps]\eps$ of $\bfD_{Q+\eps}(Q+\eps)$
is \emph{not perturbative} due to the optimality of the scaling-invariant
estimate $\|\frac{1}{y}A_{\tht}[\eps]\eps\|_{L^{2}}\aleq\|\eps\|_{L^{2}}^{2}\|\eps\|_{\dot{H}_{m}^{1}}$
and the fact that $\|\eps\|_{L^{2}}$ is not necessarily small. In
this region, we only have 
\[
\bfD_{Q+\eps}(Q+\eps)\approx\big(\rd_{y}-\tfrac{1}{y}(m+A_{\tht}[Q]+A_{\tht}[\eps])\big)\eps\eqqcolon\bfD_{Q,\eps}\eps\qquad\text{for }y\gg1.
\]
Now we use the \emph{non-locality} of the problem, particularly the
fact that $m+A_{\tht}[Q]\approx-(m+2)$ is negative. In view of \eqref{eq:energy-self-dual-form},
this says that the exterior energy of $Q+\eps$ is similar to the
exterior energy of $\eps$ for the $-(m+2)$-equivariant \eqref{eq:CSS-m-equiv}.
Thanks to the negativity of both $m+A_{\tht}[Q]$ and $A_{\tht}[\eps]$
as well as the uniform bound $|A_{\tht}[\eps]|\aleq\|\eps\|_{L^{2}}^{2}\aleq_{M}1$,
we can expect a Hardy-type inequality for the operator $\bfD_{Q,\eps}$
as 
\[
\|\chf_{y\sim1}|f|_{-1}\|_{L^{2}}+\|\chf_{y\gg1}\bfD_{Q,\eps}f\|_{L^{2}}\ageq_{M}\|\chf_{y\gg1}|f|_{-1}\|_{L^{2}}.
\]
Substituting $f=\eps$ into the above and using the control of $\eps$
in the region $y\sim1$, we obtain the control of $\eps$ even in
the region $y\gg1$, completing (the sketch of) the proof of \eqref{eq:non-lin-coer-1}.
\end{rem}

\begin{proof}[Proof of Lemma~\ref{lem:NonlinCoerEnergy}]
In the proof of \eqref{eq:H1-nonlin-coer}, we will use the inequality
\begin{equation}
\|L_{Q}\eps-\tfrac{1}{y}A_{\tht}[\eps]\eps\|_{L^{2}}\ageq_{M}\|\eps\|_{\dot{H}_{m}^{1}},\label{eq:nonlin-coer-to-use}
\end{equation}
which is essentially proved in Remark~\ref{rem:on-quant-small}.
For a rigorous proof, the readers may refer to \cite[Lemma 4.4]{KimKwonOh2022arXiv1}.
Note that the proof of $\|\eps\|_{L^{2}}\aleq_{M}1$ is obvious from
mass conservation. Henceforth, we show \eqref{eq:mu-small-par} and
\eqref{eq:H1-nonlin-coer}.

\textbf{Step 1. }A preliminary claim.

In this step, we show that 
\begin{equation}
\mu\sim\beta+\|\eps_{1}\|_{L^{2}}.\label{eq:nonlin-coer-01}
\end{equation}
To see this, using the self-dual form \eqref{eq:energy-self-dual-form}
of energy, we have 
\[
\mu\sim\|P_{1}+\eps_{1}\|_{L^{2}}.
\]
Thus the $\aleq$-inequality of \eqref{eq:nonlin-coer-01} easily
follows from $\|P_{1}\|_{L^{2}}\aleq\beta$ and the triangle inequality.

We turn to the more delicate proof of the $\ageq$-inequality of \eqref{eq:nonlin-coer-01}.
As before, we will consider the expression $\|P_{1}+\eps_{1}\|_{L^{2}}$.
The higher order terms of $P_{1}$ are \emph{non-perturbative} in
the region $y\sim B_{1}$, so we first use the inequality
\begin{equation}
\|P_{1}+\eps_{1}\|_{L^{2}}\geq\|\chf_{(0,B_{0}]}(P_{1}+\eps_{1})\|_{L^{2}}.\label{eq:nonlin-coer-02}
\end{equation}
In this region, the higher order terms of $P_{1}$ become perturbative
(using \eqref{eq:DecomSmall2}): 
\begin{equation}
\|\chf_{(0,B_{0}]}(P_{1}+\bbet\tfrac{y}{2}Q)\|_{L^{2}}\aleq\|\chf_{(0,B_{0}]}\beta^{2}\|_{L^{2}}\aleq\beta^{3/2}=\delta(\alpha^{\ast})\cdot\beta.\label{eq:nonlin-coer-03}
\end{equation}
On the other hand, by the choice of $\td{\calZ}_{3},\td{\calZ}_{4}$
(see \eqref{eq:def-R-circ}), we have 
\begin{align*}
 & |(\chf_{(0,B_{0}]}\bbet\tfrac{y}{2}Q,\chf_{(0,B_{0}]}\eps_{1})_{r}|\\
 & \quad=|(\bbet(\chf_{(0,B_{0}]}-\chi_{R_{\circ}})\tfrac{y}{2}Q,\chf_{(0,B_{0}]}\eps_{1})_{r}|\leq\tfrac{1}{2}\|\chf_{(0,B_{0}]}\beta\tfrac{y}{2}Q\|_{L^{2}}\|\chf_{(0,B_{0}]}\eps_{1}\|_{L^{2}}
\end{align*}
and hence 
\begin{equation}
\begin{aligned} & \|\chf_{(0,B_{0}]}(\bbet\tfrac{y}{2}Q+\eps_{1})\|_{L^{2}}^{2}\\
 & \quad=\|\chf_{(0,B_{0}]}\beta\tfrac{y}{2}Q\|_{L^{2}}^{2}+2(\chf_{(0,B_{0}]}\bbet\tfrac{y}{2}Q,\chf_{(0,B_{0}]}\eps_{1})_{r}+\|\chf_{(0,B_{0}]}\eps_{1}\|_{L^{2}}^{2}\\
 & \quad\geq\|\chf_{(0,B_{0}]}\beta\tfrac{y}{2}Q\|_{L^{2}}^{2}-\|\chf_{(0,B_{0}]}\beta\tfrac{y}{2}Q\|_{L^{2}}\|\chf_{(0,B_{0}]}\eps_{1}\|_{L^{2}}+\|\chf_{(0,B_{0}]}\eps_{1}\|_{L^{2}}^{2}\\
 & \quad\geq\tfrac{1}{2}(\|\chf_{(0,B_{0}]}\beta\tfrac{y}{2}Q\|_{L^{2}}^{2}+\|\chf_{(0,B_{0}]}\eps_{1}\|_{L^{2}}^{2})\\
 & \quad\ageq\beta^{2}+\|\chf_{(0,B_{0}]}\eps_{1}\|_{L^{2}}^{2}.
\end{aligned}
\label{eq:nonlin-coer-04}
\end{equation}
By \eqref{eq:nonlin-coer-02}, \eqref{eq:nonlin-coer-03}, and \eqref{eq:nonlin-coer-04},
we have 
\begin{equation}
\mu\sim\|P_{1}+\eps_{1}\|_{L^{2}}\ageq\beta+\|\chf_{(0,B_{0}]}\eps_{1}\|_{L^{2}}.\label{eq:nonlin-coer-05}
\end{equation}
Since $\|\eps_{1}\|_{L^{2}}\aleq\|P_{1}\|_{L^{2}}+\mu\aleq\beta+\mu\aleq\mu$
(using $\beta\aleq\mu$ from \eqref{eq:nonlin-coer-05}), we can improve
\eqref{eq:nonlin-coer-05} to 
\[
\mu\ageq\beta+\|\eps_{1}\|_{L^{2}}.
\]
This completes the proof of the claim \eqref{eq:nonlin-coer-01}.

\textbf{Step 2.} $\mu$ is a small parameter: proof of \eqref{eq:mu-small-par}.

By \eqref{eq:DecomSmall2} and \eqref{eq:nonlin-coer-01}, it suffices
to show that 
\begin{equation}
\|\eps_{1}\|_{L^{2}}\aleq_{M}\beta+\|\eps\|_{\dot{H}_{m}^{1}}.\label{eq:nonlin-coer-06}
\end{equation}
For the proof of \eqref{eq:nonlin-coer-06}, we start from the relation
\begin{equation}
\eps_{1}=w_{1}-P_{1}=(\bfD_{P}P-P_{1})+(\bfD_{w}w-\bfD_{P}P),\label{eq:nonlin-coer-07}
\end{equation}
take the $L^{2}$-norm, and apply the bound \eqref{eq:compat1-H1}
to have 
\[
\|\eps_{1}\|_{L^{2}}\aleq\beta+\|\bfD_{w}w-\bfD_{P}P\|_{L^{2}}.
\]
We then notice that $\bfD_{w}w-\bfD_{P}P$ is the sum of $(\rd_{y}-\tfrac{m}{y})\eps$
and a linear combination of $\tfrac{1}{y}A_{\tht}[\psi_{1},\psi_{2}]\psi_{3}$
with at least one $\psi_{j}=\eps$. Using the estimate 
\[
\|\tfrac{1}{y}A_{\tht}[\psi_{1},\psi_{2}]\psi_{3}\|_{L^{2}}\aleq\|\psi_{j_{1}}\|_{L^{2}}\|\psi_{j_{2}}\|_{L^{2}}\|\tfrac{1}{y}\psi_{j_{3}}\|_{L^{2}}
\]
for any choice of $j_{1},j_{2},j_{3}$, putting $\psi_{j_{3}}=\eps$,
and using $\|P\|_{L^{2}}+\|\eps\|_{L^{2}}\aleq_{M}1$, we have 
\[
\|\tfrac{1}{y}A_{\tht}[\psi_{1},\psi_{2}]\psi_{3}\|_{L^{2}}\aleq\|(\rd_{y}-\tfrac{m}{y})\eps\|_{L^{2}}+O_{M}(\|\tfrac{1}{y}\eps\|_{L^{2}})\aleq_{M}\|\eps\|_{\dot{H}_{m}^{1}},
\]
completing the proof of the claim \eqref{eq:nonlin-coer-06}.

\textbf{Step 3.} Nonlinear coercivity of energy: proof of \eqref{eq:H1-nonlin-coer}.

By \eqref{eq:nonlin-coer-01} and \eqref{eq:nonlin-coer-06}, it suffices
to show that
\begin{equation}
\|\eps\|_{\dot{H}_{m}^{1}}\aleq_{M}\beta+\|\eps_{1}\|_{L^{2}}.\label{eq:nonlin-coer-08}
\end{equation}
For the proof of \eqref{eq:nonlin-coer-08}, we rewrite the relation
\eqref{eq:nonlin-coer-07} as 
\[
L_{Q}\eps-\tfrac{1}{y}A_{\tht}[\eps]\eps=\eps_{1}-(\bfD_{P}P-P_{1})-\{(L_{P}-L_{Q})\eps+N_{P}(\eps)+\tfrac{1}{y}A_{\tht}[\eps]\eps\}.
\]
In the above display, we take the $L^{2}$-norm, apply the nonlinear
Hardy inequality \eqref{eq:nonlin-coer-to-use} for the LHS, and \eqref{eq:compat1-H1}
for $\bfD_{P}P-P_{1}$ to have 
\begin{equation}
\begin{aligned}\|\eps\|_{\dot{H}_{m}^{1}} & \aleq_{M}\|L_{Q}\eps-\tfrac{1}{y}A_{\tht}[\eps]\eps\|_{L^{2}}\\
 & \aleq_{M}\|\eps_{1}\|_{L^{2}}+\beta+\|(L_{P}-L_{Q})\eps+N_{P}(\eps)+\tfrac{1}{y}A_{\tht}[\eps]\eps\|_{L^{2}}.
\end{aligned}
\label{eq:nonlin-coer-09}
\end{equation}
The following estimate will be shown in the next paragraph: 
\begin{equation}
\|(L_{P}-L_{Q})\eps+N_{P}(\eps)+\tfrac{1}{y}A_{\tht}[\eps]\eps\|_{L^{2}}\aleq O_{M}(\beta)+\delta_{M}(\alpha^{\ast})\cdot\|\eps\|_{\dot{H}_{m}^{1}}.\label{eq:nonlin-coer-10}
\end{equation}
Substituting this estimate into \eqref{eq:nonlin-coer-09}, we have
\[
\|\eps\|_{\dot{H}_{m}^{1}}\aleq O_{M}(\beta+\|\eps_{1}\|_{L^{2}})+\delta_{M}(\alpha^{\ast})\cdot\|\eps\|_{\dot{H}_{m}^{1}}.
\]
Using the smallness of $\delta_{M}(\alpha^{\ast})$, the last term
of the RHS can be absorbed into the LHS, yielding the claim \eqref{eq:nonlin-coer-08}
and hence \eqref{eq:H1-nonlin-coer}.

It remains to show the estimate \eqref{eq:nonlin-coer-10}, which
is used in the proof of \eqref{eq:nonlin-coer-08}. We note that $(L_{P}-L_{Q})\eps+N_{P}(\eps)+\tfrac{1}{y}A_{\tht}[\eps]\eps$
is a linear combination of $\tfrac{1}{y}A_{\tht}[\psi_{1},\psi_{2}]\psi_{3}$,
where either (for $(L_{P}-L_{Q})\eps$) exactly one $\psi_{j_{1}}=P-Q$,
exactly one $\psi_{j_{2}}=\eps$, and the other $\psi_{j_{3}}\in\{P,Q\}$
or (for $N_{P}(\eps)+\frac{1}{y}A_{\tht}[\eps]\eps$) exactly two
$\psi_{j_{1}}=\psi_{j_{2}}=\eps$ and exactly one $\psi_{j_{3}}=P$.
In what follows, we only estimate $\|\tfrac{1}{y}A_{\tht}[\psi_{1},\psi_{2}]\psi_{3}\|_{L^{2}}$
that can contribute to either $(L_{P}-L_{Q})\eps$ or $N_{P}(\eps)+\tfrac{1}{y}A_{\tht}[\eps]\eps$.

(i) If $\psi_{3}=\eps$, then we use 
\begin{align*}
|\psi_{1}\psi_{2}| & \aleq|P-Q|(|P|+Q)+|P\eps|\\
 & \aleq|P-Q|(|P|+Q+|\eps|)+|Q\eps|
\end{align*}
to obtain 
\begin{align*}
\|\tfrac{1}{y} & A_{\tht}[\psi_{1},\psi_{2}]\eps\|_{L^{2}}\\
 & \aleq\|P-Q\|_{L^{\infty}}\||P|+Q+|\eps|\|_{L^{2}}\|\eps\|_{L^{2}}+\|Q\eps\|_{L^{1}}\|\tfrac{1}{y}\eps\|_{L^{2}}\\
 & \aleq\beta(1+\|\eps\|_{L^{2}})\|\eps\|_{L^{2}}+\|\tfrac{1}{y}\eps\|_{L^{2}}^{2}\\
 & \aleq O_{M}(\beta)+\delta_{M}(\alpha^{\ast})\cdot\|\eps\|_{\dot{H}_{m}^{1}}.
\end{align*}

(ii) If $\psi_{3}\in\{P,Q\}$, then we use 
\[
|\psi_{1}\psi_{2}|\aleq(|P-Q|+|\eps|)|\eps|\quad\text{and}\quad|\psi_{3}|\aleq Q+\chf_{(0,2B_{1}]}\beta^{2}y^{4}Q
\]
to obtain (using \eqref{eq:HardySobolevSection2} for $\|\eps\|_{L^{\infty}}$)
\begin{align*}
\|\tfrac{1}{y} & A_{\tht}[\psi_{1},\psi_{2}]\psi_{3}\|_{L^{2}}\\
 & \aleq\|\tfrac{1}{y}\psi_{1}\psi_{2}\|_{L^{2}}\|yQ\|_{L^{2}}+\|\psi_{1}\psi_{2}\|_{L^{1}}\|\chf_{(0,2B_{1}]}\beta^{2}y^{3}Q\|_{L^{2}}\\
 & \aleq(\beta+\|\eps\|_{L^{\infty}})\|\tfrac{1}{y}\eps\|_{L^{2}}+(1+\|\eps\|_{L^{2}})\|\eps\|_{L^{2}}\beta\\
 & \aleq\delta_{M}(\alpha^{\ast})\cdot\|\eps\|_{\dot{H}_{m}^{1}}+O_{M}(\beta).
\end{align*}

(iii) If $\psi_{3}=P-Q$, then we use 
\[
|\psi_{1}\psi_{2}|\aleq(|P|+Q)|\eps|
\]
to obtain 
\[
\|\tfrac{1}{y}A_{\tht}[\psi_{1},\psi_{2}](P-Q)\|_{L^{2}}\aleq\|\tfrac{1}{y}(P-Q)\|_{L^{2}}\|\psi_{1}\psi_{2}\|_{L^{1}}\aleq\beta\|\eps\|_{L^{2}}\aleq_{M}\beta.
\]
This completes the proof of \eqref{eq:nonlin-coer-10}.
\end{proof}
\begin{rem}[\textbf{Assumptions in the rest of this section}]
Until the end of this section, we always assume that our solution
$u$ has $H_{m}^{3}$-regularity and belongs to $\calT_{\alpha^{\ast},M}$
for sufficiently small $\alpha^{\ast}>0$, depending on $M$. The
required smallness of $\alpha^{\ast}>0$ may shrink in each of the
claims below.
\end{rem}

\subsection{Hardy bounds for higher Sobolev and local norms}

In later sections, we will ultimately do the energy estimates for
the variable $\eps_{2}$. To close the energy estimates for $\eps_{2}$
and to obtain useful information on the original solution $u=[P+\eps]_{\lmb,\gmm}$
from the control of $\eps_{2}$, it is necessary to convert various
Hardy-type bounds for $\eps_{2}$ into those for $\eps$ and $\eps_{1}$.
For this purpose, let us define 
\begin{align}
X_{2} & \coloneqq\|\eps_{2}\|_{L^{2}}+\mu^{2},\label{eq:def-X2}\\
X_{3} & \coloneqq\|\eps_{2}\|_{\dot{H}_{m+2}^{1}}+\mu X_{2},\label{eq:def-X3}\\
V_{7/2} & \coloneqq\|\eps_{2}\|_{\dot{V}_{m+2}^{3/2}}+\mu^{1/2}X_{3}.\label{eq:def-V7/2}
\end{align}
Note that the $\dot{V}_{m+2}^{3/2}$-norm defined in \eqref{eq:def-V3/2}
was motivated by the lower bound of the Morawetz monotonicity \eqref{eq:Morawetz-repul}.
The goal of this subsection is to show that these quantities bound
the Sobolev norms of $\eps$ and $\eps_{1}$ with correct scalings.
\begin{lem}[Hardy bounds for higher Sobolev and local norms]
\label{lem:HigherHardyBounds}\ 
\begin{enumerate}
\item For $\eps_{1}$, we have 
\begin{align}
\|\eps_{1}\|_{\dot{H}_{m+1}^{1}} & \aleq_{M}X_{2},\label{eq:e1-H1}\\
\|\eps_{1}\|_{\dot{H}_{m+1}^{2}} & \aleq_{M}X_{3},\label{eq:e1-H2}\\
\|\eps_{1}\|_{\dot{V}_{m+1}^{5/2}} & \aleq_{M}V_{7/2}.\label{eq:e1-V}
\end{align}
\item For $\eps$, we have 
\begin{align}
\|\eps\|_{\dot{\calH}_{m}^{2}} & \aleq_{M}X_{2},\label{eq:e-H2}\\
\|\eps\|_{\dot{\calH}_{m}^{3}} & \aleq_{M}X_{3}.\label{eq:e-H3}
\end{align}
\end{enumerate}
\end{lem}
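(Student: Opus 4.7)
\medskip

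The plan is to invert the linear conjugation identifications $\eps_2 \approx A_Q \eps_1$ and $\eps_1 \approx L_Q \eps$ via the linear coercivity estimates of Lemma~\ref{lem:LinearCoercivity}, controlling the profile mismatch through the compatibility bounds in Proposition~\ref{prop:Modified-profiles} and the nonlinear error by the size controls $\|\eps\|_{L^2} \aleq_M 1$ and $\|\eps\|_{\dot H_m^1} \aleq_M \mu$ from Lemma~\ref{lem:NonlinCoerEnergy}. The orthogonality conditions $(\eps_1, \td\calZ_{3})_r = (\eps_1, \td\calZ_{4})_r = 0$ and $(\eps, \calZ_1)_r = (\eps, \calZ_2)_r = 0$ from Lemma~\ref{lem:decomp} make the coercivity directly applicable.

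For part~(1), I write $w_2 = A_w w_1$ and subtract the analogous identity at the profile level to obtain
\[
A_Q \eps_1 = \eps_2 - (A_P P_1 - P_2) - \big[(A_w - A_Q)\eps_1 + (A_w - A_P)P_1\big].
\]
Taking $L^2$-, $\dot H_{m+2}^1$-, and $\dot V_{m+2}^{3/2}$-norms and applying the coercivity estimates \eqref{eq:AQ-coer-L2-to-H1}--\eqref{eq:AQ-coer-V} gives the left-hand sides of \eqref{eq:e1-H1}--\eqref{eq:e1-V} on the left. On the right, the compatibility bounds \eqref{eq:compat2-H2}--\eqref{eq:compat2-virial} contribute $\beta^2$, $\beta^3$, $\beta^{7/2}$, which by Lemma~\ref{lem:NonlinCoerEnergy} are $\aleq \mu^2$, $\mu X_2$, $\mu^{1/2} X_3$, matching the definitions of $X_2, X_3, V_{7/2}$. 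The bracketed difference is handled by observing that $A_w - A_Q$ acts as multiplication by $-\frac{1}{y}(A_\theta[w] - A_\theta[Q])$, which splits into a $P-Q$ contribution of pointwise size $O(\beta)$ on $y \aleq B_1$ and an $\eps$-dependent piece bounded via the integral operator estimate \eqref{eq:integral-operator-bdd1} together with $\|\eps\|_{L^2} \aleq_M 1$ and the weighted $L^\infty$ bounds of Lemma~\ref{lem:weighted-Linfty-est}.

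For part~(2), I use the linearization \eqref{eq:LinearizationBogomolnyi} to write
\[
L_Q \eps = \eps_1 - (\bfD_P P - P_1) - N_Q(\eps) - (L_P - L_Q)\eps,
\]
and invoke the coercivity \eqref{eq:LQ-coer-H1-to-H2}--\eqref{eq:LQ-coer-H2-to-H3} (note $\dot{\calH}_m^2, \dot{\calH}_m^3$ appear \emph{directly} in these inequalities). The profile errors are bounded by \eqref{eq:compat1-H2}--\eqref{eq:compat1-H3}, and $\|\eps_1\|_{\dot H_{m+1}^1}, \|\eps_1\|_{\dot H_{m+1}^2}$ were already controlled in part~(1). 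The main obstacle is the nonlinear remainder $N_Q(\eps) + (L_P-L_Q)\eps$, which is a sum of terms $\frac{1}{y}A_\theta[\psi_1,\psi_2]\psi_3$ with at least one $\psi_j$ equal to $\eps$ or $P-Q$; the pure-$\eps$ term $\frac{1}{y}A_\theta[\eps]\eps$, non-perturbative at the $\dot H_m^1$-level, is now \emph{made} perturbative because we can absorb one factor using the already-established $\|\eps\|_{\dot H_m^1} \aleq_M \mu$ from Lemma~\ref{lem:NonlinCoerEnergy}. The remaining technical care is to keep track of the $\langle \log_- y\rangle$-weights in the definitions \eqref{eq:def-calH2}--\eqref{eq:def-calH3} of $\dot{\calH}_m^2, \dot{\calH}_m^3$ for $m \in \{1,2\}$, handled by splitting into the regions $y \leq 1$ and $y \geq 1$ and using the integral estimates \eqref{eq:integral-operator-bdd1}--\eqref{eq:integral-operator-bdd2} with logarithmic weights.
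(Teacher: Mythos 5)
Your approach is essentially the same as the paper's: invert the conjugation identifications $A_{Q}\eps_{1}\approx\eps_{2}$ and $L_{Q}\eps\approx\eps_{1}$ via Lemma~\ref{lem:LinearCoercivity}, bound the profile mismatch by Proposition~\ref{prop:Modified-profiles}, and make the cubic nonlinear remainders perturbative using Lemma~\ref{lem:NonlinCoerEnergy}. Two minor remarks. First, in part~(2) the correct decomposition carries $N_{P}(\eps)$, not $N_{Q}(\eps)$, since one expands $\bfD_{w}w=\bfD_{P+\eps}(P+\eps)$ around the profile $P$ via \eqref{eq:LinearizationBogomolnyi}; the difference $N_{P}-N_{Q}$ is again of the form $\frac{1}{y}A_{\tht}[\psi_{1},\psi_{2}]\psi_{3}$ with some $\psi_{j}\in\{\eps,P-Q\}$, so the estimates go through, but you should state the identity correctly.

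Second, and more substantively, your plan of proving part~(1) and then part~(2) is circular as stated: the bounds in part~(1) already invoke the weighted $L^{\infty}$ controls from Lemma~\ref{lem:weighted-Linfty-est}, which in turn rest on the $\dot{\calH}_{m}^{2}$- and $\dot{\calH}_{m}^{3}$-norms of $\eps$ that you only establish in part~(2). Concretely, $\|(A_{w}-A_{P})P_{1}\|_{\dot{H}_{m+2}^{1}}$ requires $\|\tfrac{1}{\langle y\rangle}(|w|^{2}-|P|^{2})\|_{L^{2}}$, whose leading piece $\|\tfrac{1}{\langle y\rangle}Q\eps\|_{L^{2}}$ is estimated by $\|\eps\|_{\dot{\calH}_{m}^{2}}$ (i.e., by \eqref{eq:e-H2}); similarly the $\dot{V}_{m+2}^{3/2}$-estimate \eqref{eq:e1-V} calls on $\|\eps\|_{\dot{\calH}_{m}^{3}}$, hence on \eqref{eq:e-H3}. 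You need to interleave the five estimates as \eqref{eq:e1-H1}, \eqref{eq:e-H2}, \eqref{eq:e1-H2}, \eqref{eq:e-H3}, \eqref{eq:e1-V}, and verify at each step that only previously proved estimates are used. Once this ordering is made explicit, your proof closes and matches the paper's argument.
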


\begin{rem}[Weighted $L^{\infty}$ controls]
\label{rem:Weighted-Linfty}One can also control various weighted
$L^{\infty}$-norms on $\eps,\eps_{1},\eps_{2}$ by combining Lemma
\ref{lem:weighted-Linfty-est} with the above lemma.
\end{rem}

\begin{proof}
\textbf{Step 0.} Remark on the order of the proof.

To be more rigorous, we need to prove this lemma in the following
order: \eqref{eq:e1-H1}, \eqref{eq:e-H2}, \eqref{eq:e1-H2}, \eqref{eq:e-H3},
\eqref{eq:e1-V}. However, we will give the proof of \eqref{eq:e1-H1}--\eqref{eq:e1-V}
first and then give the proof of \eqref{eq:e-H2}--\eqref{eq:e-H3}
in order to group similar proofs as possible.

\textbf{Step 1. }From $\eps_{2}$ to $\eps_{1}$: proof of \eqref{eq:e1-H1}--\eqref{eq:e1-V}.

Rewrite $A_{w}w_{1}=w_{2}$ as 
\begin{equation}
A_{Q}\eps_{1}=\eps_{2}-(A_{P}P_{1}-P_{2})-\{(A_{w}-A_{Q})\eps_{1}-(A_{w}-A_{P})P_{1}\}.\label{eq:e1-coer-01}
\end{equation}
We claim that 
\begin{align}
\|(A_{w}-A_{Q})\eps_{1}-(A_{w}-A_{P})P_{1}\|_{L^{2}} & \aleq_{M}\mu^{2},\label{eq:e1-coer-02}\\
\|(A_{w}-A_{Q})\eps_{1}-(A_{w}-A_{P})P_{1}\|_{\dot{H}_{m+2}^{1}} & \aleq_{M}\mu X_{2},\label{eq:e1-coer-03}\\
\|(A_{w}-A_{Q})\eps_{1}-(A_{w}-A_{P})P_{1}\|_{\dot{V}_{m+2}^{3/2}} & \aleq_{M}\mu^{1/2}X_{3}.\label{eq:e1-coer-04}
\end{align}
Let us first assume these claims and finish the proof of \eqref{eq:e1-H1}--\eqref{eq:e1-V}.
Applying to \eqref{eq:e1-coer-01} the linear coercivity \eqref{eq:AQ-coer-L2-to-H1}--\eqref{eq:AQ-coer-V}
for $A_{Q}\eps_{1}$, the bounds \eqref{eq:compat2-H2}--\eqref{eq:compat2-virial}
for $A_{P}P_{1}-P_{2}$, the claims \eqref{eq:e1-coer-02}--\eqref{eq:e1-coer-04}
for $(A_{w}-A_{Q})\eps_{1}-(A_{w}-A_{P})P_{1}$, and $\beta\aleq_{M}\mu$,
we obtain 
\begin{align*}
\|\eps_{1}\|_{\dot{H}_{m+1}^{1}} & \aleq\|\eps_{2}\|_{L^{2}}+\beta^{2}+O_{M}(\mu^{2})\aleq_{M}X_{2},\\
\|\eps_{1}\|_{\dot{H}_{m+1}^{2}} & \aleq\|\eps_{2}\|_{\dot{H}_{m+2}^{1}}+\beta^{3}+O_{M}(\mu X_{2})\aleq_{M}X_{3},\\
\|\eps_{1}\|_{\dot{V}_{m+1}^{5/2}} & \aleq\|\eps_{2}\|_{\dot{V}_{m+2}^{3/2}}+\beta^{7/2}+O_{M}(\mu^{1/2}X_{3})\aleq_{M}V_{7/2}.
\end{align*}
This completes the proof of \eqref{eq:e1-H1}--\eqref{eq:e1-V}.

It remains to show the claims \eqref{eq:e1-coer-02}--\eqref{eq:e1-coer-04}.
For the term $(A_{w}-A_{Q})\eps_{1}$, we note the pointwise estimates
\begin{align*}
(A_{w}-A_{Q})\eps_{1} & \aleq\tfrac{1}{y}\tint 0y||w|^{2}-Q^{2}|y'dy'\cdot|\eps_{1}|\\
|(A_{w}-A_{Q})\eps_{1}|_{-1} & \aleq\tfrac{1}{y}\tint 0y||w|^{2}-Q^{2}|y'dy'\cdot|\eps_{1}|_{-1}+||w|^{2}-Q^{2}||\eps_{1}|.
\end{align*}
Using \eqref{eq:eps1-Linfty} and $\|\langle y\rangle^{-\frac{1}{2}}\eps_{1}\|_{L^{\infty}}\aleq\|\langle y\rangle^{-\frac{1}{2}}|\eps_{1}|_{-1}\|_{L^{2}}$,
we have 
\begin{align*}
\|(A_{w}-A_{Q})\eps_{1}\|_{L^{2}} & \aleq\||w|^{2}-Q^{2}\|_{L^{2}}\|\eps_{1}\|_{L^{2}},\\
\|(A_{w}-A_{Q})\eps_{1}\|_{\dot{H}_{m+1}^{1}} & \aleq\||w|^{2}-Q^{2}\|_{L^{2}}\|\eps_{1}\|_{\dot{H}_{m+1}^{1}},\\
\|(A_{w}-A_{Q})\eps_{1}\|_{\dot{V}_{m+1}^{3/2}} & \aleq\||w|^{2}-Q^{2}\|_{L^{2}}\|\langle y\rangle^{-\frac{1}{2}}|\eps_{1}|_{-1}\|_{L^{2}}.
\end{align*}
Applying to the above the estimates
\begin{align*}
\||w|^{2}-Q^{2}\|_{L^{2}} & \aleq_{M}\|w-Q\|_{L^{\infty}}\aleq_{M}\mu,\\
\|y^{-\frac{1}{2}}|\eps_{1}|_{-1}\|_{L^{2}} & \aleq\|\eps_{1}\|_{\dot{H}_{m+1}^{1}}^{1/2}\|\eps_{1}\|_{\dot{H}_{m+1}^{2}}^{1/2},
\end{align*}
we have 
\begin{align*}
\|(A_{w}-A_{Q})\eps_{1}\|_{L^{2}} & \aleq_{M}\mu\|\eps_{1}\|_{L^{2}}\aleq_{M}\mu^{2},\\
\|(A_{w}-A_{Q})\eps_{1}\|_{\dot{H}_{m+1}^{1}} & \aleq_{M}\mu\|\eps_{1}\|_{\dot{H}_{m+1}^{1}}\aleq_{M}\mu X_{2},\\
\|(A_{w}-A_{Q})\eps_{1}\|_{\dot{V}_{m+1}^{3/2}} & \aleq_{M}\mu\|\eps_{1}\|_{\dot{H}_{m+1}^{1}}^{1/2}\|\eps_{1}\|_{\dot{H}_{m+1}^{2}}^{1/2}\aleq_{M}\mu^{1/2}X_{3}.
\end{align*}
For the term $(A_{w}-A_{P})P_{1}$, we use 
\begin{align*}
\||w|^{2}-|P|^{2}\|_{L^{2}} & \aleq_{M}\|\eps\|_{L^{\infty}}\aleq_{M}\mu,\\
\|\tfrac{1}{\langle y\rangle}(|w|^{2}-|P|^{2})\|_{L^{2}} & \aleq_{M}\|\tfrac{1}{\langle y\rangle}Q\eps\|_{L^{2}}+\||P-Q|+|\eps|\|_{L^{\infty}}\|\tfrac{1}{\langle y\rangle}\eps\|_{L^{2}}\aleq_{M}X_{2},\\
\|\tfrac{1}{\langle y\rangle^{3/2}}(|w|^{2}-|P|^{2})\|_{L^{2}} & \aleq_{M}\|\tfrac{1}{\langle y\rangle^{3/2}}Q\eps\|_{L^{2}}+\||P-Q|+|\eps|\|_{L^{\infty}}\|\tfrac{1}{\langle y\rangle^{3/2}}\eps\|_{L^{2}}\\
 & \aleq_{M}\|\eps\|_{\dot{\calH}_{m}^{3}}+\mu\|\eps\|_{\dot{H}_{m}^{1}}^{1/2}\|\eps\|_{\dot{\calH}_{m}^{2}}^{1/2}\aleq_{M}X_{3}+\mu^{1/2}X_{2},
\end{align*}
and
\[
\|P_{1}\|_{L^{2}}+\|\langle y\rangle|P_{1}|_{1}\|_{L^{\infty}}\aleq\beta\aleq_{M}\mu
\]
to have 
\begin{align*}
\|(A_{w}-A_{P})P_{1}\|_{L^{2}} & \aleq\||w|^{2}-|P|^{2}\|_{L^{2}}\|P_{1}\|_{L^{2}}\aleq_{M}X_{2},\\
\|(A_{w}-A_{P})P_{1}\|_{\dot{H}_{m+1}^{1}} & \aleq\|\tfrac{1}{\langle y\rangle}(|w|^{2}-|P|^{2})\|_{L^{2}}\|\langle y\rangle|P_{1}|_{1}\|_{L^{\infty}}\aleq_{M}X_{3},\\
\|(A_{w}-A_{P})P_{1}\|_{\dot{V}_{m+1}^{3/2}} & \aleq\|\tfrac{1}{\langle y\rangle^{3/2}}(|w|^{2}-|P|^{2})\|_{L^{2}}\|\langle y\rangle|P_{1}|_{1}\|_{L^{\infty}}\aleq_{M}V^{7/2}.
\end{align*}
This completes the proof of the claims \eqref{eq:e1-coer-02}--\eqref{eq:e1-coer-04}.

\textbf{Step 2.} From $\eps_{1}$ to $\eps$: proof of \eqref{eq:e-H2}--\eqref{eq:e-H3}.

Rewrite $\bfD_{w}w=w_{1}$ as 
\begin{equation}
L_{Q}\eps=\eps_{1}-(\bfD_{P}P-P_{1})-\{(L_{P}-L_{Q})\eps+N_{P}(\eps)\}.\label{eq:e-coer-01}
\end{equation}
We claim that 
\begin{align}
\|(L_{P}-L_{Q})\eps+N_{P}(\eps)\|_{\dot{H}_{m+1}^{1}} & \aleq_{M}\mu^{2},\label{eq:e-coer-02}\\
\|(L_{P}-L_{Q})\eps+N_{P}(\eps)\|_{\dot{H}_{m+1}^{2}} & \aleq_{M}\mu X_{2}.\label{eq:e-coer-03}
\end{align}
Let us first assume these claims and finish the proof of \eqref{eq:e-H2}--\eqref{eq:e-H3}.
Applying to \eqref{eq:e-coer-01} the linear coercivity estimates
\eqref{eq:LQ-coer-H1-to-H2}--\eqref{eq:LQ-coer-H2-to-H3} for $L_{Q}\eps$,
the bounds \eqref{eq:e1-H1}--\eqref{eq:e1-H2} for $\eps_{1}$,
the estimates \eqref{eq:compat1-H2}--\eqref{eq:compat1-H3} for
$\bfD_{P}P-P_{1}$, and the claims \eqref{eq:e-coer-02}--\eqref{eq:e-coer-03}
for $(L_{P}-L_{Q})\eps+N_{P}(\eps)$, we obtain 
\begin{align*}
\|\eps\|_{\dot{\calH}_{m}^{2}} & \aleq X_{2}+\beta^{2}+O_{M}(\mu^{2})\aleq_{M}X_{2},\\
\|\eps\|_{\dot{\calH}_{m}^{3}} & \aleq X_{3}+\beta^{3}+O_{M}(\mu X_{2})\aleq_{M}X_{3}.
\end{align*}
This completes the proof of \eqref{eq:e-H2} and \eqref{eq:e-H3}.

It only remains to prove the claims \eqref{eq:e-coer-02}--\eqref{eq:e-coer-03}.
Recall that $(L_{P}-L_{Q})\eps+N_{P}(\eps)$ is a linear combination
of $\tfrac{1}{y}A_{\tht}[\psi_{1},\psi_{2}]\psi_{3}$ for suitable
choices of $\psi_{j}$'s in $\{P,Q,P-Q,\eps\}$. Note the pointwise
estimates (recall $\rd_{+}=\rd_{y}-\frac{m}{y}$ from the notation
section)
\begin{align*}
|\tfrac{1}{y}A_{\tht}[\psi_{1},\psi_{2}]\psi_{3}|_{-1} & \aleq\tfrac{1}{y}|A_{\tht}[\psi_{1},\psi_{2}]||\psi_{3}|_{-1}+|\psi_{1}\psi_{2}\psi_{3}|.\\
|\tfrac{1}{y}A_{\tht}[\psi_{1},\psi_{2}]\psi_{3}|_{-2} & \aleq\tfrac{1}{y}|A_{\tht}[\psi_{1},\psi_{2}]||\rd_{+}\psi_{3}|_{-1}+\tfrac{1}{y^{3}}|A_{\tht}[\psi_{1},\psi_{2}]||\psi_{3}|+|\psi_{1}\psi_{2}\psi_{3}|_{-1}.
\end{align*}
By symmetry, we may replace the last term $|\psi_{1}\psi_{2}\psi_{3}|_{-1}$
by $|\psi_{1}\psi_{2}||\psi_{3}|_{-1}$. Therefore, it suffices to
show the estimates 
\begin{align}
\|\tfrac{1}{y}|A_{\tht}[\psi_{1},\psi_{2}]||\psi_{3}|_{-1}+|\psi_{1}\psi_{2}\psi_{3}|\|_{L^{2}} & \aleq_{M}\mu^{2},\label{eq:high-order-11}\\
\|\tfrac{1}{y}|A_{\tht}[\psi_{1},\psi_{2}]|(|\rd_{+}\psi_{3}|_{-1}+\tfrac{1}{y^{2}}|\psi_{3}|)+|\psi_{1}\psi_{2}||\psi_{3}|_{-1}\|_{L^{2}} & \aleq_{M}\mu X_{2}\label{eq:high-order-12}
\end{align}
for the choices of $\psi_{1},\psi_{2},\psi_{3}$ that can contribute
to either $(L_{P}-L_{Q})\eps$ or $N_{P}(\eps)$.

(i) $\psi_{3}=\eps$: we have 
\begin{align*}
\text{LHS}\eqref{eq:high-order-11} & \aleq\|\psi_{1}\psi_{2}\|_{L^{2}}(\||\eps|_{-1}\|_{L^{2}}+\|\eps\|_{L^{\infty}})\aleq\|\psi_{1}\psi_{2}\|_{L^{2}}\|\eps\|_{\dot{H}_{m}^{1}},\\
\text{LHS}\eqref{eq:high-order-12} & \aleq\|\tfrac{\langle y\rangle}{y}\psi_{1}\psi_{2}\|_{L^{2}}(\||\rd_{+}\eps|_{-1}\|_{L^{2}}+\|\tfrac{y}{\langle y\rangle}|\eps|_{-1}\|_{L^{\infty}})\\
 & \aleq\|\tfrac{\langle y\rangle}{y}\psi_{1}\psi_{2}\|_{L^{2}}\|\eps\|_{\dot{\calH}_{m}^{2}}.
\end{align*}
Since 
\begin{align*}
\|\tfrac{\langle y\rangle}{y}\psi_{1}\psi_{2}\|_{L^{2}} & \aleq\|\tfrac{\langle y\rangle}{y}\{|P-Q|(|P|+Q)+(|P|+|\eps|)|\eps|\}\|_{L^{2}}\\
 & \aleq\|P-Q\|_{L^{\infty}}+\|\eps\|_{L^{\infty}}(1+\|\tfrac{\langle y\rangle}{y}\eps\|_{L^{2}})\aleq_{M}\mu,
\end{align*}
we have 
\begin{align*}
\text{LHS}\eqref{eq:high-order-11} & \aleq_{M}\mu\|\eps\|_{\dot{H}_{m}^{1}}\aleq_{M}\mu^{2},\\
\text{LHS}\eqref{eq:high-order-12} & \aleq_{M}\mu\|\eps\|_{\dot{\calH}_{m}^{2}}\aleq_{M}\mu X_{2}.
\end{align*}

(ii) $\psi_{3}\in\{P,Q\}$: we use $|P|_{2}+|Q|_{2}\aleq\langle y\rangle^{-1}$
to have 
\begin{align*}
\text{LHS}\eqref{eq:high-order-11} & \aleq\|\tfrac{1}{\langle y\rangle}\psi_{1}\psi_{2}\|_{L^{2}},\\
\text{LHS}\eqref{eq:high-order-12} & \aleq\|\tfrac{1}{y\langle y\rangle}\psi_{1}\psi_{2}\|_{L^{2}}.
\end{align*}
Applying to the above the estimates 
\begin{align*}
|\psi_{1}\psi_{2}| & \aleq|P-Q||\eps|+|\eps^{2}|\aleq(\beta+|\eps|)|\eps|\aleq_{M}\mu|\eps|,
\end{align*}
we obtain 
\begin{align*}
\text{LHS}\eqref{eq:high-order-11} & \aleq_{M}\mu\|\tfrac{1}{\langle y\rangle}\eps\|_{L^{2}}\aleq_{M}\mu\|\eps\|_{\dot{H}_{m}^{1}}\aleq_{M}\mu^{2},\\
\text{LHS}\eqref{eq:high-order-12} & \aleq_{M}\mu\|\tfrac{1}{y\langle y\rangle}\eps\|_{L^{2}}\aleq_{M}\mu\|\eps\|_{\dot{\calH}_{m}^{2}}\aleq_{M}\mu X_{2}.
\end{align*}

(iii) $\psi_{3}=P-Q$: we use 
\[
\|\tfrac{1}{y}|P-Q|_{2}\|_{L^{2}}+\||P-Q|_{1}\|_{L^{\infty}}\aleq\beta
\]
to have 
\begin{align*}
\text{LHS}\eqref{eq:high-order-11} & \aleq\beta\|\psi_{1}\psi_{2}\|_{L^{2}},\\
\text{LHS}\eqref{eq:high-order-12} & \aleq\beta\|\tfrac{1}{y}\psi_{1}\psi_{2}\|_{L^{2}}.
\end{align*}
We then apply $|\psi_{1}\psi_{2}|\aleq(|P|+Q+|\eps|)|\eps|\aleq(\langle y\rangle^{-1}+|\eps|)|\eps|$
to have 
\begin{align*}
\text{LHS}\eqref{eq:high-order-11} & \aleq\beta(\|\tfrac{1}{\langle y\rangle}\eps\|_{L^{2}}+\|\eps\|_{L^{2}}\|\eps\|_{L^{\infty}})\aleq_{M}\mu\|\eps\|_{\dot{H}_{m}^{1}}\aleq_{M}\mu^{2},\\
\text{LHS}\eqref{eq:high-order-12} & \aleq\beta(\|\tfrac{1}{y\langle y\rangle}\eps\|_{L^{2}}+\|\tfrac{1}{y}\eps\|_{L^{2}}\|\eps\|_{L^{\infty}})\aleq_{M}\mu(\|\eps\|_{\dot{\calH}_{m}^{2}}+\mu^{2})\aleq_{M}\mu X_{2}.
\end{align*}
This completes the proof of \eqref{eq:e-coer-02}--\eqref{eq:e-coer-03}.
\end{proof}

\subsection{\label{subsec:First-modulation-estimates}First modulation estimates}

As is standard, we differentiate the orthogonality conditions \eqref{eq:DecomOrtho}
in time to obtain preliminary controls on the dynamics of the modulation
parameters. As mentioned before, the $\eps$-orthogonality conditions
of \eqref{eq:DecomOrtho} (i.e., the first two of \eqref{eq:DecomOrtho})
give controls on the $\lmb$- and $\gmm$-dynamics. The $b$- and
$\eta$-modulation estimates are obtained from differentiating the
$\eps_{1}$-orthogonality conditions. Note that the evolution equation
for $\eps_{1}$ (see \eqref{eq:eps1-eqn} below) effectively detects
the dynamics of $b$ and $\eta$, thanks to the degeneracy $\Lmb P_{1},iP_{1}=O(\beta)$
for the first two components of $\bfv_{1}$.

Now we derive the evolution equations for $\eps$ and $\eps_{1}$
in the renormalized variables $(s,y)$. To derive the equation for
$\eps$, we first recall the equations for $w$ and $P$: (see \eqref{eq:def-mod-vec}
for the definition of $\Mod$ and $\bfv$) 
\begin{align*}
(\rd_{s}-\tfrac{\lmb_{s}}{\lmb}\Lmb+\gmm_{s}i)w+i\bfD_{w}^{\ast}w_{1}+(\tint y{\infty}\Re(\br ww_{1})dy')iw & =0,\tag{\ref{eq:w-eqn}}\\
(\rd_{s}-\tfrac{\lmb_{s}}{\lmb}\Lmb+\gmm_{s}i)P+i\bfD_{P}^{\ast}P_{1}+(\tint y{\infty}\Re(\br PP_{1})dy')iP & =-\Mod\cdot\bfv+i\Psi.\tag{\ref{eq:P-eqn}}
\end{align*}
Subtracting the second equation from the first, we obtain a preliminary
equation for $\eps$:
\begin{align*}
(\rd_{s} & -\tfrac{\lmb_{s}}{\lmb}\Lmb+\gmm_{s}i)\eps+i\bfD_{w}^{\ast}w_{1}-i\bfD_{P}^{\ast}P_{1}\\
 & +(\tint y{\infty}\Re(\br ww_{1}-\br PP_{1})dy')iP+(\tint y{\infty}\Re(\br ww_{1})dy')i\eps=\Mod\cdot\bfv-i\Psi.
\end{align*}
The above form is however not suitable for our analysis because we
need to incorporate the\emph{ phase correction} from the nonlocal
nonlinearity. For this purpose, we introduce a new modulation vector
$\td{\Mod}$, whose components are defined by 
\begin{equation}
\td{\Mod}_{j}\coloneqq\begin{cases}
\gmm_{s}+(m+1)\eta+\int_{0}^{\infty}\Re(\br ww_{1})dy & \text{if }j=2,\\
\Mod_{j} & \text{otherwise.}
\end{cases}\label{eq:def-td-Mod}
\end{equation}
Using this modulation vector, we obtain the equation for $\eps$:
\begin{equation}
\begin{aligned}(\rd_{s} & -\tfrac{\lmb_{s}}{\lmb}\Lmb+\gmm_{s}i)\eps+i\bfD_{w}^{\ast}w_{1}-i\bfD_{P}^{\ast}P_{1}\\
 & -(\tint 0y\Re(\br ww_{1}-\br PP_{1})dy')iP+(\tint y{\infty}\Re(\br ww_{1})dy')i\eps=\td{\Mod}\cdot\bfv-i\Psi.
\end{aligned}
\label{eq:eps-eqn}
\end{equation}
In a similar fashion, using \eqref{eq:w1-eqn}, \eqref{eq:P1-eqn},
and \eqref{eq:def-td-Mod}, we obtain the equation for $\eps_{1}$:
\begin{equation}
\begin{aligned} & (\rd_{s}-\tfrac{\lmb_{s}}{\lmb}\Lmb_{-1}+\gmm_{s}i)\eps_{1}+iA_{w}^{\ast}w_{2}-iA_{P}^{\ast}P_{2}\\
 & \quad-(\tint 0y\Re(\br ww_{1}-\br PP_{1})dy')iP_{1}+(\tint y{\infty}\Re(\br ww_{1})dy')i\eps_{1}=\td{\Mod}\cdot\bfv_{1}-i\Psi_{1}.
\end{aligned}
\label{eq:eps1-eqn}
\end{equation}

The following is the main proposition of this subsection.
\begin{prop}[First modulation estimates]
\label{prop:FirstModEst}We have 
\begin{align}
|\Mod_{1}|+|\td{\Mod}_{2}| & \aleq_{M}X_{3},\label{eq:ModEst-3}\\
|\Mod_{3}|+|\Mod_{4}| & \aleq_{M}V_{7/2}.\label{eq:ModEst-4}
\end{align}
In particular, we have rough estimates 
\begin{align}
\Big|\frac{\lmb_{s}}{\lmb}\Big|+|\gmm_{s}| & \aleq_{M}\mu,\label{eq:ModEst-1}\\
|b_{s}+b^{2}+\eta^{2}|+|\eta_{s}| & \aleq_{M}X_{3}.\label{eq:ModEst-2}
\end{align}
\end{prop}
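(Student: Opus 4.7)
My plan is the standard modulation procedure: differentiate the four orthogonality conditions \eqref{eq:DecomOrtho} in $s$, substitute the evolution equations \eqref{eq:eps-eqn}--\eqref{eq:eps1-eqn}, and solve the resulting linear system for $(\Mod_1,\td{\Mod}_2,\Mod_3,\Mod_4)^{\top}$. The transversality computation \eqref{eq:matrix-transv}---which remains valid with $\td{\Mod}_2$ replacing $\Mod_2$ in view of $\int_0^\infty \Re(\br{w}w_1)\,dy=-2(m+1)\eta+O_M(\mu^2)$ from \eqref{eq:phase-corr-1}---shows that the coefficient matrix will be block-diagonal up to $O_M(\mu)$ corrections, which are absorbable once $\mu$ is small. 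It will therefore remain to bound the source terms on the right-hand sides.

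\textbf{Bounds for $\Mod_1$ and $\td{\Mod}_2$.} I would pair \eqref{eq:eps-eqn} with the compactly supported profiles $\calZ_1,\calZ_2$ (supported in $y\leq 2R_\circ$). The time-derivative term vanishes by the preserved orthogonality, and combining the dispersive and nonlocal contributions gives $i(L_w^\ast w_1-L_P^\ast P_1)\approx iL_Q^\ast\eps_1$ up to a quadratic remainder. After integration by parts against the compactly supported $\calZ_j$, the principal term is bounded as
\begin{equation*}
|(i\eps_1,L_Q\calZ_j)_r|\aleq\int_0^{2R_\circ}|\eps_1|\,|L_Q\calZ_j|\,y\,dy\aleq_{R_\circ}\big\||\eps_1|_{-1}\big\|_{L^\infty}\aleq_M X_3,
\end{equation*}
where I use the pointwise control $|\eps_1(y)|\leq y\,\||\eps_1|_{-1}\|_{L^\infty}$, the weighted $L^\infty$-estimate \eqref{eq:eps1-Linfty}, and the Sobolev transfer $\|\eps_1\|_{\dot{H}_{m+1}^{2}}\aleq_M X_3$ from \eqref{eq:e1-H2}. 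This is the crucial step: a naive $L^2$-estimate of $\eps_1$ would only yield $X_2$. The quadratic remainder and nonlocal terms can be handled similarly using weighted pointwise bounds, the $\Mod$-carrying terms will produce $O_M(\mu)\cdot(|\Mod_1|+|\td{\Mod}_2|)$ which gets absorbed into the diagonal, and $|(\Psi,\calZ_j)_r|\aleq_{R_\circ}\beta^3\aleq_M X_3$ by \eqref{eq:Psi0-loc-bound}.

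\textbf{Bounds for $\Mod_3$ and $\Mod_4$.} I would pair \eqref{eq:eps1-eqn} with $\td{\calZ}_3=-i\tfrac{y}{2}Q\chi_{R_\circ}$ and $\td{\calZ}_4=-\tfrac{y}{2}Q\chi_{R_\circ}$. Linearizing $iA_w^\ast w_2-iA_P^\ast P_2\approx iA_Q^\ast\eps_2$ and integrating by parts, I bound $(i\eps_2,A_Q\td{\calZ}_k)_r$ via the weighted $L^\infty$-estimate $\|\langle y\rangle^{-3/4}\eps_2\|_{L^\infty}\aleq\|\eps_2\|_{\dot{V}_{m+2}^{3/2}}\leq V_{7/2}$ from \eqref{eq:eps2-Linfty}. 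The key delicate term is the profile error: since $\td{\calZ}_k$ is proportional to $yQ\chi_{R_\circ}$,
\begin{equation*}
|(\Psi_1,\td{\calZ}_k)_r|\aleq\Big|\int\frac{\Psi_1}{y^2}\cdot y^4 Q\chi_{R_\circ}\,dy\Big|\aleq\Big\|\frac{\Psi_1}{y^2}\Big\|_{L^1}\aleq\beta^4|\log\beta|^3
\end{equation*}
by \eqref{eq:Psi1-bound}, which is $\aleq_M\mu^{7/2}\leq V_{7/2}$ (using $V_{7/2}\geq\mu^{1/2}X_3\geq\mu^{1/2}\cdot\mu X_2\geq\mu^{7/2}$). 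The nonlocal and modulation terms are perturbative.

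\textbf{Conclusion and main obstacle.} Solving the linear system will then yield $|\Mod_1|+|\td{\Mod}_2|\aleq_M X_3$ and $|\Mod_3|+|\Mod_4|\aleq_M V_{7/2}$. For the rough bounds \eqref{eq:ModEst-1}--\eqref{eq:ModEst-2}, I would estimate the source terms instead using the nonlinear coercivity bounds $\|\eps\|_{\dot{H}_{m}^{1}}+\|\eps_1\|_{L^2}\aleq_M\mu$ from \eqref{eq:H1-nonlin-coer}, and observe that $V_{7/2}\aleq X_3$ follows from $\|\eps_2\|_{\dot{V}_{m+2}^{3/2}}\aleq\|\eps_2\|_{\dot{H}_{m+2}^{1}}$ by Hardy; combined with $|p_3^{(b)}|+|p_3^{(\eta)}|\aleq\mu^3\leq X_3$, this delivers \eqref{eq:ModEst-2}. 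The principal obstacle will be attaining the sharp $X_3$- and $V_{7/2}$-bounds, which requires exploiting the compact support of the orthogonality profiles via the weighted $L^\infty$-estimates of Lemma \ref{lem:weighted-Linfty-est} (rather than the cruder $L^2$-controls), and carefully balancing the $\Psi_1$-error in the dual norm $L^1(dy/y^2)$ against the weight $y^4 Q\chi_{R_\circ}$ dictated by the orthogonality profiles.
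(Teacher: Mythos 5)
Your proposal is correct and follows essentially the same route as the paper's own proof: you differentiate the orthogonality conditions in $s$, invoke the block-diagonal transversality structure \eqref{eq:matrix-transv} to solve for $(\Mod_1,\td{\Mod}_2,\Mod_3,\Mod_4)$, integrate by parts so that no derivative falls on $\eps_1$ or $\eps_2$, and bound the source terms on the compact support of $\calZ_j,\td{\calZ}_k$ using the Hardy bounds of Lemma~\ref{lem:HigherHardyBounds} together with \eqref{eq:Psi0-loc-bound} and \eqref{eq:Psi1-bound}. The only cosmetic difference is that you favor the weighted $L^\infty$ controls of Lemma~\ref{lem:weighted-Linfty-est} where the paper uses compactly localized $L^2$ controls (both are equivalent given the fixed cutoff radius $2R_\circ$ and the transfer estimates $\|\eps_1\|_{\dot H_{m+1}^2}\aleq_M X_3$, $\|\eps_2\|_{\dot V_{m+2}^{3/2}}\leq V_{7/2}$), and you treat the nonlocal nonlinearity implicitly inside $L_w^\ast w_1 - L_P^\ast P_1$ rather than explicitly separating it into $\check F$ and the phase-corrected $\td\Mod_2$ as the paper does; the phase correction extraction and the ensuing estimate of the remaining quadratic terms are the same as in the paper's structural Lemma~\ref{lem:StructFirstModEst}.
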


\begin{rem}
We mention once again that we need to incorporate the phase correction
to the blow-up part of the solution as expressed in the definition
of $\td{\Mod}_{2}$. Such a phase correction arises from the $A_{t}$-potential
and this fact was already observed in \cite{KimKwon2023MAMS,KimKwon2023AnnPDE}.
Note that $\td{\Mod}_{2}\approx\gmm_{s}-(m+1)\eta$, as can be seen
from the estimate 
\begin{equation}
\begin{aligned}\tint 0{\infty}\Re(\br ww_{1})dy & =\tint 0{\infty}\Re(\br PP_{1})dy\\
 & \peq+O(\|\tfrac{1}{y}\eps\|_{L^{2}}\|w_{1}\|_{L^{2}}+\|P\|_{L^{2}}\|\tfrac{1}{y}\eps_{1}\|_{L^{2}})\\
 & =-2(m+1)\eta+O_{M}(\mu^{2}),
\end{aligned}
\label{eq:mod-est-phase-cor}
\end{equation}
where we used \eqref{eq:phase-corr-1}, \eqref{eq:H1-nonlin-coer},
and \eqref{eq:e1-H1}.

Note that we require different bounds for the components of $\td{\Mod}$;
the bound \eqref{eq:ModEst-3} is rougher than \eqref{eq:ModEst-4},
but it is sufficient for our modulation analysis. This fact allows
us to construct $P$ only up to quadratic terms.
\end{rem}

To obtain these modulation estimates, we differentiate the orthogonality
conditions \eqref{eq:DecomOrtho} in time and use the evolution equations
for $\eps$ and $\eps_{1}$. In other words, we test \eqref{eq:eps-eqn}
and \eqref{eq:eps1-eqn} against $\calZ_{1},\calZ_{2}$ and $\td{\calZ}_{3},\td{\calZ}_{4}$,
respectively. It will be convenient to rewrite \eqref{eq:eps-eqn}
and \eqref{eq:eps1-eqn} as 
\begin{align}
\td{\Mod}\cdot\bfv & =(\rd_{s}-\tfrac{\lmb_{s}}{\lmb}\Lmb+\gmm_{s}i)\eps+\check{F},\label{eq:def-check-F}\\
\td{\Mod}\cdot\bfv_{1} & =(\rd_{s}-\tfrac{\lmb_{s}}{\lmb}\Lmb_{-1}+\gmm_{s}i)\eps_{1}+\check{F}_{1},\label{eq:def-check-F1}
\end{align}
where $\check{F}$ and $\check{F}_{1}$ are defined accordingly. The
following lemma collects the estimates for $\check{F}$ and $\check{F}_{1}$
for the proof of Proposition~\ref{prop:FirstModEst}. We relegate
its proof to Section~\ref{subsec:proof-struc-mod}.
\begin{lem}[Structural lemma for the modulation estimates]
\label{lem:StructFirstModEst}\ 
\begin{itemize}
\item (Rough low regularity estimates in compact region) We have 
\begin{equation}
|(\check{F},\calZ_{1})_{r}|+|(\check{F},\calZ_{2})_{r}|+|(\check{F}_{1},\td{\calZ}_{3})_{r}|+|(\check{F}_{1},\td{\calZ}_{4})_{r}|\aleq_{M}\mu.\label{eq:mod-struc1}
\end{equation}
\item (Higher Sobolev estimates in compact region) We have 
\begin{align}
|(\check{F},\calZ_{1})_{r}|+|(\check{F},\calZ_{2})_{r}| & \aleq_{M}X_{3},\label{eq:mod-struc2}\\
|(\check{F}_{1},\td{\calZ}_{3})_{r}|+|(\check{F}_{1},\td{\calZ}_{4})_{r}| & \aleq_{M}V_{7/2}.\label{eq:mod-struc3}
\end{align}
\item (Testing against almost nullspace elements) We have
\begin{equation}
|(\check{F}_{1},\chi_{\mu^{-1}}yQ)_{r}|+|(\check{F}_{1},\chi_{\mu^{-1}}iyQ)_{r}|\aleq_{M}\mu X_{3}+\beta^{4}|\log\beta|^{3}.\label{eq:mod-struc4}
\end{equation}
\end{itemize}
\end{lem}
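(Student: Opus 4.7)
\textbf{Proof plan for Lemma~\ref{lem:StructFirstModEst}.} Writing $w=P+\eps$, $w_{1}=P_{1}+\eps_{1}$, $w_{2}=P_{2}+\eps_{2}$, I would expand the main differences appearing in $\check{F},\check{F}_{1}$ as
\begin{align*}
\bfD_{w}^{\ast}w_{1}-\bfD_{P}^{\ast}P_{1} & =\bfD_{Q}^{\ast}\eps_{1}+(\bfD_{P}^{\ast}-\bfD_{Q}^{\ast})\eps_{1}+(\bfD_{w}^{\ast}-\bfD_{P}^{\ast})(P_{1}+\eps_{1}),\\
A_{w}^{\ast}w_{2}-A_{P}^{\ast}P_{2} & =A_{Q}^{\ast}\eps_{2}+(A_{P}^{\ast}-A_{Q}^{\ast})\eps_{2}+(A_{w}^{\ast}-A_{P}^{\ast})(P_{2}+\eps_{2}),
\end{align*}
and similarly split the nonlocal $A_{t}$-type quadratic forms into a $(\eps,\eps_{1})$-linear principal piece with $Q$-coefficients plus at-least-quadratic (or $\beta$-small) remainders. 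Thus each of $\check{F}$ and $\check{F}_{1}$ schematically decomposes as a linear principal part, a nonlinear/small-parameter error, and the profile error $i\Psi$ or $i\Psi_{1}$.

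For the rough bound \eqref{eq:mod-struc1}, since $\calZ_{1},\calZ_{2},\td\calZ_{3},\td\calZ_{4}$ are supported in $\{y\leq2R_{\circ}\}$, pairing against them reduces each contribution to a local $L^{2}$ estimate. The linear principal parts are controlled using $\|\eps\|_{\dot{H}_{m}^{1}}+\|\eps_{1}\|_{L^{2}}\aleq_{M}\mu$ from the nonlinear coercivity of energy (Lemma~\ref{lem:NonlinCoerEnergy}); the nonlinear remainders pick up higher powers of $(\mu,\beta)$ using the mass bound $\|\eps\|_{L^{2}}\aleq_{M}1$ together with $\beta\aleq_{M}\mu$; and the profile errors yield $O_{R_{\circ}}(\beta^{3})\aleq_{M}\mu$ by \eqref{eq:Psi0-loc-bound} and its counterpart for $\Psi_{1}$ on compact sets (extracted from the proof of \eqref{eq:Psi1-bound}). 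The refined bounds \eqref{eq:mod-struc2}--\eqref{eq:mod-struc3} proceed by the same splitting but upgrade the linear estimates using Lemma~\ref{lem:HigherHardyBounds}: the compact-set pairings bound $\eps_{1}$ by $\|\eps_{1}\|_{\dot{H}_{m+1}^{1}}\aleq_{M}X_{2}$, $\eps$ by $\|\eps\|_{\dot\calH_{m}^{2}}\aleq_{M}X_{2}$, and, crucially for $\check{F}_{1}$, the $\eps_{2}$-piece by $\|\eps_{2}\|_{\dot{H}_{m+2}^{1}}$, thereby promoting the estimates to $X_{3}$ and $V_{7/2}$ respectively.

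The hard part will be the almost-nullspace estimate \eqref{eq:mod-struc4}: the test functions $\chi_{\mu^{-1}}yQ$ and $\chi_{\mu^{-1}}iyQ$ are not compactly supported uniformly in $\mu$, so local Sobolev controls no longer suffice. The key idea is to integrate by parts and exploit $A_{Q}(yQ)=0$ (recorded in Section~\ref{subsec:Linear-conjugation-identities}). For the leading linear piece $iA_{Q}^{\ast}\eps_{2}$ of $\check{F}_{1}$ one finds
\[
(iA_{Q}^{\ast}\eps_{2},\chi_{\mu^{-1}}yQ)_{r}=(i\eps_{2},(\rd_{y}\chi_{\mu^{-1}})yQ)_{r},
\]
where the right-hand side is supported on $y\sim\mu^{-1}$; since $yQ\sim y^{-m-1}$ and $m\geq1$, one gets $\aleq\mu^{m+1}\|\eps_{2}\|_{L^{2}}\aleq\mu^{2}X_{2}\aleq\mu X_{3}$. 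The operator corrections $A_{P}^{\ast}-A_{Q}^{\ast}$ and $A_{w}^{\ast}-A_{P}^{\ast}$ carry coefficients of size $O(\beta+|\eps|)$ and decay rapidly, and when tested against $\chi_{\mu^{-1}}yQ$ contribute $\aleq\mu X_{3}$ via the weighted $L^{\infty}$-bounds of Lemma~\ref{lem:weighted-Linfty-est} and Remark~\ref{rem:Weighted-Linfty}; the nonlocal $A_{t}$-type and cubic remainders are handled analogously. Finally, the profile error yields the $\beta^{4}|\log\beta|^{3}$ loss:
\[
|(i\Psi_{1},\chi_{\mu^{-1}}yQ)_{r}|\aleq\|y^{-2}\Psi_{1}\|_{L^{1}}\cdot\|y^{3}Q\chi_{\mu^{-1}}\|_{L^{\infty}}\aleq\beta^{4}|\log\beta|^{3},
\]
using \eqref{eq:Psi1-bound} and $\|y^{3}Q\|_{L^{\infty}}\aleq1$ (valid since $Q\sim y^{-m-2}$ with $m\geq1$). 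Assembling these pieces proves \eqref{eq:mod-struc4}; the delicate point throughout is that the growth of the cutoff $\chi_{\mu^{-1}}$ is compensated by the spatial decay of $Q$, which is precisely where the assumption $m\geq1$ enters.
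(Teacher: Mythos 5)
Your overall strategy matches the paper's for \eqref{eq:mod-struc2}--\eqref{eq:mod-struc4}, and your key observations for \eqref{eq:mod-struc4} are precisely the ones the paper uses: integrate by parts exploiting $A_{Q}(yQ)=0$ so the cutoff derivative appears, exploit the decay $yQ\sim y^{-(m+1)}$ against the growth of $\chi_{\mu^{-1}}$, and estimate the profile error via $\|y^{-2}\Psi_{1}\|_{L^{1}}\cdot\|y^{3}Q\|_{L^{\infty}}$. However, there is a genuine gap in your treatment of the rough bound \eqref{eq:mod-struc1} for $\check{F}_{1}$. Your decomposition leaves $iA_{Q}^{\ast}\eps_{2}$ as the principal term, and even after one integration by parts against $\td{\calZ}_{k}$ you are left with $(\eps_{2},iA_{Q}\td{\calZ}_{k})_{r}\aleq\|\chf_{(0,2R_{\circ}]}\eps_{2}\|_{L^{2}}$. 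But the nonlinear coercivity of energy provides only $\|\eps\|_{\dot{H}_{m}^{1}}+\|\eps_{1}\|_{L^{2}}\aleq_{M}\mu$; there is no a priori bound $\|\eps_{2}\|_{L^{2}}\aleq_{M}\mu$ (one only has $\|\eps_{2}\|_{L^{2}}\leq X_{2}$, tautologically, and this is exactly the quantity the energy estimate is trying to control). So the expression cannot be closed at the $\mu$-level this way. The paper's fix, which you do not mention, is to observe that $w_{2}=A_{w}w_{1}$, so $A_{w}^{\ast}w_{2}-A_{P}^{\ast}P_{2}=(H_{w}-H_{P})P_{1}+H_{w}\eps_{1}+A_{P}^{\ast}(A_{P}P_{1}-P_{2})$, and then integrate by parts \emph{twice} (since $H_{w}$ is second order), producing $-(\eps_{1},iH_{w}\td{\calZ}_{k})_{r}$ plus lower-order terms; this is controlled by $\|\chf_{(0,2R_{\circ}]}\eps_{1}\|_{L^{2}}\aleq_{M}\mu$ alone, which is what the rough bound requires.

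A second, more minor issue concerns \eqref{eq:mod-struc3}: you propose bounding the compact-set $\eps_{2}$-contribution by $\|\eps_{2}\|_{\dot{H}_{m+2}^{1}}$, but this quantity is comparable to $X_{3}$, not $V_{7/2}$, and $X_{3}$ is \emph{not} $\aleq V_{7/2}$ in general. Since the statement of the lemma demands the sharper bound $V_{7/2}$ (which is later needed for the energy-Morawetz estimate), you must instead bound $\|\chf_{(0,2R_{\circ}]}\eps_{2}\|_{L^{2}}\aleq_{R_{\circ}}\|\eps_{2}\|_{\dot{V}_{m+2}^{3/2}}\leq V_{7/2}$, using that $\langle y\rangle\sim 1$ on the support of $\td{\calZ}_{k}$. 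With these two corrections the proposal would close.
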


\begin{rem}
The rough low regularity estimates \eqref{eq:mod-struc1} will be
used for the proof of \eqref{eq:ModEst-1}. The higher Sobolev estimates
\eqref{eq:mod-struc2} and \eqref{eq:mod-struc3} will be used for
the proof of \eqref{eq:ModEst-3} and \eqref{eq:ModEst-4}, respectively.
The last estimate \eqref{eq:mod-struc4} will be used to obtain second
modulation estimates (Lemma~\ref{lem:SecondModEst}) in Section~\ref{subsec:Asymptotics-of-modulation}.
\end{rem}

\begin{proof}[Proof of Proposition~\ref{prop:FirstModEst} assuming Lemma~\ref{lem:StructFirstModEst}]
Recall from Remark \ref{rem:Ortho-choice} that the orthogonality
profiles $\calZ_{1},\calZ_{2},\td{\calZ}_{3},\td{\calZ}_{4}$ are
supported in $y\leq2R_{\circ}$.

\textbf{Step 1.} Proof of \eqref{eq:ModEst-3} and \eqref{eq:ModEst-4}.

On one hand, we rearrange the equation for $\eps$ as 
\[
\td{\Mod}\cdot\{\bfv+(\Lmb\eps,-i\eps,0,0)^{t}\}=\rd_{s}\eps+b\Lmb\eps-(m+1)\eta i\eps+F.
\]
We take the inner product of the above display with $\calZ_{k}$ for
$k\in\{1,2\}$. Note that we have 
\begin{align*}
(\Lmb\eps,\calZ_{k})_{r} & =-(\eps,\Lmb\calZ_{k})_{r}\aleq\|\chf_{(0,2R_{\circ}]}\eps\|_{L^{\infty}}\aleq_{M}\min\{\mu,X_{3}\},\\
(-i\eps,\calZ_{k})_{r} & =(\eps,i\calZ_{k})_{r}\aleq\|\chf_{(0,2R_{\circ}]}\eps\|_{L^{\infty}}\aleq_{M}\min\{\mu,X_{3}\},\\
(\rd_{s}\eps,\calZ_{k})_{r} & =\rd_{s}(\eps,\calZ_{k})_{r}=0,
\end{align*}
where in the last equality we used the orthogonality condition \eqref{eq:DecomOrtho}.
Combining the above display with the estimate \eqref{eq:mod-struc2}
for $F$, we obtain
\begin{equation}
\Big|\sum_{j=1}^{4}\{((\bfv)_{j},\calZ_{k})_{r}+O_{M}(\mu)\}\td{\Mod}_{j}\Big|\aleq_{M}X_{3}.\label{eq:mod-est-04}
\end{equation}
On the other hand, we rearrange the equation for $\eps_{1}$ as 
\[
\td{\Mod}\cdot\{\bfv+(\Lmb_{-1}\eps_{1},-i\eps_{1},0,0)^{t}\}=\rd_{s}\eps_{1}+b\Lmb_{-1}\eps_{1}-(m+1)\eta i\eps_{1}+F_{1}.
\]
We take the inner product of the above display with $\td{\calZ}_{k}$
for $k\in\{3,4\}$. Proceeding similarly as in the previous paragraph,
but this time using \eqref{eq:mod-struc3} for $F_{1}$, we obtain
\begin{equation}
\Big|\sum_{j=1}^{4}\{((\bfv_{1})_{j},\td{\calZ}_{k})_{r}+O_{M}(\mu)\}\td{\Mod}_{j}\Big|\aleq_{M}V_{7/2}.\label{eq:mod-est-05}
\end{equation}
Thanks to the almost diagonal matrix structure \eqref{eq:matrix-transv},
we have 
\begin{align*}
|\td{\Mod}_{1}|+|\td{\Mod}_{2}| & \aleq_{M}\text{RHS\eqref{eq:mod-est-04}}+\mu\cdot\text{RHS\eqref{eq:mod-est-05}},\\
|\td{\Mod}_{3}|+|\td{\Mod}_{4}| & \aleq_{M}\text{RHS\eqref{eq:mod-est-05}}+\mu\cdot\text{RHS\eqref{eq:mod-est-04}}.
\end{align*}
Substituting the estimates \eqref{eq:mod-est-04} and \eqref{eq:mod-est-05}
into the above completes the proof of \eqref{eq:ModEst-3} and \eqref{eq:ModEst-4}.

\textbf{Step 2.} Proof of \eqref{eq:ModEst-1} and \eqref{eq:ModEst-2}.

For the proof of \eqref{eq:ModEst-1}, one follows the argument of
the previous step, but uses the rougher estimate \eqref{eq:mod-struc1}
instead of \eqref{eq:mod-struc2} and \eqref{eq:mod-struc3}. This
gives the estimate $|\td{\Mod}|\aleq_{M}\mu$. Combining this with
$\beta\aleq_{M}\mu$ and \eqref{eq:mod-est-phase-cor}, we obtain
\eqref{eq:ModEst-1}.

The estimate \eqref{eq:ModEst-2} easily follows from \eqref{eq:ModEst-4},
$V_{7/2}\leq X_{3}$, and $|p_{3}^{(b)}|+|p_{3}^{(\eta)}|\aleq\beta^{3}\aleq\mu^{3}$.
This completes the proof.
\end{proof}

\subsection{\label{subsec:Energy-Morawetz}Energy-Morawetz functional and uniform
$H^{3}$-bound}

This subsection is devoted to the proof of the key assertion of this
paper, namely, the uniform $H^{3}$-bound of $\eps^{\sharp}$. As
mentioned in Section~\ref{subsec:Strat-Proof}, the basic strategy
here is the \emph{energy method with repulsivity}, which has been
successfully employed in the blow-up constructions such as in \cite{RodnianskiSterbenz2010Ann.Math.,RaphaelRodnianski2012Publ.Math.,MerleRaphaelRodnianski2013InventMath,MerleRaphaelRodnianski2015CambJMath,Collot2018MemAMS}
and for \eqref{eq:CSS-m-equiv} in \cite{KimKwon2023AnnPDE,KimKwonOh2020arXiv}.
However, as we are considering arbitrary $H_{m}^{3}$-solutions near
modulated solitons, we need to control them without dynamical controls
such as \eqref{eq:old-dyn-control-1}. Note that controlling the solutions
without such dynamical assumptions are addressed in \cite{MerleRaphael2005AnnMath,Raphael2005MathAnn,RaphaelSzeftel2011JAMS}
for the mass-critical NLS and are given in a more complete picture
in \cite{MartelMerleRaphael2014Acta} for the critical gKdV. See also
\cite{JendrejLawrie2018Invent,JendrejLawrieRodriguez2022ASENS} for
energy-critical equivariant wave maps.

We introduce the energy functional 
\begin{equation}
\calF\coloneqq\frac{(\eps_{2},\td H_{Q}\eps_{2})_{r}}{\mu^{6}}-A\frac{(i\eps_{2},\psi'\rd_{y}\eps_{2})_{r}}{\mu^{5}}+A^{2}\frac{\|\eps_{2}\|_{L^{2}}^{2}}{\mu^{4}}\label{eq:def-EnergyMorawetz}
\end{equation}
with some large constant $A=A(M)\gg1$ and a function $\psi'$ given
in Lemma~\ref{lem:Morawetz}, and claim that $\calF\aleq1$ can be
propagated on any finite time intervals only under the size control
\eqref{eq:H1-nonlin-coer}. Since $0\leq\psi'\leq1$, it is clear
that $\calF$ is coercive in the $\eps_{2}$-variable, and hence it
controls $\|\eps\|_{\dot{\calH}_{m}^{3}}/\mu^{3}$ and $\|\eps\|_{\dot{\calH}_{m}^{2}}/\mu^{2}$
by Lemma~\ref{lem:HigherHardyBounds}. Note that the powers of $\mu$
in \eqref{eq:def-EnergyMorawetz} cannot be changed; see Remark~\ref{rem:scales}.

Note that $\mu=0$ if and only if $u$ is the modulated soliton $Q_{\lmb,\gmm}$
so we will not consider this trivial case. The key results \eqref{eq:unif-H3-bound}--\eqref{eq:unif-H2-bound}
of this subsection are nevertheless well-defined even for this soliton
solution.

In what follows, we explain how $\calF\aleq1$ can be propagated.
Our discussion will be sketchy and non-rigorous. We look at the structure
of the $\eps_{2}$-equation, whose full expression can be found in
\eqref{eq:e2-eqn}--\eqref{eq:def-F2} below. Here we only discuss
a toy model: 
\begin{equation}
\big(\rd_{s}-\frac{\lmb_{s}}{\lmb}\Lmb_{-2}+\gmm_{s}i\big)\eps_{2}+i\td H_{Q}\eps_{2}=i\beta V_{\mathrm{loc}}\eps_{2}+i(\beta^{2}+|\eps|^{2})\eps_{2}-i\Psi_{2},\label{eq:e2-toy-model}
\end{equation}
where $V_{\mathrm{loc}}$ is a smooth fast decaying potential and
$\Psi_{2}$ is the inhomogeneous error term (see \eqref{eq:P2-eqn}).
We will call the first term of RHS\eqref{eq:e2-toy-model} a \emph{local
term}, and call the second term a \emph{perturbative term}, whose
meaning will be made clear below.

We first consider the contribution of the perturbative term, whose
$\dot{H}^{1}$-norm can be roughly estimated by 
\[
\|i(\beta^{2}+|\eps|^{2})\eps_{2}\|_{\dot{H}^{1}}\aleq(\beta^{2}+\|\eps\|_{L^{\infty}}^{2})\|\eps_{2}\|_{\dot{H}^{1}}\aleq\mu^{2}\|\eps_{2}\|_{\dot{H}^{1}}
\]
and similarly for its $L^{2}$-norm, where in the last inequality
we applied the \emph{size control} \eqref{eq:H1-nonlin-coer}. In
this spirit, the contribution of the perturbative terms in the energy
estimates can be considered as 
\[
\rd_{s}\sqrt{\calF}\aleq\mu^{2}\sqrt{\calF},\qquad\text{or}\qquad\rd_{t}\sqrt{\calF}\aleq\sqrt{\calF}.
\]
By Grönwall's inequality, this inequality propagates the control $\calF\aleq1$
on any finite time intervals.

In a similar fashion, the contribution of the $\Psi_{2}$-term appears
as 
\[
\rd_{s}\sqrt{\calF}\aleq\mu^{-3}\|\Psi_{2}\|_{\dot{H}_{m}^{1}}+\mu^{-2}\|\Psi_{2}\|_{L^{2}}=\mu^{2}\big(\mu^{-5}\|\Psi_{2}\|_{\dot{H}_{m}^{1}}+\mu^{-4}\|\Psi_{2}\|_{L^{2}}\big).
\]
To have $\sqrt{\calF}$ bounded, in view of $\rd_{t}=\lmb^{-2}\rd_{s}$
and $\lmb\sim\mu$, we require that 
\begin{equation}
\frac{\|\Psi_{2}\|_{\dot{H}_{m}^{1}}}{\lmb^{5}}+\frac{\|\Psi_{2}\|_{L^{2}}}{\lmb^{4}}\text{ is integrable in }t.\label{eq:Psi2-integrable}
\end{equation}
Considering the formal ODE solutions \eqref{eq:formal-sol}, this
suggests us to have at least $\|\Psi_{2}\|_{\dot{H}_{m}^{1}}=O(\beta^{4+})$
and $\|\Psi_{2}\|_{L^{2}}=O(\beta^{3+})$ as stated in \eqref{eq:necessary-bound}.

We turn to the contribution of the local term, whose treatment requires
the \emph{Morawetz repulsivity} in a similar spirit to \cite{RodnianskiSterbenz2010Ann.Math.,MerleRaphaelRodnianski2015CambJMath}.
(The contribution of $-\frac{\lmb_{s}}{\lmb}\Lmb_{-2}\eps_{2}$ in
the LHS\eqref{eq:e2-toy-model} will also require the same treatment.)
Recall that we do not have a dynamical control such as $b>0$, but
our claim was that the size control \eqref{eq:H1-nonlin-coer} is
sufficient to run the energy method with repulsivity. We observe in
the computation of $\rd_{s}\{\mu^{-6}(\eps_{2},\td H_{Q}\eps_{2})_{r}\}$
that the following local terms appear:
\begin{equation}
\frac{1}{\mu^{6}}\Big\{-\frac{\lmb_{s}}{\lmb}\int\frac{\rd_{y}\td V_{Q}}{2y}|\eps_{2}|^{2}+\big(A_{Q}^{\ast}\eps_{2},A_{Q}^{\ast}(i\beta V_{\mathrm{loc}}\eps_{2})\big)_{r}\Big\},\label{eq:toy-model-nonpert}
\end{equation}
where the factorization $\td H_{Q}=A_{Q}A_{Q}^{\ast}$ is used for
the second term. In view of $-\frac{\lmb_{s}}{\lmb}\approx b$, \eqref{eq:toy-model-nonpert}
might be estimated by $\mu^{-6}\beta\|\eps_{2}\|_{\dot{H}^{1}}^{2}\aleq\beta\calF$,
and hence $\rd_{t}\calF\aleq\frac{\beta}{\lmb^{2}}\calF$, but this
estimate is not sufficient because $\frac{\beta}{\lmb^{2}}$ is not
integrable in $t$. Thus the term \eqref{eq:toy-model-nonpert} is
non-perturbative in this sense. However, if we recall that the potentials
involved in \eqref{eq:toy-model-nonpert} are \emph{well-localized}
and the \emph{size control} $\beta\aleq\mu$ is at hand, \eqref{eq:toy-model-nonpert}
can be estimated by a more local expression
\[
\mu^{-6}\beta\|\eps_{2}\|_{\dot{V}_{m+2}^{3/2}}^{2}\aleq\mu^{-5}\|\eps_{2}\|_{\dot{V}_{m+2}^{3/2}}^{2},
\]
which can be dominated by the monotonicity from the Morawetz term:
\[
\tfrac{1}{2}\rd_{s}\{\mu^{-5}(i\eps_{2},\psi'\rd_{y}\eps_{2})_{r}\}\geq\frkc\mu^{-5}\|\eps_{2}\|_{\dot{V}_{m+2}^{3/2}}^{2}+\text{(lower order)}.
\]
This motivates the Morawetz term in \eqref{eq:def-EnergyMorawetz}.
Note that the last lowest order term of \eqref{eq:def-EnergyMorawetz}
is introduced to make $\calF$ coercive.

We now state the key monotonicity estimate obtained from $\calF$,
which is the main result of this subsection. We will integrate this
estimate in Corollary~\ref{cor:integ-monot} below to obtain the
uniform $H^{3}$-bound of $\eps^{\sharp}$.
\begin{prop}[Energy-Morawetz functional]
\label{prop:EnergyMorawetz}There exist large constants $A=A(M)>1$
and $K=K(M)>1$ such that the functional 
\[
\calF\coloneqq\frac{(\eps_{2},\td H_{Q}\eps_{2})_{r}}{\mu^{6}}-A\frac{(i\eps_{2},\psi'\rd_{y}\eps_{2})_{r}}{\mu^{5}}+A^{2}\frac{\|\eps_{2}\|_{L^{2}}^{2}}{\mu^{4}}\tag{\ref{eq:def-EnergyMorawetz}}
\]
satisfies the following properties:
\begin{itemize}
\item (Coercivity) 
\begin{equation}
\calF\sim\frac{\|\eps_{2}\|_{\dot{H}_{m+2}^{1}}^{2}}{\mu^{6}}+A^{2}\frac{\|\eps_{2}\|_{L^{2}}^{2}}{\mu^{4}}\qquad\text{and hence}\qquad X_{3}\sim_{M}\mu^{3}\sqrt{1+\calF}\label{eq:EnergyMorawetzCoer}
\end{equation}
\item (Monotonicity) 
\begin{equation}
\rd_{s}\calF+\frac{\|\eps_{2}\|_{\dot{V}_{m+2}^{3/2}}^{2}}{\mu^{5}}\leq K\mu^{2}\Big\{(1+\calF)+\sqrt{\calF}\cdot\frac{\beta^{9/2}|\log\beta|^{2}}{\mu^{5}}\Big\}.\label{eq:EnergyMorawetzMonot}
\end{equation}
\end{itemize}
\end{prop}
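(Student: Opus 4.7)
\medskip
\noindent\textbf{Proof proposal.} For coercivity, I will combine the unconditional lower bound $(\eps_{2},\td H_{Q}\eps_{2})_{r}\geq\|\rd_{y}\eps_{2}\|_{L^{2}}^{2}+\|\tfrac{1}{y}\eps_{2}\|_{L^{2}}^{2}$ from Lemma~\ref{lem:LinearCoercivity}(1) with the Cauchy--Schwarz bound $|(i\eps_{2},\psi'\rd_{y}\eps_{2})_{r}|\leq\|\eps_{2}\|_{L^{2}}\|\rd_{y}\eps_{2}\|_{L^{2}}$ (here $0\leq\psi'\leq1$). The cross term $A\mu^{-5}(i\eps_{2},\psi'\rd_{y}\eps_{2})_{r}$ is then absorbed by a weighted AM--GM splitting into $\frac{1}{2}\mu^{-6}\|\rd_{y}\eps_{2}\|_{L^{2}}^{2}+\frac{A^{2}}{2}\mu^{-4}\|\eps_{2}\|_{L^{2}}^{2}$, leaving a genuine gap once $A$ is fixed large. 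The relation $X_{3}\sim_{M}\mu^{3}\sqrt{1+\calF}$ follows from the coercive equivalent plus $X_{2}\aleq_{M}\mu^{2}$, where the latter uses $\|\eps_{2}\|_{L^{2}}\aleq_{M}\mu^{2}$; this last bound comes from the compatibility \eqref{eq:compat2-H2}, the identity $\eps_{2}=A_{w}w_{1}-P_{2}$ applied as in the proof of Lemma~\ref{lem:HigherHardyBounds}, and the size control $\beta+\|\eps_{1}\|_{L^{2}}\aleq_{M}\mu$ from Lemma~\ref{lem:NonlinCoerEnergy}.

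For monotonicity, the plan is to derive an $\eps_{2}$-equation by subtracting \eqref{eq:P2-eqn} from \eqref{eq:w2-eqn}, yielding schematically
\[
\bigl(\rd_{s}-\tfrac{\lmb_{s}}{\lmb}\Lmb_{-2}+\gmm_{s}i\bigr)\eps_{2}+i\td H_{Q}\eps_{2}=i\calN-\td{\Mod}\cdot\bfv_{2}+i\Psi_{2},
\]
where $\calN$ collects potential differences $i(\td H_{Q}-\td H_{w})\eps_{2}$ and cubic interactions $(A_{w}^{\ast}-A_{P}^{\ast})P_{2}$, $(\int_{y}^{\infty}\Re(\br ww_{1}))i\eps_{2}$, $-i\br ww_{1}^{2}+i\br PP_{1}^{2}$ and similar. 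I then differentiate each summand of $\calF$ in $s$. Using that $\td H_{Q}$ is real self-adjoint and commutes with multiplication by $i$, the crucial identities $(i\eps_{2},\td H_{Q}\eps_{2})_{r}=0$ and $(i\td H_{Q}\eps_{2},\td H_{Q}\eps_{2})_{r}=0$ eliminate the leading contributions of the phase and of $i\td H_{Q}\eps_{2}$. The term arising from $-\tfrac{\lmb_{s}}{\lmb}\Lmb_{-2}$ combines with $\rd_{s}(\mu^{-6})=-6\tfrac{\mu_{s}}{\mu}\mu^{-6}$ (note $\tfrac{\mu_{s}}{\mu}=\tfrac{\lmb_{s}}{\lmb}$ by conservation of $E[u]$) through the scaling commutator $[\Lmb,\td H_{Q}]=-2\td H_{Q}+\tfrac{y\rd_{y}\td V_{Q}-2(\td V_{Q}-(m+2)^{2})}{y^{2}}$: the $-2\td H_{Q}$ piece exactly cancels against $-6\tfrac{\lmb_{s}}{\lmb}(\eps_{2},\td H_{Q}\eps_{2})_{r}$ combined with the $\Lmb_{-2}=\Lmb+2$ shift, leaving only a \emph{localized} quadratic form in $\eps_{2}$ of size $|\tfrac{\lmb_{s}}{\lmb}|\cdot O(\chf_{y\aleq1})|\eps_{2}|^{2}/y^{2}$.

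The differentiation of the Morawetz term $-A\mu^{-5}(i\eps_{2},\psi'\rd_{y}\eps_{2})_{r}$ produces, through $\Lmb_{\psi}=\psi'\rd_{y}+\frac{\Delta\psi}{2}$ and the commutator computation underlying Lemma~\ref{lem:Morawetz}, the positive Morawetz monotonicity $A\frkc\mu^{-5}\|\eps_{2}\|_{\dot{V}_{m+2}^{3/2}}^{2}$ that lives on the LHS of \eqref{eq:EnergyMorawetzMonot}; the scaling derivative $\rd_{s}(\mu^{-5})$ yields a term $\aleq_{M}\mu\cdot A\mu^{-4}\|\eps_{2}\|_{L^{2}}\|\rd_{y}\eps_{2}\|_{L^{2}}$ which fits inside $K\mu^{2}\calF$. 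The $L^{2}$-summand contributes only $\rd_{s}(\mu^{-4})\|\eps_{2}\|_{L^{2}}^{2}+2\mu^{-4}(\eps_{2},\rd_{s}\eps_{2})_{r}$, again of order $\mu^{2}\calF$ after expanding $\rd_{s}\eps_{2}$ via the equation. For the source terms: the $\Psi_{2}$-contribution is handled by Cauchy--Schwarz against $\td H_{Q}\eps_{2}$ and $\eps_{2}$ and then bounded via \eqref{eq:Psi2-H2-bound}--\eqref{eq:Psi2-H3-bound}, producing exactly $\sqrt{\calF}\cdot\mu^{2}\cdot\beta^{9/2}|\log\beta|^{2}/\mu^{5}$; the modulation contribution $\td{\Mod}\cdot\bfv_{2}$ is perturbative thanks to the degeneracy $\bfv_{2}=O(\beta^{2})$ from \eqref{eq:modvec2-1}--\eqref{eq:modvec2-2} combined with Proposition~\ref{prop:FirstModEst}; the cubic interactions inside $\calN$ are bounded by $\mu^{2}\calF$ using the weighted $L^{\infty}$ estimates of Lemma~\ref{lem:weighted-Linfty-est} together with the Hardy transfers of Lemma~\ref{lem:HigherHardyBounds}.

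\medskip
\noindent\textbf{Main obstacle.} The hard point is the non-perturbative localized term produced by the energy estimate, of the schematic form $\tfrac{1}{\mu^{6}}(\tfrac{\lmb_{s}}{\lmb})(\eps_{2},V_{\mathrm{loc}}\eps_{2})_{r}$ with $V_{\mathrm{loc}}=-\tfrac{\rd_{y}\td V_{Q}}{2y}+O(\langle y\rangle^{-4})$ supported near the soliton core, together with the analogous $\tfrac{\beta}{\mu^{6}}(A_{Q}^{\ast}\eps_{2},A_{Q}^{\ast}(V_{\mathrm{loc}}\eps_{2}))_{r}$ from the potential differences in $\calN$. A naive estimate gives $\aleq_{M}\mu^{-5}\|\eps_{2}\|_{\dot{H}^{1}}^{2}\aleq\mu\calF$, which is \emph{not} integrable in $t$. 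The resolution is to replace the full $\dot{H}^{1}$-norm by the local norm $\|\eps_{2}\|_{\dot{V}_{m+2}^{3/2}}^{2}$ (possible because $V_{\mathrm{loc}}$ is fast decaying), use the size control $|\tfrac{\lmb_{s}}{\lmb}|+\beta\aleq_{M}\mu$ from Proposition~\ref{prop:FirstModEst} and Lemma~\ref{lem:NonlinCoerEnergy} to gain one factor of $\mu$, and then choose $A=A(M)$ large enough that $A\frkc\mu^{-5}\|\eps_{2}\|_{\dot{V}_{m+2}^{3/2}}^{2}$ from the Morawetz monotonicity absorbs the resulting bound $C(M)\mu^{-5}\|\eps_{2}\|_{\dot{V}_{m+2}^{3/2}}^{2}$ with room to spare for the $\dot{V}_{m+2}^{3/2}$-term in \eqref{eq:EnergyMorawetzMonot}.
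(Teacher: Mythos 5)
Your proposal follows the paper's proof closely: the coercivity via \eqref{eq:H-td-coer} and Cauchy--Schwarz, the $\rd_s\calF$ identity obtained from the $\eps_2$-equation with the exact scaling cancellations (from anti-symmetry of $\Lmb$ and self-adjointness of $\td H_Q$), the Morawetz monotonicity from Lemma~\ref{lem:Morawetz}, and the absorption of the localized virial-type terms by choosing $A=A(M)$ large are all the same as in the paper's Steps~1--3. However, one step as you have written it would fail.

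The gap concerns your claim that ``the modulation contribution $\td\Mod\cdot\bfv_2$ is perturbative thanks to the degeneracy $\bfv_2=O(\beta^2)$ from \eqref{eq:modvec2-1}--\eqref{eq:modvec2-2}.'' This misreads the second of those bounds: $(\bfv_2)_3$ and $(\bfv_2)_4$ are only $O(\beta)$, not $O(\beta^2)$. With $|\Mod_3|+|\Mod_4|\aleq_M V_{7/2}\leq X_3$ from Proposition~\ref{prop:FirstModEst}, a direct Cauchy--Schwarz estimate of the contribution to $\mu^{-6}(A_Q^\ast\eps_2,A_Q^\ast F_2)_r$ gives
\[
\mu^{-6}\,\|A_Q^\ast\eps_2\|_{L^2}\,\big\|A_Q^\ast\big(\Mod_j(\bfv_2)_j\big)\big\|_{L^2}\aleq_M\mu^{-6}\,X_3\cdot X_3\cdot\beta\aleq_M\mu\,(1+\calF),
\]
which is one power of $\mu$ short of $K\mu^2(1+\calF)$. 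This is fatal for Grönwall: after $ds=\lmb^{-2}dt$, $\mu(1+\calF)$ produces $\lmb^{-1}$, which is not integrable near the blow-up time, while $\mu^2(1+\calF)$ produces $E[u]$, which is. The paper resolves this precisely by refusing to treat $F_{2,2}^{\loc}=\sum_{j=3}^4\Mod_j(\bfv_2)_j$ perturbatively: it isolates it as a \emph{localized} term, establishes the weighted bounds \eqref{eq:lyapunov-struc4}--\eqref{eq:lyapunov-struc6} for it (which exploit the spatial decay $(\bfv_2)_j\aleq\chf_{(0,2B_1]}\beta y^{-1}$ for $y\geq2$), and pairs it against $\|\eps_2\|_{\dot V_{m+2}^{3/2}}$ so that the residual is absorbed by the Morawetz monotonicity together with the scaling-virial and potential-difference local terms. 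You already invoke exactly this mechanism in your ``main obstacle'' paragraph for the other localized terms; the fix is simply to recognize that the $j\in\{3,4\}$ modulation terms belong in the same bucket.

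A secondary issue: the commutator identity you quote, $[\Lmb,\td H_Q]=-2\td H_Q+\tfrac{y\rd_y\td V_Q-2(\td V_Q-(m+2)^2)}{y^2}$, is incorrect; the correct identity is $[\Lmb,\td H_Q]=-2\td H_Q+\tfrac{\rd_y\td V_Q}{y}$ (the paper bypasses it by using $\td H_Q=A_QA_Q^\ast$ and integration by parts directly). This is not a harmless typo: since $\td V_Q-(m+2)^2\to m^2-(m+2)^2\neq0$ as $y\to\infty$, your spurious extra term is only $O(y^{-2})$ at infinity, hence not localized, and if it were really present the Morawetz absorption would not handle it. The correct virial term $-\int\tfrac{\rd_y\td V_Q}{2y}|\eps_2|^2=\tfrac12\int Q^2|\eps_2|^2$ is fast-decaying exactly as you conclude, so your end estimate is right, but the stated intermediate formula is not.
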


\begin{rem}[The scales]
\label{rem:scales}The scales in \eqref{eq:def-EnergyMorawetz},
i.e., the powers of $\mu$, cannot be changed. The use of other powers
will contradict the \emph{$H^{3}$-boundedness of $\eps^{\sharp}$},
either in the shrinking regime or in the expanding regime. In this
sense, we are using crucially the fact that the linearized Hamiltonian
$\td H_{Q}$ defines a coercive bilinear form on a \emph{homogeneous}
Sobolev space $\dot{H}_{m+2}^{1}$.
\end{rem}

\begin{rem}[On $m\geq1$]
In the following proof, we heavily use the Morawetz monotonicity
(Lemma~\ref{lem:Morawetz}) to bound the local terms. Hence the assumption
$m\geq1$ is crucially used in the proof. See also Remark~\ref{rem:Morawetz}.
\end{rem}

We now record the full equation for $\eps_{2}$. As in Section~\ref{subsec:First-modulation-estimates},
we subtract the $P_{2}$-equation \eqref{eq:P2-eqn} from the $w_{2}$-equation
\eqref{eq:w2-eqn} and incorporate the phase correction \eqref{eq:def-td-Mod}
to obtain 
\begin{equation}
(\rd_{s}-\frac{\lmb_{s}}{\lmb}\Lmb_{-2}+\gmm_{s}i)\eps_{2}+i\td H_{Q}\eps_{2}=F_{2},\label{eq:e2-eqn}
\end{equation}
where 
\begin{equation}
\begin{aligned}F_{2} & =\td{\Mod}\cdot\bfv_{2}-i\Psi_{2}+(i\br ww_{1}^{2}-i\br PP_{1}^{2})\\
 & \peq-(i\td H_{w}-i\td H_{Q})\eps_{2}-(i\td H_{w}-i\td H_{P})P_{2}\\
 & \peq-(\tint y{\infty}\Re(\br ww_{1})dy')i\eps_{2}+(\tint 0y\Re(\br ww_{1}-\br PP_{1})dy')iP_{2}.
\end{aligned}
\label{eq:def-F2}
\end{equation}
The following lemma collects several estimates for $F_{2}$ for the
proof of Proposition~\ref{prop:EnergyMorawetz}. As illustrated before,
we are basically decomposing the terms of $F_{2}$ into the sum of
local and perturbative terms. The proof is relegated to Section~\ref{subsec:proof-struc-energy}.
\begin{lem}[Structural lemma for the energy estimate]
\label{lem:StrucEnergyEst}We can decompose 
\begin{equation}
F_{2}=F_{2,1}^{\mathrm{loc}}+F_{2,2}^{\mathrm{loc}}+F_{2}^{\mathrm{pert}},\quad\text{where}\quad F_{2,2}^{\loc}\coloneqq\sum_{j=3}^{4}\Mod_{j}(\bfv_{2})_{j},\label{eq:lyapunov-struc0}
\end{equation}
such that 
\begin{align}
\|F_{2,1}^{\loc}+F_{2}^{\pert}\|_{L^{2}} & \aleq_{M}\mu X_{3}+\beta^{4}|\log\beta|^{2},\label{eq:lyapunov-struc1}\\
\|\langle y\rangle|F_{2,1}^{\loc}|_{-1}\|_{L^{2}} & \aleq_{M}\mu V_{7/2},\label{eq:lyapunov-struc2}\\
\||F_{2}^{\pert}|_{-1}\|_{L^{2}} & \aleq_{M}\mu^{2}X_{3}+\beta^{9/2}|\log\beta|^{2},\label{eq:lyapunov-struc3}
\end{align}
and 
\begin{align}
\|yF_{2,2}^{\mathrm{loc}}\|_{L^{2}} & \aleq_{M}X_{3},\label{eq:lyapunov-struc4}\\
\|\langle y\rangle^{1/2}|F_{2,2}^{\mathrm{loc}}|_{1}\|_{L^{2}} & \aleq_{M}\mu^{1/2}V_{7/2},\label{eq:lyapunov-struc5}\\
\|\langle y\rangle^{1/2}y^{-1}|F_{2,2}^{\mathrm{loc}}|_{2}\|_{L^{2}} & \aleq_{M}\mu V_{7/2}.\label{eq:lyapunov-struc6}
\end{align}
\end{lem}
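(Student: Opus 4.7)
The plan is to inspect each of the seven terms in the expression \eqref{eq:def-F2} for $F_2$ in turn, assign each piece to one of $F_{2,1}^{\loc}$, $F_{2,2}^{\loc}$, or $F_2^{\pert}$, and then verify the six required estimates \eqref{eq:lyapunov-struc1}--\eqref{eq:lyapunov-struc6}. The guiding heuristic is that a term is called \emph{perturbative} if its contribution is already quadratic in the smallness $\mu$ (so it can be closed by Gr\"onwall), and \emph{local} if it is only linear in the smallness but is suppressed by a weight that decays at infinity (so it can be absorbed by the Morawetz monotonicity in \eqref{eq:Morawetz-repul}). The split $F_{2,2}^{\loc}=\Mod_3(\bfv_2)_3+\Mod_4(\bfv_2)_4$ is dictated by the two components of $\bfv_2$ whose pointwise bounds \eqref{eq:modvec2-2} only give a spatial tail $\chf_{[2,2B_1]}\beta y^{-1}$, so these pieces carry too little decay to fit into the non-modulation local bucket.

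First I would dispatch the modulation part $\td{\Mod}\cdot\bfv_2$. By \eqref{eq:modvec2-1} the first two entries $(\bfv_2)_1,(\bfv_2)_2$ decay fast, so combined with the first modulation estimate \eqref{eq:ModEst-3}, i.e.\ $|\Mod_1|+|\td\Mod_2|\aleq_M X_3$, these contribute to $F_{2,1}^{\loc}$ and satisfy $\|\langle y\rangle|\cdot|_{-1}\|_{L^2}\aleq_M X_3\aleq_M \mu V_{7/2}$ (since $V_{7/2}\ageq \mu X_3/\mu=X_3$, using $V_{7/2}\ageq \mu^{1/2}X_3$ and $\mu\leq 1$). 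The remaining two entries $\Mod_3(\bfv_2)_3+\Mod_4(\bfv_2)_4$ are put into $F_{2,2}^{\loc}$; the three bounds \eqref{eq:lyapunov-struc4}--\eqref{eq:lyapunov-struc6} then follow directly from \eqref{eq:ModEst-4}, i.e.\ $|\Mod_3|+|\Mod_4|\aleq_M V_{7/2}$, combined with the pointwise estimates \eqref{eq:modvec2-2} and the obvious loss $\|\chf_{[2,2B_1]}\beta y^{-1}\cdot y\|_{L^2}\aleq 1$, $\|\langle y\rangle^{1/2}|\chf_{[2,2B_1]}\beta y^{-1}|_1\|_{L^2}\aleq \mu^{1/2}$, $\|\langle y\rangle^{1/2}y^{-1}|\chf_{[2,2B_1]}\beta y^{-1}|_2\|_{L^2}\aleq \mu$.

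Next I would handle the inhomogeneous term $-i\Psi_2$: by the profile error bounds \eqref{eq:Psi2-H2-bound}--\eqref{eq:Psi2-H3-bound}, this goes directly into $F_2^{\pert}$ and is accounted for by the explicit $\beta^{4}|\log\beta|^2$ and $\beta^{9/2}|\log\beta|^2$ terms on the right-hand sides of \eqref{eq:lyapunov-struc1} and \eqref{eq:lyapunov-struc3}. The cubic nonlinearity $i(\br{w}w_1^2-\br{P}P_1^2)$ and the two potential-difference terms $(i\td H_w-i\td H_Q)\eps_2$ and $(i\td H_w-i\td H_P)P_2$, together with the two integral (nonlocal) terms, are all expanded in the splitting $w=P+\eps$, $w_1=P_1+\eps_1$. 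Terms that are at least quadratic in $(\eps,\eps_1,\eps_2)$ after this expansion are perturbative: they gain a factor $\|\eps\|_{L^\infty}$ or $\|\eps_1\|_{L^\infty}$ which is bounded by $\mu$ via the nonlinear coercivity \eqref{eq:H1-nonlin-coer} combined with the Hardy bounds of Lemma~\ref{lem:HigherHardyBounds} and the weighted $L^\infty$ estimates of Lemma~\ref{lem:weighted-Linfty-est}, and they are paired with a factor of $|\eps_2|_{\leq 1}$ bounded by $X_3$ or $V_{7/2}$. Terms that are linear in $(\eps,\eps_1,\eps_2)$ but carry a factor of $P-Q$, $P_1$, $P_2$, or a localized potential like $|Q|^2$ or $\rd_y\td V_Q$, are local: they carry a weight of the form $Q$, $\langle y\rangle^{-2}$, or $\chi_{B_1}\beta y$ (from the higher-order pieces of the profiles), and they go into $F_{2,1}^{\loc}$.

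The main obstacle will be the nonlocal integral terms $(\int_y^\infty\Re(\br{w}w_1)dy')i\eps_2$ and $(\int_0^y\Re(\br{w}w_1-\br{P}P_1)dy')iP_2$, because the $\int$-tails couple scales and the estimate \eqref{eq:lyapunov-struc2} demands weighted control $\langle y\rangle|\cdot|_{-1}$ on the local piece. Here I would use \eqref{eq:integral-operator-bdd1}--\eqref{eq:integral-operator-bdd2} combined with the decomposition $\br w w_1-\br P P_1=\br{\eps}P_1+\br P\eps_1+\br{\eps}\eps_1$ and the compatibility estimates \eqref{eq:compat2-H2}--\eqref{eq:compat2-virial} when needed. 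The piece involving $P_1,P_2$ only (after the expansion) is absorbed either into $\Psi_2$-like corrections handled by Proposition~\ref{prop:Modified-profiles} or into $F_{2,1}^{\loc}$ via the rapid decay of $Q$. The second delicate point is that in \eqref{eq:lyapunov-struc3} we require the full $|\cdot|_{-1}$-norm with no weight at infinity, while perturbative cubic terms involving $\eps_2$ have only $\mu^2$ smallness; this forces us to bound gradients of $\eps$ and $\eps_1$ by $X_2,X_3,V_{7/2}$ using \eqref{eq:e1-H1}--\eqref{eq:e-H3}, and to use the weighted $L^\infty$ bound $\|\langle y\rangle^{-2}\eps\|_{L^\infty}\aleq\|\eps\|_{\dot\calH_m^3}\aleq_M X_3$ systematically. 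Once these substitutions are in place, the required bounds \eqref{eq:lyapunov-struc1}--\eqref{eq:lyapunov-struc3} follow by direct Cauchy--Schwarz, and the whole proof reduces to bookkeeping term by term.
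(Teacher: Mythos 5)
Your plan has the right skeleton---term-by-term bookkeeping, expansion in $(\eps,\eps_1,\eps_2)$, the split $F_{2,2}^{\loc}$ correctly singled out because $(\bfv_2)_3,(\bfv_2)_4$ carry only one power of $\beta$ decay---but two concrete gaps in the classification step prevent the argument from closing as written. First, the claim that $\|\eps_1\|_{L^\infty}$ ``is bounded by $\mu$ via the nonlinear coercivity \eqref{eq:H1-nonlin-coer}'' is false: \eqref{eq:H1-nonlin-coer} only yields $\|\eps_1\|_{L^2}\aleq_M\mu$, while $\|\eps_1\|_{L^\infty}\aleq\|\eps_1\|_{\dot H_{m+1}^1}\aleq_M X_2$, which contains $\|\eps_2\|_{L^2}$ and is not small in this lemma. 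Consequently your heuristic ``quadratic in $(\eps,\eps_1,\eps_2)\Rightarrow$ perturbative'' misclassifies the piece $Q\eps_1(2P_1+\eps_1)$ of the cubic nonlinearity $i(\br ww_1^2-\br PP_1^2)$: the $Q\eps_1^2$ part has no $(w-Q)$-factor to provide an extra $\mu$ of $L^\infty$-smallness, and the best generic bound $\||Q\eps_1^2|_{-1}\|_{L^2}\aleq X_2\,V_{7/2}$ does not fit the $\mu^2X_3$ requirement of \eqref{eq:lyapunov-struc3}. This term must go into $F_{2,1}^{\loc}$ and be estimated with the weighted $L^\infty$ bound $\|\langle y\rangle^{-2}|\eps_1|_{-1}\|_{L^\infty}\aleq\|\eps_1\|_{\dot V_{m+1}^{5/2}}\aleq V_{7/2}$ together with $\||P_1|_1+|\eps_1|\|_{L^2}\aleq\mu$; the real dividing line between the two buckets is whether the remaining factors supply a $(w-Q)$-type $L^\infty$-smallness, not the degree in $\eps$.

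Second, your chain $\|\langle y\rangle|\td\Mod_j(\bfv_2)_j|_{-1}\|_{L^2}\aleq_M X_3\aleq_M\mu V_{7/2}$ for $j\in\{1,2\}$ is wrong twice over. The first $\aleq$ drops the $\beta^2|\log\beta|^{1/2}$ factor coming from the weighted norm of $(\bfv_2)_j$ in \eqref{eq:modvec2-1}, and the second inequality $X_3\aleq\mu V_{7/2}$ is false: from $V_{7/2}\geq\mu^{1/2}X_3$ one only obtains $\mu V_{7/2}\geq\mu^{3/2}X_3$, which is weaker than $X_3$. Your parenthetical reverses an inequality ($\mu\leq1$ gives $\mu^{1/2}X_3\leq X_3$, not $V_{7/2}\geq X_3$). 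Retaining the $\beta^2$ factor the chain $X_3\cdot\beta^2|\log\beta|^{1/2}\aleq\mu^{3/2}X_3\aleq\mu V_{7/2}$ does close; more simply, place $\td\Mod_j(\bfv_2)_j$ for $j\in\{1,2\}$ in $F_2^{\pert}$ and use $\||(\bfv_2)_j|_{-1}\|_{L^2}\aleq\beta^2$ for \eqref{eq:lyapunov-struc3}. Two smaller remarks: there is no ``pure profile'' residual to absorb into $\Psi_2$-like corrections---every term of $F_2$ other than $-i\Psi_2$ vanishes when $\eps=\eps_1=\eps_2=0$---and the compatibility estimates \eqref{eq:compat2-H2}--\eqref{eq:compat2-virial} concern $A_PP_1-P_2$ and play no role in either nonlocal integral term, which are handled directly via $P_2\aleq\beta^2\langle y\rangle^{-1}$ and the split $\int_y^\infty\Re(\br ww_1)=\int_y^\infty\Re(Qw_1)+\int_y^\infty\Re(\br{(w-Q)}w_1)$.
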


\begin{rem}
When $m=1$, the modulation term $F_{2,2}^{\loc}=\sum_{j=3}^{4}\td{\Mod}_{j}\cdot(\bfv_{2})_{j}$
does not satisfy the bound \eqref{eq:lyapunov-struc2} due to the
lack of spatial decay of $(\bfv_{2})_{j}$. Thus we will directly
estimate this contribution in the energy estimates.
\end{rem}

\begin{proof}[Proof of Proposition~\ref{prop:EnergyMorawetz} assuming Lemma~\ref{lem:StrucEnergyEst}]
\ 

\textbf{Step 1.} Proof of the coercivity \eqref{eq:EnergyMorawetzCoer}.

This is obvious from \eqref{eq:H-td-coer}, $|\psi'|\leq1$, Cauchy--Schwarz,
and \eqref{eq:def-X2}--\eqref{eq:def-X3}.

\textbf{Step 2.} Algebraic computation of $\rd_{s}\calF$.

In Steps 2--3, we prove the monotonicity \eqref{eq:EnergyMorawetzMonot}.
The goal of this step is to derive the algebraic identity 
\begin{equation}
\begin{aligned}\frac{1}{2}\rd_{s}\calF & =\frac{1}{\mu^{6}}\Big\{-\frac{\lmb_{s}}{\lmb}\int\frac{\rd_{y}\td V_{Q}}{2y}|\eps_{2}|^{2}+(A_{Q}^{\ast}\eps_{2},A_{Q}^{\ast}F_{2})_{r}\Big\}\\
 & \peq-\frac{A}{\mu^{5}}\Big\{-\frac{\lmb_{s}}{2\lmb}\big(i\eps_{2},(y\rd_{y}\psi')\rd_{y}\eps_{2}\big)_{r}+(\td H_{Q}\eps_{2},\Lmb_{\psi}\eps_{2})_{r}+(iF_{2},\Lmb_{\psi}\eps_{2})_{r}\Big\}\\
 & \peq+\frac{A^{2}}{\mu^{4}}(\eps_{2},F_{2})_{r}.
\end{aligned}
\label{eq:Lyapunov-2-1}
\end{equation}
First, we use $\frac{\mu_{s}}{\mu}=\frac{\lmb_{s}}{\lmb}$ and the
equation \eqref{eq:e2-eqn} for $\eps_{2}$ to have 
\begin{align*}
\frac{1}{2}\rd_{s}\Big\{\frac{(\eps_{2},\td H_{Q}\eps_{2})_{r}}{\mu^{6}}\Big\} & =\frac{1}{\mu^{6}}\big((\rd_{s}-3\frac{\lmb_{s}}{\lmb})\eps_{2},\td H_{Q}\eps_{2}\big)_{r}\\
 & =\frac{1}{\mu^{6}}\big(\frac{\lmb_{s}}{\lmb}y\rd_{y}\eps_{2}-\gmm_{s}i\eps_{2}-i\td H_{Q}\eps_{2}+F_{2},\td H_{Q}\eps_{2}\big)_{r}.
\end{align*}
Using $\td H_{Q}=A_{Q}A_{Q}^{\ast}=-\rd_{yy}-\tfrac{1}{y}\rd_{y}+\tfrac{1}{y^{2}}\td V_{Q}$
(see \eqref{eq:H-td-factor}), we obtain 
\begin{equation}
\frac{1}{2}\rd_{s}\Big\{\frac{(\eps_{2},\td H_{Q}\eps_{2})_{r}}{\mu^{6}}\Big\}=\frac{1}{\mu^{6}}\Big\{-\frac{\lmb_{s}}{\lmb}\int\frac{\rd_{y}\td V_{Q}}{2y}|\eps_{2}|^{2}+(A_{Q}^{\ast}\eps_{2},A_{Q}^{\ast}F_{2})_{r}\Big\}.\label{eq:Lyapunov-2-2}
\end{equation}
Next, we similarly compute 
\begin{align*}
\frac{1}{2}\rd_{s}\Big\{ & -A\frac{(i\eps_{2},\psi'\rd_{y}\eps_{2})_{r}}{\mu^{5}}\Big\}\\
 & =-\frac{A}{2\mu^{5}}\Big[\frac{\lmb_{s}}{\lmb}\Big\{(i\Lmb_{-2}\eps_{2},\psi'\rd_{y}\eps_{2})_{r}+(i\eps_{2},\psi'\rd_{y}\Lmb_{-2}\eps_{2})_{r}-5(i\eps_{2},\psi'\rd_{y}\eps_{2})_{r}\Big\}\\
 & \peq\qquad+\big\{(\td H_{Q}\eps_{2},\psi'\rd_{y}\eps_{2})_{r}-(\eps_{2},\psi'\rd_{y}\td H_{Q}\eps_{2})_{r}\big\}\\
 & \peq\qquad+\big\{(iF_{2},\psi'\rd_{y}\eps_{2})_{r}+(i\eps_{2},\psi'\rd_{y}F_{2})_{r}\big\}\Big].
\end{align*}
In the above display, we integrate by parts the scaling term
\begin{align*}
(i\eps_{2},\psi'\rd_{y}\Lmb_{-2}\eps_{2})_{r} & =\big(i\eps_{2},\Lmb_{-3}(\psi'\rd_{y}\eps_{2})-(y\rd_{y}\psi')\rd_{y}\eps_{2}\big)_{r}\\
 & =-\big(i\Lmb_{3}\eps_{2},\psi'\rd_{y}\eps_{2})_{r}-(i\eps_{2},(y\rd_{y}\psi')\rd_{y}\eps_{2}\big)_{r}
\end{align*}
and apply the operator identity $\psi'\rd_{y}-\rd_{y}^{\ast}\psi'=2\Lmb_{\psi}$
for the terms involving $\td H_{Q}$ and $F_{2}$. This gives 
\begin{equation}
\begin{aligned}\frac{1}{2}\rd_{s} & \Big\{-A\frac{(i\eps_{2},\psi'\rd_{y}\eps_{2})_{r}}{\mu^{5}}\Big\}\\
 & =-\frac{A}{\mu^{5}}\Big\{-\frac{\lmb_{s}}{2\lmb}\big(i\eps_{2},(y\rd_{y}\psi')\rd_{y}\eps_{2}\big)_{r}+(\td H_{Q}\eps_{2},\Lmb_{\psi}\eps_{2})_{r}+(iF_{2},\Lmb_{\psi}\eps_{2})_{r}\Big\}.
\end{aligned}
\label{eq:Lyapunov-2-3}
\end{equation}
Finally, we have 
\begin{equation}
\frac{1}{2}\rd_{s}\Big\{ A^{2}\frac{\|\eps_{2}\|_{L^{2}}^{2}}{\mu^{4}}\Big\}=\frac{A^{2}}{\mu^{4}}(\eps_{2},F_{2})_{r}.\label{eq:Lyapunov-2-4}
\end{equation}
Summing up the displays \eqref{eq:Lyapunov-2-2}--\eqref{eq:Lyapunov-2-4}
completes the proof of the claim \eqref{eq:Lyapunov-2-1}.

\textbf{Step 3.} Proof of the monotonicity \eqref{eq:EnergyMorawetzMonot}.

In this step, we prove \eqref{eq:EnergyMorawetzMonot} using the computation
\eqref{eq:Lyapunov-2-1}.

\emph{\uline{First line of RHS\mbox{\eqref{eq:Lyapunov-2-1}}.}}
We claim that 
\begin{equation}
\begin{aligned}\frac{1}{\mu^{6}}\Big\{ & -\frac{\lmb_{s}}{\lmb}\int\frac{\rd_{y}\td V_{Q}}{2y}|\eps_{2}|^{2}+(A_{Q}^{\ast}\eps_{2},A_{Q}^{\ast}F_{2})_{r}\Big\}\\
 & \leq C_{1}(M)\Big\{\frac{\|\eps_{2}\|_{\dot{V}_{m+2}^{3/2}}^{2}}{\mu^{5}}+\mu^{2}\Big(1+\calF+\sqrt{\calF}\cdot\frac{\beta^{9/2}|\log\beta|^{2}}{\mu^{5}}\Big)\Big\},
\end{aligned}
\label{eq:Lyapunov-04}
\end{equation}
where $C_{1}(M)$ is some constant depending on $M$. We rewrite LHS\eqref{eq:Lyapunov-04}
using the decomposition \eqref{eq:lyapunov-struc0} as 
\begin{align*}
\text{LHS\eqref{eq:Lyapunov-04}}=\frac{1}{\mu^{6}}\Big\{ & -\frac{\lmb_{s}}{\lmb}\int\frac{\rd_{y}\td V_{Q}}{2y}|\eps_{2}|^{2}+(\eps_{2},\td H_{Q}F_{2,2}^{\loc})_{r}\\
 & +(A_{Q}^{\ast}\eps_{2},A_{Q}^{\ast}(F_{2,1}^{\loc}+F_{2}^{\pert}))_{r}\Big\}.
\end{align*}
For the first term (which is a local term), we use \eqref{eq:ModEst-1}
to have 
\[
-\frac{\lmb_{s}}{\lmb}\int\frac{\rd_{y}\td V_{Q}}{2y}|\eps_{2}|^{2}=\frac{\lmb_{s}}{2\lmb}\int Q^{2}|\eps_{2}|^{2}\aleq_{M}\mu\|\eps_{2}\|_{\dot{V}_{m+2}^{3/2}}^{2}.
\]
For the second term, we use \eqref{eq:lyapunov-struc6}, \eqref{eq:def-V7/2},
and \eqref{eq:EnergyMorawetzCoer} to have 
\begin{align*}
\big(\eps_{2},\td H_{Q}F_{2,2}^{\loc}\big)_{r} & \aleq\|\langle y\rangle^{-1/2}y^{-1}\eps_{2}\|_{L^{2}}\|\langle y\rangle^{1/2}y^{-1}|F_{2,2}^{\loc}|_{2}\|_{L^{2}}\\
 & \aleq_{M}\|\eps_{2}\|_{\dot{V}_{m+2}^{3/2}}\cdot\mu V_{7/2}\\
 & \aleq_{M}\mu\|\eps_{2}\|_{\dot{V}_{m+2}^{3/2}}^{2}+\mu^{2}(X_{3})^{2}\aleq_{M}\mu\|\eps_{2}\|_{\dot{V}_{m+2}^{3/2}}^{2}+\mu^{8}(1+\calF).
\end{align*}
For the third term, we use \eqref{eq:lyapunov-struc2}--\eqref{eq:lyapunov-struc3}
to have 
\begin{align*}
 & \big(A_{Q}^{\ast}\eps_{2},A_{Q}^{\ast}(F_{2,1}^{\loc}+F_{2}^{\pert})\big)_{r}\\
 & \aleq\|\langle y\rangle^{-1}A_{Q}^{\ast}\eps_{2}\|_{L^{2}}\|\langle y\rangle|F_{2,1}^{\loc}|_{-1}\|_{L^{2}}+\|A_{Q}^{\ast}\eps_{2}\|_{L^{2}}\||F_{2}^{\pert}|_{-1}\|_{L^{2}}\\
 & \aleq_{M}\|\eps_{2}\|_{\dot{V}_{m+2}^{3/2}}\cdot\mu V_{7/2}+\|\eps_{2}\|_{\dot{H}_{m+2}^{1}}\cdot(\mu^{2}X_{3}+\beta^{9/2}|\log\beta|^{2})\\
 & \aleq_{M}\mu(V_{7/2})^{2}+\mu^{2}(X_{3})^{2}+\mu^{3}\sqrt{\calF}\cdot\beta^{9/2}|\log\beta|^{2}\\
 & \aleq_{M}\mu\|\eps_{2}\|_{\dot{V}_{m+2}^{3/2}}^{2}+\mu^{8}(1+\calF)+\mu^{3}\sqrt{\calF}\cdot\beta^{9/2}|\log\beta|^{2}
\end{align*}
This completes the proof of the claim \eqref{eq:Lyapunov-04}.

\emph{\uline{Second line of RHS\mbox{\eqref{eq:Lyapunov-2-1}}.}}
We claim 
\begin{equation}
\begin{aligned}- & \frac{A}{\mu^{5}}\Big\{-\frac{\lmb_{s}}{2\lmb}\big(i\eps_{2},(y\rd_{y}\psi')\rd_{y}\eps_{2}\big)_{r}+(\td H_{Q}\eps_{2},\Lmb_{\psi}\eps_{2})_{r}+(iF_{2},\Lmb_{\psi}\eps_{2})_{r}\Big\}\\
 & \leq-\frac{\frkc A}{2\mu^{5}}\|\eps_{2}\|_{\dot{V}_{m+2}^{3/2}}^{2}+C_{2}(M)A\mu^{2}\Big\{1+\calF+\sqrt{\calF}\cdot\frac{\beta^{4}|\log\beta|^{\frac{5}{2}}}{\mu^{4}}\Big\},
\end{aligned}
\label{eq:Lyapunov-05}
\end{equation}
where $\frkc$ is the constant in \eqref{eq:Morawetz-repul} and $C_{2}(M)$
is some constant depending on $M$. To see this, similarly as in the
previous paragraph, we rewrite 
\begin{align*}
\text{LHS}\ref{eq:Lyapunov-05} & =-\frac{A}{\mu^{5}}\Big\{(\td H_{Q}\eps_{2},\Lmb_{\psi}\eps_{2})_{r}-\frac{\lmb_{s}}{2\lmb}\big(i\eps_{2},(y\rd_{y}\psi')\rd_{y}\eps_{2}\big)_{r}\\
 & \qquad\qquad-(\eps_{2},i\Lmb_{\psi}F_{2,2}^{\loc})_{r}-(i\Lmb_{\psi}\eps_{2},F_{2,1}^{\loc}+F_{2}^{\pert})_{r}\Big\}.
\end{align*}
We apply \eqref{eq:Morawetz-repul} to the first term of LHS\eqref{eq:Lyapunov-05}
to obtain the \emph{key monotonicity} 
\[
-\frac{A}{\mu^{5}}(\td H_{Q}\eps_{2},\Lmb_{\psi}\eps_{2})_{r}\leq-\frac{\frkc A}{\mu^{5}}\|\eps_{2}\|_{\dot{V}_{m+2}^{3/2}}^{2}.
\]
The other terms of LHS\eqref{eq:Lyapunov-05} are error terms. Using
\eqref{eq:ModEst-1} and $y\rd_{y}\psi'\aleq\langle y\rangle^{-1}$
from \eqref{eq:ydy-psi'}, we have 
\[
-\frac{\lmb_{s}}{2\lmb}\big(i\eps_{2},(y\rd_{y}\psi')\rd_{y}\eps_{2}\big)_{r}\aleq_{M}\mu\|\eps_{2}\|_{\dot{H}_{m+2}^{1}}^{2}\aleq_{M}\mu^{7}\calF.
\]
Using \eqref{eq:lyapunov-struc5} and \eqref{eq:def-V7/2}, we have
\begin{align*}
(\eps_{2},i\Lmb_{\psi}F_{2,2}^{\loc})_{r} & \aleq\|\langle y\rangle^{-1/2}y^{-1}\eps_{2}\|_{L^{2}}\|\langle y\rangle^{1/2}|F_{2,2}^{\loc}|_{1}\|_{L^{2}}\\
 & \aleq_{M}\|\eps_{2}\|_{\dot{V}_{m+2}^{3/2}}\cdot\mu^{1/2}V_{7/2}\\
 & \aleq_{M}\mu^{1/2}(\|\eps_{2}\|_{\dot{V}_{m+2}^{3/2}}^{2}+\mu X_{3}^{2})\\
 & \aleq\delta_{M}(\alpha^{\ast})(\|\eps_{2}\|_{\dot{V}_{m+2}^{3/2}}^{2}+\mu^{7}(1+\calF)).
\end{align*}
Using \eqref{eq:lyapunov-struc1}, we have 
\begin{align*}
(i\Lmb_{\psi}\eps_{2},F_{2,1}^{\loc}+F_{2}^{\pert})_{r} & \aleq_{M}\||\eps_{2}|_{-1}\|_{L^{2}}\cdot(\mu X_{3}+\beta^{4}|\log\beta|^{2})\\
 & \aleq_{M}\mu^{3}\sqrt{\calF}\cdot(\mu^{4}(\sqrt{\calF}+1)+\beta^{4}|\log\beta|^{2}).
\end{align*}
Substituting the previous bounds into LHS\eqref{eq:Lyapunov-05} and
using $\frkc-\delta_{M}(\alpha^{\ast})>\frac{1}{2}\frkc$ complete
the proof of the claim \eqref{eq:Lyapunov-05}.

\emph{\uline{Last line of RHS\mbox{\eqref{eq:Lyapunov-2-1}}.}}
We claim 
\begin{equation}
\frac{A^{2}}{\mu^{4}}(\eps_{2},F_{2})_{r}\leq C_{3}(M)A\mu^{2}\Big\{\calF+\sqrt{\calF}\Big(\frac{\beta^{4}|\log\beta|^{2}}{\mu^{4}}+1\Big)\Big\},\label{eq:Lyapunov-06}
\end{equation}
where $C_{3}(M)$ is some constant depending on $M$. Indeed, this
follows from \eqref{eq:lyapunov-struc1} and \eqref{eq:EnergyMorawetzCoer}:
\begin{align*}
\frac{A^{2}}{\mu^{4}}(\eps_{2},F_{2})_{r} & \aleq_{M}\frac{A^{2}}{\mu^{4}}\|\eps_{2}\|_{L^{2}}\cdot(\mu X_{3}+\beta^{4}|\log\beta|^{2})\\
 & \aleq_{M}A\mu^{2}\sqrt{\calF}\Big\{(\sqrt{\calF}+1)+\frac{\beta^{4}|\log\beta|^{2}}{\mu^{4}}\Big\}.
\end{align*}

\emph{\uline{Completion of the proof of \mbox{\eqref{eq:EnergyMorawetzMonot}}.}}
Choose $A=A(M)>0$ such that $\frac{1}{2}\frkc A>C_{1}(M)+1$. Substituting
the claims \eqref{eq:Lyapunov-04}--\eqref{eq:Lyapunov-06} into
\eqref{eq:Lyapunov-2-1}, we have 
\[
\frac{1}{2}\rd_{s}\calF+\frac{\|\eps_{2}\|_{\dot{V}_{m+2}^{3/2}}}{\mu^{5}}\leq(C_{2}(M)+C_{3}(M))A\mu^{2}\Big\{1+\calF+\sqrt{\calF}\Big(\frac{\beta^{9/2}|\log\beta|^{2}}{\mu^{5}}+\frac{\beta^{4}|\log\beta|^{2}}{\mu^{4}}\Big)\Big\}.
\]
Using $\beta\aleq_{M}\mu$ and $\mu\ll1$, the above display can be
written more compactly as 
\[
\rd_{s}\calF+\frac{\|\eps_{2}\|_{\dot{V}_{m+2}^{3/2}}}{\mu^{5}}\aleq_{M}\mu^{2}\Big\{1+\calF+\sqrt{\calF}\cdot\frac{\beta^{9/2}|\log\beta|^{2}}{\mu^{5}}\Big\},
\]
which is \eqref{eq:EnergyMorawetzMonot}. This completes the proof.
\end{proof}
Integrating the monotonicity estimate \eqref{eq:EnergyMorawetzMonot},
we obtain the key (local-in-time) \emph{uniform $H^{3}$ bound} as
well as a spacetime bound.
\begin{cor}[Integration of the monotonicity]
\label{cor:integ-monot}Let $I$ be a bounded time interval either
of the form $[t_{0},t_{1})$ or $[t_{0},t_{1}]$ such that $u(t)\in\calT_{\alpha^{\ast},M}$
for all $t\in I$. Let $L>0$ be such that $E[u](t_{1}-t_{0})\leq L$.
Then, we have 
\begin{equation}
\begin{aligned}\sup_{t\in I}\Big(\frac{\|\eps_{2}\|_{\dot{H}_{m+2}^{1}}^{2}}{\mu^{6}}+\frac{\|\eps_{2}\|_{L^{2}}^{2}}{\mu^{4}}\Big)(t) & +\int_{I}\frac{\|\eps_{2}\|_{\dot{V}_{m+2}^{3/2}}^{2}}{\mu^{5}}\frac{dt}{\lmb^{2}}\\
 & \aleq_{M,L}1+\Big(\frac{\|\eps_{2}\|_{\dot{H}_{m+2}^{1}}^{2}}{\mu^{6}}+\frac{\|\eps_{2}\|_{L^{2}}^{2}}{\mu^{4}}\Big)(t_{0}).
\end{aligned}
\label{eq:integ-monot}
\end{equation}
In particular, we have local-in-time uniform $H^{3}$-bounds:
\begin{align}
\Big(\|\eps\|_{\dot{\calH}_{m}^{3}}+\|\eps_{1}\|_{\dot{H}_{m+1}^{2}}+\|\eps_{2}\|_{\dot{H}_{m+2}^{1}}\Big)(t) & \aleq_{M,L}\Big(\frac{\lmb(t)}{\lmb(t_{0})}\Big)^{3}X_{3}(t_{0}),\label{eq:unif-H3-bound}\\
\Big(\|\eps\|_{\dot{\calH}_{m}^{2}}+\|\eps_{1}\|_{\dot{H}_{m+1}^{1}}+\|\eps_{2}\|_{L^{2}}\Big)(t) & \aleq_{M,L}\Big(\frac{\lmb(t)}{\lmb(t_{0})}\Big)^{2}\sqrt{\mu X_{3}}(t_{0}).\label{eq:unif-H2-bound}
\end{align}
\end{cor}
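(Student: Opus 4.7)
The plan is to integrate the differential inequality \eqref{eq:EnergyMorawetzMonot} along the flow in the renormalized $s$-variable and then translate the resulting bound on $\calF$ back to Sobolev-type controls via the coercivity \eqref{eq:EnergyMorawetzCoer} and Lemma~\ref{lem:HigherHardyBounds}. The pivotal scaling identity, and essentially the only globally available integrability, is
\[
\int_I \mu^2\,ds \;=\; \int_I E[u]\,dt \;\leq\; L,
\]
which follows from $ds/dt = 1/\lmb^2$ together with energy conservation.

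First I would put \eqref{eq:EnergyMorawetzMonot} into a Grönwall-ready form. Invoking the nonlinear coercivity $\beta\aleq_M\mu$ from \eqref{eq:H1-nonlin-coer} reduces the troublesome cross term to $K\mu^2\sqrt{\calF}\cdot\mu^{-1/2}|\log\mu|^2$. Dividing \eqref{eq:EnergyMorawetzMonot} by $2\sqrt{1+\calF}$ and using $\sqrt{\calF}/\sqrt{1+\calF}\leq 1$ produces a linear differential inequality of schematic form
\[
\rd_s\sqrt{1+\calF} \;+\; \frac{\|\eps_2\|_{\dot V_{m+2}^{3/2}}^2}{2\mu^5\sqrt{1+\calF}} \;\aleq_M\; \mu^2\sqrt{1+\calF} \;+\; \mu^{3/2}|\log\mu|^2,
\]
on which a Young-type absorption together with $\int_I\mu^2\,ds\leq L$ and the classical integrating-factor argument yield the multiplicative bound $1+\calF(t)\aleq_{M,L} 1+\calF(t_0)$ on $I$. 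Integrating the Morawetz dissipation on the left of \eqref{eq:EnergyMorawetzMonot} simultaneously delivers the spacetime part of \eqref{eq:integ-monot}.

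The $\dot H^3$-bound \eqref{eq:unif-H3-bound} is then immediate: the coercivity \eqref{eq:EnergyMorawetzCoer} gives $X_3(t)\sim_M \mu(t)^3\sqrt{1+\calF(t)}$, the identity $\mu(t)/\mu(t_0) = \lmb(t)/\lmb(t_0)$ (energy conservation) combined with the trivial lower bound $X_3(t_0)\ageq \mu(t_0)^3$ converts this into $X_3(t)\aleq_{M,L}(\lmb(t)/\lmb(t_0))^3 X_3(t_0)$, and \eqref{eq:e-H3}--\eqref{eq:e1-H2} then control $\|\eps\|_{\dot\calH_m^3}$ and $\|\eps_1\|_{\dot H_{m+1}^2}$. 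For \eqref{eq:unif-H2-bound} the direct coercivity bound on $\|\eps_2\|_{L^2}$ yields the wrong scaling $\sqrt{1+\calF(t_0)}$ instead of $(1+\calF(t_0))^{1/4}$, so I would instead interpolate via Gagliardo--Nirenberg: $\|\eps_2\|_{L^2}^2\aleq_M \|\eps\|_{\dot\calH_m^2}^2 \aleq \|\eps\|_{\dot H_m^1}\|\eps\|_{\dot\calH_m^3} + \mu^4$, then combine $\|\eps\|_{\dot H_m^1}(t)\aleq_M\mu(t)$ (nonlinear coercivity) with the just-obtained $\dot H^3$-bound and $\mu(t)^4\aleq (\lmb(t)/\lmb(t_0))^4\mu(t_0)X_3(t_0)$ (using $X_3\ageq\mu^3$ again) to conclude.

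The main obstacle is closing the Grönwall step: the factor $\beta^{9/2}|\log\beta|^2/\mu^5$ carries a divergent negative power of $\mu$ as the solution concentrates, and its taming uses all the available leverage at once, namely the size control $\beta\aleq_M\mu$ (to absorb three of the five negative powers of $\mu$) together with the precise identity $\mu^2\,ds = E[u]\,dt$ (to absorb the remainder linearly in Grönwall). Once the differential inequality is closed, everything downstream is routine coercivity and interpolation.
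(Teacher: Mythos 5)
There is a genuine gap at the heart of your Grönwall step. You correctly reduce to
\[
\rd_s\sqrt{1+\calF}\aleq_M\mu^2\sqrt{1+\calF}+\mu^{3/2}|\log\mu|^2,
\]
but then claim that ``a Young-type absorption together with $\int_I\mu^2\,ds\leq L$'' closes the argument. It does not: the inhomogeneous term $\mu^{3/2}|\log\mu|^2$ is \emph{not} controlled by $\int_I\mu^2\,ds$. Indeed, $\int\mu^{3/2}|\log\mu|^2\,ds$ involves a lower power of $\mu$ than $\int\mu^2\,ds$, and the latter being finite says nothing about the former when $\mu$ is small (take $\mu\equiv\eps$ on an $s$-interval of length $L/\eps^2$: then $\int\mu^2\,ds=L$, yet $\int\mu^{3/2}|\log\mu|^2\,ds=L|\log\eps|^2/\eps^{1/2}\to\infty$). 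In the original time variable it is even worse: $\mu^{3/2}|\log\mu|^2\,ds = E[u]\,\mu^{-1/2}|\log\mu|^2\,dt$, which carries a \emph{divergent} negative power of $\mu$. There is no Young-type inequality that trades $\mu^{3/2}|\log\mu|^2$ for $\mu^2\calF$, because the two are independent terms of different orders in $\mu$; and energy conservation alone provides only the $\mu^2$-weight. You identify the obstacle but propose no actual mechanism for overcoming it.

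The missing ingredient is \emph{dynamical}: the paper introduces the auxiliary quantity $b/\mu^{3/4}$ and uses the modulation equation $b_s+b^2+\eta^2=O_M(X_3)$ to show
\[
\rd_s\Big(\frac{b}{\mu^{3/4}}\Big)\leq-\frac{\beta^2}{4\mu^{3/4}}+K'\mu^{9/4}\sqrt{1+\calF}.
\]
The negative term $-\beta^2/(4\mu^{3/4})$ dominates the inhomogeneous error $\beta^{9/2}|\log\beta|^2/\mu^3$ precisely because $\beta^{5/2}|\log\beta|^2\ll\mu^{9/4}$ (using $\beta\aleq_M\mu$ and $\mu\ll1$). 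Adding the two differential inequalities produces a closed Grönwall inequality for $\sqrt{1+\calF}+b/\mu^{3/4}$, with the inhomogeneous term absorbed into the sign-definite quadratic part of the $b$-ODE rather than integrated. The same device supplies the extra integrability $\int\beta^2/\mu^{3/4}\,ds\aleq_{M,L}\sqrt{1+\calF(t_0)}$ needed for the spacetime bound in \eqref{eq:integ-monot}, so without it that part of your argument does not close either. Your deduction of \eqref{eq:unif-H3-bound} and \eqref{eq:unif-H2-bound} from the integrated monotonicity is essentially fine (the interpolation you propose for the $\dot{H}^2$-bound differs slightly from the paper's, which interpolates on $\eps_1$, but this is cosmetic); the failure is entirely in closing the Grönwall step.
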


\begin{rem}[Uniform $H^{3}$-bound]
As is clear from the implicit constants involved in the bounds \eqref{eq:unif-H3-bound}--\eqref{eq:unif-H2-bound},
we see that $\|\eps^{\sharp}(t)\|_{H_{m}^{3}}$ is \emph{uniformly
bounded} for all $H^{3}$ nearby solutions (due to the presence of
$M$ and $X_{3}$) on a \emph{bounded time interval} (due to the presence
of $L$). The uniform bounds \eqref{eq:unif-H3-bound}--\eqref{eq:unif-H2-bound}
can also be viewed as persistence of regularity estimates, but the
point is that the $H^{3}$-regularity of $\eps^{\sharp}$ persists
regardless whether $u$ blows up or not. We also note that, as can
be seen in the proof below, the $L$-dependences of the implicit constants
in \eqref{eq:integ-monot} and \eqref{eq:unif-H3-bound} are exponential
in view of Grönwall's inequality.
\end{rem}

\begin{rem}
All the bounds in the above corollary are scaling invariant.
\end{rem}

\begin{rem}
We will not use the spacetime bound (i.e., the second term of LHS\eqref{eq:integ-monot})
in our proof of Theorem~\ref{thm:blow-up-rigidity}. However, we
include it to emphasize that \eqref{eq:integ-monot} is indeed a monotonicity
estimate.
\end{rem}

\begin{proof}
\textbf{Step 1.} Fixed-time bound of \eqref{eq:integ-monot}.

In this step, we prove for any $t\in I$ 
\begin{equation}
\calF(t)\aleq_{M,L}\calF(t_{0})+1.\label{eq:integ-monot-3}
\end{equation}
To show this, we first ignore the spacetime term of \eqref{eq:EnergyMorawetzMonot}
and divide the both sides by $\sqrt{1+\calF}$ to obtain 
\begin{equation}
\rd_{s}\sqrt{1+\calF}\leq K\mu^{2}\Big\{\sqrt{1+\calF}+\frac{\beta^{9/2}|\log\beta|^{2}}{\mu^{5}}\Big\}.\label{eq:integ-monot-1}
\end{equation}
To integrate the inhomogeneous term of the RHS, we use \eqref{eq:ModEst-3}
and \eqref{eq:ModEst-2} to note the following inequality
\begin{equation}
\begin{aligned}\rd_{s}\Big(\frac{b}{\mu^{3/4}}\Big) & =\frac{1}{\mu^{3/4}}\Big\{-\Big(\frac{1}{4}b^{2}+\eta^{2}\Big)+(b_{s}+b^{2}+\eta^{2})-\frac{3}{4}b\Big(\frac{\mu_{s}}{\mu}+b\Big)\Big\}\\
 & \leq-\frac{\beta^{2}}{4\mu^{3/4}}+K'\mu^{3-\frac{3}{4}}\sqrt{1+\calF},
\end{aligned}
\label{eq:integ-monot-4}
\end{equation}
for some constant $K'=K'(M)$. Summing the above display with \eqref{eq:integ-monot-1},
we obtain 
\[
\rd_{s}\Big(\sqrt{1+\calF}+\frac{b}{\mu^{3/4}}\Big)\leq\Big(K\frac{\beta^{9/2}|\log\beta|^{2}}{\mu^{3}}-\frac{\beta^{2}}{4\mu^{3/4}}\Big)+\Big(K\mu^{2}+K'\mu^{3-\frac{3}{4}}\Big)\sqrt{1+\calF}.
\]
Since $\beta\aleq_{M}\mu\ll1$, we have $|b|/\mu^{3/4}\ll1$ and $\beta^{9/2}|\log\beta|^{2}/\mu^{3}\ll\beta^{2}/\mu^{3/4}$.
Thus the above display can be simplified as 
\begin{equation}
\rd_{s}\Big(\sqrt{1+\calF}+\frac{b}{\mu^{3/4}}\Big)\leq K''\mu^{2}\Big(\sqrt{1+\calF}+\frac{b}{\mu^{3/4}}\Big),\label{eq:integ-monot-2}
\end{equation}
for some constant $K''=K''(M)$. Now \eqref{eq:integ-monot-3} follows
from integrating \eqref{eq:integ-monot-2} using $ds=\lmb^{2}dt$
and $\mu=\lmb\sqrt{E[u]}$: 
\begin{align*}
\sqrt{\calF(t)}\leq\Big(\sqrt{1+\calF}+\frac{b}{\mu^{3/4}}\Big)(t)+1 & \leq e^{K''E[u](t-t_{0})}\Big(\sqrt{1+\calF}+\frac{b}{\mu^{3/4}}\Big)(t_{0})+1\\
 & \aleq_{M,L}\sqrt{1+\calF(t_{0})}.
\end{align*}

\textbf{Step 2.} Spacetime bound of \eqref{eq:integ-monot}.

In this step, we show the spacetime bound of \eqref{eq:integ-monot}.
We integrate \eqref{eq:EnergyMorawetzMonot}, apply \eqref{eq:integ-monot-3}
to pointwisely bound $\calF(t)$, and use $\mu=\lmb\sqrt{E[u]}$ to
have 
\begin{equation}
\begin{aligned} & \int_{I}\frac{\|\eps_{2}\|_{\dot{V}_{m+2}^{3/2}}^{2}}{\mu^{5}}\frac{dt}{\lmb^{2}}\\
 & \quad\aleq\sup_{t\in I}\calF(t)+\int_{I}K\mu^{2}\Big\{1+\calF+\sqrt{\calF}\cdot\frac{\beta^{9/2}|\log\beta|^{2}}{\mu^{5}}\Big\}\frac{dt}{\lmb^{2}}\\
 & \quad\aleq_{M,L}\Big(\calF(t_{0})+1\Big)+\sqrt{1+\calF(t_{0})}\int_{s(I)}\frac{\beta^{9/2}|\log\beta|^{2}}{\mu^{3}}ds.
\end{aligned}
\label{eq:integ-monot-5}
\end{equation}
For the last integral, we note as a consequence of \eqref{eq:integ-monot-4},
$\mu^{1/4}=\delta_{M}(\alpha^{\ast})$, and \eqref{eq:integ-monot-3}
that 
\begin{align*}
\int_{s(I)}\frac{\beta^{9/2}|\log\beta|^{2}}{\mu^{3}}ds & \leq\int_{s(I)}\frac{\beta^{2}}{\mu^{3/4}}ds\\
 & \aleq\sup_{t\in I}\frac{|b(t)|}{\mu^{3/4}(t)}+\int_{s(I)}K'\mu^{3-\frac{3}{4}}\sqrt{1+\calF}ds\\
 & \aleq\delta_{M}(\alpha^{\ast})+\delta_{M}(\alpha^{\ast})\Big(\sup_{t\in I}\sqrt{1+\calF(t)}\Big)\int_{s(I)}\mu^{2}ds\\
 & \aleq_{M,L}\sqrt{1+\calF(t_{0})}.
\end{align*}
Substituting this bound into \eqref{eq:integ-monot-5} gives 
\[
\int_{I}\frac{\|\eps_{2}\|_{\dot{V}_{m+2}^{3/2}}^{2}}{\mu^{5}}\frac{dt}{\lmb^{2}}\aleq_{M,L}\Big(\calF(t_{0})+1\Big),
\]
which is the spacetime bound of \eqref{eq:integ-monot}. This completes
the proof of \eqref{eq:integ-monot}.

\textbf{Step 3.} Proof of the uniform bounds \eqref{eq:unif-H3-bound}--\eqref{eq:unif-H2-bound}.

We first show \eqref{eq:unif-H3-bound}. By \eqref{eq:EnergyMorawetzCoer}
and \eqref{eq:integ-monot-3}, we have 
\[
X_{3}(t)\aleq_{M,L}\mu^{3}(t)(1+\sqrt{\calF(t_{0})})\sim_{M,L}\frac{\mu^{3}(t)}{\mu^{3}(t_{0})}X_{3}(t_{0})=\frac{\lmb^{3}(t)}{\lmb^{3}(t_{0})}X_{3}(t_{0}).
\]
Combining the above with Lemma \ref{lem:HigherHardyBounds} gives
\eqref{eq:unif-H3-bound}.

We turn to the proof of \eqref{eq:unif-H2-bound}. Interpolating \eqref{eq:unif-H3-bound}
and \eqref{eq:H1-nonlin-coer}, we have 
\[
\|\eps_{1}\|_{\dot{H}_{m+1}^{1}}\aleq\|\eps_{1}\|_{L^{2}}^{1/2}\|\eps_{1}\|_{\dot{H}_{m+1}^{2}}^{1/2}\aleq_{M,L}\mu^{1/2}(t)\frac{\lmb^{3/2}(t)}{\lmb^{3/2}(t_{0})}\sqrt{X_{3}(t_{0})}=\frac{\lmb^{2}(t)}{\lmb^{2}(t_{0})}\sqrt{\mu X_{3}}(t_{0}).
\]
This bound, combined with \eqref{eq:e1-coer-01}, \eqref{eq:e1-coer-02},
and \eqref{eq:compat2-H2}, gives 
\[
\|\eps_{2}\|_{L^{2}}\aleq\|\eps_{1}\|_{\dot{H}_{m+1}^{1}}+\beta^{2}+O_{M}(\mu^{2})\aleq_{M,L}\frac{\lmb^{2}(t)}{\lmb^{2}(t_{0})}\sqrt{\mu X_{3}}(t_{0}).
\]
This bound propagates for $\eps$ via \eqref{eq:e-H2}. This completes
the proof of \eqref{eq:unif-H2-bound}.
\end{proof}

\subsection{\label{subsec:proof-struc-mod}Proof of the structural Lemma~\ref{lem:StructFirstModEst}}
\begin{proof}[Proof of Lemma~\ref{lem:StructFirstModEst}]
\ 

\textbf{Step 1.} Estimates for $\check{F}$.

We show the estimates \eqref{eq:mod-struc1} and \eqref{eq:mod-struc2}
for $\check{F}$. In particular, we only need estimates restricted
to the compact region $y\leq2R_{\circ}$, where we recall $R_{\circ}$
from Remark~\ref{rem:Ortho-choice}. Recall that 
\[
\check{F}=i(\bfD_{w}^{\ast}w_{1}-\bfD_{P}^{\ast}P_{1})-(\tint 0y\Re(\br ww_{1}-\br PP_{1})dy')iP+(\tint y{\infty}\Re(\br ww_{1})dy')i\eps+i\Psi.
\]

\uline{The term \mbox{$i\bfD_{w}^{\ast}\eps_{1}$} of \mbox{$\check{F}$}.}
This term has a derivative, so we need to integrate by parts to have
low regularity estimate. Since $\calZ_{k}\in C_{c,m}^{\infty}$, we
have 
\[
(i\bfD_{w}^{\ast}\eps_{1},\calZ_{k})_{r}=-(\eps_{1},i\bfD_{w}\calZ_{k})_{r}\aleq_{M}\|\chf_{(0,2R_{\circ}]}\eps_{1}\|_{L^{2}}\aleq_{M}\min\{\mu,X_{3}\},
\]
where we applied either \eqref{eq:H1-nonlin-coer} or \eqref{eq:e1-H2}
in the last inequality. This yields \eqref{eq:mod-struc1} and \eqref{eq:mod-struc2}
for the term $-i\bfD_{w}^{\ast}\eps_{1}$ of $\check{F}$.

\uline{Remaining terms of \mbox{$\check{F}$}.} The remaining terms
are given by 
\begin{align*}
\check{F}-i\bfD_{w}^{\ast}\eps_{1} & =i(\bfD_{w}^{\ast}-\bfD_{P}^{\ast})P_{1}\\
 & \qquad-(\tint 0y\Re(\br ww_{1}-\br PP_{1})dy')iP+(\tint y{\infty}\Re(\br ww_{1})dy')i\eps+i\Psi.
\end{align*}
These terms are merely estimated in local $L^{2}$-norms. We claim
that 
\begin{equation}
\|\chf_{(0,2R_{\circ}]}(\check{F}+i\bfD_{w}^{\ast}\eps_{1})\|_{L^{2}}\aleq_{M}\min\{\mu,X_{3}\}.\label{eq:mod-struc-cl1}
\end{equation}
This claim immediately implies \eqref{eq:mod-struc1} and \eqref{eq:mod-struc2}
for $\check{F}$ since $\calZ_{k}\in L^{2}$. We turn to the proof
of \eqref{eq:mod-struc-cl1}. First, we have 
\begin{align*}
\|\chf_{(0,2R_{\circ}]}(\bfD_{w}^{\ast}-\bfD_{P}^{\ast})P_{1}\|_{L^{2}} & \aleq\|\chf_{(0,2R_{\circ}]}(|w|^{2}-|P|^{2})\|_{L^{2}}\|\chf_{(0,2R_{\circ}]}P_{1}\|_{L^{2}}\\
 & \quad\aleq\beta\|\chf_{(0,2R_{\circ}]}\eps\|_{L^{\infty}}\|w+P\|_{L^{2}}\aleq_{M}\mu\|\chf_{(0,2R_{\circ}]}\eps\|_{L^{\infty}}.
\end{align*}
Next, we have 
\begin{align*}
 & \|\chf_{(0,2R_{\circ}]}(\tint 0y\Re(\br ww_{1}-\br PP_{1})dy')iP\|_{L^{2}}\\
 & \quad\aleq\|\chf_{(0,2R_{\circ}]}\tfrac{1}{y}|\br ww_{1}-\br PP_{1}|\|_{L^{1}}\|\chf_{(0,2R_{\circ}]}P\|_{L^{2}}\\
 & \quad\aleq_{M}\|\chf_{(0,2R_{\circ}]}\tfrac{1}{y}\eps\|_{L^{2}}+\|\chf_{(0,2R_{\circ}]}\eps_{1}\|_{L^{2}}.
\end{align*}
Next, we have 
\begin{align*}
 & \|\chf_{(0,2R_{\circ}]}(\tint y{\infty}\Re(\br ww_{1})dy')i\eps\|_{L^{2}}\\
 & \quad\aleq\|\tfrac{1}{y}w\|_{L^{2}}\|w_{1}\|_{L^{2}}\|\chf_{(0,2R_{\circ}]}\eps\|_{L^{2}}\aleq_{M}\mu\|\chf_{(0,2R_{\circ}]}\eps\|_{L^{2}}.
\end{align*}
Finally, by \eqref{eq:Psi0-loc-bound}, we have 
\[
\|\chf_{(0,2R_{\circ}]}\Psi\|_{L^{2}}\aleq\beta^{3}\aleq_{M}\min\{\mu,X_{3}\}.
\]
As all the norms in the above error estimates are restricted to the
compact region $y\leq2R_{\circ}$, we can apply either \eqref{eq:H1-nonlin-coer}
(for the bound $\mu$) or Lemma \ref{lem:HigherHardyBounds} combined
with Remark \ref{rem:Weighted-Linfty} (for the bound $X_{3}$). This
completes the proof of the claim \eqref{eq:mod-struc-cl1} and hence
the proof of \eqref{eq:mod-struc1} and \eqref{eq:mod-struc2} for
$\check{F}$.

\textbf{Step 2.} Estimates for $\check{F}_{1}$.

We prove \eqref{eq:mod-struc1} for $\check{F}_{1}$, \eqref{eq:mod-struc3},
and \eqref{eq:mod-struc4}. Recall that
\[
\check{F}_{1}=i(A_{w}^{\ast}w_{2}-A_{P}^{\ast}P_{2})-(\tint 0y\Re(\br ww_{1}-\br PP_{1})dy')iP_{1}+(\tint y{\infty}\Re(\br ww_{1})dy')i\eps_{1}+i\Psi_{1}.
\]

\uline{The term \mbox{$i(A_{w}^{\ast}w_{2}-A_{P}^{\ast}P_{2})$}.}
This term has derivatives and contains the main linear part $iA_{Q}^{\ast}\eps_{2}$
of the $\eps_{1}$-evolution.

\emph{(i) Proof of \eqref{eq:mod-struc1}.}

For the low regularity estimate \eqref{eq:mod-struc1}, we rewrite
\begin{align*}
A_{w}^{\ast}w_{2}-A_{P}^{\ast}P_{2} & =H_{w}w_{1}-H_{P}P_{1}+A_{P}^{\ast}(A_{P}P_{1}-P_{2})\\
 & =(H_{w}-H_{P})P_{1}+H_{w}\eps_{1}+A_{P}^{\ast}(A_{P}P_{1}-P_{2})
\end{align*}
and integrate by parts to avoid derivatives falling on $\eps_{1}$:
\begin{align*}
(i & (A_{w}^{\ast}w_{2}-A_{P}^{\ast}P_{2}),\td{\calZ}_{k})_{r}\\
 & =-(\eps_{1},iH_{w}\td{\calZ}_{k})_{r}-((H_{w}-H_{P})P_{1},i\td{\calZ}_{k})_{r}-(A_{P}P_{1}-P_{2},iA_{P}\td{\calZ}_{k})_{r}.
\end{align*}
Since $\td{\calZ}_{k}\in C_{c,m+1}^{\infty}$ is compactly supported,
we can simply use $P\aleq1$, $\eps\aleq1$, $P_{1}\aleq\beta$, $A_{P}P_{1}-P_{2}\aleq\beta^{3}$
in the region $y\leq2R_{\circ}$ (c.f. \eqref{eq:compat2-H3}). As
a result, we have 
\begin{align*}
(\eps_{1},iH_{w}\td{\calZ}_{k})_{r} & \aleq_{M}\|\chf_{(0,2R_{\circ}]}\eps_{1}\|_{L^{2}}\aleq_{M}\mu,\\
((H_{w}-H_{P})P_{1},i\td{\calZ}_{k})_{r} & \aleq_{M}\beta\|\chf_{(0,2R_{\circ}]}\eps\|_{L^{\infty}}\aleq_{M}\mu,\\
(A_{P}P_{1}-P_{2},iA_{P}\td{\calZ}_{k})_{r} & \aleq\beta^{3}\aleq_{M}\mu,
\end{align*}
which yield \eqref{eq:mod-struc1} for $i(A_{w}^{\ast}w_{2}-A_{P}^{\ast}P_{2})$.

\emph{(ii) Proof of \eqref{eq:mod-struc3} and \eqref{eq:mod-struc4}.}

For the higher Sobolev estimates \eqref{eq:mod-struc3} and \eqref{eq:mod-struc4},
we begin by rewriting 
\[
A_{w}^{\ast}w_{2}-A_{P}^{\ast}P_{2}=A_{Q}^{\ast}\eps_{2}+\big\{(A_{w}^{\ast}-A_{Q}^{\ast})\eps_{2}+(A_{w}^{\ast}-A_{P}^{\ast})P_{2}\big\}.
\]
For the first term $A_{Q}^{\ast}\eps_{2}$, using $A_{Q}(yQ)=0$,
we have 
\begin{align*}
(iA_{Q}^{\ast}\eps_{2},\td{\calZ}_{k})_{r} & =-(\eps_{2},iA_{Q}\td{\calZ}_{k})_{r}\aleq\|\chf_{(0,2R_{\circ}]}\eps_{2}\|_{L^{2}}\aleq\|\eps_{2}\|_{\dot{V}_{m+2}^{3/2}},\\
(iA_{Q}^{\ast}\eps_{2},\chi_{\mu^{-1}}yQ)_{r} & =-(\eps_{2},(\rd_{y}\chi_{\mu^{-1}})iyQ)_{r}\aleq\|\chf_{[\mu^{-1},2\mu^{-1}]}y^{-3}\eps_{2}\|_{L^{1}}\aleq\mu^{2}\|\eps_{2}\|_{L^{2}},
\end{align*}
and similarly for the inner product $(iA_{Q}^{\ast}\eps_{2},\chi_{\mu^{-1}}iyQ)_{r}$.
This shows \eqref{eq:mod-struc3} and \eqref{eq:mod-struc4} for the
term $iA_{Q}^{\ast}\eps_{2}$.

To show \eqref{eq:mod-struc3} and \eqref{eq:mod-struc4} for the
remaining term, namely, $i\{(A_{w}^{\ast}-A_{Q}^{\ast})\eps_{2}+(A_{w}^{\ast}-A_{P}^{\ast})P_{2}\}$,
we claim the following $L^{1}$-bound:
\begin{equation}
\|\tfrac{1}{y^{2}}\{(A_{w}^{\ast}-A_{Q}^{\ast})\eps_{2}+(A_{w}^{\ast}-A_{P}^{\ast})P_{2}\}\|_{L^{1}}\aleq_{M}\mu X_{3}.\label{eq:mod-struc-cl2}
\end{equation}
This claim follows from the estimates 
\begin{align*}
\|\tfrac{1}{y^{2}} & (A_{w}^{\ast}-A_{Q}^{\ast})\eps_{2}\|_{L^{1}}\\
 & \aleq\|\tfrac{1}{y^{2}}\tint 0y||w|^{2}-Q^{2}|y'dy'\|_{L^{2}}\|\tfrac{1}{y}\eps_{2}\|_{L^{2}}\\
 & \aleq\||w|^{2}-Q^{2}\|_{L^{2}}\|\tfrac{1}{y}\eps_{2}\|_{L^{2}}\aleq_{M}\|w-Q\|_{L^{\infty}}\|\eps_{2}\|_{\dot{H}_{m+2}^{1}}\aleq_{M}\mu X_{3}
\end{align*}
and 
\begin{align*}
\|\tfrac{1}{y^{2}} & (A_{w}^{\ast}-A_{P}^{\ast})P_{2}\|_{L^{1}}\\
 & \aleq\|\tfrac{1}{y^{3}\langle y\rangle}\tint 0y||w|^{2}-|P|^{2}|y'dy'\|_{L^{1}}\|\langle y\rangle P_{2}\|_{L^{\infty}}\\
 & \aleq\|\tfrac{1}{y\langle y\rangle}(|w|^{2}-|P|^{2})\|_{L^{1}}\beta^{2}\aleq_{M}\mu^{2}\|\tfrac{1}{y\langle y\rangle}\eps\|_{L^{2}}\aleq_{M}\mu^{2}X_{2},
\end{align*}
where we used Fubini for the inequality $\|\tfrac{1}{y^{3}\langle y\rangle}\tint 0yfy'dy'\|_{L^{1}}\aleq\|\tfrac{1}{y\langle y\rangle}f\|_{L^{1}}$.
Now, the claim \eqref{eq:mod-struc-cl2} implies \eqref{eq:mod-struc3}
and \eqref{eq:mod-struc4} since $\|y^{2}\td{\calZ}_{k}\|_{L^{\infty}}+\|y^{2}\cdot\chi_{\mu^{-1}}yQ\|_{L^{\infty}}\aleq1$.
Thus we have proved \eqref{eq:mod-struc3} and \eqref{eq:mod-struc4}
for the term $i(A_{w}^{\ast}w_{2}-A_{P}^{\ast}P_{2})$.

\uline{Remaining terms of \mbox{$\check{F}_{1}$}.} Recall that
the remaining terms are given by 
\begin{equation}
\begin{aligned} & \check{F}_{1}-i(A_{w}^{\ast}w_{2}-A_{P}^{\ast}P_{2})\\
 & \quad=-(\tint 0y\Re(\br ww_{1}-\br PP_{1})dy')iP_{1}+(\tint y{\infty}\Re(\br ww_{1})dy')i\eps_{1}+i\Psi_{1}.
\end{aligned}
\label{eq:mod-struc-cl4}
\end{equation}

\emph{(i) Proof of \eqref{eq:mod-struc1}.}

Since $\td{\calZ}_{k}\in C_{c,m+1}^{\infty}$, the estimate \eqref{eq:mod-struc1}
easily follows from
\begin{align*}
 & \|\chf_{(0,2R_{\circ}]}\text{RHS\eqref{eq:mod-struc-cl4}}\|_{L^{2}}\\
 & \aleq\|\tfrac{1}{y}(|\br ww_{1}|+|\br PP_{1}|)\|_{L^{1}}\||P_{1}|+|\eps_{1}|\|_{L^{2}}+\|\chf_{(0,2R_{\circ}]}\Psi_{1}\|_{L^{2}}\\
 & \aleq_{M}\mu^{2}+\beta^{4}|\log\beta|^{3}\aleq\mu,
\end{align*}
where we used \eqref{eq:Psi1-bound} for the term $\Psi_{1}$.

\emph{(ii) Proof of \eqref{eq:mod-struc3} and \eqref{eq:mod-struc4}.}

Since $\|y^{2}\td{\calZ}_{k}\|_{L^{\infty}}+\|y^{2}\cdot\chi_{\mu^{-1}}yQ\|_{L^{\infty}}\aleq1$
and $\beta^{4}|\log\beta|^{3}\aleq\mu^{7/2}$, it suffices to show
(c.f. \eqref{eq:mod-struc-cl2})
\begin{equation}
\|\tfrac{1}{y^{2}}\text{RHS\eqref{eq:mod-struc-cl4}}\|_{L^{1}}\aleq_{M}\mu X_{3}.\label{eq:mod-struc-cl3}
\end{equation}
For the first term of RHS\eqref{eq:mod-struc-cl4}, we use $\|P_{1}\|_{L^{2}}+\|\langle y\rangle P_{1}\|_{L^{\infty}}\aleq\beta$
to have 
\begin{align*}
\|\tfrac{1}{y^{2}} & \tint 0y\Re(\br ww_{1}-\br PP_{1})dy'\cdot P_{1}\|_{L^{1}}\\
 & \aleq\|\tfrac{1}{y}(\br ww_{1}-\br PP_{1})\|_{L^{2}}\|P_{1}\|_{L^{2}}\\
 & \aleq(\|w\|_{L^{2}}\|\tfrac{1}{y}\eps_{1}\|_{L^{\infty}}+\|\tfrac{1}{y\langle y\rangle}\eps\|_{L^{2}}\|\langle y\rangle P_{1}\|_{L^{\infty}})\cdot\beta\\
 & \aleq_{M}\mu\|\eps_{1}\|_{\dot{H}_{m+1}^{2}}+\mu^{2}\|\eps\|_{\dot{\calH}_{m}^{2}}\aleq_{M}\mu X_{3}.
\end{align*}
For the second term, we have 
\begin{align*}
\|\tfrac{1}{y^{2}} & \tint y{\infty}\Re(\br ww_{1})dy'\cdot\eps_{1}\|_{L^{1}}\\
 & \aleq\|ww_{1}\|_{L^{1}}\|\tfrac{1}{y^{2}}\eps_{1}\|_{L^{2}}\aleq_{M}\|w_{1}\|_{L^{2}}\|\tfrac{1}{y^{2}}\eps_{1}\|_{L^{2}}\aleq_{M}\mu X_{3}.
\end{align*}
For the last term, by \eqref{eq:Psi1-bound}, we have 
\[
\|\tfrac{1}{y^{2}}\Psi_{1}\|_{L^{1}}\aleq\beta^{4}|\log\beta|^{3},
\]
completing the proof of \eqref{eq:mod-struc-cl3}, and hence the proof
of \eqref{eq:mod-struc3} and \eqref{eq:mod-struc4}.
\end{proof}

\subsection{\label{subsec:proof-struc-energy}Proof of the structural Lemma~\ref{lem:StrucEnergyEst}}
\begin{proof}[Proof of Lemma~\ref{lem:StrucEnergyEst}]
We estimate each term of $F_{2}$. Recall that
\[
\begin{aligned}F_{2} & =\td{\Mod}\cdot\bfv_{2}-i\Psi_{2}+(i\br ww_{1}^{2}-i\br PP_{1}^{2})\\
 & \peq-(i\td H_{w}-i\td H_{Q})\eps_{2}-(i\td H_{w}-i\td H_{P})P_{2}\\
 & \peq-(\tint y{\infty}\Re(\br ww_{1})dy')i\eps_{2}+(\tint 0y\Re(\br ww_{1}-\br PP_{1})dy')iP_{2}.
\end{aligned}
\tag{\ref{eq:def-F2}}
\]

\uline{Modulation terms \mbox{$\td{\Mod}_{j}(\bfv_{2})_{j}$}}.
For $j\in\{1,2\}$, we show that $\td{\Mod}_{j}(\bfv_{2})_{j}$ is
a perturbative term. Thus we need to prove \eqref{eq:lyapunov-struc1}
and \eqref{eq:lyapunov-struc3}. These estimates easily follow from
\eqref{eq:ModEst-3} and \eqref{eq:modvec2-1}: 
\begin{align*}
|\td{\Mod}_{j}|\|(\bfv_{2})_{j}\|_{L^{2}} & \aleq_{M}X_{3}\cdot\beta^{2}|\log\beta|^{\frac{1}{2}}\aleq_{M}\mu X_{3},\\
|\td{\Mod}_{j}|\|A_{Q}^{\ast}(\bfv_{2})_{j}\|_{L^{2}} & \aleq_{M}X_{3}\cdot\beta^{2}\aleq_{M}\mu^{2}X_{3}.
\end{align*}
For $j\in\{3,4\}$, we need to prove \eqref{eq:lyapunov-struc4}--\eqref{eq:lyapunov-struc6}.
These estimates follow from \eqref{eq:ModEst-4}, $V_{7/2}\leq X_{3}$,
and \eqref{eq:modvec2-2}: 
\begin{align*}
|\td{\Mod}_{j}|\|y|(\bfv_{2})_{j}|_{1}\|_{L^{2}} & \aleq_{M}V_{7/2}\aleq_{M}X_{3},\\
|\td{\Mod}_{j}|\|\langle y\rangle^{\frac{1}{2}}|(\bfv_{2})_{j}|_{1}\|_{L^{2}} & \aleq_{M}V_{7/2}\cdot\beta^{\frac{1}{2}}\aleq_{M}\mu^{1/2}V_{7/2},\\
|\td{\Mod}_{j}|\|\langle y\rangle^{\frac{1}{2}}y(\td H_{Q}\bfv_{2})_{j}\|_{L^{2}} & \aleq_{M}V_{7/2}\cdot\beta\aleq_{M}\mu V_{7/2}.
\end{align*}

\uline{Inhomogeneous term \mbox{$-i\Psi_{2}$}}. This term is perturbative;
the estimates \eqref{eq:lyapunov-struc1} and \eqref{eq:lyapunov-struc3}
are immediate from \eqref{eq:Psi2-H2-bound} and \eqref{eq:Psi2-H3-bound}.

\uline{Nonlinear term \mbox{$-i(\td H_{w}-\td H_{P})P_{2}$}}.
Let 
\[
h_{1}\coloneqq|w|^{2}-|P|^{2}\quad\text{and}\quad h_{2}\coloneqq|w|^{2}+|P|^{2}
\]
so that 
\[
\td H_{w}-\td H_{P}=\tfrac{1}{y^{2}}(2m+4-\tfrac{1}{2}\tint 0yh_{2}y'dy')\cdot(-\tfrac{1}{2}\tint 0yh_{1}y'dy')+\tfrac{1}{2}h_{1}.
\]
We show that $-i(\td H_{w}-\td H_{P})P_{2}$ is a perturbative term;
thus $F_{2}^{\pert}$ collects this term. Taking $|\cdot|_{-1}$ to
the above display and using $\|h_{2}\|_{L^{1}}\aleq_{M}1$ and $|h_{2}|\aleq|P|^{2}+|\eps|^{2}\aleq y^{-2}+|\eps|^{2}$,
we have 
\begin{align*}
i(\td H_{w}-\td H_{P})P_{2} & \aleq_{M}(\tfrac{1}{y^{2}}\tint 0y|h_{1}|y'dy'+|h_{1}|)\cdot|P_{2}|,\\
|i(\td H_{w}-\td H_{P})P_{2}|_{-1} & \aleq_{M}(\tfrac{1}{y^{2}}\tint 0y|h_{1}|y'dy'+|h_{1}|)\cdot|P_{2}|_{-1}+|(\rd_{y}h_{1})P_{2}|\\
 & \qquad\qquad\qquad\qquad\qquad\qquad+(\tfrac{1}{y}\tint 0y|h_{1}|y'dy')\cdot|\eps^{2}P_{2}|.
\end{align*}
We now estimate the $L^{2}$-norms. First, using $|P_{2}|_{-1}\aleq\beta^{2}\langle y\rangle^{-2}$
and $\|P\|_{L^{2}}\aleq1$, we have for $\ell\in\{0,1\}$ 
\begin{align*}
 & \|(\tfrac{1}{y^{2}}\tint 0y|h_{1}|y'dy'+|h_{1}|)\cdot|P_{2}|_{-\ell}\|_{L^{2}}\\
 & \quad\aleq\beta^{2}\|\langle y\rangle^{-(1+\ell)}h_{1}\|_{L^{2}}\aleq_{M}\mu^{2}\|\langle y\rangle^{-(1+\ell)}\eps\|_{L^{\infty}}\aleq_{M}\mu^{2}X_{2+\ell},
\end{align*}
where we used \eqref{eq:eps-Linfty} and \eqref{eq:e-H2}--\eqref{eq:e-H3}.
The case $\ell=0$ gives \eqref{eq:lyapunov-struc1}. Next, we similarly
estimate using $|P|_{1}\aleq\langle y\rangle^{-1}$ and $|P_{2}|\aleq\beta^{2}\langle y\rangle^{-1}$:
\begin{align*}
\|(\rd_{y}h_{1})P_{2}\|_{L^{2}} & \aleq\||\eps|_{-1}\cdot(|P|_{1}+|\eps|)P_{2}\|_{L^{2}}\\
 & \aleq\beta^{2}(\|\langle y\rangle^{-1}|\eps|_{-1}\|_{L^{2}}\|\eps\|_{L^{\infty}}+\|\langle y\rangle^{-2}|\eps|_{-1}\|_{L^{2}})\aleq_{M}\mu^{2}X_{3}.
\end{align*}
Finally, we have 
\begin{align*}
\|\tint 0y|h_{1}|y'dy'\cdot\tfrac{1}{y}\eps^{2}P_{2}\|_{L^{2}} & \aleq\|\tfrac{1}{y^{2}}\tint 0y|h_{1}|y'dy'\|_{L^{2}}\|\eps\|_{L^{\infty}}^{2}\beta^{2}\\
 & \aleq_{M}\beta^{2}\|h_{1}\|_{L^{2}}\mu^{2}\aleq_{M}\mu^{4}\|\eps\|_{L^{\infty}}\aleq_{M}\mu^{5}\aleq_{M}\mu^{2}X_{3}.
\end{align*}
This completes the proof of \eqref{eq:lyapunov-struc1}--\eqref{eq:lyapunov-struc3}
for the term $-i(\td H_{w}-\td H_{P})P_{2}$.

\uline{Nonlinear term \mbox{$-i(\td H_{w}-\td H_{Q})\eps_{2}$}}.
Let 
\[
h_{1}\coloneqq|w|^{2}-Q{}^{2}\quad\text{and}\quad h_{2}\coloneqq|w|^{2}+Q^{2}
\]
so that 
\[
\td H_{w}-\td H_{Q}=\tfrac{1}{y^{2}}(2m+4-\tfrac{1}{2}\tint 0yh_{2}y'dy')\cdot(-\tfrac{1}{2}\tint 0yh_{1}y'dy')+\tfrac{1}{2}h_{1}.
\]
We further decompose 
\[
h_{1}=2\Re(\br{(w-Q)}Q)+|w-Q|^{2}\eqqcolon h_{1}^{\loc}+h_{1}^{\pert}\qquad\text{and}\qquad h_{2}=2Q^{2}+h_{1}
\]
so that
\begin{align*}
\td H_{w}-\td H_{Q} & =\{\tfrac{1}{y^{2}}(2m+4-\tint 0yQ^{2}y'dy')\cdot(-\tfrac{1}{2}\tint 0yh_{1}^{\loc}y'dy')+\tfrac{1}{2}h_{1}^{\loc}\}\\
 & \peq+\{\tfrac{1}{y^{2}}(2m+4-\tint 0yQ^{2}y'dy')\cdot(-\tfrac{1}{2}\tint 0yh_{1}^{\pert}y'dy')+\tfrac{1}{2}h_{1}^{\pert}\}\\
 & \peq+\tfrac{1}{4y^{2}}(\tint 0yh_{1}y'dy')^{2}\\
 & \eqqcolon V_{\loc}+V_{\pert,1}+V_{\pert,2}.
\end{align*}
We claim that the contribution of $V_{\loc}$ is local and those of
$V_{\pert,1},V_{\pert,2}$ are perturbative. In other words, $F_{2,1}^{\loc}$
collects $-iV_{\loc}\eps_{2}$ and $F_{2}^{\pert}$ collects $-i(V_{\pert,1}+V_{\pert,2})\eps_{2}$.

For $-iV_{\loc}\eps_{2}$, we use $\|w-Q\|_{\dot{H}_{m}^{1}}\aleq_{M}\mu$
to have 
\begin{align*}
\|iV_{\loc}\eps_{2}\|_{L^{2}} & \aleq\|(\tfrac{1}{y^{2}}\tint 0y|h_{1}^{\loc}|y'dy'+|h_{1}^{\loc}|)\cdot\eps_{2}\|_{L^{2}}\\
 & \aleq\|h_{1}^{\loc}\|_{L^{2}}\|\eps_{2}\|_{L^{\infty}}\aleq_{M}\|w-Q\|_{L^{\infty}}\|\eps_{2}\|_{\dot{H}_{m+2}^{1}}\aleq_{M}\mu X_{3}
\end{align*}
and (further using \eqref{eq:eps2-Linfty} and $Q\aleq\langle y\rangle^{-3}$)
\begin{align*}
 & \|\langle y\rangle|iV_{\loc}\eps_{2}|_{-1}\|_{L^{2}}\\
 & \quad\aleq\|\langle y\rangle(\rd_{y}h_{1}^{\loc})\eps_{2}\|_{L^{2}}+\|\langle y\rangle(\tfrac{1}{y^{2}}\tint 0y|h_{1}^{\loc}|y'dy'+|h_{1}^{\loc}|)\cdot|\eps_{2}|_{-1}\|_{L^{2}}\\
 & \quad\aleq\||w-Q|_{-1}\|_{L^{2}}\|\langle y\rangle^{-2}\eps_{2}\|_{L^{\infty}}+\|w-Q\|_{L^{\infty}}\|\langle y\rangle^{-2}|\eps_{2}|_{-1}\|_{L^{2}}\\
 & \quad\aleq_{M}\mu\|\eps_{2}\|_{\dot{V}_{m+2}^{3/2}}\aleq_{M}\mu V_{7/2}.
\end{align*}
For $-iV_{\pert,1}\eps_{2}$, we have 
\begin{align*}
 & \|iV_{\pert,1}\eps_{2}\|_{L^{2}}\\
 & \quad\aleq\|(\tfrac{1}{y^{2}}\tint 0y|h_{1}^{\pert}|y'dy'+|h_{1}^{\pert}|)\cdot\eps_{2}\|_{L^{2}}\aleq\|w-Q\|_{L^{\infty}}^{2}\|\eps_{2}\|_{L^{2}}\aleq_{M}\mu X_{3}
\end{align*}
and similarly 
\begin{align*}
 & \||iV_{\pert,1}\eps_{2}|_{-1}\|_{L^{2}}\\
 & \quad\aleq\|(\rd_{y}h_{1}^{\pert})\eps_{2}\|_{L^{2}}+\|(\tfrac{1}{y^{2}}\tint 0y|h_{1}^{\pert}|y'dy'+|h_{1}^{\pert}|)\cdot|\eps_{2}|_{-1}\|_{L^{2}}\\
 & \quad\aleq\|\rd_{y}h_{1}^{\pert}\|_{L^{2}}\|\eps_{2}\|_{L^{\infty}}+\|h_{1}^{\pert}\|_{L^{\infty}}\||\eps_{2}|_{-1}\|_{L^{2}}\\
 & \quad\aleq\|w-Q\|_{\dot{H}_{m}^{1}}^{2}\|\eps_{2}\|_{\dot{H}_{m+2}^{1}}\aleq_{M}\mu^{2}X_{3}.
\end{align*}
For $-iV_{\pert,2}\eps_{2}$, we have 
\begin{align*}
 & \|iV_{\pert,2}\eps_{2}\|_{L^{2}}\\
 & \quad\aleq\|(\tfrac{1}{y}\tint 0y|h_{1}|y'dy')^{2}\cdot\eps_{2}\|_{L^{2}}\aleq\|h_{1}\|_{L^{2}}^{2}\|\eps_{2}\|_{L^{2}}\aleq_{M}\|w-Q\|_{L^{\infty}}^{2}\|\eps_{2}\|_{L^{2}}\aleq_{M}\mu X_{3}
\end{align*}
and similarly 
\begin{align*}
 & \||iV_{\pert,2}\eps_{2}|_{-1}\|_{L^{2}}\\
 & \quad\aleq\|(\tfrac{1}{y}\tint 0y|h_{1}|y'dy')^{2}\cdot|\eps_{2}|_{-1}\|_{L^{2}}+\|\tfrac{1}{y}\tint 0y|h_{1}|y'dy'\cdot h_{1}\eps_{2}\|_{L^{2}}\\
 & \quad\aleq\|h_{1}\|_{L^{2}}^{2}(\||\eps_{2}|_{-1}\|_{L^{2}}+\|\eps_{2}\|_{L^{\infty}})\\
 & \quad\aleq_{M}\|w-Q\|_{L^{\infty}}^{2}\|\eps_{2}\|_{\dot{H}_{m+1}^{1}}\aleq_{M}\mu^{2}X_{3}.
\end{align*}
This completes the proof of \eqref{eq:lyapunov-struc1}--\eqref{eq:lyapunov-struc3}
for the term $-i(\td H_{w}-\td H_{Q})\eps_{2}$.

\uline{Nonlinear term \mbox{$(\tint 0y\Re(\br ww_{1}-\br PP_{1})dy')iP_{2}$}}.
We show that this term is perturbative. Using $P\aleq\langle y\rangle^{-1}$,
$P_{1}\aleq\beta\langle y\rangle^{-1}$, $|P_{2}|_{1}\aleq\chf_{(0,2B_{1}]}\beta^{2}\langle y\rangle^{-1}$,
and Lemma~\ref{lem:HigherHardyBounds}, we have 
\begin{align*}
 & \|(\tint 0y\Re(\br ww_{1}-\br PP_{1})dy')iP_{2}\|_{L^{2}}\\
 & \quad\aleq\|\tfrac{1}{y}(\br ww_{1}-\br PP_{1})\|_{L^{2}}\|y^{2}P_{2}\|_{L^{\infty}}\\
 & \quad\aleq(\|\tfrac{1}{y\langle y\rangle}\eps_{1}\|_{L^{2}}+\|\eps\|_{L^{\infty}}\|\tfrac{1}{y}\eps_{1}\|_{L^{2}}+\beta\|\tfrac{1}{y\langle y\rangle}\eps\|_{L^{2}})\cdot\beta\aleq_{M}\mu X_{3}.
\end{align*}
This gives \eqref{eq:lyapunov-struc1}. Similarly, we have 
\[
\||(\tint 0y\Re(\br ww_{1}-\br PP_{1})dy')iP_{2}|_{-1}\|_{L^{2}}\aleq\|\tfrac{1}{y}(\br ww_{1}-\br PP_{1})\|_{L^{2}}\|y|P_{2}|_{1}\|_{L^{\infty}}\aleq_{M}\mu^{2}X_{3},
\]
which is \eqref{eq:lyapunov-struc3}. This completes the proof of
\eqref{eq:lyapunov-struc1}--\eqref{eq:lyapunov-struc3} for the
term $(\tint 0y\Re(\br ww_{1}-\br PP_{1})dy')iP_{2}$.

\uline{Nonlinear term \mbox{$-(\tint y{\infty}\Re(\br ww_{1})dy')i\eps_{2}$}}.
The estimate \eqref{eq:lyapunov-struc1} easily follows from 
\[
\|(\tint y{\infty}\Re(\br ww_{1})dy')i\eps_{2}\|_{L^{2}}\aleq\|\br ww_{1}\|_{L^{1}}\|\eps_{2}\|_{L^{\infty}}\aleq_{M}\|w_{1}\|_{L^{2}}\|\eps_{2}\|_{\dot{H}_{m+2}^{1}}\aleq_{M}\mu X_{3}.
\]
For \eqref{eq:lyapunov-struc3}, we decompose 
\[
\tint y{\infty}\Re(\br ww_{1})dy'=\tint y{\infty}\Re(Qw_{1})dy'+\tint y{\infty}\Re(\br{(w-Q)}w_{1})dy'\eqqcolon V_{\loc}+V_{\pert}.
\]
The term $-iV_{\loc}\eps_{2}$ is local; we use $Q\aleq\langle y\rangle^{-3}$
to have 
\begin{align*}
\|\langle y\rangle|iV_{\loc}\eps_{2}|_{-1}\|_{L^{2}} & \aleq\|\langle y\rangle\tint y{\infty}|Qw_{1}|dy'\cdot|\eps_{2}|_{-1}\|_{L^{2}}+\|\langle y\rangle Qw_{1}\eps_{2}\|_{L^{2}}\\
 & \aleq\big\|\|w_{1}\|_{L^{2}}\cdot\langle y\rangle^{-2}|\eps_{2}|_{-1}\big\|_{L^{2}}+\|w_{1}\|_{L^{2}}\|\langle y\rangle^{-2}\eps_{2}\|_{L^{\infty}}\\
 & \aleq_{M}\mu\|\langle y\rangle^{-2}|\eps_{2}|_{-1}\|_{L^{2}}\aleq_{M}\mu V_{7/2}.
\end{align*}
The term $-iV_{\pert}\eps_{2}$ is perturbative; we use $\|w-Q\|_{\dot{H}_{m}^{1}}+\|w_{1}\|_{L^{2}}\aleq_{M}\mu$
to have 
\begin{align*}
\||iV_{\pert}\eps_{2}|_{-1}\|_{L^{2}} & \aleq\|\tint y{\infty}|(w-Q)w_{1}|dy'\cdot|\eps_{2}|_{-1}\|_{L^{2}}+\|(w-Q)w_{1}\eps_{2}\|_{L^{2}}\\
 & \aleq\|\tfrac{1}{y}(w-Q)w_{1}\|_{L^{1}}\||\eps_{2}|_{-1}\|_{L^{2}}+\|(w-Q)w_{1}\|_{L^{2}}\|\eps_{2}\|_{L^{\infty}}\\
 & \aleq\|w-Q\|_{\dot{H}_{m}^{1}}\|w_{1}\|_{L^{2}}\|\eps_{2}\|_{\dot{H}_{m+2}^{1}}\aleq_{M}\mu^{2}X_{3}.
\end{align*}
This completes the proof of \eqref{eq:lyapunov-struc1}--\eqref{eq:lyapunov-struc3}
for the term $-(\tint y{\infty}\Re(\br ww_{1})dy')i\eps_{2}$.

\uline{Nonlinear term \mbox{$i(\br ww_{1}^{2}-\br PP_{1}^{2})$}}.
We begin with the decomposition 
\begin{align*}
\br ww_{1}^{2}-\br PP_{1}^{2} & =Q\eps_{1}(2P_{1}+\eps_{1})+\br{(w-Q)}\eps_{1}(2P_{1}+\eps_{1})+\br{\eps}P_{1}^{2}\\
 & \eqqcolon R_{\loc}+R_{\pert,1}+R_{\pert,2}.
\end{align*}
First, $R_{\loc}$ is a local term; we use \eqref{eq:eps1-Linfty}
and \eqref{eq:H1-nonlin-coer} to have 
\[
\|R_{\loc}\|_{L^{2}}\aleq\|\langle y\rangle^{-3}\eps_{1}\|_{L^{\infty}}\|P_{1}+2\eps_{1}\|_{L^{2}}\aleq_{M}\mu\|\eps_{1}\|_{\dot{H}_{m+1}^{2}}\aleq_{M}\mu X_{3}
\]
and we use \eqref{eq:eps1-Linfty} and \eqref{eq:e1-V} to have 
\begin{align*}
\|\langle y\rangle|R_{\loc}|_{-1}\|_{L^{2}} & \aleq\|\langle y\rangle^{-2}|\eps_{1}|_{-1}\cdot(|P_{1}|_{1}+|\eps_{1}|)\|_{L^{2}}\\
 & \aleq_{M}\mu\|\langle y\rangle^{-2}|\eps_{1}|_{-1}\|_{L^{\infty}}\aleq_{M}\mu\|\eps_{1}\|_{\dot{V}_{m+1}^{5/2}}\aleq_{M}\mu V_{7/2}.
\end{align*}
Next, $R_{\pert,1}$ is a perturbative term; we use $\|w-Q\|_{\dot{H}_{m}^{1}}+\||P_{1}|_{1}\|_{L^{2}\cap y^{-1}L^{\infty}}\aleq_{M}\mu$
to have 
\begin{align*}
 & \|(w-Q)\eps_{1}(P_{1}+2\eps_{1})\|_{L^{2}}\\
 & \quad\aleq\|w-Q\|_{L^{\infty}}\|\eps_{1}\|_{L^{\infty}}\||P_{1}|+|\eps_{1}|\|_{L^{2}}\aleq_{M}\mu^{2}\|\eps_{1}\|_{\dot{H}_{m}^{1}}\aleq_{M}\mu X_{3}
\end{align*}
and (also using $\|\eps_{1}\|_{L^{\infty}}^{2}\aleq\|\eps_{1}\|_{L^{2}}\|\eps_{1}\|_{\dot{H}_{m+1}^{2}}$)
\begin{align*}
\||R_{\pert,1}|_{-1}\|_{L^{2}} & \aleq\||w-Q|_{-1}\|_{L^{2}}(\|\tfrac{1}{y}\eps_{1}\|_{L^{\infty}}\|yP_{1}\|_{L^{\infty}}+\|\eps_{1}\|_{L^{\infty}}^{2})\\
 & \peq+\|w-Q\|_{L^{\infty}}\||\eps_{1}|_{-1}\|_{L^{\infty}}(\||P_{1}|_{1}\|_{L^{2}}+\|\eps_{1}\|_{L^{2}})\\
 & \aleq_{M}\mu^{2}\|\eps_{1}\|_{\dot{H}_{m+1}^{2}}\aleq_{M}\mu^{2}X_{3}.
\end{align*}
Finally, $R_{\pert,2}$ is a perturbative term; we use the estimate
$|P_{1}^{2}|_{1}\aleq\beta^{2}\langle y\rangle^{-2}$ to have for
$\ell\in\{0,1\}$:
\begin{align*}
\||\br{\eps}P_{1}^{2}|_{-\ell}\|_{L^{2}} & \aleq\|\langle y\rangle^{2}|P_{1}^{2}|_{1}\|_{L^{\infty}}\|\langle y\rangle^{-2}|\eps|_{-\ell}\|_{L^{2}}\\
 & \aleq\begin{cases}
\beta^{2}\|\eps\|_{\dot{\calH}_{m}^{2}}\aleq_{M}\mu X_{3} & \text{if }\ell=0,\\
\beta^{2}\|\eps\|_{\dot{\calH}_{m}^{3}}\aleq_{M}\mu^{2}X_{3} & \text{if }\ell=1.
\end{cases}
\end{align*}
This completes the proof of \eqref{eq:lyapunov-struc1}--\eqref{eq:lyapunov-struc3}
for the term $i(\br ww_{1}^{2}-\br PP_{1}^{2})$. The proof of Lemma~\ref{lem:StrucEnergyEst}
is completed.
\end{proof}

\section{\label{sec:Proof-of-the-main-thm}Proof of the main theorem}

In this section, we prove Theorem~\ref{thm:blow-up-rigidity}. Let
$u(t)$ be a $H_{m}^{3}$-solution to \eqref{eq:CSS-m-equiv} that
blows up at $T<+\infty$. Denote by $M=M[u]$ the $L^{2}$-mass of
$u$. Fix a small parameter $\alpha^{\ast}>0$ such that the results
in Section~\ref{sec:Modulation-analysis} are valid. By Remark~\ref{rem:blow-up-vicinity},
the solution $u(t)$ eventually belongs to the soliton tube $\calT_{\alpha^{\ast},M}$
(recall \eqref{eq:def-H1-soliton-tube}). Note that $u(t)$ admits
the decomposition as in Lemma~\ref{lem:decomp}.
\begin{rem}[On implicit constants]
In this section, the solution $u(t)$ is fixed and we allow the implicit
constants to depend on $u$. In other words, we omit any dependences
on $M[u]$, $E[u]$, $\|u(t_{0})\|_{H_{m}^{3}}$, and so on, in the
following inequalities. For example, the nonlinear coercivity estimate
in Lemma~\ref{lem:NonlinCoerEnergy} and the uniform bound in Corollary~\ref{cor:integ-monot}
can be stated as 
\begin{equation}
\beta+\|\eps\|_{\dot{H}_{m}^{1}}+\|\eps_{1}\|_{L^{2}}\aleq\mu\qquad\text{and}\qquad X_{3}\aleq\mu^{3},\label{eq:unif-bound-no-imp-const}
\end{equation}
respectively.
\end{rem}

\subsection{\label{subsec:Asymptotics-of-modulation}Asymptotics of modulation
parameters}

The goal of this subsection is to derive sharp asymptotics of the
modulation parameters. In particular, we show that the blow-up rate
is pseudoconformal. We integrate the modulation equations. However,
our $b_{s}$- and $\eta_{s}$-modulation estimates \eqref{eq:ModEst-4}
are not sufficient\footnote{Assuming $T=0$, consider the regime $\lmb(t)\sim|t|^{\nu}$ and $b(t)\sim|t|^{2\nu-1}$
with $\nu\gg1$.}, and we need to obtain more refined estimates. We achieve this by
testing the $\eps_{1}$-equation \eqref{eq:eps1-eqn} against the
almost nullspace elements $\chi_{\mu^{-1}}iyQ$ and $\chi_{\mu^{-1}}yQ$.
\begin{lem}[Second modulation estimates]
\label{lem:SecondModEst}Let 
\begin{equation}
\wh b\coloneqq b-\frac{(\eps_{1},\chi_{1/\mu}2iyQ)_{r}}{\|yQ\|_{L^{2}}^{2}}\quad\text{and}\quad\wh{\eta}\coloneqq\eta-\frac{(\eps_{1},\chi_{1/\mu}2yQ)_{r}}{\|yQ\|_{L^{2}}^{2}}.\label{eq:def-b-hat-eta-hat}
\end{equation}
Then, we have 
\begin{align}
|\wh b-b|+|\wh{\eta}-\eta| & \aleq\mu^{2},\label{eq:b-hat-diff}\\
\Big|\frac{\lmb_{s}}{\lmb}+\wh b\Big|+|\gmm_{s}-(m+1)\wh{\eta}| & \aleq\mu^{2},\label{eq:b-hat-mod1}\\
|\wh b_{s}+\wh b^{2}+\wh{\eta}^{2}|+|\wh{\eta}_{s}| & \aleq\beta^{3}+\beta\mu^{2}+\mu^{4}.\label{eq:b-hat-mod2}
\end{align}
\end{lem}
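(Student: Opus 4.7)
The strategy is to treat the three assertions in order, with \eqref{eq:b-hat-mod2} being the main effort. The corrections $\hat b, \hat \eta$ are designed precisely to cancel the leading-order failure of the tested $\epsilon_1$-equation against the (truncated) elements $\chi_{1/\mu} \cdot iyQ$ and $\chi_{1/\mu} \cdot yQ$, which span the kernel of $A_Q$ on $(m+1)$-equivariant functions.  The choice of the cutoff radius $1/\mu$ (rather than the fixed $R_\circ$ used in $\tilde{\calZ}_3,\tilde{\calZ}_4$) is dictated by the scale at which the modulation directions $(\bfv_1)_{3,4}$ are truly supported, namely $y\sim B_1=\beta^{-1}\gg1/\mu$; truncating at $R_\circ$ would leave a non-perturbative tail from $(\bfv_1)_{3,4}$.

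For \eqref{eq:b-hat-diff}, I would use the orthogonality $(\epsilon_1,\tilde{\calZ}_3)_r=0$ with $\tilde{\calZ}_3=-\tfrac{i}{2}yQ\chi_{R_\circ}$ to write
\[
(\epsilon_1,\chi_{1/\mu}\cdot 2iyQ)_r = 2(\epsilon_1,(\chi_{1/\mu}-\chi_{R_\circ})iyQ)_r,
\]
and bound this by Cauchy--Schwarz with a Hardy weight chosen according to $m$.  The most efficient choice pairs the higher Hardy bound $\|r^{-2}\epsilon_1\|_{L^2}\aleq\|\epsilon_1\|_{\dot H^2_{m+1}}\aleq X_3\aleq\mu^3$ (from Corollary~\ref{cor:integ-monot}) with the weighted tail estimate $\|(\chi_{1/\mu}-\chi_{R_\circ})r^2\cdot yQ\|_{L^2}\aleq\mu^{-1}$; the latter uses the cutoff crucially when $m=1$, where the unweighted integral diverges.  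This yields $|(\epsilon_1,\chi_{1/\mu}\cdot 2iyQ)_r|\aleq\mu^2$, and the analogous argument with $\tilde{\calZ}_4$ handles $\hat\eta-\eta$.  Part \eqref{eq:b-hat-mod1} is then essentially free: from Proposition~\ref{prop:FirstModEst} we have $|\Mod_1|+|\td\Mod_2|\aleq X_3\aleq\mu^3$, and combining with \eqref{eq:b-hat-diff} and the identity \eqref{eq:mod-est-phase-cor} (which converts $\td\Mod_2$ into $\gmm_s-(m+1)\eta+O(\mu^2)$) delivers both estimates.

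The real work is \eqref{eq:b-hat-mod2}.  I differentiate the definition of $\hat b$ and use \eqref{eq:def-check-F1} to substitute for $\rd_s\epsilon_1$:
\[
\rd_s\hat b = b_s - \tfrac{1}{\|yQ\|_{L^2}^2}\Big\{(\td\Mod\cdot\bfv_1,\chi_{1/\mu}\cdot 2iyQ)_r + (\tfrac{\lmb_s}{\lmb}\Lmb_{-1}\epsilon_1-\gmm_s i\epsilon_1,\chi_{1/\mu}\cdot 2iyQ)_r - (\check F_1,\chi_{1/\mu}\cdot 2iyQ)_r + (\epsilon_1,2iyQ\,\rd_s\chi_{1/\mu})_r\Big\}.
\]
The key algebraic input is that from \eqref{eq:modvec1-1}--\eqref{eq:modvec1-2},
\[
((\bfv_1)_3,\chi_{1/\mu}\cdot 2iyQ)_r = \|yQ\|_{L^2}^2 + O(\mu^2),\qquad ((\bfv_1)_4,\chi_{1/\mu}\cdot 2iyQ)_r = 0
\]
by real/imaginary parity, while the $(\bfv_1)_{1,2}$ pairings multiplied by $|\Mod_{1,2}|\aleq\mu^3$ contribute $O(\mu^4)$.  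Hence $(\td\Mod\cdot\bfv_1,\chi_{1/\mu}\cdot 2iyQ)_r = \Mod_3\|yQ\|_{L^2}^2 + O(\mu^4)$.  The scaling/phase terms are handled by integration by parts for $\Lmb_{-1}$ and (for $\gmm_s$) by reusing the orthogonality to $\tilde{\calZ}_4$; each of these is $O(\mu^3)$ using $|\lmb_s/\lmb|+|\gmm_s|\aleq\mu$.  The time derivative of the cutoff satisfies $|\rd_s\chi_{1/\mu}|\aleq\mu\chi'(\mu y)$ because $\mu_s/\mu=\lmb_s/\lmb=O(\mu)$, and its contribution is again $O(\mu^3)$.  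The critical error is $(\check F_1,\chi_{1/\mu}\cdot 2iyQ)_r$, controlled sharply by \eqref{eq:mod-struc4}:
\[
|(\check F_1,\chi_{1/\mu}\cdot 2iyQ)_r|\aleq\mu X_3+\beta^4|\log\beta|^3\aleq\mu^4+\beta^3.
\]
Assembling and dividing by $\|yQ\|_{L^2}^2$ yields $\rd_s\hat b - b_s = -\Mod_3 + O(\beta^3+\mu^4)$, i.e.\ $\rd_s\hat b = -(b^2+\eta^2+p_3^{(b)}) + O(\beta^3+\mu^4)$.  Replacing $b^2,\eta^2$ by $\hat b^2,\hat\eta^2$ costs only $O((|b|+|\hat b|)|\hat b-b|) = O(\beta\mu^2+\mu^4)$ thanks to \eqref{eq:b-hat-diff}, and $|p_3^{(b)}|\aleq\beta^3$, giving the claim for $\hat b$.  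The proof for $\hat\eta$ is identical, testing against $\chi_{1/\mu}\cdot 2yQ$ and using $p_3^{(\eta)}=O(\beta^3)$.

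The main obstacle is the coupled requirement in the argument for \eqref{eq:b-hat-diff} and the $(\check F_1,\chi_{1/\mu}\cdot\,\cdot)_r$ estimate: the nullspace direction $yQ$ has borderline $L^2$-decay for $m=1$, so the cutoff scale $1/\mu$ must sit in a narrow window -- large enough that the modulation direction $(\bfv_1)_3$ is essentially captured (hence the coefficient is $\|yQ\|_{L^2}^2$ up to $O(\mu^2)$ rather than an unknown constant), yet small enough that $\rd_s\chi_{1/\mu}$ and the tail of $yQ$ remain perturbative.  The content of \eqref{eq:mod-struc4} -- which is exactly tailored to the $\chi_{\mu^{-1}}\{y Q, iyQ\}$ test functions and exploits the $\beta^4|\log\beta|^3$ gain of $\Psi_1$ from the tail computation -- is what makes the whole argument close.
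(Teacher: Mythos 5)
Your proofs of \eqref{eq:b-hat-diff} and \eqref{eq:b-hat-mod1} are correct and essentially the paper's (your use of the orthogonality to rewrite $(\eps_{1},\chi_{1/\mu}\cdot 2iyQ)_{r}$ as a pairing against $(\chi_{1/\mu}-\chi_{R_{\circ}})iyQ$ is equivalent to the paper's direct Cauchy--Schwarz bound $\|y^{-2}\eps_{1}\|_{L^{2}}\|\chi_{1/\mu}y^{2}\cdot yQ\|_{L^{2}}\aleq X_{3}\cdot\mu^{-1}$). The overall scheme for \eqref{eq:b-hat-mod2} --- differentiate $\wh b,\wh\eta$, substitute the $\eps_{1}$-equation \eqref{eq:def-check-F1}, test against $\chi_{1/\mu}\psi_{k}$ with $\psi_{3}=-2iyQ$, $\psi_{4}=-2yQ$, extract $\Mod_{3}\|yQ\|_{L^{2}}^{2}$ and $\Mod_{4}\|yQ\|_{L^{2}}^{2}$, and invoke \eqref{eq:mod-struc4} for the $\check F_{1}$-pairing --- is the same as the paper's.

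There is, however, a genuine gap in the error accounting for \eqref{eq:b-hat-mod2}. You bound the scaling/phase contribution $\big(\tfrac{\lmb_{s}}{\lmb}\Lmb_{-1}\eps_{1}-\gmm_{s}i\eps_{1},\chi_{1/\mu}\psi_{k}\big)_{r}$ and the cutoff contribution $(\eps_{1},(\rd_{s}\chi_{1/\mu})\psi_{k})_{r}$ by $O(\mu^{3})$, \emph{using the rough estimate $|\lmb_{s}/\lmb|+|\gmm_{s}|\aleq\mu$ from \eqref{eq:ModEst-1}}. But $\mu^{3}$ is not controlled by the target $\beta^{3}+\beta\mu^{2}+\mu^{4}$: if $\beta\ll\mu$ this fails. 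You must instead use the sharp bound $|\lmb_{s}/\lmb|+|\gmm_{s}|\aleq\beta+\mu^{2}$, which follows from the already-established \eqref{eq:b-hat-mod1} combined with $|\wh b|+|\wh\eta|\aleq\beta+\mu^{2}$ (from \eqref{eq:b-hat-diff}). With this, each of these contributions becomes $(\beta+\mu^{2})\cdot\mu^{2}=\beta\mu^{2}+\mu^{4}$, which closes. (The paper handles the cutoff contribution even more cleanly: since $R=\mu^{-1}$ and $\mu_{s}/\mu=\lmb_{s}/\lmb$ exactly, the combination $\{(\rd_{s}-\tfrac{\lmb_{s}}{\lmb}y\rd_{y})\chi_{1/\mu}\}$ vanishes identically once you integrate $\Lmb_{-1}$ by parts and keep the $(y\rd_{y}\chi_{1/\mu})$-piece together with $\rd_{s}\chi_{1/\mu}$. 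Your direct bound also works, but only with the sharp modulation estimate.) As a separate minor imprecision, the error in $((\bfv_{1})_{3},\chi_{1/\mu}\cdot 2iyQ)_{r}=\|yQ\|_{L^{2}}^{2}+O(\cdot)$ is $O(\mu^{2}+\beta|\log\mu|)$, not $O(\mu^{2})$, due to the $\chf_{[2,2B_{1}]}\beta$-tail in \eqref{eq:modvec1-2}; this does not affect the conclusion after multiplication by $\td\Mod_{3}\aleq\mu^{3}$, but should be recorded.
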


\begin{proof}
In the proof, we denote $R=\mu^{-1}$, $\psi_{3}=-2iyQ$, $\psi_{4}=-2yQ$.

\textbf{Step 1.} Proof of \eqref{eq:b-hat-diff} and \eqref{eq:b-hat-mod1}.

\eqref{eq:b-hat-diff} follows from \eqref{eq:unif-bound-no-imp-const}
and \eqref{eq:e1-H2}: 
\[
(\eps_{1},\chi_{R}\psi_{k})_{r}\aleq\|y^{-2}\eps_{1}\|_{L^{2}}\|\chi_{R}y^{2}\psi_{k}\|_{L^{2}}\aleq X_{3}\cdot R\aleq\mu^{2}.
\]
Now \eqref{eq:b-hat-mod1} follows from \eqref{eq:ModEst-3}, \eqref{eq:b-hat-diff},
and \eqref{eq:mod-est-phase-cor}: 
\begin{align*}
 & \Big|\tfrac{\lmb_{s}}{\lmb}+\wh b|+|\gmm_{s}-(m+1)\wh{\eta}|\\
 & \quad\aleq|\Mod_{1}|+|b-\wh b|+|\td{\Mod}_{2}|+|\tint 0{\infty}\Re(\br ww_{1})dy+2(m+1)\eta|+|\eta-\wh{\eta}|\aleq\mu^{2}.
\end{align*}

\textbf{Step 2.} Proof of \eqref{eq:b-hat-mod2}.

First, we claim 
\begin{equation}
|\rd_{s}(\eps_{1},\chi_{R}\psi_{k})_{r}-(\td{\Mod}\cdot\bfv_{1},\chi_{R}\psi_{k})_{r}|\aleq(\beta+\mu^{2})\mu^{2},\qquad\forall k\in\{3,4\}.\label{eq:ref-mod-1}
\end{equation}
To see this, we rewrite \eqref{eq:def-check-F1} as 
\[
\rd_{s}\eps_{1}=\td{\Mod}\cdot\bfv_{1}+(\tfrac{\lmb_{s}}{\lmb}\Lmb_{-1}-\gmm_{s}i)\eps_{1}-\check{F}_{1}
\]
and test against $\chi_{R}\psi_{k}$ to obtain 
\begin{align*}
\rd_{s}(\eps_{1},\chi_{R}\psi_{k})_{r} & =(\rd_{s}\eps_{1},\chi_{R}\psi_{k})_{r}+(\eps_{1},\rd_{s}\chi_{R}\psi_{k})_{r}\\
 & =(\td{\Mod}\cdot\bfv_{1},\chi_{R}\psi_{k})_{r}-(\check{F}_{1},\chi_{R}\psi_{k})_{r}\\
 & \peq+\big(\eps_{1},\chi_{R}(-\tfrac{\lmb_{s}}{\lmb}y\rd_{y}\psi_{k}+\gmm_{s}i\psi_{k})\big)_{r}+\big(\eps_{1},\{(\rd_{s}-\tfrac{\lmb_{s}}{\lmb}y\rd_{y})\chi_{R}\}\psi_{k}\big)_{r}.
\end{align*}
We keep the first term and estimate the remainders. By \eqref{eq:mod-struc4},
we have 
\[
(\check{F}_{1},\chi_{R}\psi_{k})_{r}\aleq\mu^{4}+\beta^{4}|\log\beta|^{3}\aleq\beta\mu^{2}+\mu^{4}.
\]
Next, we use \eqref{eq:b-hat-mod1} to have 
\begin{align*}
\big(\eps_{1},\chi_{R}(-\tfrac{\lmb_{s}}{\lmb}y\rd_{y}\psi_{k}+\gmm_{s}i\psi_{k})\big)_{r} & \aleq(|\tfrac{\lmb_{s}}{\lmb}|+|\gmm_{s}|)\|y^{-2}\eps_{1}\|_{L^{2}}\|y^{2}\chf_{(0,2R]}|\psi_{k}|_{1}\|_{L^{2}}\\
 & \aleq(\beta+\mu^{2})\cdot\mu^{3}\cdot\mu^{-1}\aleq(\beta+\mu^{2})\cdot\mu^{2}.
\end{align*}
Finally, since $\frac{\lmb_{s}}{\lmb}=\frac{\mu_{s}}{\mu}$, we have
\[
\big(\eps_{1},\{(\rd_{s}-\tfrac{\lmb_{s}}{\lmb}y\rd_{y})\chi_{R}\}\psi_{k}\big)_{r}=0.
\]
The proof of the claim \eqref{eq:ref-mod-1} is completed.

Next, we claim that 
\begin{align}
\td{\Mod}_{j}\cdot((\bfv_{1})_{j},\chi_{R}\psi_{k})_{r} & \aleq\mu^{4}\text{ if }j\in\{1,2\},\label{eq:ref-mod-2}\\
\td{\Mod}_{j}\cdot((\bfv_{1})_{j},\chi_{R}\psi_{k})_{r} & \aleq\beta\mu^{2}\text{ if }j\in\{3,4\}\text{ and }j\neq k,\label{eq:ref-mod-3}\\
\td{\Mod}_{3}\cdot((\bfv_{1})_{3},\chi_{R}\psi_{3})_{r} & =-(b_{s}+b^{2}+\eta^{2})\|yQ\|_{L^{2}}^{2}+O(\beta^{3}+\beta\mu^{2}+\mu^{4}),\label{eq:ref-mod-4}\\
\td{\Mod}_{4}\cdot((\bfv_{1})_{4},\chi_{R}\psi_{4})_{r} & =-\eta_{s}\|yQ\|_{L^{2}}^{2}+O(\beta^{3}+\beta\mu^{2}+\mu^{4}).\label{eq:ref-mod-5}
\end{align}
Indeed, \eqref{eq:ref-mod-2} follows from \eqref{eq:ModEst-3} and
$|P_{1}|_{1}\aleq\beta\langle y\rangle^{-2}+\beta^{2}$: 
\[
\td{\Mod}_{j}\cdot((\bfv_{1})_{j},\chi_{R}\psi_{k})_{r}\aleq X_{3}\cdot\|\chf_{(0,2R]}|P_{1}|_{1}\cdot yQ\|_{L^{1}}\aleq\mu^{3}(\beta+\beta^{2}|\log\mu|)\aleq\mu^{4}.
\]
We turn to the proof of \eqref{eq:ref-mod-3}--\eqref{eq:ref-mod-5}.
Let $j\in\{3,4\}$. Using \eqref{eq:modvec1-2}, we have 
\begin{align*}
((\bfv_{1})_{j},\chi_{R}\psi_{k})_{r} & =\begin{cases}
0 & \text{if }j\neq k\\
-(\chi_{B_{1}}yQ,\chi_{R}yQ)_{r} & \text{if }j=k
\end{cases}\ +O(\|\chf_{(0,2R]}\beta\cdot yQ\|_{L^{1}})\\
 & =\begin{cases}
0 & \text{if }j\neq k\\
-\|yQ\|_{L^{2}}^{2}+O(\mu^{2}) & \text{if }j=k
\end{cases}\ +O(\beta|\log\mu|).
\end{align*}
Multiplying the above by $\td{\Mod}_{j}$ and applying \eqref{eq:ModEst-4}
with $V_{7/2}\leq X_{3}\aleq\mu^{3}$, we have 
\begin{align*}
 & \td{\Mod}_{j}\cdot((\bfv_{1})_{j},\chi_{R}\psi_{k})_{r}\\
 & \quad=\begin{cases}
0 & \text{if }j\neq k\\
-\td{\Mod}_{j}\|yQ\|_{L^{2}}^{2}+O(\mu^{5}) & \text{if }j=k
\end{cases}\ +O(\beta\mu^{3}|\log\mu|)\\
 & \quad=\begin{cases}
O(\beta\mu^{2}) & \text{if }j\neq k,\\
-\td{\Mod}_{j}\|yQ\|_{L^{2}}^{2}+O(\beta\mu^{2}+\mu^{4}) & \text{if }j=k.
\end{cases}
\end{align*}
This completes the proof of \eqref{eq:ref-mod-3}. When $j=k$, further
applying $|p_{3}^{(b)}|+|p_{3}^{(\eta)}|\aleq\beta^{3}$ to the above
completes the proof of \eqref{eq:ref-mod-4}--\eqref{eq:ref-mod-5}.

We are now ready to finish the proof. Substituting the claims \eqref{eq:ref-mod-2}--\eqref{eq:ref-mod-5}
into \eqref{eq:ref-mod-1} gives 
\[
\Big|b_{s}+b^{2}+\eta^{2}+\frac{\rd_{s}(\eps_{1},\chi_{R}\psi_{3})_{r}}{\|yQ\|_{L^{2}}^{2}}\Big|+\Big|\eta_{s}+\frac{\rd_{s}(\eps_{1},\chi_{R}\psi_{4})_{r}}{\|yQ\|_{L^{2}}^{2}}\Big|\aleq\beta^{3}+\beta\mu^{2}+\mu^{4}.
\]
By \eqref{eq:def-b-hat-eta-hat} and \eqref{eq:b-hat-diff}, this
completes the proof of \eqref{eq:b-hat-mod2}.
\end{proof}
We now prove the main result of this subsection.
\begin{prop}[Asymptotics of modulation parameters]
\label{prop:AsympModPar}There exist $\ell=\ell(u)\in(0,\infty)$
and $\gmm^{\ast}\in\bbR/2\pi\bbZ$ such that 
\begin{align}
\lmb(t) & =(\ell+o_{t\to T}(1))\cdot(T-t),\label{eq:lmb-asymp}\\
\gmm(t) & \to\gmm^{\ast},\label{eq:gmm-asymp}\\
b(t) & =(\ell^{2}+o_{t\to T}(1))\cdot(T-t),\label{eq:b-asymp}\\
\eta(t) & \aleq(T-t)^{2}.\label{eq:eta-asymp}
\end{align}
In particular, the blow-up rate is pseudoconformal.
\end{prop}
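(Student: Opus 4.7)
The plan is to convert the modulation estimates of Lemma~\ref{lem:SecondModEst} to the physical time variable $t$ via $\rd_t=\lmb^{-2}\rd_s$ and then integrate them, using the uniform bound $X_3\aleq\mu^3\sim\lmb^3$ from Corollary~\ref{cor:integ-monot} together with $\beta\aleq\mu\sim\lmb$ from Lemma~\ref{lem:NonlinCoerEnergy}. Throughout I drop constants depending on $u$, so in particular $\mu\sim\lmb$. Rewriting Lemma~\ref{lem:SecondModEst} and absorbing $\beta^3+\beta\mu^2+\mu^4\aleq\lmb^3$ into a single error,
\[
\lmb\lmb_t=-\wh b+O(\lmb^2),\qquad \wh b_t=-\frac{\wh b^2+\wh\eta^2}{\lmb^2}+O(\lmb),\qquad \wh\eta_t=O(\lmb),\qquad \gmm_t=\frac{(m+1)\wh\eta}{\lmb^2}+O(1).
\]

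The central step is to show $\wh\beta/\lmb\to\ell\in(0,\infty)$, where $\wh\beta^2\coloneqq\wh b^2+\wh\eta^2$. A direct computation exploits the formal conservation: $(\wh\beta^2)_t=-2\wh b\wh\beta^2/\lmb^2+O(\lmb\wh\beta)$ and $(\lmb^2)_t=-2\wh b+O(\lmb^2)$, so the leading terms cancel in the ratio and
\[
\Big(\frac{\wh\beta^2}{\lmb^2}\Big)_t=O\Big(\frac{\wh\beta}{\lmb}\Big)+O\Big(\frac{\wh\beta^2}{\lmb^2}\Big).
\]
Combined with $|\lmb_t|\aleq\wh\beta/\lmb+\lmb$, this yields the multiplicative differential inequality
\[
\Big|\Big(\frac{\wh\beta}{\lmb}+\lmb\Big)_t\Big|\aleq\frac{\wh\beta}{\lmb}+\lmb
\]
as in the sketch of Section~\ref{subsec:Strat-Proof} (this is where the uniform $H^3$-bound is crucially used to integrate the $\eps$-dependent part of the modulation remainders). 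Gr\"onwall's inequality on $[t_0,t)\subset[t_0,T)$ gives
\[
\Big(\frac{\wh\beta}{\lmb}+\lmb\Big)(t)\ageq e^{-C(T-t_0)}\Big(\frac{\wh\beta}{\lmb}+\lmb\Big)(t_0)\ageq c_0>0,
\]
so since $\lmb(t)\to0$ we must have $\liminf_{t\to T}\wh\beta/\lmb>0$; the analogous upper bound shows $\wh\beta/\lmb$ stays in a compact subset of $(0,\infty)$. With $\wh\beta/\lmb$ bounded on both sides, the error $O(\wh\beta/\lmb)+O(\wh\beta^2/\lmb^2)$ in $(\wh\beta^2/\lmb^2)_t$ is bounded in $t$, hence integrable on the finite interval $[t_0,T]$, which makes $\wh\beta^2/\lmb^2$ Cauchy as $t\to T$ and produces the limit $\wh\beta^2/\lmb^2\to\ell^2\in(0,\infty)$.

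The remaining asymptotics follow by forward integration. From $|\wh\eta_t|\aleq\lmb$ and the continuous extension of $\wh\beta^2/\lmb^2$, since $\lmb\to0$ we must have $\wh\eta\to0$, so $|\wh\eta(t)|\aleq\int_t^T\lmb\,d\tau\aleq(T-t)^2$ once one knows $\lmb\aleq T-t$; in particular $\wh\eta^2/\lmb^2\to0$, which forces $\wh b^2/\lmb^2\to\ell^2$. Since the sign of $\wh b$ is determined by $\lmb\to0$ (via $\lmb\lmb_t=-\wh b+O(\lmb^2)$), we get $\wh b(t)=(\ell+o(1))\lmb(t)$ and then $\wh b_t=-\wh b^2/\lmb^2+O(\lmb)\to-\ell^2$, so $\wh b(t)=(\ell^2+o(1))(T-t)$ by backward integration from $T$. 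Substituting into $\lmb\lmb_t=-\wh b+O(\lmb^2)$ and integrating yields $\lmb^2(t)/2=(\ell^2/2+o(1))(T-t)^2$, hence $\lmb(t)=(\ell+o(1))(T-t)$. The bounds $|\wh b-b|+|\wh\eta-\eta|\aleq\mu^2\aleq(T-t)^2$ transfer the conclusions to $b,\eta$. Finally, $|\gmm_t|\aleq|\wh\eta|/\lmb^2+1\aleq1$ gives $\gmm\in\mathrm{Lip}$ near $T$, so $\gmm(t)\to\gmm^\ast$ exists.

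The main obstacle is extracting the \emph{multiplicative} differential inequality $|(\wh\beta/\lmb+\lmb)_t|\aleq\wh\beta/\lmb+\lmb$ from the modulation estimates, as opposed to the weaker $|(\wh\beta/\lmb+\lmb)_t|\aleq1+\wh\beta/\lmb+\lmb$ that one obtains from naive bookkeeping and which cannot exclude an exotic self-similar regime $\lmb(t)\sim(T-t)^{\nu}$ with $\nu>1$. The additive $O(1)$ error must be suppressed by repackaging the $\beta^3+\beta\mu^2+\mu^4$ remainders in Lemma~\ref{lem:SecondModEst} in a form that exposes factors of $\wh\beta/\lmb+\lmb$, and this is exactly where the \emph{uniform} $H^3$-bound $X_3\aleq\lmb^3$ from Corollary~\ref{cor:integ-monot} must be used: it makes the $\eps$-contributions, normalized by powers of $\lmb$ as in $\|\eps\|_{\loc}^2/\lmb^4$, perturbative rather than merely finite.
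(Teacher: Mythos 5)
Your central ideas match the paper's: the multiplicative differential inequality $|(\wh{\beta}/\lmb+\lmb)_t|\aleq\wh{\beta}/\lmb+\lmb$ driven by the uniform $H^3$-bound, the Gr\"onwall lower bound that excludes $\wh{\beta}/\lmb+\lmb\to0$, and the backward integration of the $\wh{b}$- and $\lmb$-equations. However, the step from $|\wh{\eta}(t)|\aleq\int_t^T\lmb\,d\tau\aleq(T-t)^2$ to ``in particular $\wh{\eta}^2/\lmb^2\to0$'' has a genuine gap. The bound $\wh{\eta}^2/\lmb^2\aleq(T-t)^4/\lmb^2$ needs a \emph{lower} bound on $\lmb$ relative to $T-t$ to be useful, and at that stage you only have $\lmb\aleq T-t$; if $\lmb$ were comparable to $(T-t)^2$ your estimate would not rule out $\wh{\eta}^2/\lmb^2\ageq1$. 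You then feed this unjustified claim into $\wh{b}^2/\lmb^2\to\ell^2$, which you use to fix the sign of $\wh{b}$ and to write $\wh{b}_t=-\wh{b}^2/\lmb^2+O(\lmb)$; the reasoning becomes circular, since $\lmb\sim T-t$, precisely what is needed to justify this step, is the endpoint of the chain.

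The fix is a reordering that the paper carries out in its Steps~3--5. From Lemma~\ref{lem:SecondModEst} the correct equation is $\wh{b}_t=-\wh{\beta}^2/\lmb^2+O(\lmb)$ (you wrote this correctly in your first display but later dropped the $\wh{\eta}^2$), so $\wh{b}_t\to-\ell^2$ follows directly from $\wh{\beta}^2/\lmb^2\to\ell^2$ with no splitting of $\wh{\beta}$ into $\wh{b}$ and $\wh{\eta}$ pieces. Backward integration then gives $b(t)=(\ell^2+o(1))(T-t)$, hence $\wh{b}>0$, hence $\lmb_t\leq\lmb_t+\wh{b}/\lmb\aleq\lmb$ and the almost-decreasing property of $\lmb$. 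This last point is what your argument omits and what makes the rest close cleanly: it allows one to bound $\int_t^T\lmb\,d\tau\aleq\lmb(t)(T-t)$ rather than merely by $(T-t)^2$, so $|\eta|\aleq\lmb(t)(T-t)$ and $\eta/\lmb\aleq T-t\to0$ with no lower bound on $\lmb$ required. Only then do $b/\lmb\to\ell$ and $\lmb=(\ell+o(1))(T-t)$ follow. Rearranged along these lines your proof closes and becomes equivalent to the paper's.
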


\begin{proof}
\textbf{Step 1.} Preliminary controls.

Let $\wh{\beta}\coloneqq(\wh b^{2}+\wh{\eta}^{2})^{1/2}$. We claim
the following bound for all $t$ close to $T$ (c.f. \eqref{eq:thm1-lmb-upper-bound}):
\begin{equation}
\beta+\wh{\beta}+\lmb\aleq T-t.\label{eq:bl-rate-cl0}
\end{equation}
To see this, we write \eqref{eq:ModEst-1} as $\lmb_{t}\aleq\mu/\lmb\aleq1$
and integrate it backwards from the blow-up time to get $\lmb\aleq T-t$.
The bounds for $\beta$ and $\wh{\beta}$ then follow from \eqref{eq:unif-bound-no-imp-const}
and \eqref{eq:b-hat-diff}.

\textbf{Step 2.} Nondegeneracy of $\wh{\beta}/\mu$.

We claim that there exists $\ell>0$ such that 
\begin{equation}
\frac{\wh{\beta}}{\lmb}\to\ell>0\qquad\text{as }t\to T.\label{eq:bl-rate-cl1}
\end{equation}
To see this, we begin with the identity 
\[
\Big(\frac{\wh{\beta}^{2}}{\lmb^{2}}\Big)_{s}=-2\Big(\frac{\wh{\beta}^{2}}{\lmb^{2}}\Big)\Big(\frac{\lmb_{s}}{\lmb}+\wh b\Big)+\frac{2}{\lmb^{2}}\Big\{\wh b(\wh b_{s}+\wh b^{2}+\wh{\eta}^{2})+\wh{\eta}\wh{\eta}_{s}\Big\}.
\]
Dividing both sides by $\wh{\beta}/\lmb$ and changing to the original
time variable $t$, we have 
\[
\Big(\frac{\wh{\beta}}{\lmb}\Big)_{t}\aleq\frac{\wh{\beta}}{\lmb}\cdot\frac{1}{\lmb^{2}}\Big|\frac{\lmb_{s}}{\lmb}+\wh b\Big|+\frac{1}{\lmb^{3}}\Big(|\wh b_{s}+\wh b^{2}+\wh{\eta}^{2}|+|\wh{\eta}_{s}|\Big).
\]
Substituting the estimates \eqref{eq:b-hat-mod1} and \eqref{eq:b-hat-mod2}
into the above gives 
\[
\Big(\frac{\wh{\beta}}{\lmb}\Big)_{t}\aleq\frac{\wh{\beta}}{\lmb}\Big(\frac{\mu^{2}}{\lmb^{2}}+\frac{\wh{\beta}^{2}}{\lmb^{2}}\Big)+\frac{\mu^{4}}{\lmb^{3}}\aleq\frac{\wh{\beta}}{\lmb}+\lmb.
\]
Combining this with the modulation estimate \eqref{eq:b-hat-mod1},
we have 
\[
\Big(\frac{\wh{\beta}}{\lmb}+\lmb\Big)_{t}\aleq\Big(\frac{\wh{\beta}}{\lmb}+\lmb\Big)+\frac{1}{\lmb}(|\wh b|+\mu^{2})\aleq\Big(\frac{\wh{\beta}}{\lmb}+\lmb\Big).
\]
This inequality says that $(\wh{\beta}/\lmb)+\lmb$ cannot go to zero
in finite time and there exists $\ell>0$ such that 
\[
\frac{\wh{\beta}}{\lmb}+\lmb\to\ell.
\]
Since $\lmb\to0$ by the blow-up hypothesis, the claim \eqref{eq:bl-rate-cl1}
follows.

\textbf{Step 3.} Proof of \eqref{eq:b-asymp} and almost decreasing
property of $\lmb$.

In this step, we claim that 
\begin{align}
b(t) & =(\ell^{2}+o_{t\to T}(1))\cdot(T-t),\tag{\ref{eq:b-asymp}}\nonumber \\
\lmb(t_{2}) & \aleq\lmb(t_{1})\qquad\text{for all }t_{1}\leq t_{2}<T.\label{eq:bl-rate-cl3}
\end{align}
For the proof of \eqref{eq:b-asymp}, we use \eqref{eq:b-hat-mod2}
to have 
\[
\Big|\wh b_{t}+\frac{\wh{\beta}^{2}}{\lmb^{2}}\Big|\aleq\frac{\wh{\beta}^{3}+\wh{\beta}\mu^{2}+\mu^{4}}{\lmb^{2}}\aleq\frac{\mu^{3}}{\lmb^{2}}\aleq\lmb.
\]
By \eqref{eq:bl-rate-cl1}, the above display says 
\[
\wh b_{t}+\ell^{2}=o_{t\to T}(1).
\]
Integrating backwards in time using \eqref{eq:bl-rate-cl0} and \eqref{eq:b-hat-diff}
gives 
\[
b(t)=\{b(t)-\wh b(t)\}-\int_{t}^{T}\wh b_{t}dt'=(\ell^{2}+o_{t\to T}(1))\cdot(T-t).
\]
For the proof of \eqref{eq:bl-rate-cl3}, we use $\wh b>0$ and \eqref{eq:b-hat-mod1}
to have 
\[
\lmb_{t}\leq\lmb_{t}+\frac{\wh b}{\lmb}\leq C\lmb.
\]
Applying Grönwall's inequality gives the almost decreasing property
\eqref{eq:bl-rate-cl3}.

\textbf{Step 4.} Smallness of $\eta$ and proof of \eqref{eq:eta-asymp}.

In this step, we claim that 
\begin{equation}
|\eta|\aleq\lmb(t)\cdot(T-t).\label{eq:bl-rate-cl4}
\end{equation}
Note that \eqref{eq:bl-rate-cl4} gives \eqref{eq:eta-asymp} in view
of $\lmb(t)\aleq T-t$. To show \eqref{eq:bl-rate-cl4}, we write
\eqref{eq:b-hat-mod2} as $|\wh{\eta}_{t}|\aleq\lmb$ and integrate
it backwards in time using \eqref{eq:bl-rate-cl0}, \eqref{eq:b-hat-diff},
and  \eqref{eq:bl-rate-cl3}: 
\[
\eta(t)\leq|\eta(t)-\wh{\eta}(t)|+\int_{t}^{T}|\wh{\eta}_{t}|dt'\aleq\lmb^{2}+\int_{t}^{T}\lmb dt'\aleq\lmb\cdot(T-t)+\lmb\int_{t}^{T}dt'\aleq\lmb(t)\cdot(T-t).
\]

\textbf{Step 5.} Completion of the proof.

It suffices to show \eqref{eq:lmb-asymp} and \eqref{eq:gmm-asymp}.
For the proof of \eqref{eq:lmb-asymp}, we combine \eqref{eq:bl-rate-cl1},
\eqref{eq:bl-rate-cl4}, and $b>0$ to have 
\[
\frac{b}{\lmb}\to\ell.
\]
Substituting the $b$-asymptotics \eqref{eq:b-asymp} into the above
gives \eqref{eq:lmb-asymp}:
\[
\lmb(t)=(\ell+o_{t\to T}(1))\cdot(T-t).
\]
For the proof of \eqref{eq:gmm-asymp}, we substitute \eqref{eq:lmb-asymp}
and \eqref{eq:bl-rate-cl4} into \eqref{eq:b-hat-mod1} to have 
\[
|\gmm_{t}|\aleq\frac{1}{\lmb^{2}}(|\eta|+\mu^{2})\aleq\frac{T-t}{\lmb}+1\aleq1.
\]
Thus $\gmm_{t}$ is integrable near the blow-up time and \eqref{eq:gmm-asymp}
follows. This completes the proof.
\end{proof}
\begin{rem}
\label{rem:H3/2-rigidity}If one has modulation estimates of the form
\begin{equation}
\begin{aligned}\Big|\frac{\lmb_{s}}{\lmb}+b\Big|+|\gmm_{s}-(m+1)\eta| & \aleq\|\eps\|_{\loc}+\beta^{2},\\
|b_{s}+b^{2}+\eta^{2}|+|\eta_{s}| & \aleq\beta\|\eps\|_{\loc}+\|\eps\|_{\loc}^{2}+\beta^{3}
\end{aligned}
\label{eq:rmk-formal-mod-est}
\end{equation}
with some $\|\eps\|_{\loc}$ satisfying
\begin{equation}
\int_{t^{\ast}}^{T}\frac{\|\eps\|_{\loc}^{2}}{\lmb^{4}}dt<+\infty,\label{eq:rmk-spacetime-bd}
\end{equation}
then the above proof still works with simple modifications. Indeed,
one has 
\begin{equation}
\Big(\frac{\beta}{\lmb}+\lmb\Big)_{t}\aleq\Big(1+\frac{\|\eps\|_{\loc}^{2}}{\lmb^{4}}\Big)\Big(\frac{\beta}{\lmb}+\lmb\Big)\label{eq:rmk-diff-ineq}
\end{equation}
and hence \eqref{eq:rmk-spacetime-bd} still guarantees $\frac{\beta}{\lmb}\to\ell>0$
in view of Grönwall's inequality, which is the most important part
of the proof. The previous proof is presented in the case where $b,\eta,\|\eps\|_{\loc}$
are replaced by $\wh b,\wh{\eta},\mu^{2}$ and \eqref{eq:rmk-spacetime-bd}
is then obviously satisfied. This says that only the spacetime control
of the form \eqref{eq:rmk-spacetime-bd} is important and the $\lmb^{7}$-factor
of the spacetime control \eqref{eq:integ-monot} (note that $\mu^{-5}ds\sim\lmb^{-7}dt$)
might be too strong. As the exponent $7$ is obtained from the numerology
$7=2s+1$ with $s=3$, the spacetime control of the form \eqref{eq:rmk-spacetime-bd}
might be available for any $H_{m}^{3/2}$ finite-time blow-up solutions.
Thus Theorem~\ref{thm:blow-up-rigidity} might be true for $H_{m}^{3/2}$-solutions.
\end{rem}

\subsection{Asymptotic decomposition and structure of asymptotic profile}
\begin{proof}[Completion of the proof of Theorem~\ref{thm:blow-up-rigidity}]
By the rigidity Proposition~\ref{prop:AsympModPar}, it suffices
to show the statements regarding the asymptotic profile. We note that,
by Theorem~\ref{thm:asymptotic-description}, there exist parameters
$\wh{\lmb}(t)\to0$ and $\wh{\gmm}(t)$ (possibly not equal to $\lmb(t)$
and $\gmm(t)$) and the asymptotic profile $z^{\ast}$ such that $u(t)-Q_{\wh{\lmb}(t),\wh{\gmm}(t)}\to z^{\ast}$
in $L^{2}$. Therefore, it suffices to show the following:
\begin{enumerate}
\item \label{enu:1}$u(t)-Q_{\lmb(t),\gmm(t)}\to z^{\ast}$ in $L^{2}$.
\item \label{enu:2}$z^{\ast}$ satisfies the properties as in Theorem~\ref{thm:blow-up-rigidity}.
\end{enumerate}
\textbf{Step 1.} Proof of (\ref{enu:1}).

In this step, we show the first item (\ref{enu:1}), namely, 
\begin{equation}
u(t)-Q_{\lmb(t),\gmm(t)}\to z^{\ast}\text{ in }L^{2}.\label{eq:asymp-decomp-1}
\end{equation}
For this purpose, we claim the following: 
\begin{equation}
\begin{aligned}\text{If \ensuremath{u(t_{n})-Q_{\lmb(t_{n}),\gmm(t_{n})}\weakto\td z^{\ast}} in \ensuremath{H_{m}^{1}}}\\
\text{for some \ensuremath{t_{n}\to T} and \ensuremath{\td z^{\ast}\in H_{m}^{1}},} & \text{ then \ensuremath{\td z^{\ast}=z^{\ast}}.}
\end{aligned}
\label{eq:asymp-decomp-2}
\end{equation}
Assuming this claim, we finish the proof of \eqref{eq:asymp-decomp-1}.
Note that $u(t)-Q_{\lmb(t),\gmm(t)}$ is $H_{m}^{1}$-bounded: 
\[
\|u-Q_{\lmb,\gmm}\|_{H_{m}^{1}}\aleq\|(P-Q)+\eps\|_{L^{2}}+\lmb^{-1}\|(P-Q)+\eps\|_{\dot{H}_{m}^{1}}\aleq1+(\mu/\lmb)\aleq1.
\]
Combining this fact with the further subsequence argument and the
claim \eqref{eq:asymp-decomp-2}, we have $u(t)-Q_{\lmb(t),\gmm(t)}\weakto z^{\ast}$
in $H_{m}^{1}$. By the Rellich--Kondrachov theorem, we have $u(t)-Q_{\lmb(t),\gmm(t)}\to z^{\ast}$
in $L_{\loc}^{2}$. On the other hand, since $\lmb(t),\wh{\lmb}(t)\to0$
and $u(t)-Q_{\wh{\lmb}(t),\wh{\gmm}(t)}\to z^{\ast}$, we have $\chf_{r\geq R}\{u(t)-Q_{\lmb(t),\gmm(t)}\}\to\chf_{r\geq R}z^{\ast}$
for any $R>0$. Therefore, we have \eqref{eq:asymp-decomp-1}.

It remains to show \eqref{eq:asymp-decomp-2}. This again follows
from the Rellich--Kondrachov theorem and $\lmb(t),\wh{\lmb}(t)\to0$;
for any $R>1$ we have
\begin{align*}
\chf_{[R^{-1},R]}\td z^{\ast} & =\lim_{n\to\infty}\chf_{[R^{-1},R]}\{u(t_{n})-Q_{\lmb(t_{n}),\gmm(t_{n})}\}\\
 & =\lim_{n\to\infty}\chf_{[R^{-1},R]}\{u(t_{n})-Q_{\wh{\lmb}(t_{n}),\wh{\gmm}(t_{n})}\}=\chf_{[R^{-1},R]}z^{\ast},
\end{align*}
where the limits are taken in the $L^{2}$-topology.

\textbf{Step 2.} Proof of (\ref{enu:2}).

In this step, we show the second item of Theorem~\ref{thm:blow-up-rigidity}.
Recall from \eqref{eq:asymp-decomp-1} that the asymptotic profile
$z^{\ast}$ is the $L^{2}$-limit of $u-Q^{\sharp}=[P-Q]^{\sharp}+\eps^{\sharp}$.
The limit of $[P-Q]^{\sharp}$ gives the singular part $z_{\sg}^{\ast}$
of $z^{\ast}$ and the limit of $\eps^{\sharp}$ gives the regular
part $z_{\rg}^{\ast}$. Thus it suffices to show the following two
claims: 
\begin{align}
 & [P-Q]^{\sharp}(t,r)\to\begin{cases}
-e^{i\gmm^{\ast}}\ell^{2}\frac{\sqrt{2}}{8}\chi(\ell r)\cdot r\text{ in }L^{2} & \text{if }m=1,\\
e^{i\gmm^{\ast}}\ell^{3}\frac{\sqrt{2}}{64}i\chi(\ell r)\cdot r^{2}\text{ in }L^{2} & \text{if }m=2,\\
0\text{ in }L^{2} & \text{if }m\geq3,
\end{cases}\label{eq:struc-z-1}\\
 & \|\chf_{r\geq\lmb(t)}|\eps^{\sharp}(t)|_{-3}\|_{L^{2}}\aleq1.\label{eq:struc-z-2}
\end{align}

\emph{Proof of \eqref{eq:struc-z-1}.} When $m=1$, we show that only
the $T_{2,0}^{(0)}$-part\footnote{Recall the notation \eqref{eq:Tk_j-decomp}.}
of $P-Q$ contributes to the singular part of $z^{\ast}$. Indeed,
using the pointwise bounds in Lemma~\ref{lem:TaylorExpands} and
the asymptotics of the modulation parameters in Proposition~\ref{prop:AsympModPar},
we have 
\begin{align*}
\|\chi_{B_{1}}T_{1}^{(0)}\|_{L^{2}} & \aleq\|\chf_{(0,2B_{1}]}\beta\langle y\rangle^{-1}\|_{L^{2}}\aleq\beta|\log\beta|^{1/2}\to0,\\
\|\chi_{B_{1}}T_{3}^{(0)}\|_{L^{2}} & =0,\\
\|\chi_{B_{1}}(T_{1,1}^{(0)}+T_{0,2}^{(0)})\|_{L^{2}} & \aleq\|\chf_{(0,2B_{1}]}|\eta|\beta y\|_{L^{2}}\aleq|\eta|\beta^{-1}\aleq(T-t)\to0.
\end{align*}
For the $T_{2,0}^{(0)}$-part of $P$, using \eqref{eq:asymp-T-quad}
and \eqref{eq:Q-formula}, we have 
\begin{align*}
 & \|\chi_{B_{1}}(T_{2,0}^{(0)}+b^{2}\tfrac{\sqrt{2}}{8}y)\|_{L^{2}}\aleq\|\chf_{(0,2B_{1}]}b^{2}\langle y\rangle^{-1}\log y\|_{L^{2}}\aleq b^{2}|\log\beta|^{3/2}\to0
\end{align*}
and using Proposition~\ref{prop:AsympModPar}, we have 
\begin{align*}
 & \frac{e^{i\gmm(t)}}{\lmb(t)}\chi_{B_{1}}\Big(\frac{r}{\lmb(t)}\Big)\cdot\Big(-(b(t))^{2}\frac{\sqrt{2}}{8}\frac{r}{\lmb(t)}\Big)\\
 & \qquad=-e^{i\gmm(t)}\Big(\frac{b(t)}{\lmb(t)}\Big)^{2}\frac{\sqrt{2}}{8}\cdot\chi\Big(\frac{\beta(t)}{\lmb(t)}r\Big)\cdot r\to-e^{i\gmm^{\ast}}\ell^{2}\frac{\sqrt{2}}{8}\chi(\ell r)\cdot r\text{ in }L^{2}.
\end{align*}

When $m=2$, we show that only the $T_{3,0}^{(0)}$-part of $P-Q$
contributes to the singular part of $z^{\ast}$. Indeed, proceeding
as in the previous paragraph but using the pointwise bounds for the
$m=2$ case, we have 
\begin{align*}
\|\chi_{B_{1}}T_{1}^{(0)}\|_{L^{2}} & \aleq\|\chf_{(0,2B_{1}]}\beta\langle y\rangle^{-2}\|_{L^{2}}\aleq\beta\to0,\\
\|\chi_{B_{1}}T_{2}^{(0)}\|_{L^{2}} & \aleq\|\chf_{(0,2B_{1}]}\beta^{2}\|_{L^{2}}\aleq\beta\to0,\\
\|\chi_{B_{1}}(T_{2,1}^{(0)}+T_{1,2}^{(0)}+T_{0,3}^{(0)})\|_{L^{2}} & \aleq\|\chf_{(0,2B_{1}]}|\eta|\beta^{2}y^{2}\|_{L^{2}}\aleq|\eta|\beta^{-1}\aleq(T-t)\to0.
\end{align*}
For the $T_{3,0}^{(0)}$-part of $P$, using \eqref{eq:def-T0_3}
instead, we have 
\[
\|\chi_{B_{1}}(T_{3,0}^{(0)}-ib^{3}\tfrac{\sqrt{2}}{64}y^{2})\|_{L^{2}}\aleq\|\chf_{(0,2B_{1}]}b^{3}\langle y\rangle^{-4}\|_{L^{2}}\aleq b^{3}\to0
\]
and 
\begin{align*}
 & \frac{e^{i\gmm(t)}}{\lmb(t)}\chi_{B_{1}}\Big(\frac{r}{\lmb(t)}\Big)\cdot\Big(i(b(t))^{3}\frac{\sqrt{2}}{64}\Big(\frac{r}{\lmb(t)}\Big)^{2}\Big)\\
 & \qquad=e^{i\gmm(t)}\Big(\frac{b(t)}{\lmb(t)}\Big)^{3}\frac{\sqrt{2}}{64}i\cdot\chi\Big(\frac{\beta(t)}{\lmb(t)}r\Big)\cdot r^{2}\to e^{i\gmm^{\ast}}\ell^{3}\frac{\sqrt{2}}{64}i\chi(\ell r)\cdot r^{2}\text{ in }L^{2}.
\end{align*}

When $m\geq3$, there is no singular part of $z^{\ast}$: 
\[
\|P-Q\|_{L^{2}}\aleq\|\chf_{(0,2B_{1}]}(\beta\langle y\rangle^{-3}+\beta^{2}\langle y\rangle^{-1})\|_{L^{2}}\aleq\beta\to0.
\]
This completes the proof of \eqref{eq:struc-z-1}.

\emph{Proof of \eqref{eq:struc-z-2}.} This easily follows from scaling:
\[
\|\chf_{r\geq\lmb(t)}|\eps^{\sharp}(t)|_{-3}\|_{L^{2}}=\lmb^{-3}\|\chf_{y\geq1}|\eps(t)|_{-3}\|_{L^{2}}\aleq\lmb^{-3}\|\eps\|_{\dot{\calH}_{m}^{3}}\aleq1.
\]
This completes the proof of Theorem~\ref{thm:blow-up-rigidity}.
\end{proof}
\begin{rem}[On the proof of Corollary~\ref{cor:H33-soliton-resolution}]
\label{rem:cor-H33-soliton-res}Corollary~\ref{cor:H33-soliton-resolution}
is a standard consequence of Theorem~\ref{thm:blow-up-rigidity}
and pseudoconformal symmetry. Indeed, if a $H_{m}^{3,3}$ global solution
$u$ does not scatter, then $v\coloneqq\calC u$ becomes a $H_{m}^{3}$
finite-time blow-up solution. Thus $v$ satisfies the properties as
in Theorem~\ref{thm:blow-up-rigidity}. Applying the pseudoconformal
transform in reverse, we see that $u$ scatters to some $Q_{\ell,\gmm^{\ast}}$,
for some $\ell\in(0,\infty)$ and $\gmm^{\ast}\in\bbR/2\pi\bbZ$.
We omit the details and refer the reader to \cite[Section 5.2]{KimKwonOh2020arXiv}
or \cite[Section 4]{KimKwonOh2022arXiv1}.
\end{rem}

\appendix

\section{\label{sec:FormalRightInverses}Formal right inverses of linearized
operators}

In this appendix, we record explicit formulas of formal right inverses
for the linear operators $L_{Q}$, $A_{Q}$, $A_{Q}^{\ast}$, $H_{Q}$,
and $\td H_{Q}$ used in the profile construction (Section~\ref{sec:Modified-profiles}).
As $L_{Q}$, $A_{Q}$, and $H_{Q}$ have nontrivial kernels, their
right inverses are not necessarily unique. As in \cite{KimKwonOh2020arXiv},
we fix the choice of formal right inverses, denoted by $\out T^{-1}$
for a linear operator $T$, by requiring the \emph{outgoing property};
its Green's function $^{(T)}G(y,y')$ is determined by the properties
\begin{align*}
T\Big({}^{(T)}G(y,y')\Big) & =\delta_{y'}(y),\\
^{(T)}G(y,y') & =0\qquad\text{for }0<y<y'.
\end{align*}
Roughly speaking, this means that if $f(y)=0$ for $y\leq A$, then
$[\out T^{-1}f](y)=0$ also for $y\leq A$.

We begin with the explicit formula for the formal right inverse of
$L_{Q}$.
\begin{lem}[Formal right inverse of $L_{Q}$]
\label{lem:LQ-inv}Let $m\geq0$. Let 
\begin{equation}
J_{1}(y)\coloneqq\frac{1-y^{2m+2}}{1+y^{2m+2}},\qquad J_{2}(y)\coloneqq\frac{1-y^{2m+2}}{1+y^{2m+2}}\log y+\frac{2}{m+1}\frac{1}{1+y^{2m+2}}.\label{eq:def-J1J2}
\end{equation}
\begin{itemize}
\item (Other expressions for $J_{k}$) We have 
\begin{equation}
J_{1}=\frac{m+1+A_{\tht}[Q]}{m+1}=\frac{\Lmb Q}{(m+1)Q},\qquad J_{2}=J_{1}\log y+\frac{2m+2+A_{\tht}[Q]}{(m+1)^{2}}.\label{eq:Jk-other-exp}
\end{equation}
\item (Asymptotics of $J_{1}$ and $J_{2}$ and their derivatives) For $y\geq2$
and $k\in\bbN$, 
\begin{equation}
|J_{1}+1|_{k}+(\log y)^{-1}|J_{2}+\log y|_{k}\aleq_{k}y^{-(2m+2)}.\label{eq:asymp-J1J2}
\end{equation}
\item (Formal right inverse of $L_{Q}$) The operator $\out L_{Q}^{-1}$
defined by the formula 
\begin{equation}
\begin{aligned}[][\out L_{Q}^{-1}f](y) & \coloneqq Q(y)\Big[J_{1}(y)\int_{0}^{y}[\rd_{y}J_{2}](y')\frac{\Re f(y')}{Q(y')}y'dy'\\
 & \qquad-J_{2}(y)\int_{0}^{y}[\rd_{y}J_{1}](y')\frac{\Re f(y')}{Q(y')}y'dy'+\int_{0}^{y}\frac{i\Im f(y')}{Q(y')}dy'\Big].
\end{aligned}
\label{eq:def-LQ-inv}
\end{equation}
is a formal right inverse of $L_{Q}$, i.e., for any function $f:(0,\infty)\to\bbC$
such that the integrals of \eqref{eq:def-LQ-inv} are finite, we have
\begin{equation}
L_{Q}(\out L_{Q}^{-1}f)=f.\label{eq:apdx-1-1}
\end{equation}
\item (Smooth extension) If $f$ has a smooth (resp., analytic) $(m+1)$-equivariant
extension on $\bbR^{2}$, then $\out L_{Q}^{-1}f$ has a smooth (resp.,
analytic) $m$-equivariant extension on $\bbR^{2}$.
\end{itemize}
\end{lem}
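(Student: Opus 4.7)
The plan is to verify the four assertions in the stated order, with the bulk of the effort going into the inversion identity. The first two items are essentially algebraic. To establish item~1, I will first evaluate $A_{\tht}[Q]$ in closed form: substituting $u = (y')^{2m+2}$ in the defining integral and using \eqref{eq:Q-formula} yields $A_{\tht}[Q](y) = -2(m+1)y^{2m+2}/(1+y^{2m+2})$. The identity $(m+1)J_{1} = m+1+A_{\tht}[Q]$ is then arithmetic, and $\Lmb Q = (m+1)QJ_{1}$ follows from the Bogomol'nyi relation $\bfD_{Q}Q = 0$ (equivalently $y\rd_{y}Q/Q = m + A_{\tht}[Q]$), which gives $\Lmb Q = Q + y\rd_{y}Q = (1+m+A_{\tht}[Q])Q = (m+1)QJ_{1}$. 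For $J_{2}$, I compute $2m+2+A_{\tht}[Q] = (m+1)(1+J_{1}) = 2(m+1)/(1+y^{2m+2})$ and substitute. Item~2 is then immediate from these closed-form expressions, since $J_{1}+1 = 2/(1+y^{2m+2})$ is $O(y^{-(2m+2)})$ together with all its $y$-derivatives in the region $y\geq 2$, and analogously for the $c$-part of $J_{2}$.

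For item~3, I exploit the $\bbR$-linearity of $L_{Q}$ to split $f = f_{r} + if_{i}$ and handle the real/imaginary parts separately. For a purely imaginary input $i\phi$ with $\phi$ real, the nonlocal correction vanishes since $A_{\tht}[Q,i\phi] = -\tfrac{1}{2}\int_{0}^{y}\Re(Q\cdot i\phi)y'dy' = 0$, so $L_{Q}(i\phi) = \bfD_{Q}(i\phi) = iQ\rd_{y}(\phi/Q)$ using $\bfD_{Q}Q = 0$, matching the third term of \eqref{eq:def-LQ-inv}. For the real part, I aim to verify directly that $L_{Q}(Q(J_{1}I_{2} - J_{2}I_{1})) = \Re f$ with $I_{k}(y) = \int_{0}^{y}[\rd_{y}J_{k}](y')(\Re f(y')/Q(y'))y'dy'$. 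Two auxiliary identities carry the proof: first, $J_{1}' = -yQ^{2}/(2(m+1))$, obtained by direct differentiation of \eqref{eq:def-J1J2} and \eqref{eq:Q-formula}; second, the Wronskian identity
\begin{equation*}
W \coloneqq J_{1}J_{2}' - J_{2}J_{1}' = \frac{1}{y},
\end{equation*}
which, after inserting the closed forms and the expression $c = 2/((m+1)(1+y^{2m+2}))$ for $J_{2} - J_{1}\log y$, reduces to the purely algebraic identity $(1-y^{2m+2})^{2} + 4y^{2m+2} = (1+y^{2m+2})^{2}$. Now writing $L_{Q}(Q\psi) = Q\rd_{y}\psi + (Q/y)\int_{0}^{y}Q^{2}\psi\, y'dy'$ for real $\psi = J_{1}I_{2} - J_{2}I_{1}$ and using $I_{k}' = J_{k}'(\Re f/Q)y$, one finds $Q\psi' = Q(J_{1}'I_{2} - J_{2}'I_{1}) + yW\Re f = Q(J_{1}'I_{2} - J_{2}'I_{1}) + \Re f$. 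It remains to show $(1/y)\int_{0}^{y}Q^{2}\psi y'dy' = -(J_{1}'I_{2} - J_{2}'I_{1})$; differentiating both sides in $y$, using $yQ^{2} = -2(m+1)J_{1}'$, and matching the coefficients of $I_{1}$ and $I_{2}$ reduces this to the statement that both $J_{1}$ and $J_{2}$ satisfy the second-order ODE $y\psi'' + \psi' + yQ^{2}\psi = 0$. For $J_{1}$ this is a direct algebraic check using $\Lmb Q = (m+1)QJ_{1}$; for $J_{2}$ it follows from Abel's formula together with $yW = 1$.

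For item~4, when $f$ has a smooth $(m+1)$-equivariant extension on $\bbR^{2}$, the ratio $\Re f/Q$ extends to a smooth even function of $y$ vanishing like $y$ at the origin (since $f \sim y^{m+1}$ and $Q \sim y^{m}$), and similarly for $\Im f/Q$. The integrand defining $I_{1}$ is then smooth in $y'$ down to the origin, while the integrand for $I_{2}$ carries at worst a $\log y'$ factor (coming from the $J_{1}/y$ term inside $J_{2}'$). A Taylor expansion of the combination $J_{1}I_{2} - J_{2}I_{1}$ at $y = 0$ reveals that the $\log$ contributions cancel exactly---this is the standard cancellation for inhomogeneous solutions of an ODE with indicial roots both equal to zero---so $\out L_{Q}^{-1}f$ has a smooth $m$-equivariant extension; the analytic case follows from the same argument applied to convergent power series. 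The main obstacle, as I see it, is the bookkeeping in item~3---keeping track of the Wronskian through the integration by parts and checking that $J_{2}$ satisfies the same ODE as $J_{1}$---rather than any conceptual hurdle; once the identities $J_{1}' = -yQ^{2}/(2(m+1))$ and $yW = 1$ are in hand, everything else is direct calculation.
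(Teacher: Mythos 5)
Your proposal is correct and tracks the paper's proof closely for items~1--3: the closed form of $A_{\tht}[Q]$, the Bogomol'nyi-derived identity $\Lmb Q=(m+1)QJ_1$, the identity $J_1'=-yQ^2/(2(m+1))$, the Wronskian $yW=1$, and the reduction of the integral term to the second-order ODE $y\psi''+\psi'+yQ^2\psi=0$ are exactly the ingredients the paper uses. Where the paper integrates by parts in the $\int_0^y Q^2\psi\,y'dy'$ term (via $y'Q^2J_k=-\rd_y(y'\rd_y J_k)$) to get the cross-cancellation directly, you instead differentiate the target identity and match coefficients of $I_1,I_2$; those are equivalent manipulations. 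Your Abel's-formula remark for $J_2$ is slightly loose (Abel gives $W=C/y$ from the ODE, whereas you need the converse — that $W=1/y$ together with the ODE for $J_1$ forces $J_2$ to solve the ODE), but the computation goes through.

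For the smooth-extension item, the paper's route is a Fubini trick: it isolates the logarithm via $J_2=J_1\log y+\td J_2$ and rewrites
\[
J_1\int_0^y J_1'\log y'\,\td f\,y'dy'-J_1\log y\int_0^y J_1'\td f\,y'dy'
=-J_1\int_0^y\Big\{\int_0^{y'}J_1'\td f\,y''dy''\Big\}\frac{dy'}{y'},
\]
which has no logs, making smoothness manifest. Your Frobenius/Taylor-expansion argument (the indicial root is double, so the $\log y$ coefficients cancel in the variation-of-parameters combination) is a valid alternative but requires the observation that the coefficient of $y^n\log y$ in $I_2$ equals the coefficient of $y^n$ in $I_1$, which you state only implicitly; the Fubini manipulation makes this structural fact explicit. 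Two small factual slips worth fixing: $\Re f/Q$ extends to an \emph{odd} function vanishing like $y$ (not ``even''), since $f\sim y^{m+1}$ and $Q\sim y^m$; and the $\log y'$ factor in the $I_2$ integrand comes from the $J_1'\log y'$ piece of $J_2'$, not the $J_1/y'$ piece (the latter contributes a $1/y'$ that is tamed by the $y'$-vanishing of $\td f$ and is log-free).
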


\begin{rem}
The formula \eqref{eq:def-LQ-inv} for the real part can be derived
from the observation that 
\[
(\bfD_{Q}+\frac{1}{y})L_{Q}=(\bfD_{Q}+\frac{1}{y})\bfD_{Q}+Q^{2}
\]
is a \emph{local} second-order differential operator, the fact that
$\Lmb Q$ belongs to the kernel, and Wronskian relations of this operator.
Note that this shares a similar spirit with the observation in Jackiw--Pi
\cite{JackiwPi1990PRL} that the Bogomol'nyi equation \eqref{eq:Bogomol'nyi-eq}
can be linked to the Liouville equation, which is local.
\end{rem}

\begin{proof}
We omit the obvious proof of \eqref{eq:Jk-other-exp}--\eqref{eq:asymp-J1J2}.

We turn to the proof of \eqref{eq:apdx-1-1}. For the imaginary part
of $f$, we have $L_{Q}=\bfD_{Q}$ and hence \eqref{eq:apdx-1-1}
is immediate from the relation $\bfD_{Q}Q=0$. Henceforth, we assume
$f$ is real. Note that $J_{1}$ and $J_{2}$ solve 
\begin{equation}
\rd_{yy}J_{k}+\frac{1}{y}\rd_{y}J_{k}+Q^{2}J_{k}=0,\qquad\forall k\in\{1,2\},\label{eq:apdx-1-2}
\end{equation}
which easily follows from the expressions \eqref{eq:Jk-other-exp}
and the computation
\[
\Big(\rd_{yy}+\frac{1}{y}\rd_{y}\Big)A_{\tht}[Q]=\frac{1}{y}\rd_{y}\Big(-\frac{1}{2}y^{2}Q^{2}\Big)=-Q\Lmb Q.
\]
We also note the Wronskian relation 
\begin{equation}
J_{1}(\rd_{y}J_{2})-(\rd_{y}J_{1})J_{2}=\frac{1}{y}.\label{eq:apdx-1-3}
\end{equation}
Letting $\td f(y)\coloneqq f(y)/Q(y)$ and using $\bfD_{Q}Q=0$, we
have 
\begin{align*}
L_{Q}(\out L_{Q}^{-1}f) & =Q\Big\{\rd_{y}\Big[J_{1}\int_{0}^{y}(\rd_{y}J_{2})\td fy'dy'-J_{2}\int_{0}^{y}(\rd_{y}J_{1})\td fy'dy'\Big]\\
 & +\frac{1}{y}\int_{0}^{y}Q^{2}\Big[J_{1}\int_{0}^{y'}(\rd_{y}J_{2})\td fy''dy''-J_{2}\int_{0}^{y'}(\rd_{y}J_{1})\td fy''dy''\Big]y'dy'\Big)\Big\}.
\end{align*}
On one hand, we use the Wronskian relation \eqref{eq:apdx-1-3} to
obtain 
\begin{align*}
 & \rd_{y}\Big[J_{1}\int_{0}^{y}(\rd_{y}J_{2})\td fy'dy'-J_{2}\int_{0}^{y}(\rd_{y}J_{1})\td fy'dy'\Big]\\
 & =\td f(y')+\Big[(\rd_{y}J_{1})\int_{0}^{y}(\rd_{y}J_{2})\td fy'dy'-(\rd_{y}J_{2})\int_{0}^{y}(\rd_{y}J_{1})\td fy'dy'\Big].
\end{align*}
On the other hand, using the relation \eqref{eq:apdx-1-2} in the
form $yQ^{2}J_{k}=-\rd_{y}(y\rd_{y}J_{k})$ and integrating by parts,
we obtain 
\begin{align*}
 & \frac{1}{y}\int_{0}^{y}Q^{2}\Big[J_{1}\int_{0}^{y'}(\rd_{y}J_{2})\td fy''dy''-J_{2}\int_{0}^{y'}(\rd_{y}J_{1})\td fy''dy''\Big]y'dy'\Big)\\
 & =-\Big[(\rd_{y}J_{1})\int_{0}^{y}(\rd_{y}J_{2})\td fy'dy'-(\rd_{y}J_{2})\int_{0}^{y}(\rd_{y}J_{1})\td fy'dy'\Big].
\end{align*}
Summing up the previous two displays completes the proof of \eqref{eq:apdx-1-1}.

We turn to the proof of the smoothness assertion. We recall a basic
fact (see \cite[Lemmas A.2 and A.3]{KimKwon2023MAMS} for the proof
of particularly similar statements) that a smooth function $f:(0,\infty)\to\bbC$
admits a smooth $m$-equivariant extension on $\bbR^{2}$ if and only
if its even or odd extension on $\bbR$ (i.e., $\td f(x)=(-1)^{m}f(|x|)$)
is also smooth and has vanishing Taylor coefficients at $0$ up to
$(|m|-1)$-th order. A similar statement holds for analytic extensions.
With this fact and the expression \eqref{eq:def-LQ-inv}, the assertion
is obvious for the imaginary part of $f$.

Henceforth, we show the assertion for the real part of $f$. We may
assume $f$ is real. Note that $J_{2}$ can be written in the form
\[
J_{2}=J_{1}\log y+\td J_{2},
\]
where $\td J_{2}(y)=\frac{2}{m+1}(1+y^{2m+2})^{-1}$. Since $\td J_{2}$
is analytic, its contribution to the expression \eqref{eq:def-LQ-inv}
is safe. By a similar reason, the contribution of $J_{1}(y)/y$ of
$\rd_{y}(J_{1}\log y)$ is safe. Thus it remains to show that (writing
$\td f(y)\coloneqq f(y)/Q(y)$ as before) 
\begin{equation}
Q\Big[J_{1}\int_{0}^{y}(\rd_{y}J_{1})\log y'\cdot\td fy'dy'-J_{1}\log y\int_{0}^{y}(\rd_{y}J_{1})\td fy'dy'\Big]\label{eq:apdx-1-4}
\end{equation}
has a smooth expression, in particular at $y=0$. This can be seen
by Fubini:
\[
-\eqref{eq:apdx-1-4}=QJ_{1}\int_{0}^{y}(\rd_{y}J_{1})\Big(\int_{y'}^{y}\frac{dy''}{y''}\Big)\td fy'dy'=QJ_{1}\int_{0}^{y}\Big\{\int_{0}^{y'}(\rd_{y}J_{1})\td fy''dy''\Big\}\frac{dy'}{y'}.
\]
This completes the proof of the smoothness assertion.
\end{proof}
It is easy to find formal right inverses of $A_{Q}$, $A_{Q}^{\ast}$,
$H_{Q}$, and $\td H_{Q}$ using $A_{Q}(yQ)=0$, $A_{Q}^{\ast}(\frac{1}{y^{2}Q})=0$,
$H_{Q}=A_{Q}^{\ast}A_{Q}$, and $\td H_{Q}=A_{Q}A_{Q}^{\ast}$. In
the following lemmas, we record explicit formulas of right inverses
of these operators. We omit the proofs, which are clear from their
explicit formulas. See also \cite[Section 3.2]{KimKwonOh2020arXiv}.
\begin{lem}[Formal inverse of $A_{Q}$]
\label{lem:AQ-inv}Let $m\geq0$.
\begin{itemize}
\item The operator $\out A_{Q}^{-1}$ defined by 
\begin{equation}
[\out A_{Q}^{-1}f](y)\coloneqq yQ(y)\int_{0}^{y}\frac{f(y')}{y'Q(y')}dy'\label{eq:def-AQ-inv}
\end{equation}
is a formal right inverse of $A_{Q}$.
\item If $f$ has a smooth (resp., analytic) $(m+2)$-equivariant extension
on $\bbR^{2}$, then $\out A_{Q}^{-1}f$ has a smooth (resp., analytic)
$(m+1)$-equivariant extension.
\end{itemize}
\end{lem}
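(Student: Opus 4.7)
The plan for the first bullet is a direct verification via reduction of order, exploiting that $yQ$ spans the kernel element of $A_Q$ we care about. I will set $g(y) \coloneqq yQ(y)\cdot I(y)$ with $I(y) \coloneqq \int_0^y f(y')/(y'Q(y'))\,dy'$, and compute $A_Qg = (\bfD_Q - \tfrac{1}{y})g$ directly. Using $\rd_y(yQ)=\Lmb Q$ together with the identity $\Lmb Q = (m+1+A_\tht[Q])Q$ (which is immediate from $\bfD_Q Q=0$, i.e.\ $y\rd_y Q = (m+A_\tht[Q])Q$), the terms proportional to $I$ collapse and one finds $\bfD_Q g = Q\cdot I + yQ\cdot I'$; subtracting $g/y = Q\cdot I$ leaves $A_Q g = yQ\cdot I'(y) = f(y)$, as desired. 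The only hypothesis required at this stage is finiteness of the integral defining $I(y)$, which is exactly what the statement assumes.

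For the second bullet, the plan is to analyze the behavior at $y=0$ using the standard structure theorem for equivariant functions: a function $\phi:(0,\infty)\to\bbC$ admits a smooth (resp.\ analytic) $k$-equivariant extension on $\bbR^{2}$ if and only if $\phi(y)=y^{|k|}\wh\phi(y^{2})$ for some $\wh\phi$ smooth (resp.\ analytic) near $0$. Writing $f(y)=y^{m+2}\wh f(y^{2})$ and $Q(y)=y^{m}\wh Q(y^{2})$ with $\wh Q(0)=\sqrt{8}(m+1)\neq 0$, the integrand becomes $f(y')/(y'Q(y')) = y'\cdot\wh f(y'^{2})/\wh Q(y'^{2})$, i.e.\ $y'$ times a smooth (analytic) function of $y'^{2}$. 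After the substitution $u=y'^{2}$, the integral from $0$ to $y$ is a smooth (analytic) function of $y^{2}$; multiplying by $yQ(y)=y^{m+1}\wh Q(y^{2})$ then yields $y^{m+1}$ times a smooth (analytic) function of $y^{2}$, which is the desired $(m+1)$-equivariant extension. Smoothness away from the origin is automatic since $Q$ is smooth and nonvanishing there.

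I do not foresee any significant obstacle: both claims are local, algebraic consequences of the factorization $A_Q = \bfD_Q - \tfrac{1}{y}$ with explicit kernel element $yQ$, combined with the structure theorem for equivariant functions. The only point that requires a moment of care is the convergence of $I(y)$ at $y'=0$, but this is automatic from the equivariance of $f$: the integrand vanishes to first order at the origin, so $I$ is in fact smooth through $0$ with the expected parity.
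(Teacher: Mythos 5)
Your proof is correct and is precisely the verification the paper has in mind (the paper omits the proof, noting it is clear from the explicit formula): the first bullet is a direct computation using $\rd_y(yQ)=(m+1+A_\theta[Q])Q$ (equivalently, $A_Q(yQ)=0$), and the second bullet follows from the standard characterization of smooth/analytic $k$-equivariant functions as $y^{|k|}$ times a smooth/analytic function of $y^2$, together with $\wh Q(0)\neq 0$.
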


\begin{lem}[Formal inverse of $A_{Q}^{\ast}$]
\label{lem:AQ-ast-inv}Let $m\geq0$.
\begin{itemize}
\item The operator $\out(A_{Q}^{\ast})^{-1}$ defined by 
\begin{equation}
[\out(A_{Q}^{\ast})^{-1}f](y)\coloneqq-\frac{1}{y^{2}Q(y)}\int_{0}^{y}y'Q(y')f(y')y'dy'\label{eq:def-AQ-ast-inv}
\end{equation}
is a formal right inverse of $A_{Q}$.
\item If in addition $f\perp yQ$ with respect to the $L^{2}(ydy)$-inner
product, then 
\begin{equation}
[\out(A_{Q}^{\ast})^{-1}f](y)=\frac{1}{y^{2}Q(y)}\int_{y}^{\infty}y'Q(y')f(y')y'dy'.\label{eq:AQ-ast-inv-orthog}
\end{equation}
\item If $f$ has a smooth (resp., analytic) $(m+1)$-equivariant extension
on $\bbR^{2}$, then $\out(A_{Q}^{\ast})^{-1}f$ has a smooth (resp.,
analytic) $(m+2)$-equivariant extension.
\end{itemize}
\end{lem}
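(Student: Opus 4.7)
The strategy is a standard variation of constants built on the one-dimensional kernel of $A_Q^\ast$. The first step is to identify this kernel: from $\bfD_Q Q = 0$ one reads off the ODE $yQ'/Q = m + A_\tht[Q]$, and then a direct computation with $A_Q^\ast = -\rd_y - \frac{m+2+A_\tht[Q]}{y}$ gives
\[
A_Q^\ast\Big(\frac{1}{y^2 Q}\Big) = \frac{2}{y^3 Q} + \frac{Q'}{y^2 Q^2} - \frac{m+2+A_\tht[Q]}{y^3 Q} = 0.
\]
With this in hand, the formula \eqref{eq:def-AQ-ast-inv} is recognized as the variation-of-constants ansatz $g = -(y^2 Q)^{-1}\int_0^y y'Q f\cdot y' dy'$, and a Leibniz computation together with the previous identity yields $A_Q^\ast g = f$, establishing the first bullet.

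The second bullet is immediate: under the orthogonality $\int_0^\infty y'Q\,f\,y'dy' = 0$, one may rewrite $\int_0^y = -\int_y^\infty$, giving \eqref{eq:AQ-ast-inv-orthog}.

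The third bullet, the smooth/analytic extension property, is where the only real work lies. Recall that a function $h:(0,\infty)\to\bbC$ admits a smooth $k$-equivariant extension on $\bbR^2$ iff its suitably parity-extended version on $\bbR$ is smooth and its Taylor coefficients at $0$ vanish up to order $|k|-1$. Using $Q(y) = \sqrt{8}(m+1) y^m(1+y^{2m+2})^{-1}$, one has the analytic factorizations $y'Q(y') = \sqrt{8}(m+1)(y')^{m+1}\phi(y')$ and $(y^2 Q(y))^{-1} = \frac{1}{\sqrt{8}(m+1)}y^{-(m+2)}\psi(y)$, where $\phi,\psi$ are positive analytic functions with $\phi(0)=\psi(0)=1$. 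If $f$ extends smoothly (resp.\ analytically) as an $(m+1)$-equivariant function, then $f(y) = y^{m+1}\tilde f(y)$ for a smooth (resp.\ analytic) even function $\tilde f$. Substituting these into \eqref{eq:def-AQ-ast-inv} gives
\[
[\out(A_Q^\ast)^{-1}f](y) = -y^{-(m+2)}\psi(y)\int_0^y (y')^{2m+3}\phi(y')\tilde f(y')\,dy',
\]
which, after expanding $\phi\tilde f$ in a (convergent) Taylor series and integrating term-by-term, becomes $y^{m+2}$ times a smooth (resp.\ analytic) even function. This is precisely the required form for a smooth (resp.\ analytic) $(m+2)$-equivariant extension on $\bbR^2$.

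The main obstacle is bookkeeping the precise parity and order of vanishing at the origin in the third bullet; once the factorizations $y'Q \sim (y')^{m+1}\cdot(\text{analytic even})$ and $(y^2Q)^{-1}\sim y^{-(m+2)}\cdot(\text{analytic even})$ are fixed, the remaining computations are term-by-term Taylor manipulations with no analytic subtleties.
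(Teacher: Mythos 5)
The paper itself omits the proof of this lemma (it states after Lemma~\ref{lem:LQ-inv} that the remaining proofs ``are clear from their explicit formulas''), so there is no paper proof to compare against; your proposal supplies the intended direct verification and is correct. The kernel identity $A_Q^\ast(1/(y^2Q))=0$, read off from $\bfD_Q Q=0$ via $Q'/Q=(m+A_\tht[Q])/y$, is right, and the ensuing variation-of-constants computation $A_Q^\ast(hF)=(A_Q^\ast h)F - hF' = -hF' = f$ with $h=-1/(y^2Q)$, $F=\int_0^y y'Qf\,y'dy'$ closes the first bullet. (You correctly read the paper's ``right inverse of $A_Q$'' as a typo for $A_Q^\ast$, which is what the formula inverts and what the paper uses in \eqref{eq:def-T2_3} and \eqref{eq:prof-cubic}.) The second bullet is immediate. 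For the third bullet, the factorizations $y'Q(y')=c\,(y')^{m+1}(1+(y')^{2m+2})^{-1}$ and $(y^2Q)^{-1}=c^{-1}y^{-(m+2)}(1+y^{2m+2})$ are exactly what the explicit formula \eqref{eq:Q-formula} provides, and the vanishing-order bookkeeping gives $y^{m+2}\times(\text{even})$ as required. The only slight looseness is invoking a ``convergent Taylor series'' for the merely smooth case, where no convergent series exists; the standard fix is the substitution $y'=ty$, which rewrites $\int_0^y (y')^{2m+3}h(y')\,dy'=y^{2m+4}\int_0^1 t^{2m+3}h(ty)\,dt$ with the latter factor manifestly smooth and even in $y$, or equivalently an iterated Taylor expansion with remainder. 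You flag exactly this caveat at the end, so the gap is cosmetic rather than substantive.
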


\begin{lem}[Formal right inverse of $H_{Q}$]
\label{lem:HQ-inv}Let $m\geq0$. Let 
\[
h_{1}(y)\coloneqq yQ(y),\qquad h_{2}(y)\coloneqq yQ(y)\int_{1}^{y}\frac{1}{(y')^{3}Q^{2}(y')}dy'.
\]
\begin{itemize}
\item The operator $\out(H_{Q})^{-1}$ defined by 
\begin{equation}
[\out H_{Q}^{-1}f](y)\coloneqq h_{1}(y)\int_{0}^{y}h_{2}(y')f(y')y'dy'-h_{2}(y)\int_{0}^{y}h_{1}(y')f(y')y'dy'\label{eq:def-HQ-inv}
\end{equation}
is a formal right inverse of $H_{Q}$.
\item If in addition $f\perp yQ$ with respect to the $L^{2}(ydy)$-inner
product, then
\begin{equation}
[\out H_{Q}^{-1}f](y)=h_{1}(y)\int_{0}^{y}h_{2}(y')f(y')y'dy'+h_{2}(y)\int_{y}^{\infty}h_{1}(y')f(y')y'dy'.\label{eq:HQ-inv-orthog}
\end{equation}
\item If $f$ has a smooth (resp., analytic) $(m+1)$-equivariant extension
on $\bbR^{2}$, then $\out H_{Q}^{-1}f$ has a smooth (resp., analytic)
$(m+1)$-equivariant extension.
\end{itemize}
\end{lem}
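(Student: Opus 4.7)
The approach is to verify the explicit formula directly, closely paralleling the proof of Lemma~\ref{lem:LQ-inv}. The key input is the Wronskian relation between $h_1 = yQ$ and $h_2$, which are the two linearly independent solutions of $H_Q h = 0$.

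First, I identify the homogeneous solutions. Since $A_Q(yQ) = 0$ (a consequence of $\bfD_Q Q = 0$) and $H_Q = A_Q^{\ast} A_Q$ by \eqref{eq:H-factor}, the function $h_1 = yQ$ solves $H_Q h_1 = 0$. Writing $H_Q$ in the Sturm--Liouville form $H_Q u = -\tfrac{1}{y}(y u')' + \tfrac{V_Q}{y^2} u$, any second solution $h_2$ satisfies $(yW)' = 0$ for the Wronskian $W = h_1 h_2' - h_2 h_1'$, so $yW$ is constant. Fixing the normalization $yW \equiv 1$ gives $(h_2/h_1)' = 1/(y^3 Q^2)$, and integrating from $y = 1$ (a convenient choice which absorbs the free additive multiple of $h_1$ by fixing $h_2(1) = 0$) produces the stated formula for $h_2$.

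Next, I verify $H_Q(\out H_Q^{-1} f) = f$ by direct computation. Differentiating $P = \out H_Q^{-1} f$ twice, the boundary terms in $P'$ cancel, and the boundary terms in $P''$ contribute $(h_1' h_2 - h_1 h_2')(y) \cdot y f(y) = -f(y)$ thanks to the Wronskian identity. The remaining terms combine, using $H_Q h_1 = H_Q h_2 = 0$, to yield $H_Q P = f$. The orthogonality case \eqref{eq:HQ-inv-orthog} then follows immediately by writing $\int_0^y h_1 f y' dy' = -\int_y^\infty h_1 f y' dy'$ using $(h_1, f)_{L^2(ydy)} = 0$.

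The main technical obstacle is the smoothness assertion, since $H_Q$ has a regular singular point at $y = 0$ with indicial roots $\pm(m+1)$: the function $h_1 \sim y^{m+1}$ is the regular $(m+1)$-equivariant smooth solution, while $h_2 \sim -\tfrac{1}{2m+2} y^{-(m+1)}$ is singular. For $f$ admitting a smooth $(m+1)$-equivariant extension, one has $f(y) = y^{m+1} g(y^2)$ for a smooth $g$, so that $\int_0^y h_1 f y' dy' = O(y^{2m+4})$ and $\int_0^y h_2 f y' dy' = O(y^2)$, giving that each summand of $P$ is of order $O(y^{m+3})$ at the origin and, in particular, that all the integrals in \eqref{eq:def-HQ-inv} converge. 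Hence $P$ is the unique solution of $H_Q P = f$ that is $o(h_2)$ at $0$. A Frobenius-type argument then matches $P$, order by order, to a power series in $y^2$ multiplied by $y^{m+1}$, yielding the required smooth $(m+1)$-equivariant extension; here one must track that $Q^2$, $V_Q$, and $f/y^{m+1}$ all depend analytically on $y^2$ alone so that the parity needed for a smooth equivariant extension on $\bbR^2$ is preserved through the recursion. The analytic case follows by the same argument applied to convergent power series.
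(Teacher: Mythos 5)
Your proof is correct but takes a different route from what the paper intends. The paper omits the proof entirely, remarking only that the formal right inverses of $A_{Q}$, $A_{Q}^{\ast}$, $H_{Q}$, $\td H_{Q}$ are ``easy to find \dots using $A_{Q}(yQ)=0$, $A_{Q}^{\ast}(\tfrac{1}{y^{2}Q})=0$, $H_{Q}=A_{Q}^{\ast}A_{Q}$, $\td H_{Q}=A_{Q}A_{Q}^{\ast}$.'' The implied construction is therefore the composition $\out H_{Q}^{-1}=\out A_{Q}^{-1}\circ\out(A_{Q}^{\ast})^{-1}$: one checks directly that $H_{Q}(\out A_{Q}^{-1}\out(A_{Q}^{\ast})^{-1}f)=A_{Q}^{\ast}(A_{Q}^{\ast})^{-1}f=f$, and then an integration by parts (using $v(y')\coloneqq\int_{1}^{y'}\tfrac{ds}{s^{3}Q^{2}(s)}=h_{2}/h_{1}$) turns the iterated-integral expression for the composition into the Wronskian form \eqref{eq:def-HQ-inv}. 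That route has two advantages you lose with the direct Wronskian verification patterned on Lemma~\ref{lem:LQ-inv}: the outgoing property and, more importantly, the smooth/analytic extension assertion follow at once from the corresponding assertions in Lemmas~\ref{lem:AQ-inv} and \ref{lem:AQ-ast-inv} (a composition of maps preserving smooth equivariant extensions preserves them). Your Frobenius-type argument for smoothness is in principle fine --- the indicial roots of $H_{Q}$ at $y=0$ are $\pm(m+1)$, you correctly show $\out H_{Q}^{-1}f=O(y^{m+3})$, and the unique such solution with $f=y^{m+1}g(y^{2})$ is indeed of the form $y^{m+1}\times(\text{smooth function of }y^{2})$ --- but as written it is sketched rather than proved, and it quietly elides the fact that the roots differ by the integer $2(m+1)$, so that the second Frobenius solution and indeed $h_{2}$ itself carry a $y^{m+1}\log y$ term which must be checked not to appear in $\out H_{Q}^{-1}f$. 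A very small cosmetic point: the constant in your asymptotic $h_{2}\sim-\tfrac{1}{2m+2}y^{-(m+1)}$ should carry the factor $Q(y)/y^{m}\to\sqrt{8}(m+1)$; this does not affect the argument.
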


\begin{lem}[Formal right inverses of $\td H_{Q}$]
\label{lem:HtdQ-inv}Let $m\geq0$. Let 
\begin{align*}
\td h_{1}(y) & \coloneqq\frac{1}{y^{2}Q(y)}\int_{0}^{y}(y'Q(y'))^{2}\,y'dy'\\
\td h_{2}(y) & \coloneqq\begin{cases}
\frac{1}{y^{2}Q(y)}\cdot\frac{1}{\frkp}\int_{y}^{\infty}(y'Q(y'))^{2}\,y'dy' & \text{if }m\geq1,\\
\frac{1}{y^{2}Q(y)} & \text{if }m=0,
\end{cases}
\end{align*}
where $\frkp=\int_{0}^{\infty}(yQ(y))^{2}\,ydy$.
\begin{itemize}
\item The operators $\out\td H_{Q}^{-1}$ and $\inn\td H_{Q}^{-1}$ defined
by 
\begin{align}
[\out\td H_{Q}^{-1}f](y) & \coloneqq\td h_{2}(y)\int_{0}^{y}\td h_{1}(y')f(y')y'dy'-\td h_{1}(y)\int_{0}^{y}\td h_{2}(y')f(y')y'dy',\label{eq:def-HtdQ-inv-out}\\{}
[\inn\td H_{Q}^{-1}f](y) & \coloneqq\td h_{2}(y)\int_{0}^{y}\td h_{1}(y')f(y')y'dy'+\td h_{1}(y)\int_{y}^{\infty}\td h_{2}(y')f(y')y'dy'\label{eq:def-HtdQ-inv-in}
\end{align}
are formal right inverses of $\td H_{Q}$. Note that the definition
of $\inn\td H_{Q}^{-1}f$ requires a suitable integrability condition
for $f$ in the region $y\geq1$.
\item If $f$ has a smooth (resp., analytic) $(m+2)$-equivariant extension
on $\bbR^{2}$, then $\out\td H_{Q}^{-1}f$ has a smooth (resp., analytic)
$(m+2)$-equivariant extension. The analogous statement holds for
$\inn\td H_{Q}^{-1}f$ when it is well-defined.
\end{itemize}
\end{lem}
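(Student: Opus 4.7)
The plan is to build the inverse formulas from the factorization $\td H_{Q}=A_{Q}A_{Q}^{\ast}$ together with standard reduction of order. First I would verify that $\td h_{1}$ and $\td h_{2}$ both lie in the kernel of $\td H_{Q}$. The key starting observation is $A_{Q}^{\ast}(\frac{1}{y^{2}Q})=0$ from Lemma~\ref{lem:AQ-ast-inv}, so $\frac{1}{y^{2}Q}$ solves $\td H_{Q}\phi=0$ directly; this is already the formula for $\td h_{2}$ when $m=0$. For the second linearly independent solution, I would apply reduction of order to $\td H_{Q}=-\rd_{yy}-\frac{1}{y}\rd_{y}+\frac{\td V_{Q}}{y^{2}}$: any Wronskian $W=\td h_{1}'\td h-\td h_{1}\td h'$ of two homogeneous solutions satisfies $(yW)'=0$, so $W=c/y$, and integrating $\td h/\frac{1}{y^{2}Q}$ against $c/y$ produces a primitive proportional to $\frac{1}{y^{2}Q}\int(y')^{3}Q^{2}(y')\,\frac{dy'}{y'}$. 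Choosing integration limits at $0$ and using $\int y^{3}Q^{2}\frac{dy}{y}=\int(yQ)^{2}\,ydy$ recovers the stated $\td h_{1}$. When $m\geq1$, finiteness of $\frkp=\|yQ\|_{L^{2}}^{2}$ lets one also define the complementary primitive $\td h_{2}$ from the tail integral $\int_{y}^{\infty}$; the two descriptions are consistent because $\td h_{1}+\frkp\td h_{2}=\frac{\frkp}{y^{2}Q}$, which also lies in the kernel.

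Second, I would verify the right-inverse identities $\td H_{Q}(\out\td H_{Q}^{-1}f)=f$ and $\td H_{Q}(\inn\td H_{Q}^{-1}f)=f$ by a routine variation-of-parameters computation, exactly paralleling the derivation of \eqref{eq:apdx-1-1} in the proof of Lemma~\ref{lem:LQ-inv}. The Wronskian of the pair $(\td h_{1},\td h_{2})$ can be evaluated explicitly using $\td h_{1}+\frkp\td h_{2}=\frac{\frkp}{y^{2}Q}$ (for $m\geq1$, and an analogous direct calculation for $m=0$), which produces the constant needed to make the second-derivative terms collapse correctly. The ``outgoing'' vs.\ ``ingoing'' nature of $\out\td H_{Q}^{-1}$ and $\inn\td H_{Q}^{-1}$ is then just the choice of integration limits $\int_{0}^{y}$ versus $\int_{y}^{\infty}$ in one of the two factors.

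Third, for the smoothness assertion I would use the indicial analysis at $y=0$. Since $Q(y)\sim y^{m}$, the two indicial exponents of $\td H_{Q}\phi=0$ at the origin are $\pm(m+2)$, and one checks from the explicit formulas that $\td h_{1}(y)=O(y^{m+2})$ while $\td h_{2}(y)=O(y^{-m-2})$. If $f$ has a smooth $(m+2)$-equivariant extension on $\bbR^{2}$, then $f(y)=O(y^{m+2})$ and its odd/even extension has vanishing Taylor coefficients up to order $m+1$ at $0$. A short power-counting then shows that each of $\td h_{2}(y)\int_{0}^{y}\td h_{1}fy'dy'$ and $\td h_{1}(y)\int_{0}^{y}\td h_{2}fy'dy'$ vanishes to order $y^{m+4}=y^{m+2}\cdot y^{2}$ near the origin, which is a smooth $(m+2)$-equivariant function on $\bbR^{2}$; the analytic case follows because all ingredients ($Q$, $\frac{1}{y^{2}Q}$ away from origin, and $f$) are analytic and the pole cancellation in the sum can be certified by expanding $f$ to order $m+2$ and substituting. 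The same argument applies to $\inn\td H_{Q}^{-1}f$ when the integrability at infinity is satisfied, since the smoothness issue is entirely at the origin.

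The main obstacle is the smoothness (and especially analyticity) claim, not the inversion identity. The subtlety is that $\td h_{2}$ individually is singular at $y=0$, so smoothness of $\out\td H_{Q}^{-1}f$ really depends on a cancellation between the two terms that only works once $f$ has the correct equivariance and vanishing order. The cleanest route is probably a Fubini-style rearrangement analogous to the manipulation of \eqref{eq:apdx-1-4} in the proof of Lemma~\ref{lem:LQ-inv}, which rewrites the potentially singular combination as an iterated integral whose integrand is manifestly analytic; I expect this rewriting to be the technically delicate step.
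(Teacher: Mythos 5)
The paper omits this proof outright ("We omit the proofs, which are clear from their explicit formulas", with a pointer to \cite[Section 3.2]{KimKwonOh2020arXiv}), so there is no internal argument to compare against; your outline is the natural and correct filling-in of those details. The route is sound: derive $\td h_1,\td h_2$ from the factorization $\td H_Q = A_Q A_Q^{\ast}$ plus the known kernel element $1/(y^2Q)$ and reduction of order, verify the right-inverse identity by variation of parameters (mirroring the derivation of \eqref{eq:apdx-1-1} for $L_Q$), and settle smoothness at $y=0$ by indicial analysis.

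Two small corrections. First, a reduction-of-order slip: the primitive is $\frac{1}{y^{2}Q}\int_{0}^{y}(y')^{3}Q^{2}(y')\,dy'$, since $W/u_1^2=(c/y)\cdot y^4Q^2=cy^3Q^2$; the identity you quote, $\int y^{3}Q^{2}\,\frac{dy}{y}=\int(yQ)^{2}\,y\,dy$, has a spurious $1/y$ on the left and should read $\int y^3 Q^2\,dy = \int (yQ)^2\,y\,dy$, a tautology rather than a nontrivial simplification.

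Second, and more substantively, the last paragraph misidentifies the mechanism behind the smoothness claim. The Fubini rearrangement of \eqref{eq:apdx-1-4} is genuinely needed for $\out L_Q^{-1}$ because $J_2$ carries a $\log y$ factor, which must cancel against the $\log y$ coming from the other term. For $\td H_Q$ there is no logarithm: since $Q(y)=y^m\cdot(\text{smooth even})$, the explicit kernel element $1/(y^2Q)=y^{-(m+2)}\cdot(\text{smooth even})$ is $\log$-free, and hence so are $\td h_1=y^{m+2}\cdot(\text{smooth even})$ and $\td h_2=y^{-(m+2)}\cdot(\text{smooth even})$, despite the indicial roots $\pm(m+2)$ differing by an integer. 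Writing $f=y^{m+2}\cdot(\text{smooth even})$, one gets $\td h_2 f\,y'=y'\cdot(\text{smooth even})$ and $\td h_1 f\,y'=(y')^{2m+5}\cdot(\text{smooth even})$; integrating from $0$ (e.g.\ via $y'=ty$ to keep the resulting function manifestly smooth and even in $y$) and multiplying back shows that \emph{each of the two terms} in \eqref{eq:def-HtdQ-inv-out} is individually of the form $y^{m+4}\cdot(\text{smooth even})$, hence individually a smooth $(m+2)$-equivariant function. No cross-cancellation between the two terms is required, so your termwise power counting already closes the argument and the Fubini device of \eqref{eq:apdx-1-4} is unnecessary here.
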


\section{\label{sec:Linear-coercivity-estimates}Linear coercivity estimates}

In this appendix, we give a sketch of the proof of Lemma~\ref{lem:LinearCoercivity}.
The estimates \eqref{eq:AQ-coer-H1-to-H2}, \eqref{eq:LQ-coer-L2-to-H1},
and \eqref{eq:LQ-coer-H2-to-H3} are already proved in \cite[Appendix A]{KimKwon2023AnnPDE}.
The reader may fill the omitted details by following the proofs therein.

We collect some notation. Recall the Cauchy--Riemann operator $\rd_{+}=\rd_{y}-\frac{m}{y}$
acting on $m$-equivarirant functions \eqref{eq:def-euc-CR}. We denote
$\dot{\calH}_{m}^{1}=\dot{H}_{m}^{1}$. We use the $\dot{\calH}_{m,\leq}^{k}$-
and $\dot{\calH}_{m,\geq}^{k}$-norms to denote the $\dot{\calH}_{m}^{k}$-norms
restricted on $(0,\frac{1}{2}]$ and $[\frac{1}{2},\infty)$, respectively.
More precisely, one adds $\chf_{(0,\frac{1}{2}]}$ or $\chf_{[\frac{1}{2},\infty)}$
to the norms appearing in the RHS of the definitions of $\dot{\calH}_{m}^{k}$.
Note that $\|f\|_{\dot{\calH}_{m,\geq}^{k}}\sim\|\chf_{[\frac{1}{2},\infty)}|f|_{-k}\|_{L^{2}}$.

For the proof of Lemma~\ref{lem:LinearCoercivity}, we first note
that \eqref{eq:H-td-coer} is immediate from $\td V_{Q}\geq1$. The
remaining coercivity estimates for $A_{Q}$ and $L_{Q}$ follow from
combining the corresponding subcoercivity estimates (Lemmas \ref{lem:subcoer-LQ}
and \ref{lem:subcoer-AQ}) below with a soft functional analytic argument;
see for instance \cite[Lemma A.6]{KimKwon2023AnnPDE}.
\begin{lem}[Subcoercivity estimates for $L_{Q}$]
\label{lem:subcoer-LQ}For any $k\in\{0,1,2\}$ and $f\in C_{c,m}^{\infty}$,
we have 
\begin{equation}
\|L_{Q}f\|_{\dot{H}_{m+1}^{k}}+\|\chf_{y\sim1}f\|_{L^{2}}\sim\|f\|_{\dot{\calH}_{m}^{k+1}}.\label{eq:subcoer-LQ}
\end{equation}
Moreover, the kernel of $L_{Q}:H_{m,\loc}^{1}\to L_{\loc}^{2}$ is
$\mathrm{span}_{\bbR}\{\Lmb Q,iQ\}$.
\end{lem}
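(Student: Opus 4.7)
The plan is to establish both directions of the norm equivalence in \eqref{eq:subcoer-LQ} and then characterize the kernel separately. The easy direction, namely $\|L_Q f\|_{\dot{H}_{m+1}^{k}} + \|\chf_{y\sim 1}f\|_{L^2} \aleq \|f\|_{\dot{\calH}_m^{k+1}}$, follows from decomposing $L_Q f = \bfD_Q f - \frac{2}{y}A_\tht[Q,f]Q$ and estimating each piece: the local part $\bfD_Q f = \rd_y f - \frac{m+A_\tht[Q]}{y}f$ satisfies the pointwise bound $|\bfD_Q f|_k \aleq |f|_{k+1} + \langle y\rangle^{-1}|f|_k$ (using $A_\tht[Q]$ is bounded), which is controlled by $\|f\|_{\dot\calH_m^{k+1}}$ through the generalized Hardy inequality \eqref{eq:GenHardySection2} and the definitions \eqref{eq:def-calH2}--\eqref{eq:def-calH3}; the nonlocal part enjoys the rapid decay $|Q|_j \aleq \langle y\rangle^{-m-2}$ and is bounded using the integral operator estimates \eqref{eq:integral-operator-bdd1}--\eqref{eq:integral-operator-bdd2} applied to $A_\tht[Q,f]$. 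The local term $\|\chf_{y\sim 1}f\|_{L^2}$ is trivially dominated by the Hardy norm on compact regions.

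For the subcoercive lower bound, the key observation is that the nonlocal correction in $L_Q$ is \emph{localized} because $Q$ decays rapidly. I would rewrite $\bfD_Q f = L_Q f + \frac{2}{y}A_\tht[Q,f]Q$ and exploit $|Q|_j \aleq \langle y\rangle^{-m-2}$ to obtain
\[
\Big\|\tfrac{2}{y}A_\tht[Q,f]Q\Big\|_{\dot{H}_{m+1}^{k}} \aleq \|\chf_{y\leq R}f\|_{L^2}
\]
for some fixed $R>0$. This reduces the problem to subcoercivity of the local operator $\bfD_Q$. For $\bfD_Q$ one has the direct Hardy-type identity $|f|_{-(k+1)}(y) \aleq |\bfD_Q f|_{-k} + \frac{1}{y}|f|_{-k}$ up to factors involving $m+A_\tht[Q]$, and integrating the first-order ODE structure from $y=\infty$ (or from the support of $f$) combined with the weighted Sobolev machinery in $\dot\calH_m^{k+1}$ yields
\[
\|f\|_{\dot\calH_m^{k+1}} \aleq \|\bfD_Q f\|_{\dot H^k_{m+1}} + \|\chf_{y\sim 1}f\|_{L^2}.
\]
Finally, the leftover local term $\|\chf_{y\leq R}f\|_{L^2}$ can be reduced to $\|\chf_{y\sim 1}f\|_{L^2}$ by a standard contradiction-compactness argument, using the already established kernel structure of $L_Q$ to rule out a nonzero weak limit.

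For the kernel characterization, suppose $f \in H^1_{m,\loc}$ satisfies $L_Q f = 0$. Decomposing into real and imaginary parts, the imaginary part only couples to the local operator $\bfD_Q$ (since $A_\tht[Q, if] = 0$ as this is the real pairing), so $\bfD_Q (\Im f) = 0$, giving $\Im f = c_2 Q$. The real part then satisfies an inhomogeneous first-order equation with nonlocal right-hand side, whose solution space is determined by the formal right inverse $\out L_Q^{-1}$ from Lemma~\ref{lem:LQ-inv} together with the generalized kernel relations \eqref{eq:gen-kernel-rel-LQ}, yielding $\Re f = c_1 \Lambda Q$.

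The hardest step will be the compactness reduction $\|\chf_{y\leq R}f\|_{L^2} \lesssim \|\chf_{y\sim 1}f\|_{L^2} + \text{(small)}\,\|f\|_{\dot\calH_m^{k+1}}$ used to close the subcoercivity argument, particularly because for $m \in \{1,2\}$ the norm $\dot\calH_m^{k+1}$ carries logarithmic weights near the origin (see \eqref{eq:def-calH2}--\eqref{eq:def-calH3}), and one must verify that these weights are compatible with the first-order ODE analysis of $\bfD_Q$. A secondary technical point is the iteration from $k=0$ to $k=1,2$, where one commutes $\bfD_Q$ with $r\rd_r$ and must carefully track error terms arising from the non-constant coefficients involving $A_\tht[Q]$.
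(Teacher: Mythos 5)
Your overall strategy — peel off the nonlocal correction, reduce to a first-order Hardy estimate for $\bfD_Q$, then absorb the localized remainder — is in the right spirit, but there are two concrete gaps.

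First, the estimate
\[
\Big\|\tfrac{2}{y}A_\tht[Q,f]\,Q\Big\|_{\dot H^k_{m+1}}\aleq\|\chf_{y\leq R}f\|_{L^2}
\]
with a \emph{fixed} $R$ is false. Although the outer factor $Q$ localizes the product to a bounded region, the integral $A_\tht[Q,f](y)=-\tfrac12\int_0^y\Re(\br Qf)\,y'dy'$ samples $f$ on all of $(0,y)$. Taking $f$ supported in $y\geq R+1$ gives a nonzero left side and a zero right side. What is actually true is a subcoercive estimate of the form $\aleq o_{R\to\infty}(\|f\|_{\dot\calH_m^{k+1}})+C_R\|\chf_{y\leq R}f\|_{L^2}$, because the tail $\int_R^\infty Q|f|$ is controlled by a scaling-critical Hardy norm with a vanishing prefactor. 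This $o$-correction is exactly what the paper builds in via the interpolation step \eqref{eq:LQ-small-y-7}; omitting it breaks the absorption.

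Second, the closure argument is the wrong tool for this lemma. You propose a compactness–contradiction argument, invoking the kernel characterization, to reduce $\|\chf_{y\leq R}f\|_{L^2}$ to $\|\chf_{y\sim 1}f\|_{L^2}$. But \eqref{eq:subcoer-LQ} is the \emph{subcoercivity} estimate: it is unconditional (no orthogonality conditions), kernel elements satisfy it trivially, and it is supposed to be the direct, soft input that later feeds, together with the kernel characterization, into a compactness argument proving the full coercivity estimates of Lemma~\ref{lem:LinearCoercivity}. The paper proves \eqref{eq:subcoer-LQ} without compactness: the localized lower-order errors — both from the potential $\tfrac{1}{y}A_\tht[Q]$ and from the nonlocal term — are absorbed directly by an interpolation inequality that trades a small multiple of the full norm for a constant times a compact-region $L^2$ piece. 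Finally, a subsidiary point: treating $\bfD_Q$ as a single Hardy operator "integrating from $y=\infty$" glosses over the fact that the coefficient $m+A_\tht[Q]$ transitions from $m$ at the origin to $-(m+2)$ at infinity; the paper's split into $y\aleq 1$ (approximant $\rd_+$) and $y\ageq 1$ (approximant $\rd_y+\tfrac{m+2}{y}$) is precisely what makes each one-sided Hardy inequality clean and explains why the term $\|\chf_{y\sim 1}f\|_{L^2}$ appears where the coefficient passes through the transitional range.
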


\begin{proof}
As mentioned above, here we only sketch the proof. The details for
the $k\in\{0,2\}$ cases can be found in \cite[Appendix A]{KimKwon2023AnnPDE}.

\textbf{Step 1.} Estimates in the region $y\aleq1$.

In the region $y\aleq1$, $L_{Q}$ is well approximated by $\rd_{y}-\frac{m}{y}=\rd_{+}$
and we decompose 
\begin{equation}
L_{Q}f=\rd_{+}f-\tfrac{1}{y}A_{\tht}[Q]f-\tfrac{2}{y}A_{\tht}[Q,f]Q\eqqcolon\rd_{+}f+\td L_{Q,\leq}f.\label{eq:LQ-small-y-1}
\end{equation}
The following estimates hold for $k\in\{0,1,2\}$: 
\begin{align}
\|f\|_{\dot{\calH}_{m,\leq}^{k+1}} & \aleq\|\chf_{(0,1]}|\rd_{+}f|_{-k}\|_{L^{2}}+\|\chf_{[\frac{1}{2},1]}f\|_{L^{2}},\label{eq:LQ-small-y-2}\\
\|\chf_{(0,1]}|\td L_{Q,\leq}f|_{-k}\|_{L^{2}} & \aleq\|\chf_{(0,1]}y|f|_{-k}\|_{L^{2}}.\label{eq:LQ-small-y-3}
\end{align}
The first estimate is a consequence of (logarithmic) Hardy's inequality
and the operator identities $\rd_{yyy}=(\rd_{y}+\frac{1}{y})^{2}\rd_{+}$
for $m=2$ and $\rd_{yy}=(\rd_{y}+\frac{1}{y})\rd_{+}$ for $m=1$.
The second estimate can be proved using $|Q|_{2}\aleq y$ in the region
$y\aleq1$.

\textbf{Step 2.} Estimates in the region $y\ageq1$.

In the region $y\ageq1$, $L_{Q}$ is well approximated by $\rd_{y}+\frac{m+2}{y}$
and we decompose 
\begin{equation}
\begin{aligned}L_{Q}f & =(\rd_{y}+\tfrac{m+2}{y})f-\tfrac{1}{y}(2m+2+A_{\tht}[Q])f-\tfrac{2}{y}A_{\tht}[Q,f]Q\\
 & \eqqcolon(\rd_{y}+\tfrac{m+2}{y})f+\td L_{Q,\geq}f.
\end{aligned}
\label{eq:LQ-small-y-4}
\end{equation}
The following estimates hold for $k\in\{0,1,2\}$: 
\begin{align}
\|f\|_{\dot{\calH}_{m,\geq}^{k+1}} & \aleq\|\chf_{[\frac{1}{4},\infty)}|(\rd_{y}+\tfrac{m+2}{y})f|_{-k}\|_{L^{2}}+\|\chf_{[\frac{1}{4},\frac{1}{2}]}f\|_{L^{2}},\label{eq:LQ-small-y-5}\\
\|\chf_{[\frac{1}{4},\infty)}|\td L_{Q,\geq}f|_{-k}\|_{L^{2}} & \aleq\|\chf_{[\frac{1}{4},\infty)}y^{-3}|f|_{-k}\|_{L^{2}}+\|\chf_{(0,\frac{1}{4}]}y^{1-k}f\|_{L^{2}}.\label{eq:LQ-small-y-6}
\end{align}
The first estimate is a consequence of Hardy's inequality (using $m+2>0$).
The second estimate can be proved using \eqref{eq:LQ-small-y-3} and
$|Q|_{2}\aleq y^{-3}$ in the region $y\ageq1$. Note that the second
term of RHS\eqref{eq:LQ-small-y-6} appears due to the nonlocal term
$-\frac{2}{y}A_{\tht}[Q,f]Q$.

\textbf{Step 3.} Further treatment of localized lower order errors.

We further estimate the localized lower order terms (i.e., \eqref{eq:LQ-small-y-3}
and \eqref{eq:LQ-small-y-6}) as 
\begin{equation}
\|(\chf_{(0,1]}y+\chf_{[1,\infty)}y^{-3})\cdot|f|_{-k}\|_{L^{2}}\leq o_{r_{0}\to0}(\|f\|_{\dot{\calH}_{m}^{k+1}})+O_{r_{0}}(\|\chf_{[r_{0},1/r_{0}]}f\|_{L^{2}}).\label{eq:LQ-small-y-7}
\end{equation}
For the proof of \eqref{eq:LQ-small-y-7}, we first note that the
contributions from the regions $y\leq2r_{0}$ and $y\geq1/(2r_{0})$
are bounded by $o_{r_{0}\to0}(\|f\|_{\dot{\calH}_{m}^{k+1}})$. The
contribution from the region $2r_{0}\leq y\leq1/(2r_{0})$ can be
estimated using interpolation, for example
\begin{align*}
 & \|\chf_{[2r_{0},1/(2r_{0})]}|f|_{-2}\|_{L^{2}}\\
 & \quad\aleq_{r_{0}}\|\varphi f\|_{H_{m}^{2}}\aleq_{r_{0}}\|\varphi f\|_{L^{2}}^{1/3}\|\varphi f\|_{H_{m}^{3}}^{2/3}\aleq_{r_{0}}\|\chf_{[r_{0},1/r_{0}]}f\|_{L^{2}}^{1/3}\|f\|_{\dot{\calH}_{m}^{k+1}}^{2/3},
\end{align*}
where $\varphi=\varphi(y)$ is some smooth compactly supported radial
function such that $\varphi\equiv1$ on $[2r_{0},1/(2r_{0})]$ and
$\varphi\equiv0$ outside $[r_{0},1/r_{0}]$. This gives \eqref{eq:LQ-small-y-7}.

\textbf{Step 4.} Proof of the subcoercivity estimate \eqref{eq:subcoer-LQ}.

The proof of the $(\aleq)$-direction follows from first applying
the decompositions \eqref{eq:LQ-small-y-1} and \eqref{eq:LQ-small-y-4},
and then applying the estimates \eqref{eq:LQ-small-y-3} and \eqref{eq:LQ-small-y-6}:
\begin{align*}
 & \|L_{Q}f\|_{\dot{H}_{m+1}^{k}}+\|\chf_{y\sim1}f\|_{L^{2}}\\
 & \aleq\big\|\chf_{(0,\frac{1}{2}]}|\rd_{+}f|_{-k}\|_{L^{2}}+\|\chf_{(0,\frac{1}{2}]}y|f|_{-k}\|_{L^{2}}+\|\chf_{[\frac{1}{2},\infty)}|f|_{-(k+1)}\big\|_{L^{2}}\aleq\|f\|_{\dot{\calH}_{m}^{k+1}}.
\end{align*}
The proof of the $(\ageq)$-direction follows from applying \eqref{eq:LQ-small-y-2}
and \eqref{eq:LQ-small-y-5} first, then applying \eqref{eq:LQ-small-y-1}
and \eqref{eq:LQ-small-y-4}, \eqref{eq:LQ-small-y-3} and \eqref{eq:LQ-small-y-6},
and finally \eqref{eq:LQ-small-y-7}: 
\begin{align*}
\|f\|_{\dot{\calH}_{m}^{k+1}} & \aleq\|\chf_{(0,1]}|\rd_{+}f|_{-k}\|_{L^{2}}+\|\chf_{[\frac{1}{4},\infty)}|(\rd_{y}+\tfrac{m+2}{y})f|_{-k}\|_{L^{2}}+\|\chf_{[\frac{1}{4},1]}f\|_{L^{2}}\\
 & \aleq\||L_{Q}f|_{-k}\|_{L^{2}}+\|\chf_{(0,1]}y|f|_{-k}\|_{L^{2}}+\|\chf_{[\frac{1}{4},\infty)}y^{-3}|f|_{-k}\|_{L^{2}}\\
 & \aleq\|L_{Q}f\|_{\dot{H}_{m+1}^{k}}+o_{r_{0}\to0}(\|f\|_{\dot{\calH}_{m}^{k+1}})+O_{r_{0}}(\|\chf_{[r_{0},1/r_{0}]}f\|_{L^{2}}).
\end{align*}
Choosing $r_{0}>0$ sufficiently small, the second term of RHS can
be absorbed into the LHS. This gives the $(\ageq)$-inequality of
\eqref{eq:subcoer-LQ}.

\textbf{Step 5.} Kernel characterization.

If $f\in H_{m,\loc}^{1}$ belongs to the kernel of $L_{Q}$, then
$f$ is smooth by elliptic regularity; see for example \cite[Lemma A.5]{KimKwon2023AnnPDE}.
Then, the result follows by the characterization of the smooth kernel
of $L_{Q}$ proved in \cite[Lemma 3.3]{KimKwon2023MAMS}.
\end{proof}
\begin{lem}[Subcoercivity estimates for $A_{Q}$]
\label{lem:subcoer-AQ}For any $k\in\{0,1\}$ and $f\in C_{c,m+1}^{\infty}$,
we have 
\begin{align*}
\|A_{Q}f\|_{\dot{H}_{m+1}^{k}}+\|\chf_{y\sim1}f\|_{L^{2}} & \sim\|f\|_{\dot{H}_{m+1}^{k+1}},\\
\|A_{Q}f\|_{\dot{V}_{m+1}^{3/2}}+\|\chf_{y\sim1}f\|_{L^{2}} & \sim\|f\|_{\dot{V}_{m+1}^{5/2}}.
\end{align*}
Moreover, the kernel of $A_{Q}:H_{m+1,\loc}^{1}\to L_{\loc}^{2}$
is $\mathrm{span}_{\bbC}\{yQ\}$.
\end{lem}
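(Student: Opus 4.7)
The plan is to follow the same five-step template as the proof of Lemma~\ref{lem:subcoer-LQ}, exploiting the decomposition $A_Q = \partial_y - \frac{1}{y}(m+1+A_\theta[Q])$ and the fact that $m+A_\theta[Q]$ vanishes at the origin (like $y^2$) and approaches $-(2m+2)$ at infinity (with remainder of size $y^{-(2m+2)}$), so that $\frac{1}{y}(m+1+A_\theta[Q])$ is \emph{localized and lower order} compared to the principal Cauchy--Riemann parts. The operator $A_Q$ acts on $(m+1)$-equivariant functions, and unlike $L_Q$ it contains no nonlocal term.

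First I would split $A_Q f = \partial_+ f + \widetilde{A}_{Q,\leq} f$ in the region $y\leq 1$, where $\partial_+ = \partial_y - \frac{m+1}{y}$ is the $(m+1)$-equivariant Cauchy--Riemann operator and $\widetilde{A}_{Q,\leq}f = -\frac{1}{y}(A_\theta[Q])f$ satisfies $\|\chf_{(0,1]}|\widetilde{A}_{Q,\leq}f|_{-k}\|_{L^2} \lesssim \|\chf_{(0,1]} y|f|_{-k}\|_{L^2}$ because $\frac{1}{y}A_\theta[Q] = O(y)$ near $0$. Hardy's inequality (valid for $m+1\geq 2$) gives the Cauchy--Riemann bound
\[
\|f\|_{\dot H_{m+1,\leq}^{k+1}} \lesssim \|\chf_{(0,1]}|\partial_+ f|_{-k}\|_{L^2} + \|\chf_{[\frac{1}{2},1]}f\|_{L^2}
\]
for $k=0,1$. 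In the region $y\geq 1$, I write $A_Q f = (\partial_y + \frac{m+1}{y})f + \widetilde{A}_{Q,\geq}f$ with $\widetilde{A}_{Q,\geq}f = -\frac{1}{y}(2m+2+A_\theta[Q])f$, so $|\widetilde{A}_{Q,\geq}f|_{-k}\lesssim y^{-(2m+3)}|f|_{-k}$, and again Hardy gives the companion estimate (the $+\frac{m+1}{y}$ sign is favorable). Combining the two regions and using the interpolation trick \eqref{eq:LQ-small-y-7} to absorb the localized lower-order error $\|(\chf_{(0,1]}y + \chf_{[1,\infty)}y^{-3})|f|_{-k}\|_{L^2}$ into the LHS modulo a compactly supported $L^2$-norm yields the Sobolev subcoercivity $\|A_Q f\|_{\dot H_{m+1}^k} + \|\chf_{y\sim 1}f\|_{L^2}\sim \|f\|_{\dot H_{m+1}^{k+1}}$ for $k\in\{0,1\}$.

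For the weighted $\dot V$-estimate I carry out the same decomposition but with the weights $\langle y\rangle^{-1}$ on $\partial_y$ and $\langle y\rangle^{-1/2}y^{-1}$ on lower derivatives built into the norm. The main point is the weighted Hardy inequality
\[
\Bigl\|\frac{\partial_{yy}f}{\langle y\rangle}\Bigr\|_{L^2} + \Bigl\|\frac{|f|_{-1}}{\langle y\rangle^{1/2}y}\Bigr\|_{L^2} \lesssim \Bigl\|\frac{\partial_y(A_Q f)}{\langle y\rangle}\Bigr\|_{L^2} + \Bigl\|\frac{A_Q f}{\langle y\rangle^{1/2}y}\Bigr\|_{L^2} + \|\chf_{y\sim 1}f\|_{L^2},
\]
obtained by commuting $\partial_+$ (resp.\ $\partial_y+\frac{m+1}{y}$) with the localized weights, which is permissible because the weights and their logarithmic derivatives are bounded. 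The localized remainder $\widetilde A_Q f$ is controlled exactly as before: its contribution to the $\dot V_{m+2}^{3/2}$-norm is bounded by a $y$-localized piece which is absorbed by the interpolation argument.

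The kernel characterization is the easiest part: if $f\in H_{m+1,\loc}^1$ solves $A_Q f = 0$, elliptic regularity (as in \cite[Lemma A.5]{KimKwon2020arXiv}) applied to the second-order equation $A_Q^\ast A_Q f = H_Q f = 0$ yields smoothness; then the ODE $\partial_y f = \frac{1}{y}(m+1+A_\theta[Q])f$ has one-dimensional solution space over $\bbC$ spanned by $yQ$ (this is the standard integrating-factor computation, using the explicit formula for $Q$).

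The main obstacle I anticipate is bookkeeping the weighted Hardy inequalities in the $\dot V_{m+1}^{5/2}$ estimate, specifically ensuring that the commutator terms $[\partial_+,\langle y\rangle^{-1}]$ and the weight mismatch between top-order and lower-order parts of the $\dot V$-norms produce only perturbative errors that can be absorbed by the interpolation step. All other steps are direct adaptations of the proof of Lemma~\ref{lem:subcoer-LQ}.
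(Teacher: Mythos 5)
Your proposal is correct and follows essentially the same approach as the paper's proof: the same near-origin/far-field decompositions of $A_Q$ into the Cauchy--Riemann part plus a localized lower-order error, the same Hardy and weighted Hardy inequalities, and the same interpolation trick to absorb the localized lower-order term via a compactly supported $L^2$-norm. The only cosmetic difference is in the kernel step, where you detour through $H_Q f = 0$ for elliptic regularity, whereas the paper observes directly that $A_Q f = 0$ is a first-order $\bbC$-linear ODE with $yQ$ as a solution, so smoothness and one-dimensionality follow from standard ODE uniqueness.
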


\begin{proof}
We proceed similarly as in the proof of Lemma~\ref{lem:subcoer-LQ}
and omit the details. In the region $y\aleq1$, we approximate $A_{Q}$
by $\rd_{y}-\frac{m+1}{y}=\rd_{+}$ and decompose 
\[
A_{Q}f=\rd_{+}f-\tfrac{1}{y}A_{\tht}[Q]f\eqqcolon\rd_{+}f+\td A_{Q,\leq}f.
\]
We have the following estimates for $k\in\{0,1\}$: 
\begin{align*}
\|\chf_{(0,\frac{1}{2}]}|f|_{-(k+1)}\|_{L^{2}} & \aleq\|\chf_{(0,1]}|\rd_{+}f|_{-k}\|_{L^{2}}+\|\chf_{[\frac{1}{2},1]}f\|_{L^{2}},\\
\|\chf_{(0,1]}|\td A_{Q,\leq}f|_{-k}\|_{L^{2}} & \aleq\|\chf_{(0,1]}y|f|_{-k}\|_{L^{2}}.
\end{align*}
On the other hand, in the region $y\ageq1$, we approximate $A_{Q}$
by $\rd_{y}+\frac{m+1}{y}$ and decompose 
\[
A_{Q}f=\rd_{+}f-\tfrac{1}{y}(2m+2+A_{\tht}[Q])f\eqqcolon(\rd_{y}+\tfrac{m+1}{y})f+\td A_{Q,\geq}f.
\]
We have the following estimates for $k\in\{0,1\}$: 
\begin{align*}
\|\chf_{[\frac{1}{2},\infty)}|f|_{-(k+1)}\|_{L^{2}} & \aleq\|\chf_{[\frac{1}{4},\infty)}|(\rd_{y}+\tfrac{m+1}{y})f|_{-k}\|_{L^{2}}+\|\chf_{[\frac{1}{4},\frac{1}{2}]}f\|_{L^{2}},\\
\|\chf_{[\frac{1}{2},\infty)}\tfrac{1}{y^{3/2}}|f|_{-1}\|_{L^{2}} & \aleq\|\chf_{[\frac{1}{4},\infty)}\tfrac{1}{y^{3/2}}(\rd_{y}+\tfrac{m+1}{y})f\|_{L^{2}}+\|\chf_{[\frac{1}{4},\frac{1}{2}]}f\|_{L^{2}},\\
\|\chf_{[\frac{1}{2},\infty)}\tfrac{1}{y}\rd_{yy}f\|_{L^{2}} & \aleq\|\chf_{[\frac{1}{2},\infty)}\tfrac{1}{y}\rd_{y}(\rd_{y}+\tfrac{m+1}{y})f\|_{L^{2}}+\|\chf_{[\frac{1}{2},\infty)}\tfrac{1}{y^{2}}|f|_{-1}\|_{L^{2}},\\
\|\chf_{[\frac{1}{4},\infty)}|\td A_{Q,\geq}f|_{-k}\|_{L^{2}} & \aleq\|\chf_{[\frac{1}{4},\infty)}y^{-3}|f|_{-k}\|_{L^{2}}.
\end{align*}
Here, the first and second estimates are consequences of Hardy's inequality
(using $m+1>0$). The second and third estimates are adapted to the
$\dot{V}_{m+1}^{5/2}$- and $\dot{V}_{m+2}^{3/2}$-norms. Finally,
we have for $k\in\{0,1\}$ 
\[
\|(\chf_{(0,1]}y+\chf_{[1,\infty)}y^{-3})\cdot|f|_{-k}\|_{L^{2}}\leq o_{r_{0}\to0}(\|f\|_{\dot{H}_{m+1}^{k+1}})+O_{r_{0}}(\|\chf_{[r_{0},1/r_{0}]}f\|_{L^{2}})
\]
and in fact we have a better estimate
\[
\|(\chf_{(0,1]}y+\chf_{[1,\infty)}y^{-3})\cdot|f|_{-1}\|_{L^{2}}\leq o_{r_{0}\to0}(\|f\|_{\dot{V}_{m+1}^{5/2}})+O_{r_{0}}(\|\chf_{[r_{0},1/r_{0}]}f\|_{L^{2}}).
\]

Proceeding as in Step~4 of the proof of Lemma~\ref{lem:subcoer-LQ},
we obtain the subcoercivity estimates. The kernel characterization
of $A_{Q}$ is in fact easier than that of $L_{Q}$ because $A_{Q}(yQ)=0$
($A_{Q}$ is $\bbC$-linear) and we can use the uniqueness of ODE
solutions.
\end{proof}
\bibliographystyle{abbrv}
\bibliography{References}

\end{document}